\numberwithin{equation}{section}
\newtheorem{theorem}{Theorem}[section]
\newtheorem{corollary}[theorem]{Corollary}
\newtheorem{lemma}[theorem]{Lemma}
\newtheorem{proposition}[theorem]{Proposition}
\theoremstyle{remark}
\newtheorem{remark}[theorem]{Remark}
\theoremstyle{definition}
\definecolor{gray}{rgb}{92.,92.,92.}
\newcommand {\Cdot}{\vcenter{\hbox{\scalebox{1.6}{.}}}}
\newcommand {\N}{\mathbb{N}}
\newcommand {\E}{\mathbb{E}}
\newcommand {\R}{\mathbb{R}}
\newcommand {\s}{\sigma}
\newcommand {\W}{\mathbb{W}}
\newcommand {\BSDE}{BSDE~\eqref{selfEq1Aux}}
\renewcommand {\b}{\beta}
\renewcommand {\(}{\left (}
\renewcommand {\)}{\right )}
\renewcommand {\[}{\left [}
\renewcommand {\]}{\right ]}
\newcommand {\Ee}[1]{\widetilde{\mathbb{E}}_{#1}}
\newcommand {\Econd}[2]{\mathbb{E}\[#1\middle|\mathcal{F}^{\otimes n}_{#2}\]} 
\newcommand {\Eecond}[3]{\widetilde{\mathbb{E}}_{#1}\[#2\middle|\mathcal{F}^{\otimes n}_{#3}\]} 
\newcommand {\Fq}[1]{\mathcal{F}^{\otimes n}_{#1}} 
\newcommand {\Fqfilt}[2]{\(\mathcal{F}^{\otimes n}_{#1}\)_{#1\in #2}} 
\newcommand {\Wn}{\mathbb{W}^{\otimes n}}
\newcommand {\Wnas}{\mathbb{W}^{\otimes n}\textup{--a.s.}}
\def\been#1{\begin{linenomath}\begin{equation}#1\end{equation}\end{linenomath}}
\def\aled#1{\begin{aligned}#1\end{aligned}}
\DeclareMathOperator\erf{erf}
\title[Prop. of the full-RSB free energy func. of the Ising spin glass on RRG]{Properties of the full replica symmetry breaking free energy functional of the Ising spin glass on random regular graph}
\author{Francesco Concetti}
\address{Faculty of Mathematics and Computer Science, UniDistance Suisse, 3900 Brig, Switzerland}
\email{francesco.concetti@unidistance.ch}
\thanks{The author was supported by SNSF grants 176918 and 206148. }
\begin{document}
\begin{abstract}
We analyze the full replica symmetry breaking (full--RSB) free energy functional for the Ising spin glass on a random regular graph proposed by the author in \cite{MyPaper}. We prove that the full--RSB formulation provides an improvement over any replica symmetry breaking approximation with a finite number of steps (finite--RSB), based on the Mézard-Parisi ansatz \cite{ParMezRRG1}. We provide a representation of that functional as the unique solution to a well-posed backward stochastic differential equation. This stochastic formulation enables a refined analysis of the functional and the computation of the derivatives with respect to the order parameters of the model. The techniques developed here hold potential interest for broader areas such as calculus of variations, stochastic optimal control, and functional analysis.
\medskip
\noindent{\bf MSC:} 82D30, 60K40, 60G44, 60H30, 60B05, 49J40.
\end{abstract}
\maketitle
\section{Introduction}
\label{sec:intro}
The solution to the Sherrington--Kirkpatrick model (SK)\cite{SK1,SK2}, obtained by Parisi \cite{Par1_0,Par1_1,VPM}, represents one of the most remarkable milestones in the study of disordered and glassy systems.

In his pioneering work \cite{SK2}, Parisi introduced a hierarchical scheme, known as the \emph{Replica Symmetry Breaking (RSB) ansatz}. This ansatz yields a sequence of upper bounds for the free energy of the SK model, where each bound corresponds to a fixed number of RSB \emph{steps}. Increasing the number of steps up to infinity yields a non-increasing sequence of upper bounds. The case with infinitely many steps is known as the \emph{full-RSB formula} \cite[Equation (19)]{SK2}.

Talagrand rigorously proved that the full-RSB formula yields the exact free energy in the SK model \cite{Tala}. This result was later extended by Panchenko to a broader class of fully connected spin glass models \cite{Panchenko_p_spin}. For a rigorous and detailed treatment of this topic, we refer the reader to the books by Talagrand \cite{Talbook2} and Panchenko \cite{Panchbook}.

In recent decades, significant efforts have been devoted to extending the results obtained for fully connected models to spin glasses defined on \emph{sparse} graphs \cite{Viana_bray,Mottishaw}, henceforth referred to as \emph{sparse models}.

In this manuscript, we consider the following model. Given $N\in \N$ and $c\in\N$, we consider a random regular graph (RRG) $\mathcal{G}=\([N],\mathcal{E}\)$ with connectivity $c$ and size $N$, where $[N]=\{1,\cdots ,N\}$ denotes the set of vertices and $\mathcal{E}\subset [N]^2$ denotes the set of edges (see \cite{Bollobas} for a detailed description of RRG). The $2$--spin model on this graph is defined by the random Hamiltonian
\begin{equation}
\label{eq:model}
H(\bm{\sigma}):=-\sum_{(i,j)\in \mathcal{E}}J_{ij}\s_i\s_j,
\end{equation}
where $\bm{\s}:=(\s_i)_{i\in [N]}\in \{-1,1\}^N$ represents the \emph{spins}, and the \emph{couplings} $J_{ij}$, for $(i,j)\in \mathcal{E}$ are independent Rademacher random variables. Given $\b>0$ (the so-called \emph{inverse temperature}), the \emph{quenched free energy} of the model is defined as
\been{
\label{eq:free} 
f:=\lim_{N\to \infty}\frac{1}{N}\E_{\mathcal{G},(J_{ij})}\[\log\(\sum_{\bm{\s}\in \{-1,1\}^N}e^{-\b H(\bm{\sigma})}\)\].
}
Parisi and Mézard adapted the replica symmetry breaking ansatz with a \emph{finite} number of steps (\emph{finite--RSB}), originally developed for the fully connected models, to sparse models, using the so-called \emph{cavity method}. In their seminal paper \cite{ParMezRRG1}, they introduced the ansatz and derived an initial version of the scheme, known as \emph{1--RSB} (see also \cite{Panchenko_2015,Tala_panch} and \cite{MyPaper} for a detailed description of the general M\'ezard-Parisi  ansatz).

Later, Panchenko and Talagrand extended this framework to encompass the more general finite-RSB formulation for spin glasses on Erdős-Rényi graphs \cite{Erdos}, as presented in \cite{Tala_panch}. This approach was further generalized to a broader class of random graphs by Lelarge and Oulamara in \cite{lelarge}.

Most recently, the author derived the full--RSB formula for the 2-spin Ising model on random regular graphs in \cite{MyPaper}. This result adapts the completed Parisi ansatz, originally formulated for the fully connected model, to the context of sparse graphs.

To present the main result of \cite{MyPaper} and introduce the contributions of the present manuscript, we first recall the original $K$--RSB formula (finite--RSB with $K$ steps) as it is presented in \cite{Tala_panch,lelarge}.

 Let $K \geq 1$ be an integer. Define $\mathcal{M}^{(K+1)}$ as the set $[-1,1]$ equipped with the standard topology on $\mathbb{R}$ and the associated Borel $\sigma$--algebra. Recursively, for $0 \leq l \leq K$, define $\mathcal{M}^{(l)}$ as the space of probability measures on $\mathcal{M}^{(l+1)}$, equipped with the weak topology of measures and the associated Borel $\sigma$--algebra. 

Given $\zeta \in \mathcal{M}^{(0)}$, we construct a random sequence
\begin{equation}
W:=(M^{(0)}, \cdots, M^{(K+1)}),
\label{eq:random-sequence}
\end{equation}
where $M^{(0)}=\zeta$, and for each $1 \leq l \leq K+1$, $M^{(l)}$ is a random variable taking values in $\mathcal{M}^{(l)}$, with conditional distribution conditionally on $(M^{(0)}, \cdots, M^{(l-1)})$ equal to $M^{(l-1)}$. Let $\mathbb{M}_{\zeta}$ be the distribution of $W$ given  $\zeta$. 

For each $0 \leq l \leq K+1$, let $\mathcal{A}_l$ be the $\sigma$-algebra generated by the variables $(M^{(0)}, \cdots, M^{(l)})$:
\been{
\label{eq:sigmaA}
\mathcal{A}_l:=\sigma\(M^{(0)}, \cdots, M^{(l)}\)
}
Given an integer $n\in \N$, let $\bm{W} := (W_1, \cdots, W_{n})$ be a sequence of $n$ independent copies of the random sequence $W$, each distributed according to $\mathbb{M}_{\zeta}$. We denote by $\mathcal{A}^{\otimes n}_l$ the $n$-fold \emph{product $\sigma$-algebra} generated by $\mathcal{A}_l$ on each coordinate; that is,
\been{
\label{eq:delAl}
\mathcal{A}^{\otimes n}_l := \bigotimes_{i=1}^{n} \mathcal{A}_l.
}
Given a real-valued, and $\mathcal{A}^{\otimes n}_{K+1}$--measurable random variable $\Psi$, and an increasing sequence
\been{
\label{eq:x_seq0}
x:=(x_0,x_{1},\cdots,x_K,x_{K+1}),\qquad  0=x_0< x_1 \leq \cdots \leq x_K \leq x_{K+1}=1,
}
we define recursively
\begin{equation}
\label{eq:recursive-T0}
\widehat{\Xi}_{l}(\Psi, x, \bm{W}) :=
\begin{dcases}
   e^{\Psi(\bm{W})}, \quad & \text{if } l = K+1; \\
   \E\left[ \left. \widehat{\Xi}_{l+1}(\Psi, x,\bm{W})^{\frac{x_{l}}{x_{l+1}}} \right| \mathcal{A}^{\otimes n}_{l} \right] , & \text{if } 1\leq l \leq K.
\end{dcases}
\end{equation}
Finally, define
\been{
\label{eq:phihat}
\widehat{\Phi}(\Psi, x):=\E\[\frac{1}{x_1}\log(\widehat{\Xi}_{1}(\Psi, x, \bm{W}))\].
}
For the model~\eqref{eq:model} on a RRG with connectivity $c$, we set $n := 2c$.

Let $\psi^{(e)}_{\bm{J}}:[-1,1]^{2c}\to \R$ and $\psi^{(v)}_{\bm{J}}:[-1,1]^{2c}\to \R$ be the function defined as in the following:
\been{
\label{eq:psie}
\psi^{(e)}_{\bm{J}}(m_1,\cdots, m_{2c}):=
\log\(\sum_{\bm{\sigma}\in \{-1,1\}^{2c}}e^{\b\sum^c_{i=1}J_i\s_i\s_{i+c}}\prod^{2c}_{i=1}(1+m_i\s_i)\),
}
\been{
\label{eq:psiv}
\psi^{(v)}_{\bm{J}}(m_1,\cdots, m_{2c}):=\log\(\sum_{\bm{\sigma}\in \{-1,1\}^{2c}}\(\sum_{\bm{\tau}\in \{-1,1\}^2}e^{\b\sum^c_{i=1}J_i\(\tau_1\s_i+\tau_2\s_{i+c}\)}\)\prod^{2c}_{i=1}(1+m_i\s_i)\).
}
Here, $\bm{J}=(J_1,\cdots J_{2c})$ is a sequence of $2c$ independent  Rademacher random variables independent of $\bm{W}$, encoding the dependence on the random couplings of the Hamiltonian. 

Denoting by $(M^{(K+1)}_1,\cdots,M^{(K+1)}_{2c})$ the sequence composed of the last components of each random sequence in $\bm{W}=(W_1,\cdots W_{2c})$, we define
\been{
\label{eq:psievmr}
\psi^{(*)}_{\bm{J}}\circ \zeta^{\times 2c}(\bm{W}):=\psi^{(*)}_{\bm{J}}(M^{(K+1)}_1,\cdots,M^{(K+1)}_{2c}),
}
where the superscript $(*)$ stands for either $(e)$ or $(v)$ and the dependence on $\zeta$ is encoded in the distribution of $M^{(K+1)}_1,\cdots,M^{(K+1)}_{2c}$.

The $K$--RSB functional is defined as:
\begin{equation}
\label{eq:main-identity}
\widehat{\mathcal{P}}_K(\zeta,x):=\E_{\bm{J}}\[\widehat{\Phi} (\psi^{(v)}_{\bm{J}}\circ \zeta^{\times 2c},x)-\widehat{\Phi}(\psi^{(e)}_{\bm{J}}\circ \zeta^{\times 2c},x)\],
\end{equation}
where the functional $\widehat{\Phi} (\psi^{(v)}_{\bm{J}}\circ \zeta^{\times 2c},x)$ and $\widehat{\Phi} (\psi^{(e)}_{\bm{J}}\circ \zeta^{\times 2c},x)$ are evaluated for a fixed realization of $\bm{J}$, and $\E_{\bm{J}}$ denotes the expectation over $\bm{J}$, taken afterward.

This expression provides a variational formula for computing the quenched free energy~\eqref{eq:free} under the assumption that the spin distribution follows the M\'ezard-Parisi $K$--RSB ansatz introduced in \cite{ParMezRRG1}. The $K$--RSB functional has been rigorously proved to give an upper bound of the free energy (see \cite{Franz} and \cite{lelarge}).
\begin{theorem}[Franz Leone upper bound]
For any $K\in \N$
\been{
\label{eq:RSB_UB}
f\leq \inf_{\substack{\zeta \in \mathcal{M}^{(0)}\\
0< x_{1}\leq \cdots\leq x_{K+1}=1}}\widehat{\mathcal{P}}_K(\zeta,x).
}
\end{theorem}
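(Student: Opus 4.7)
The plan is to prove \eqref{eq:RSB_UB} by a Guerra--Toninelli-type interpolation tailored to the sparse geometry and the hierarchical M\'ezard--Parisi ansatz, in the spirit of \cite{Franz,lelarge}.

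\step{1} (Interpolation setup.) Fix $\zeta\in\mathcal{M}^{(0)}$ and an admissible sequence $x$ as in \eqref{eq:x_seq0}. For $t\in[0,1]$, construct an auxiliary Hamiltonian $H_t$ on $\{-1,1\}^N$ that at $t=1$ coincides, up to asymptotically negligible corrections, with \eqref{eq:model}, and at $t=0$ decouples into single-site problems, each driven by $2c$ cavity fields sampled from the RSB tree $\bm{W}\sim \mathbb{M}_\zeta^{\otimes 2c}$. Following the Franz--Leone strategy, this is achieved by Poissonisation: at each vertex, place a Poisson number of edge terms of mean $\propto tc$ and a Poisson number of cavity terms of mean $\propto (1-t)\cdot 2c$, matching the $2c$ copies appearing in \eqref{eq:psievmr}.

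\step{2} (Endpoints.) Let $\phi(t)$ denote the $N\to\infty$ limit of the $K$--RSB-weighted log partition function associated with $H_t$, obtained by plugging its Boltzmann weight into the recursion \eqref{eq:recursive-T0}--\eqref{eq:phihat} with the exponents $x$. A standard coupling between the random regular graph and its Poissonised counterpart, combined with a concentration estimate for $\log Z$, shows $\phi(1)=f$. Direct evaluation of \eqref{eq:recursive-T0} at $t=0$, using \eqref{eq:psie}, \eqref{eq:psiv} and \eqref{eq:main-identity}, gives $\phi(0)=\widehat{\mathcal{P}}_K(\zeta,x)$.

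\step{3} (Monotonicity.) Differentiate $\phi(t)$. Poisson derivative identities produce two contributions: one associated with each newly inserted edge, of the form $\E_{\bm{J}}[\widehat{\Phi}(\psi^{(e)}_{\bm{J}}\circ\zeta^{\times 2c},x)]$, and one associated with each newly inserted cavity, of the form $\E_{\bm{J}}[\widehat{\Phi}(\psi^{(v)}_{\bm{J}}\circ\zeta^{\times 2c},x)]$, both evaluated along the running interpolation. A layer-by-layer Jensen argument then yields $\phi'(t)\leq 0$: at each level $l$, the map $u\mapsto u^{x_l/x_{l+1}}$ is concave since $x_l\leq x_{l+1}$, and the product structure of $\mathcal{A}^{\otimes 2c}_l$ in \eqref{eq:delAl} allows one to peel off the $2c$ independent copies one level at a time.

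\step{4} (Conclusion.) Integrating $\phi'(t)\leq 0$ over $[0,1]$ yields $f\leq \widehat{\mathcal{P}}_K(\zeta,x)$; optimizing over admissible $(\zeta,x)$ gives \eqref{eq:RSB_UB}.

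The main obstacle is Step~3. On the fully connected SK model, Guerra's monotonicity follows from a single Gaussian integration by parts and one global Jensen inequality. On the random regular graph one must instead combine the combinatorics of the $2c$ half-edges with the $K+1$ nested exponents $(x_l)$, iterating the Jensen step consistently across every level of the hierarchical tree, while simultaneously controlling the discrepancy between the strictly regular graph and its Poissonised counterpart at $t=1$ uniformly in $N$. These are precisely the technically delicate steps carried out in \cite{Franz,lelarge}, from which \eqref{eq:RSB_UB} is obtained.
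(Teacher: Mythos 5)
The paper does not prove this theorem; it is stated as a cited result with the proof deferred to \cite{Franz} and \cite{lelarge}, so there is no in-paper argument to compare against. Your proposal correctly identifies the underlying strategy — a Guerra--Toninelli-style interpolation between the true model and a decoupled cavity ensemble driven by the hierarchical RSB tree, with Poisson derivatives producing the $\psi^{(v)}_{\bm J}$ and $\psi^{(e)}_{\bm J}$ contributions — and Steps 1, 2, and 4 are a fair sketch of that machinery.

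Step 3 is where the real work lies, and your description of it would not, as written, yield $\phi'(t)\le 0$. Concavity of $u\mapsto u^{x_l/x_{l+1}}$ does appear in the Franz--Leone/Panchenko--Talagrand argument — it keeps the recursion \eqref{eq:recursive-T0} well-posed and is one reason adding RSB levels can only improve the bound — but it is not what forces the interpolation derivative to be nonpositive. In the actual argument one expands $\log\bigl(1+\tanh(\beta J)\langle\sigma_i\sigma_j\rangle_t\bigr)$ in powers of the Gibbs-averaged quantities, averages the Rademacher $J$ so that only even powers survive, and then compares the resulting even-moment multi-overlap series for the ``edge'' and ``cavity'' contributions via the elementary convexity of $y\mapsto y^{2k}$; this is the diluted analogue of Guerra's nonpositive $-\E\bigl[(q_{12}-q(a_1\wedge a_2))^2\bigr]$ term, not a layer-by-layer Jensen on $u^{x_l/x_{l+1}}$. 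In addition, for a \emph{regular} graph the degree is fixed, so a naive Poissonization changes the ensemble: identifying $\phi(1)=f$ is not a routine coupling plus concentration, but is the technical core of the extension in \cite{lelarge}, which uses a combinatorial interpolation that inserts edges one at a time while respecting the degree constraint. Your skeleton is the right one, but the two load-bearing steps — the sign of the derivative and the fixed-degree constraint — are not correctly characterized in the sketch.
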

\begin{remark}
\label{rem:topo}
Under the weak topology, the space of Borel probability measures on a Polish space is itself Polish. Since $\mathcal{M}^{(K+1)}$ is a Polish space under the standard topology, it follows by construction that, for each $0\leq l\leq K+1$, the space $\mathcal{M}^{(l)}$, when equipped with its weak topology and corresponding Borel $\sigma$--algebra, is a standard Borel space.

From the definitions~\eqref{eq:psie},~\eqref{eq:psiv}, and~\eqref{eq:psievmr}, we observe that the mappings $\psi^{(e)}_{\bm{J}}\circ \zeta^{\times 2c}[\bm{W}]$ and $\psi^{(v)}_{\bm{J}}\circ \zeta^{\times 2c}[\bm{W}]$ depend only on the tuple $(M^{(K+1)}_1,\cdots,M^{(K+1)}_{2c})$, and they are both bounded and continuous with respect to the standard topology on $[-1,1]$. Consequently, in the recursion~\eqref{eq:recursive-T0}, $\widehat{\Xi}_{K+1}(\psi^{(v)}_{\bm{J}}\circ \zeta^{\times 2c}, x, \bm{W})$ is bounded, Borel measurable, and continuous on $\mathcal{M}^{(K+1)}$. By definition, for any Borel subset of $\mathcal{M}^{(l+1)}$
\been{
M^{(l)}(A)=\E\[\bm{1}_{\{M^{(l+1)}\in A\}}\middle|\mathcal{A}_l\],
}
where $\bm{1}$ is the indicator function. Thus, if $\widehat{\Xi}_{l+1}(\psi^{(*)}_{\bm{J}}\circ \zeta^{\times 2c}, x, \bm{W})$ is bounded, Borel measurable, and continuous in the weak topology of $\mathcal{M}^{(l+1)}$, then  $\widehat{\Xi}_{l}(\psi^{(*)}_{\bm{J}}\circ \zeta^{\times 2c}, x, \bm{W})$ is bounded, Borel measurable, and continuous in the weak topology of $\mathcal{M}^{(l)}$.

Thus, our choice of $\sigma$--algebra suffices for the measurability and continuity requirements, and no further enlargement of the measurable structure is necessary.
\end{remark}
Whether there exist some parameters $K$, $x$, and $\zeta$ for which~\eqref{eq:RSB_UB} holds with equality remains an open question, despite several significant progresses made in this direction (see \cite{AustinPanchenko, PanchenkoExchange}). However, as for the Sherrington-Kirkpatrick (SK) model, it is widely conjectured that if the Parisi ansatz holds for the model~\eqref{eq:model}, then the equality in~\eqref{eq:RSB_UB} is achieved in the limit of infinitely many steps of replica symmetry breaking.

As pointed out in \cite{Panchenko_2015}, the finite–RSB scheme described above is not well suited for deriving a general full–RSB formulation. The order parameter $\zeta$ in this context takes the form of a hierarchical structure—a tower of distributions of distributions—where each additional step of symmetry breaking introduces another layer of complexity. As a result, each finite $K$--RSB scheme resides in its own distinct space of order parameters, preventing the construction of a unified and compact formulation that captures all levels of symmetry breaking simultaneously. Furthermore, as $K$ increases, the structure of the order parameter becomes increasingly intricate, making the limit $K\to \infty$ analytically challenging, due to the emergence of an infinite hierarchy of distributions.

The extension to the general full--RSB formula was derived in \cite{MyPaper}. The key idea in that work is to represent the random sequence~\eqref{eq:random-sequence} as a functional of Brownian motion. This approach is inspired by the representation of hierarchically exchangeable arrays of random variables developed in \cite{AustinPanchenko}, and first applied in the context of spin glasses in \cite{PanchenkoExchange}.

To state the full--RSB formula for the model~\eqref{eq:model} and present the main results of this manuscript, we first introduce the necessary probabilistic framework

Let $(C([0,1], \mathbb{R}), \mathcal{F}, \mathbb{W},(\mathcal{F}_q)_{q \in [0,1]})$ denote the classical Wiener space, where $C([0,1], \mathbb{R})$ is the space of real-valued continuous functions on $[0,1]$, $\mathcal{F}$ is the Borel $\sigma$-algebra, and $\mathbb{W}$ is the Wiener measure under which the canonical process $\{\omega(q)\}_{q \in [0,1]}$ is a Brownian motion with $\omega(0) \sim \mathcal{N}(0,1)$. The filtration $(\mathcal{F}_q)_{q \in [0,1]}$ is the usual augmentation of the natural filtration generated by $\omega$.

Given $n \in \mathbb{N}$, we also consider the probability space $(C([0,1], \mathbb{R}^n), \mathcal{F}^{\otimes n}, \mathbb{W}^{\otimes n}, (\mathcal{F}_q^{\otimes n})_{q \in [0,1]})$, where elements of $C([0,1], \mathbb{R}^n)$ are denoted by $\bm{\omega} = (\omega_1, \dots, \omega_n)$. Under $\mathbb{W}^{\otimes n}$, the coordinate process $\bm{\omega}:=(\omega_1\,,\cdots ,\omega_n)$ is an $\mathbb{R}^n$-valued Brownian motion with independent components, each distributed as $\mathbb{W}$. For each $q \in [0,1]$, the $\sigma$-algebra $\mathcal{F}_q^{\otimes n}$ is the $n$-fold product of $\mathcal{F}_q$.

Let $B_n$ be the space of bounded $\mathcal{F}^{\otimes n}_1$--measurable Wiener functional $\Psi:C([0,1],\R^n)\to \R$, and $D_n$ be the space of processes $\bm{r}:[0,1)\times C([0,1],\R^n)\to \R^{n}$ that are progressively measurable with respect the filtration $\(\mathcal{F}^{\otimes n}_q\)_{q\in[0,1)}$ and
\been{
\W\(\int^1_{0}dq \|\bm{r}(q,\bm{\omega}\|^2_2<\infty\)=1.
}
Given a probability measure $\mu$ on $[0,1]$ and $\bm{r} \in D_n$, we define the Doléans-Dade exponential (DDE) by
\begin{equation}
\label{DDE_xr}
\mathcal{E}(\mu \bm{r}, q, \bm{\omega}) :=
\exp\left(
\int_0^{q} \mu([0,t])\, \bm{r}(t, \bm{\omega}) \cdot d\bm{\omega}(t)
- \frac{1}{2} \int_0^{q} \mu^2([0,t]) \| \bm{r}(t, \bm{\omega}) \|_2^2 \, dt
\right),
\end{equation}
where the integral is understood in the Itô sense. 

For $q \in [0,1]$ and $\bm{r} \in D_n$, define the process  $\bm{r}_q \in D_n$
\been{
\label{eq:rq}
\bm{r}_q(q', \bm{\omega}) := 
\begin{dcases}
\bm{r}(q', \bm{\omega}), & \text{if } q' > q;\\
0, & \text{otherwise}.
\end{dcases}
}

Given $\Psi \in B_n$, define the functional
\begin{equation}
\label{eq:auxiliary_func0}
\aled{
&\Gamma(\Psi, \mu, \bm{r}, q, \bm{\omega}) \\
&:=
\mathbb{E}\left[
\mathcal{E}(\mu \bm{r}_q, 1, \bm{\omega}) \Psi(\bm{\omega}) \mid \mathcal{F}_q^{\otimes n}
\right]
- \frac{1}{2} \mathbb{E}\left[
\int_q^1 \mu([0,t]) \mathcal{E}(\mu \bm{r}_q, t, \bm{\omega}) \|\bm{r}(t, \bm{\omega})\|_2^2 dt
\mid \mathcal{F}_q^{\otimes n}
\right].
}
\end{equation}
Finally, set
\begin{align}
\label{eq:auxiliary_func}
\Phi(\Psi, \mu, 0) &:= \mathbb{E}\left[
\sup_{\bm{r} \in D_n} \Gamma(\Psi, \mu, \bm{r}, 0, \bm{\omega})
\right], \\
\label{eq:auxiliary_funccond}
\Phi(\Psi, \mu, q, \bm{\omega}) &:= \sup_{\bm{r} \in D_n} \Gamma(\Psi, \mu, \bm{r}, q, \bm{\omega}).
\end{align}
The above two quantities are non-linear expectations of the random variable $\Psi$. We refer to the quantity define in~\eqref{eq:auxiliary_func} as $\mu$--\emph{RSB expectation} of $\Psi$~\eqref{eq:auxiliary_func} and to the quantity~\eqref{eq:auxiliary_funccond} as \emph{conditional} $\mu$--RSB expectation. The probability measure $\mu$ is called \emph{Parisi parameter}.
%The $\mu$--RSB expectation  is the main subject of the present manuscript. 

Denote by $\textup{Pr}([0,1])$ the space of probability measure on $[0,1]$, and by $B_1([-1,1])$ the space of bounded, $\mathcal{F}^{\otimes n}_1$--measurable Wiener functional taking values on $[-1,1]$ $\mathbb{W}$--almost surely.

We define the full--RSB functional for the model~\eqref{eq:model} as
\begin{equation} \label{eq:full_rsb_domain}
\mathcal{P} : B_1([-1,1]) \times \textup{Pr}([0,1]) \to \mathbb{R},
\end{equation}
given, for $m \in B_1([-1,1])$ and $\mu \in \textup{Pr}([0,1])$, by
\begin{equation} \label{eq:full_rsb}
\mathcal{P}(m, \mu) := \mathbb{E}_{\bm{J}} \left[ \Phi\left( \psi^{(v)}_{\bm{J}} \circ m^{\times 2c},\, \mu,\, 0 \right) - \Phi\left( \psi^{(e)}_{\bm{J}} \circ m^{\times 2c},\, \mu,\, 0 \right) \right],
\end{equation}
where $\psi^{(e)}$ and $\psi^{(v)}$ are defined in~\eqref{eq:psie} and~\eqref{eq:psiv}, respectively, and
\begin{equation} \label{eq:psic2}
\psi^{(*)}_{\bm{J}} \circ m^{\times 2c}(\bm{\omega}) := \psi^{(*)}_{\bm{J}} \left( m(\omega_1), \dots, m(\omega_{2c}) \right),
\end{equation}
with the superscript $(*)$ denoting either $(e)$ or $(v)$, as appropriate.

The functional
\begin{equation} \label{eq:cavm}
m \in B_1([-1,1])
\end{equation}
is referred to as the \emph{cavity magnetization functional}, and serves as a variational parameter in the full--RSB formulation, alongside $\mu$.

The full--RSB functional provides a refinement of the finite--RSB approximation~\eqref{eq:main-identity}, as formalized in the following result.
\begin{theorem} \label{th:The_Theorem}
For any $K \in [N]$, the free energy satisfies
\begin{equation} \label{eq:rsb_inequality}
f \leq \inf_{\substack{m \in B_1([-1,1]) \\ \mu \in \textup{Pr}([0,1])}} \mathcal{P}(m, \mu)
\leq \inf_{\substack{\zeta \in \mathcal{M}^{(0)} \\ 0 < x_1 \leq \cdots \leq x_K = 1}} \widehat{\mathcal{P}}_K(\zeta, x).
\end{equation}
\end{theorem}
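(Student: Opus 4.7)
My plan is to establish both inequalities through a single construction that embeds every finite-RSB configuration $(\zeta,x)$ into the full-RSB variational problem as an equivalent configuration $(m_\zeta,\mu_x)\in B_1([-1,1])\times \textup{Pr}([0,1])$ satisfying $\mathcal{P}(m_\zeta,\mu_x)=\widehat{\mathcal{P}}_K(\zeta,x)$. Given $x=(0,x_1,\ldots,x_K,1)$ as in~\eqref{eq:x_seq0}, I first fix an auxiliary grid $0=q_0<q_1<\cdots<q_{K+1}=1$ on $[0,1]$ and set $\mu_x:=\sum_{l=1}^{K}(x_l-x_{l-1})\,\delta_{q_l}+(1-x_K)\,\delta_{q_{K+1}}$, so that $\mu_x([0,t])=x_l$ on $[q_l,q_{l+1})$ and the Doléans-Dade exponential~\eqref{DDE_xr} factors into a finite product of one-step exponential martingales. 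Since each $\mathcal{M}^{(l)}$ is standard Borel (Remark~\ref{rem:topo}), the Borel isomorphism theorem lets me encode the hierarchical tower~\eqref{eq:random-sequence} as a measurable functional of the Brownian increments $(\omega(q_{l+1})-\omega(q_l))_{0\le l\le K}$; in particular $M^{(K+1)}$ is realized as $m_\zeta(\omega)$ for some $m_\zeta\in B_1([-1,1])$, the correspondence $\mathcal{A}_l\leftrightarrow \mathcal{F}_{q_l}$ is preserved at every level, and the $2c$-fold product structure reproduces the joint law of $(M^{(K+1)}_1,\ldots,M^{(K+1)}_{2c})$ through the independent paths $\omega_1,\ldots,\omega_{2c}$.

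The core step is to show that, with $\mu_x$ discrete, the supremum~\eqref{eq:auxiliary_funccond} is attained by processes $\bm{r}\in D_n$ that are piecewise constant on $[q_l,q_{l+1})$, which reduces the stochastic control problem to a finite sequence of one-step Gibbs variational problems. At each atom $q_l$, the optimization of the constant piece, via the classical Gibbs duality for the Gaussian increment $\bm{\omega}(q_{l+1})-\bm{\omega}(q_l)$, produces a conditional log-Laplace transform of the form $\tfrac{1}{x_l}\log\E\[e^{x_l Y}\,\middle|\,\mathcal{F}^{\otimes n}_{q_l}\]$, where $Y$ encodes the contribution of the later atoms. Iterating across $q_K,q_{K-1},\ldots,q_1$ reconstructs exactly the nested conditional expectations $\E\[\,\cdot^{x_l/x_{l+1}}\,\middle|\,\mathcal{A}^{\otimes n}_l\]$ of~\eqref{eq:recursive-T0}, and identifying terminal data via the coding of the first step yields $\Phi(\psi^{(*)}_{\bm{J}}\circ m_\zeta^{\times 2c},\mu_x,0)=\widehat{\Phi}(\psi^{(*)}_{\bm{J}}\circ \zeta^{\times 2c},x)$ for $(*)\in\{(e),(v)\}$, and hence $\mathcal{P}(m_\zeta,\mu_x)=\widehat{\mathcal{P}}_K(\zeta,x)$.

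The second inequality in~\eqref{eq:rsb_inequality} then follows at once by taking infima. For the first inequality $f\leq \inf \mathcal{P}$, I approximate an arbitrary $(m,\mu)\in B_1([-1,1])\times \textup{Pr}([0,1])$ by a sequence $(m_{\zeta_n},\mu_{x_n})$ from the above construction with $\mu_{x_n}\rightharpoonup \mu$ weakly, invoke continuity of $\Phi$ in $\mu$ (expected from the BSDE representation of $\Phi$ announced in the abstract, together with standard Itô estimates on~\eqref{DDE_xr}), and pass to the limit in the Franz-Leone upper bound~\eqref{eq:RSB_UB} to obtain $f\leq \widehat{\mathcal{P}}_{K_n}(\zeta_n,x_n)=\mathcal{P}(m_{\zeta_n},\mu_{x_n})\to \mathcal{P}(m,\mu)$, yielding $f\leq \mathcal{P}(m,\mu)$ for every $(m,\mu)$.

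The main obstacle is the core matching step: showing that the supremum in~\eqref{eq:auxiliary_funccond} under discrete $\mu_x$ is actually attained by piecewise-constant processes, and that the resulting finite-dimensional Gaussian optimizations fit together exactly into~\eqref{eq:recursive-T0} with the correct scaling of the Brownian increments $\bm{\omega}(q_{l+1})-\bm{\omega}(q_l)$ and of the exponents $x_l/x_{l+1}$. A secondary difficulty is the weak continuity needed in the last step, since convergence $\mu_n\rightharpoonup\mu$ has to be promoted through the nonlinear and measure-singular functional~\eqref{eq:auxiliary_func}; the BSDE representation of $\Phi$ developed elsewhere in this manuscript should be the natural vehicle for that.
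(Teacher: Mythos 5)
Your proposal correctly identifies the structure of the argument (embed each finite--RSB configuration into the full--RSB variational problem, then use Franz--Leone and continuity in $\mu$), but there are two genuine gaps.

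\textbf{The missing direction $m\mapsto\zeta$.} To prove the first inequality $f\leq \inf_{m,\mu}\mathcal{P}(m,\mu)$ you need, for every $(m,\mu)$, a sequence of finite--RSB configurations with $\widehat{\mathcal{P}}_{K_n}(\zeta_n,x_n)\to\mathcal{P}(m,\mu)$. Your only tool is the map $\zeta\mapsto m_\zeta$, and you propose to ``approximate an arbitrary $(m,\mu)$ by $(m_{\zeta_n},\mu_{x_n})$''. But you never explain how $\zeta_n$ should be chosen given $m$, and in fact for fixed $K$ the image of $\zeta\mapsto m_\zeta$ consists of functionals of the finitely many increments $(\omega(q_{l+1})-\omega(q_l))_{l}$, which is nowhere near dense in $B_1([-1,1])$. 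Even if you let $K_n\to\infty$, you would additionally need continuity of $\mathcal{P}$ in $m$ (not just in $\mu$) for the limit to go through, and that is not part of your plan. The paper avoids this by establishing the \emph{reverse} embedding as a separate step: starting from an arbitrary $m\in B_1([-1,1])$, it defines $m^{(K+1)}(q_{K+1},\omega)=m(\omega)$ and recursively $m^{(l)}(q_l,\omega)(A)=\E[\bm{1}_{\{m^{(l+1)}(q_{l+1},\omega)\in A\}}\mid\mathcal{F}_{q_l}]$, producing a $\zeta$ with $\mathcal{P}(m,\mu)=\widehat{\mathcal{P}}_K(\zeta,x)$ \emph{exactly}. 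Combining the two directions gives the equality $\inf_\zeta\widehat{\mathcal{P}}_K(\zeta,x)=\inf_m\mathcal{P}(m,\mu)$ for each discrete $\mu$, from which Franz--Leone and continuity in $\mu$ yield both inequalities. Without your own version of this converse step, the first inequality is not established.

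\textbf{The ``piecewise-constant optimizer'' claim is false.} You propose that for discrete $\mu_x$ the supremum in~\eqref{eq:auxiliary_funccond} is attained at a process $\bm{r}$ that is constant on each block $[q_l,q_{l+1})$, and use this to reduce to a chain of one-step Gibbs problems. This is not correct for the terminal conditions at hand: $\Psi=\psi^{(*)}_{\bm{J}}\circ m^{\times 2c}$ depends on $\bm{\omega}$ through $m$, which is a general $\mathcal{F}_1$-measurable functional, and even after the reduction to increments on the grid the optimizer is the martingale-representation integrand of the conditional expectation of a (nonlinear transform of) $\Psi$; see~\eqref{auxiliry_Order_piecewise0}, where the optimal $\bm{r}(q,\bm{\omega})=\bm{m}_l(q,\bm{\omega})/(x_l\xi_l(q,\bm{\omega}))$ on $[q_{l-1},q_l)$ is genuinely time- and path-dependent. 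The nested log-Laplace structure is recovered not by restricting the class of controls, but by exhibiting this $\bm{r}$ explicitly (via the Martingale Representation Theorem), verifying that the resulting pair solves the BSDE, and then invoking uniqueness and the variational representation $\Phi(\Psi,\mu,q)=\Gamma(\Psi,\mu,\bm{r}^*,q)$. Relatedly, using a generic Borel isomorphism to encode the tower $W$ into the Brownian path does not automatically preserve the conditional-expectation correspondence $\mathcal{A}_l\leftrightarrow\mathcal{F}_{q_l}$; one needs the recursive Noise-Outsourcing construction precisely so that the $l$-th coordinate is measurable with respect to the first $l+1$ uniforms, and the corresponding nested conditioning identities (Lemmas~\ref{lem:NOU1}--\ref{lem:equivWh}) hold.
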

The main conjecture proposed in~\cite{MyPaper}, widely considered a central open problem in the field, states the following full--RSB variational principle:
\begin{equation}
\label{eq:claim}
f = \inf_{\substack{m \in B_1([-1,1]) \\ \mu \in \textup{Pr}([0,1])}} \mathcal{P}(m, \mu).
\end{equation}

Before any progress in proving~\eqref{eq:claim} can be made, a deeper understanding of the analytical properties of the functional~\eqref{eq:full_rsb} is necessary.  A central theme of this manuscript is the analysis of the conditional $\mu$-RSB expectation $\Phi$ defined in~\eqref{eq:auxiliary_funccond}.

The $\mu$--RSB expectation in~\eqref{eq:auxiliary_func} generalizes the variational representation of the solution to the Parisi PDE (see~\cite[Equation (15)]{ChenAuf}). Specifically, setting $n=1$ and
\begin{equation}
\label{eq:Par_initial}
\Psi(\omega) = \log(\cosh(\omega(1))),
\end{equation}
the $\mu$--RSB expectation gives the solution of the Parisi PDE. The Parisi PDE and its variational formulation have been thoroughly studied (see~\cite{Tala2,Talbook2,ChenAuf,jaggarnat}). The analysis in this context relies on the structure of the initial condition~\eqref{eq:Par_initial}, which is smooth, convex, and cylindrical—that is, it depends on $\omega \in C([0,1], \mathbb{R})$ only through its value at time $1$. In contrast, the full--RSB functional~\eqref{eq:full_rsb} and the conjectured identity~\eqref{eq:claim} require considering a broader class of Wiener functionals. Extending the results established for the Parisi PDE to this setting poses significant challenges in both functional and stochastic analysis.
 
Our first result establishes a representation of the $\mu$--RSB expectation as the solution to a proper backward stochastic differential equation (BSDE).
\begin{theorem}
\label{th:func_cont}
Let $\Psi\in B_n$ and $\mu\in \textup{Pr}([0,1])$, then there exists a unique $\Fqfilt{q}{[0,1)}$--predictable process $\bm{r}(\Psi,\mu)\in D_n$ such that
\been{
\Phi(\Psi,\mu,q,\bm{\omega})=\Gamma\big(\,\Psi,\,\mu,\,\bm{r}(\Psi,\mu),\,q,\bm{\omega}).
}
For any $q\in [0,1]$ the pair $(\Phi(\Psi,\mu),\bm{r}(\Psi,\mu))$ verifies the following BSDE:
\been{
\label{selfEq0Aux}
\Phi(\Psi,\mu,q,\bm{\omega})=\Psi(\bm{\omega})-\int^1_q\bm{r}(\Psi,\mu,t,\bm{\omega})\cdot d\bm{\omega}(t)+\frac{1}{2}\int^1_qdt\mu([0,t])\|\bm{r}(\Psi,\mu,t,\bm{\omega})\|^2_2.
}
\end{theorem}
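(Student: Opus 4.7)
The plan is to first solve~\eqref{selfEq0Aux} as a quadratic BSDE with bounded terminal condition, and then to identify its $Y$-component with the sup-functional $\Phi(\Psi,\mu,\cdot)$ via a Girsanov-based completing-the-square identity. Throughout I write $g(t) := \mu([0,t]) \in [0,1]$, which is bounded and right-continuous.

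Equation~\eqref{selfEq0Aux} reads $Y_q = \Psi - \int_q^1 Z_t \cdot d\bm{\omega}(t) - \int_q^1 f(t, Z_t)\, dt$ with driver $f(t,z) = -\tfrac{1}{2} g(t)\|z\|_2^2$, which is quadratic in $z$ with coefficient bounded by $1/2$. Since $\Psi \in B_n$ is bounded, conditioning on $\bm{\omega}(0)$ to reduce to a Brownian filtration starting at a deterministic point, Kobylanski-type existence and uniqueness results for quadratic BSDEs with bounded terminal condition (in the form of Briand--Hu or Tevzadze) yield a unique pair $(Y,Z)$ with $Y$ bounded and $Z$ predictable satisfying $\int_0^1 \|Z_t\|_2^2\, dt < \infty$ $\W^{\otimes n}$-a.s., so that $Z \in D_n$; moreover $Z$ lies in the space $\textrm{BMO}(\W^{\otimes n})$, which underpins the measure change used below.

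Fix $q \in [0,1]$ and $\bm{r} \in D_n$ such that $\mathcal{E}(\mu \bm{r}_q)$ is a true martingale, and define the equivalent measure $d\widetilde{\W}^{\bm{r}} := \mathcal{E}(\mu \bm{r}_q, 1, \bm{\omega})\, d\W^{\otimes n}$; by Girsanov, $\widetilde{\bm{\omega}}(t) := \bm{\omega}(t) - \int_0^{t} g(s)\bm{r}_q(s)\, ds$ is an $\R^n$-valued Brownian motion under $\widetilde{\W}^{\bm{r}}$. Substituting $d\bm{\omega} = d\widetilde{\bm{\omega}} + g(t)\bm{r}_q(t)\, dt$ into~\eqref{selfEq0Aux}, completing the square in the Lebesgue integrand, and taking $\mathcal{F}_q^{\otimes n}$-conditional expectation under $\widetilde{\W}^{\bm{r}}$ produces
\begin{equation*}
Y_q - \Gamma(\Psi,\mu,\bm{r},q,\bm{\omega}) \;=\; \tfrac{1}{2}\, \widetilde{\mathbb{E}}^{\bm{r}}\!\left[\int_q^1 g(t)\, \|Z_t - \bm{r}(t)\|_2^2\, dt \,\middle|\, \mathcal{F}_q^{\otimes n}\right] \;\ge\; 0,
\end{equation*}
the residual stochastic integral vanishing in conditional expectation thanks to the BMO property of $Z$. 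Choosing $\bm{r} = Z$ gives equality, so $Y_q = \Gamma(\Psi,\mu,Z,q,\bm{\omega})$; passing to the supremum over admissible $\bm{r}$ forces $Y_q = \Phi(\Psi,\mu,q,\bm{\omega})$ and identifies $\bm{r}(\Psi,\mu) := Z$ as a maximizer. The strictness of the above identity combined with the $dt \otimes d\W^{\otimes n}$-uniqueness of the BSDE $Z$-component via martingale representation pins down $\bm{r}(\Psi,\mu)$ uniquely as a predictable process in $D_n$.

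The main obstacle is ensuring that the Girsanov change of measure is admissible for a sufficiently rich family of $\bm{r} \in D_n$, since the DDE in~\eqref{DDE_xr} is only a local martingale for an arbitrary $\bm{r}$ satisfying merely $\int_0^1 \|\bm{r}\|_2^2\,dt < \infty$ $\W^{\otimes n}$-a.s. A natural resolution combines two ingredients: a density/approximation argument showing that the supremum in~\eqref{eq:auxiliary_funccond} is attained within (or approximated by) bounded processes $\bm{r}$ for which Novikov's criterion applies, together with a stopping-time localization leveraging the boundedness of $Y$ and the BMO norm of $Z$. The latter guarantees, via Kazamaki's criterion, that $\mathcal{E}(\mu Z_q)$ is itself a true martingale, so that the maximizer candidate $Z$ produces a genuine equivalent measure and the equality case of the identity above can be taken at face value, closing the argument.
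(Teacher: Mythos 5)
Your proposal takes a genuinely different route from the paper's. The paper never invokes general quadratic BSDE theory. Instead, it proves existence of the BSDE solution constructively: for Parisi parameters $\mu \in \textup{Pr}([0,1])^\circ$ (discrete with finitely many atoms), Section~\ref{sec:41} writes down the solution explicitly as the finite--RSB iteration~\eqref{discrete_equation0}--\eqref{auxiliry_Order_piecewise0} via the Martingale Representation Theorem; Section~\ref{sec:42} then passes to general $\mu$ by a quantitative stability estimate (Lemmas~\ref{cor:corder} and~\ref{uniform_convergence_theo}) and a limiting argument; uniqueness is Section~\ref{sec:43}. Only then does Section~\ref{sec:4} prove the variational identification, and there your completing-the-square-under-Girsanov step coincides essentially verbatim with the paper's Lemma~\ref{lemma_Global_minimum} (equation~\eqref{eq:KI0}). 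Your appeal to Kobylanski/Briand--Hu/Tevzadze is a legitimate shortcut for Theorem~\ref{th:func_cont} taken in isolation and is arguably cleaner; note however that the paper's explicit finite--RSB construction is not dispensable elsewhere --- it is what makes the continuity and differentiability statements of Theorem~\ref{th:func_quant} (Lemma~\ref{derivative_theo}, Corollary~\ref{cor:corder}) and the reduction to $\widehat{\mathcal{P}}_K$ in Theorem~\ref{th:The_Theorem} tractable. Also, the Kazamaki/BMO step you use to make $\mathcal{E}(\mu\bm{r}(\Psi,\mu))$ a true martingale is replaced in the paper by a more elementary a priori bound: Proposition~\ref{prop:uniqueness2} shows directly, from the BSDE and the boundedness of $\Psi$, that $\sup_q|\gamma(\mu\bm{r}^*,q,\bm{\omega})|\le 2\|\Psi\|_\infty$, so $\mathcal{E}(\mu\bm{r}^*)$ is a \emph{bounded} local martingale.

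There is one real gap, which you flag yourself but do not close. The supremum in~\eqref{eq:auxiliary_funccond} runs over all of $D_n$, not merely over $\bm{r}$ for which $\mathcal{E}(\mu\bm{r}_q)$ is a true martingale, and your ``density/approximation by bounded $\bm{r}$ plus localization'' sketch does not obviously recover $\Gamma(\Psi,\mu,\bm{r},q,\bm{\omega}) \le \Phi$ in the limit: $\mathcal{E}(\mu\bm{r}_q,\sigma_k)\to\mathcal{E}(\mu\bm{r}_q,1)$ only almost surely, and without uniform integrability the first piece of $\Gamma$ need not pass to the limit, so you cannot simply send $k\to\infty$ in $\Gamma(\bm{r}^{(k)})\le Y_q$. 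The paper's resolution is Theorem~\ref{lem:martcond}(1), a tailor-made sufficient condition: if $\bm{v}\in \widehat{D}_n$ (i.e., $\E[\int_0^1 \mathcal{E}(\bm{v},t)\|\bm{v}(t)\|_2^2\,dt]<\infty$), then $\mathcal{E}(\bm{v})$ is a true martingale, proved by de la Vall\'ee Poussin and Vitali convergence. This lets the paper split $D_n$ into $\widehat{D}^\mu_n$, where completing the square applies (Lemma~\ref{lemma_Global_minimum}), and its complement, where $\Gamma=-\infty$ because the penalty term is $+\infty$ (equation~\eqref{eq:min1}). Your proposal lacks this dichotomy; to make it rigorous you would need either to import Theorem~\ref{lem:martcond}(1) or to supply an argument showing that a finite second piece of $\Gamma(\bm{r})$ forces $\mathcal{E}(\mu\bm{r}_q)$ to be a martingale. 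The remaining points --- measurable selection after conditioning on $\bm{\omega}(0)\sim\mathcal{N}(0,1)$, applicability of the quadratic-BSDE theorems with a merely right-continuous $t\mapsto\mu([0,t])$ coefficient, and reading the uniqueness claim jointly with the BSDE constraint on $[0,q_0]$ where $\mu$ vanishes --- are minor but should be stated rather than assumed.
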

The BSDE~\eqref{selfEq0Aux} can be viewed as a non-Markovian analogue of the Parisi PDE. The above Thorem extends the stochastic representation of the Parisi solution originally developed by Chen and Auffinger~\cite[Theorem 3]{ChenAuf}, which is based on a backward SDE formulation of the classical Parisi PDE~\cite[Eq. III.55]{VPM}.

Unlike their setting, where the variational formula is derived from the PDE, our result proceeds in the opposite direction: starting from a variational representation and constructing a BSDE. Due to the non-Markovian nature of the underlying functional, a PDE formulation is not available, and the analysis must rely on different probabilistic tools.

The next theorem establishes the well-posedness of the BSDE representation.
\begin{theorem}
\label{th:BSDE}
For any $\Psi\in B_n$ and $\mu\in \textup{Pr}([0,1])$ the solution to the BSDE~\eqref{selfEq0Aux} exists and it is unique.
\end{theorem}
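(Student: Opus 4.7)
Existence is for free: Theorem~\ref{th:func_cont} exhibits $(\Phi(\Psi,\mu,\cdot,\bm\omega),\bm r(\Psi,\mu))$ as a concrete solution of~\eqref{selfEq0Aux}. The real content of Theorem~\ref{th:BSDE} is therefore uniqueness, which I would establish inside the natural class of pairs $(Y,Z)$ with $Y$ a bounded, continuous, $(\Fq{q})_{q\in[0,1]}$--adapted process and $Z\in D_n$. Since the driver is purely quadratic in $Z$ with bounded coefficient $\tfrac12\mu([0,t])\in[0,\tfrac12]$ and $\Psi$ is bounded, the equation falls in the Kobylanski/Hu--Imkeller--M\"uller class of quadratic BSDEs. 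My strategy is the classical three-step one: extract \emph{a priori} $L^\infty$ bounds on $Y$, upgrade these to a BMO estimate on $\int Z\cdot d\bm\omega$, and then linearize the difference of two solutions and apply Girsanov.

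\textbf{A priori $L^\infty$ and BMO bounds.} The BSDE reads $dY_t=Z_t\cdot d\bm\omega(t)-\tfrac12\mu([0,t])\|Z_t\|_2^2\,dt$ with $d[Y]_t=\|Z_t\|_2^2\,dt$, so Itô yields
\begin{equation*}
de^{\pm Y_t}=\pm e^{\pm Y_t}Z_t\cdot d\bm\omega(t)+\tfrac12 e^{\pm Y_t}\bigl(1\mp\mu([0,t])\bigr)\|Z_t\|_2^2\,dt.
\end{equation*}
Both drifts are non-negative because $\mu([0,t])\in[0,1]$, so $e^{\pm Y}$ are local submartingales; a standard localization combined with Fatou and $e^{\pm\Psi}\le e^{\|\Psi\|_\infty}$ yields $|Y_q|\le\|\Psi\|_\infty$ for every $q$, $\Wnas$ Integrating the same Itô expansion on $[\tau,1]$ for an arbitrary stopping time $\tau$ and taking $\Fq{\tau}$--conditional expectations (after a second localization) produces
\begin{equation*}
\tfrac12\Econd{\int_\tau^1 e^{\pm Y_t}\bigl(1\mp\mu([0,t])\bigr)\|Z_t\|_2^2\,dt}{\tau}=\Econd{e^{\pm\Psi}}{\tau}-e^{\pm Y_\tau}\le e^{\|\Psi\|_\infty}.
\end{equation*}
Bounding $e^{\pm Y_t}\ge e^{-\|\Psi\|_\infty}$ and summing the two inequalities cancels the $\mu$-dependent weights and gives $\Econd{\int_\tau^1\|Z_t\|_2^2\,dt}{\tau}\le 2e^{2\|\Psi\|_\infty}$, so $\int_0^\cdot Z_t\cdot d\bm\omega(t)$ is a BMO martingale whose norm is controlled by $\|\Psi\|_\infty$ alone.

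\textbf{Uniqueness via Girsanov.} For two candidate solutions $(Y^i,Z^i)$, $i=1,2$, set $\Delta Y:=Y^1-Y^2$ and $\Delta Z:=Z^1-Z^2$. Subtracting~\eqref{selfEq0Aux} for $i=1,2$ and using $\|Z^1\|_2^2-\|Z^2\|_2^2=(Z^1+Z^2)\cdot\Delta Z$,
\begin{equation*}
\Delta Y_q=-\int_q^1\Delta Z_t\cdot\bigl(d\bm\omega(t)-\alpha(t)\,dt\bigr),\qquad \alpha(t):=\tfrac12\mu([0,t])\bigl(Z^1_t+Z^2_t\bigr).
\end{equation*}
Because $Z^1,Z^2$ are BMO and $\mu([0,t])\le1$, $\alpha$ is BMO, so Kazamaki's theorem makes $\mathcal{E}\bigl(\int_0^\cdot\alpha\cdot d\bm\omega\bigr)$ a uniformly integrable martingale. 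Defining $\mathbb{Q}$ through this density, $\tilde{\bm\omega}(t):=\bm\omega(t)-\int_0^t\alpha(s)\,ds$ is a $\mathbb{Q}$--Brownian motion, and the displayed identity becomes $\Delta Y_q=-\int_q^1\Delta Z_t\cdot d\tilde{\bm\omega}(t)$. The process $(\Delta Y_q)_q$ is thus a bounded $\mathbb{Q}$--local martingale with $\Delta Y_1=0$, hence a true martingale equal to $\E^{\mathbb{Q}}[\Delta Y_1\mid\Fq{q}]=0$. Reinjecting $\Delta Y\equiv0$ in the subtracted BSDE reduces it to an identity of the form (local martingale)$=$(finite variation process); uniqueness of the semimartingale decomposition forces both sides to vanish, whence $\Delta Z\equiv0$ $dt\otimes\Wn$--a.e.

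\textbf{Main obstacle.} The arithmetic is classical for quadratic BSDEs, but the non-Markovian structure of~\eqref{selfEq0Aux}---$\Psi$ is a generic bounded Wiener functional in $B_n$ and the coefficient $\mu([0,t])$ is only a monotone c\`adl\`ag function of time---precludes any PDE shortcut and keeps the whole argument at the BSDE level. The technical care required lies in (i) pinning down the solution class so that the a priori exponential submartingale estimate closes (bounded $Y$ is the natural choice and is consistent with the solution supplied by Theorem~\ref{th:func_cont}); and (ii) performing the localization in the conditional-expectation step uniformly in the solution, so that the BMO norm of $\int Z\cdot d\bm\omega$ depends only on $\|\Psi\|_\infty$ and Kazamaki's criterion for $\alpha$ applies independently of the particular pair $(Y^i,Z^i)$.
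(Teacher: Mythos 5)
Your uniqueness argument is sound and genuinely different from the paper's, but your existence claim is circular within the paper's logical structure.

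\textbf{On existence.} You write that existence is ``for free'' from Theorem~\ref{th:func_cont}, but that theorem is proved \emph{after} Theorem~\ref{th:BSDE} and relies on it: the proof in Section~\ref{sec:4} opens with ``Let $(\phi^*,\bm{r}^*)$ be the solution to the BSDE,'' and Lemma~\ref{lemma_Global_minimum} is stated conditionally on the BSDE having a solution. Showing that the supremum in~\eqref{eq:auxiliary_funccond} is attained at some $\bm{r}^*\in D_n$ and that the pair solves the BSDE is precisely what requires work. The paper supplies this by an explicit construction: Proposition~\ref{prop:existk} exhibits the solution when $\mu$ is purely atomic with finitely many atoms (finite--RSB), via the iterated conditional expectations $\Xi_l$, and Proposition~\ref{convergence_result} then passes to general $\mu\in\textup{Pr}([0,1])$ by approximation in $\|\cdot\|_\infty$, exploiting the Lipschitz estimates of Corollary~\ref{cor:corder} and Lemma~\ref{uniform_convergence_theo}. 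Any proposal for Theorem~\ref{th:BSDE} must supply an existence route that does not lean on Theorem~\ref{th:func_cont}.

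\textbf{On uniqueness.} Here your route differs from the paper's and is a legitimate alternative. You follow the classical quadratic-BSDE template (Kobylanski, Hu--Imkeller--M\"uller): the exponential transforms $e^{\pm Y}$ give $|Y_q|\le\|\Psi\|_\infty$; a conditional version of the same identity gives a uniform BMO bound on $\int Z\cdot d\bm\omega$; linearizing the driver difference yields a drift $\alpha=\tfrac12\mu([0,\cdot])(Z^1+Z^2)$ that is BMO, so Kazamaki's criterion makes the Dol\'eans exponential a uniformly integrable martingale, and $\Delta Y$ is a bounded $\mathbb{Q}$--martingale vanishing at $q=1$. The paper's Proposition~\ref{prop:uniqueness} instead avoids BMO theory and Kazamaki entirely: having established in Proposition~\ref{prop:uniqueness2} that any solution produces a bona fide martingale $\mathcal{E}(\mu\bm{r}^*)$ (via the direct bound~\eqref{eq:uniqueness22} on $\gamma(\mu\bm{r}^*)$ and Theorem~\ref{lem:martcond}), it performs \emph{two} separate Girsanov tilts, one with density $\mathcal{E}(\mu\bm{r}^*)$ and one with $\mathcal{E}(\mu\overline{\bm{r}})$, and reads off the two one-sided inequalities $\delta\phi\ge 0$ and $\delta\phi\le 0$ from the sign of the leftover integral term in~\eqref{eq:dif10} and~\eqref{eq:dif20}. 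Your approach buys reliance on standard off-the-shelf BSDE machinery; the paper's buys self-containment and explicit constants (and directly delivers the martingale property of $\mathcal{E}(\mu\bm{r}^*)$, which the paper uses again in Theorem~\ref{th:func_quant}). One smaller discrepancy: you fix the uniqueness class to bounded $Y$ from the outset, whereas the paper proves uniqueness in $S_n\times D_n$ (a.s.\ finite supremum, not a priori bounded), with Proposition~\ref{prop:uniqueness2} showing \emph{a posteriori} that every such solution is in fact bounded by $\|\Psi\|_\infty$. Your a priori $e^{\pm Y}$ argument needs a preliminary argument in the spirit of Lemmas~\ref{lem:martcond4}--\ref{lem:martcond6} to close in that larger class.
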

The representation of the Parisi functional as a solution to a  PDE has proven to be a valuable tool in the investigation of its properties \cite{jaggarnat}. The BSDE representation enables the application of stochastic dynamic programming methods and the theory of BSDEs (see \cite{PaPeng}) for a quantitative analysis of the functional.

The analysis developed in this manuscript enables us to establish continuity and regularity properties of the $\mu$-RSB expectation~\eqref{eq:auxiliary_func} with respect to both the functional $\Psi$ and the Parisi parameter $\mu$.
\begin{theorem}
\label{th:func_quant}
Let $(\phi(\Psi,\mu),\bm{r}(\Psi,\mu))$ be the unique solution to the BSDE~\eqref{th:BSDE} corresponding to a given $\Psi\in B_n$ and $\mu\in \textup{Pr}([0,1])$. Then:
\begin{enumerate}
\item given $\Psi\in B_n$ and $\mu\in \textup{Pr}([0,1])$, the process $(t,\bm{\omega})\mapsto \mathcal{E}(\mu \bm{r}(\Psi,\mu),q,\bm{\omega})$ is a non-negative $\Wn$--martingale, adapted to $\Fqfilt{q}{[0,1]}$, with mean $1$.
\item Given $\mu \in \textup{Pr}([0,1])$, the map $\Psi \mapsto \Phi(\Psi, \mu)$ is convex and continuous in $\Psi$. Moreover, for any $\Psi' \in B_n$, the following holds $\mathbb{W}$-almost surely:
\been{
\aled{
&\left.\frac{\partial}{\partial t}\Phi(\Psi+t(\Psi'-\Psi),\mu,q,\bm{\omega})\right|_{t=0}\\
&=\Econd{\mathcal{E}(\mu \bm{r}_q(\Psi,\mu),1,\bm{\omega})(\Psi'(\bm{\omega})-\Psi(\bm{\omega}))}{q},\quad \Wnas;
}
}
where $\bm{r}_q(\Psi,\mu)$ is defined as in~\eqref{eq:rq}.
\item Given $\Psi \in B_n$, the map $\mu \mapsto \Phi(\Psi, \mu)$ is continuous on $\textup{Pr}([0,1])$. Moreover, for any $\mu' \in \textup{Pr}([0,1])$, the directional derivative exists $\mathbb{W}$--almost surely and satisfies:
\been{
\aled{
&\left.\frac{\partial}{\partial t}\Phi(\Psi,\mu+t(\mu'-\mu),q,\bm{\omega})\right|_{t=0}\\
&=\Econd{\mathcal{E}(\mu \bm{r}_q(\Psi,\mu),1,\bm{\omega})\int^1_qdt(\mu'-\mu)([0,t])\|\bm{r}(\Psi,\mu,t,\bm{\omega})\|^2_2}{q}.
}
}
\item Let 
\been{
\|\Psi\|_{\infty}=\inf\left\{C>0\vert \,\Wn(|\Psi(\bm{\omega})|\leq C)=1\right\}.
}
Then there exists a constant $K(|\Psi|{\infty}) > 0$ (depending only on $\|\Psi\|_{\infty}$) such that for all $\mu, \mu' \in \textup{Pr}([0,1])$,
\been{
\aled{
&\Econd{\mathcal{E}(\mu \bm{r}_q(\Psi,\mu),1,\bm{\omega})\int^1_qdt(\mu'-\mu)([0,t])\|r(\Psi,\mu,t,\bm{\omega})\|^2_2}{q}\\
&\leq K(\|\Psi\|_{\infty})\sup_{t\in[0,1]}\((\mu'-\mu)([0,t])\),\quad \Wnas.
}
}
\end{enumerate}
\end{theorem}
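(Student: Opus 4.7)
The plan is to prove the four parts by combining the variational characterization $\Phi=\sup_{\bm r\in D_n}\Gamma$ from Theorem~\ref{th:func_cont}, the BSDE~\eqref{selfEq0Aux} from Theorem~\ref{th:BSDE}, and a Girsanov change of measure with density $\mathcal{E}(\mu\bm r(\Psi,\mu),1,\bm\omega)$.

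\emph{Part (1).} The Doléans--Dade exponential is automatically a non-negative local martingale with initial value $1$, so the task is to exclude loss of mass. I would first show $|\Phi(\Psi,\mu,q,\bm\omega)|\leq\|\Psi\|_\infty$ directly from the variational representation: the upper bound uses the supermartingale property of the DDE inside each $\Gamma$, and the lower bound is obtained by testing with $\bm r\equiv 0$. Applying Itô's formula to $e^{2\Phi}$ along~\eqref{selfEq0Aux} and taking conditional expectations (after localization of the stochastic integral) then yields
\been{
\Econd{\int_q^1 e^{2\Phi(\Psi,\mu,t,\bm\omega)}(2-\mu([0,t]))\|\bm r(\Psi,\mu,t,\bm\omega)\|^2_2\,dt}{q}=\Econd{e^{2\Psi}}{q}-e^{2\Phi(\Psi,\mu,q,\bm\omega)}.
}
Since $2-\mu([0,t])\geq 1$ and $e^{2\Phi}\geq e^{-2\|\Psi\|_\infty}$, this delivers the uniform BMO bound $\sup_q\Econd{\int_q^1\|\bm r\|^2_2\,dt}{q}\leq e^{4\|\Psi\|_\infty}$ $\Wnas$. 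Hence $\mu([0,\cdot])\bm r(\Psi,\mu)$ is BMO, and Kazamaki's criterion upgrades the DDE to a true $\Wn$-martingale of mean $1$.

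\emph{Parts (2) and (3).} Both derivative formulas reduce to Danskin's envelope theorem applied to the sup representation, exploiting the uniqueness of the optimizer $\bm r(\Psi,\mu)$ supplied by Theorem~\ref{th:func_cont}. Convexity and $\|\cdot\|_\infty$-continuity of $\Psi\mapsto\Phi(\Psi,\mu)$ are immediate since $\Gamma$ is affine and $1$-Lipschitz in $\Psi$, and the $\Psi$-derivative then follows by differentiating $\Gamma$ at the fixed optimizer, producing the formula of (2). For the $\mu$-derivative, I change measure---legitimate by part~(1)---to $\Ee{q}$ with density $\mathcal{E}(\mu\bm r_q(\Psi,\mu),1,\bm\omega)$; under $\Ee{q}$ the process $\tilde{\bm\omega}(t):=\bm\omega(t)-\bm\omega(q)-\int_q^t\mu([0,s])\bm r(\Psi,\mu,s,\bm\omega)\,ds$ is a Brownian motion on $[q,1]$ and~\eqref{selfEq0Aux} rewrites as
\been{
\Psi=\Phi(\Psi,\mu,q,\bm\omega)+\int_q^1\bm r(\Psi,\mu,t,\bm\omega)\cdot d\tilde{\bm\omega}(t)+\tfrac{1}{2}\int_q^1\mu([0,t])\|\bm r(\Psi,\mu,t,\bm\omega)\|^2_2\,dt.
}
Perturbing $\mu\to\mu+\epsilon(\mu'-\mu)$, identifying $\partial_\epsilon\mathcal{E}|_{\epsilon=0}=\mathcal{E}\cdot\int_q^1(\mu'-\mu)([0,t])\bm r\cdot d\tilde{\bm\omega}$, and invoking the covariation identity $[\int(\mu'-\mu)([0,\cdot])\bm r\cdot d\tilde{\bm\omega},\int\bm r\cdot d\tilde{\bm\omega}]_1=\int_q^1(\mu'-\mu)([0,t])\|\bm r\|^2_2\,dt$, one obtains the displayed $\mu$-derivative after cancellation of the $\mu$-weighted cross-terms. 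Continuity in $\mu$ in the weak topology on $\textup{Pr}([0,1])$ then follows from this explicit representation by a dominated-convergence argument.

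\emph{Part (4).} Starting from the derivative formula of~(3), I use the pointwise bound $(\mu'-\mu)([0,t])\leq\sup_s(\mu'-\mu)([0,s])$---non-negative since $(\mu'-\mu)([0,1])=0$---to pull the sup out of the integral; the claim reduces to the uniform estimate $\Eecond{q}{\int_q^1\|\bm r(\Psi,\mu,t,\bm\omega)\|^2_2\,dt}{q}\leq K(\|\Psi\|_\infty)$ $\Wnas$. The main obstacle is that the plain BSDE energy identity obtained by taking $\Ee{q}$ of the rewritten BSDE above controls only the $\mu$-weighted integral $\Eecond{q}{\int_q^1\mu([0,t])\|\bm r\|^2_2\,dt}{q}=2(\Eecond{q}{\Psi}{q}-\Phi(\Psi,\mu,q,\bm\omega))\leq 4\|\Psi\|_\infty$, which degenerates on regions where $\mu$ places little mass. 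To remove the $\mu$-weight, I apply Itô to $e^{2\Phi}$ under $\Ee{q}$: since the drift of $\Phi$ under $\Ee{q}$ is $+\tfrac{1}{2}\mu([0,t])\|\bm r\|^2_2$, the $dt$-term of $d(e^{2\Phi})$ gains the \emph{unweighted} quadratic-variation contribution $2e^{2\Phi}\|\bm r\|^2_2$. Combined with $|\Phi|\leq\|\Psi\|_\infty$ from part~(1), this yields $\Eecond{q}{\int_q^1\|\bm r\|^2_2\,dt}{q}\leq\tfrac{1}{2}e^{4\|\Psi\|_\infty}$ and closes the argument with $K(\|\Psi\|_\infty)=\tfrac{1}{2}e^{4\|\Psi\|_\infty}$.
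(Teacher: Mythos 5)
Your treatment of parts (1) and (4) is essentially correct and in fact offers an elegant alternative to the paper's route: the Itô computation for $e^{2\Phi}$ (yielding the BMO bound $\Econd{\int_q^1\|\bm r\|^2 dt}{q}\leq e^{4\|\Psi\|_\infty}$, then Kazamaki) replaces the paper's integration-by-parts identity $\gamma(\mu\bm r^*,t,\bm\omega)=\int_{q_0}^t\mu(ds)(\phi^*(t)-\phi^*(s))$ in Proposition~\ref{prop:uniqueness2}, which bounds $\mathcal{E}(\mu\bm r^*)$ directly; the paper then derives the unweighted BMO control of your part (4) from a change of measure and Burkholder--Davis--Gundy (Corollary~\ref{rem:uniqueness222}) rather than by the $e^{2\Phi}$ trick. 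Both are sound; yours is arguably more self-contained.

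The genuine gap is in parts (2) and (3), where "Danskin's envelope theorem" is invoked as if it were automatic. From $\Phi(\Psi_t,\mu)\geq\Gamma(\Psi_t,\mu,\bm r^*)$ you indeed get the lower one-sided bound for free; closing the upper bound requires either differentiability of $\Phi$ (which is what you are trying to establish) or quantitative control on the optimizer map. The correspondence $\bm r(\Psi,\mu)$ lives in the infinite-dimensional space $D_n$ and no compactness or upper hemicontinuity is available, so the classical envelope theorem does not apply as stated. Concretely, when you write for part (3) that after perturbing $\mu\to\mu+\epsilon\delta\mu$ you "obtain the displayed $\mu$-derivative after cancellation of the $\mu$-weighted cross-terms," you are silently discarding the contribution of $\partial_\epsilon\bm r$. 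This is exactly the nontrivial content. The paper supplies the missing ingredient via the quadratic Lipschitz estimate $\E\bigl[\bigl(\int_0^1\|\delta\bm r\|^2_2\bigr)^p\bigr]^{1/p}\lesssim\|\delta\mu\|^2_\infty$ (Lemma~\ref{uniform_convergence_theo}, extended in Proposition~\ref{deltaphir}), which is precisely what makes the $\partial_\epsilon\bm r$ term $o(\epsilon)$; it is then used through Lemmas~\ref{deltaphieq} and~\ref{lem:convas} to pass to the derivative. Moreover, the paper establishes the derivative formula first for discrete $\mu\in\textup{Pr}([0,1])^\circ$, where the explicit finite-RSB recursion~\eqref{eq:rec3} yields the identity~\eqref{first_derivative} by induction on the atoms, and only afterward extends by approximation. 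Your proposal bypasses both steps. A secondary point: in part (3) you assert continuity of $\mu\mapsto\Phi(\Psi,\mu)$ in the weak topology on $\textup{Pr}([0,1])$; the estimates actually available (e.g. Corollary~\ref{cor:corder}) control $|\Phi(\mu')-\Phi(\mu)|$ by $\|\mu'-\mu\|_\infty=\sup_q|(\mu'-\mu)([0,q])|$, which gives continuity only in the Kolmogorov-type norm the paper uses, not in the weak topology, where point masses converging weakly do not converge in $\|\cdot\|_\infty$.
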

The manuscript is organized as follows. Section~\ref{sec:3} introduces the notation adopted throughout the manuscript.  Section~\ref{sec:2} is devoted to proving some properties of the process $\mathcal{E}$ that will be useful for the analysis of the BSDE~\ref{selfEq0Aux}. In Section~\ref{sec:41} we prove the existence result for the BSDE~\ref{selfEq0Aux} when the Parisi parameter is a discrete measure with finitely many atoms (finite--RSB) and obtain an explicit solution. In Section~\ref{sec:6} we will show that such a solution is continuous with respect to the Parisi parameters and prove Theorem~\ref{th:func_quant} for discrete $\mu$. Thank to the continuity, in Section ~\ref{sec:42} we extend the existence result to any allowable $\mu$, by taking the limit of a sequence of approximating discrete Parisi parameters. We prove the uniqueness of the solution in Section~\ref{sec:43}, completing the proof of Theorem~\ref{th:BSDE}. In Section~\ref{sec:4} we prove Theorem~\ref{th:func_cont}. We extend the proof of Theorem~\ref{th:func_quant} to all the allowable Parisi parameter in Section~\ref{sec:7}. Finally, in Section~\ref{sec:10}, we prove Theorem~\ref{th:The_Theorem}.
\section{Main notation and definitions}
\label{sec:3}
This section introduces the notations used throughout the manuscript. The symbol $\bm{r}\cdot \bm{v}$ denotes the Euclidean inner product between two vectors $\bm{r}$ and $\bm{v}$ in $\R^n$, and $\|\bm{v}\|_2^2=\bm{v}\cdot \bm{v}$. Given any integer $n\in \N$ ($\N$ is the set of all natural numbers but $0$) we indicate
\been{
[n]:=\{1,\cdots,n\},
}
and
\been{
[n]_0:=\{0,1,\cdots,n\}.
}
We denote the indicator function of a given set $A$ by $\bm{1}_A$.

Given two real numbers $x_1$ and $x_2$, let
\been{
x_1\wedge x_2:=\min\{x_1,x_2\},
}
and
\been{
x_1\vee x_2:=\max\{x_1,x_2\}.
}
For $n\in \N$, we consider the filtered probability space $(C([0,1],\R^n),\mathcal{F}^{\otimes n},\mathbb{W}^{\otimes n},\(\mathcal{F}^{\otimes n}_q\)_{q\in[0,1]})$ defined in the introduction. 

Given two random variables $\Psi_1$ and $\Psi_2$, we say that they are in the same \emph{equivalence class} if
\been{
\W^{\otimes n}\(\Psi_1(\bm{\omega})=\Psi_2(\bm{\omega})\)=1.
}

Denoting by $\mathcal{B}([0,1])$ the Borel $\s$--algebra in $[0,1]$ and given two $\mathcal{B}([0,1])\otimes \mathcal{F}^{\otimes n}$--measurable processes $v_1$ and $v_2$, we say that:
\begin{enumerate}
    \item they are in the same \emph{indistinguishability class} if
    \been{
    \label{eq:indi}
    \W^{\otimes n}(v_1(q,\bm{\omega})=v_2(q,\bm{\omega});\, \forall q\in [0,1))=1;
    }
    \item they are in the same \emph{equivalence class} if
    \been{
    \label{eq:modi}
    \W^{\otimes n}\(\int^1_{0}dt\|v_1(q,\bm{\omega})-v_2(q,\bm{\omega})\|^2_2=0\)=1.
    }
\end{enumerate}
We define the following sets:
\begin{enumerate}
\item  $B_n$ is the set of equivalence classes of all $\mathcal{F}^{\otimes n}_1$--measurable Wiener functional $\Psi:C([0,1],\R^n)\to \R$ that are bounded (i.e., there exists $M>0$ such that $|\Psi(\bm{\omega})|<M$ $\W^{\otimes n}$--a.s.);
\item $D_n$ denotes the set of equivalence classes of all the $\(\mathcal{F}^{\otimes n}_q\)_{q\in[0,1)}$--progressive processes $\bm{v}: [0,1)\times C([0,1],\R^n)\to\R^n$ such that $\mathbb{W}^{\otimes n}\(\displaystyle\int^1_0dq\,\|\bm{v}(q,\bm{\omega})\|^2_2<\infty\)=1$;
\item $S_n$ denotes the space of indistinguishability classes of all the $\(\mathcal{F}^{\otimes n}_q\)_{q\in[0,1)}$--adapted processes $\phi:[0,1)\times \Omega\to\R$ satisfying $\mathbb{W}^{\otimes n}\(\,\underset{q\in[0,1)}{\sup}|\phi(q,\bm{\omega})|< \infty\,\)=1$.
\item $S^b_n$ denotes the space of all the elements of $S_n$ for which there exists $M>0$ such that $\underset{q\in[0,1)}{\sup}|\phi(q,\bm{\omega})|<M$ $\W^{\otimes n}$--a.s.
\end{enumerate}
We say that a process is $\(\mathcal{F}^{\otimes n}_q\)_{q\in[0,1]}$--adapted if it is $\(\mathcal{F}^{\otimes n}_q\)_{q\in[0,1)}$--adapted and the limit $\lim_{q\uparrow 1}\phi(q,\bm{\omega})$ exists and it is a $\mathcal{F}^{\otimes n}_1$--measurable random variable.

Given an equivalence class $\bm{v}\in D_n$ and a process $\bm{r}\in \bm{v}$, let $I(\bm{r})$ denote the Ito integral of $\bm{r}$
\been{
I(\bm{r})(q,\bm{\omega})=I(\bm{r},q,\bm{\omega}):=\int^q_0\bm{r}(q,\bm{\omega})\cdot d\bm{\omega}(q),\quad \forall q\in [0,1).
}
For any $\bm{v}\in D_n$, there exists a unique indistinguishability class $\phi\in S_n$ such that, for any process $\bm{r}\in \bm{v}$, $I(\bm{r})\in \phi$. With abuse of notation, we denote this element as $I(\bm{v})$ and we formally write $I(\bm{v},q,\bm{\omega})=\int^q_0\bm{v}(q,\bm{\omega})\cdot d\bm{\omega}(q)$. The process $I(\bm{v})$ is a continuous $\W^{\otimes n}$--local martingale adapted to the filtration $(\mathcal{F}_q)_{q\in [0,1)}$.

For any pair of progressive processes $\bm{r}_1$ and $\bm{r}_2$ that are in the same equivalence class $\bm{v}\in D_n$, the Monotone Convergence Theorem gives
\been{
\aled{
&\W^{\otimes n}\(\int^q_0dt\|\bm{r}_1(t,\bm{\omega})\|^2_2\neq \int^q_0dt\|\bm{r}_2(t,\bm{\omega})\|^2_2,\quad \forall q\in [0,1]\)\\
&\leq \W^{\otimes n}\(\int^1_0dt\|\bm{r}_1(t,\bm{\omega})-\bm{r}_2(t,\bm{\omega})\|^2_2\neq 0\)=0.
}
}
Thus the process $(q,\bm{\omega})\mapsto\displaystyle \int^q_0dt\|\bm{r}_1(t,\bm{\omega})\|^2_2$ and $(q,\bm{\omega})\mapsto\displaystyle  \int^q_0dt\|\bm{r}_2(t,\bm{\omega})\|^2_2$ are in the same indistinguishability class. We denote such equivalence class as $(q,\bm{\omega})\mapsto\displaystyle  \int^q_0dt\|\bm{v}(t,\bm{\omega})\|^2_2$ as well.

In the following, we will make no distinctions between equivalence classes and their members (for example, we will use the word \emph{process} instead of \emph{equivalence class}). Moreover, if an event occurs almost surely, we will occasionally omit stating  $\W^{\otimes n}$--a.s..

Given a process $X$ and a stopping time $T$, we denote by $X^T$ the \emph{stopped} process
\been{
X^T(q,\bm{\omega}):=X(q\wedge T,\bm{\omega}).
}
Given $q\in [0,1]$, we denote by $X_q$ the process
\been{
\label{eq:uproot}
X_q(q',\bm{\omega}):=
\begin{dcases}
   X(q',\bm{\omega}),\quad &\textup{if }q'>q;\\
   0,\quad &\textup{otherwise}.
\end{dcases}
}
Given a process $\bm{v}\in D_n$, let
\been{
\label{zeta0}
\gamma(\bm{v}, q_2,\bm{\omega}):=
\int^{q_2}_{0} \bm{v}(t\,, \,\bm{\omega}) \cdot d\bm{\omega}(t) -\frac{1}{2}\int^q_{0}dt\,\,\|\bm{v}(t\,, \,\bm{\omega})\|^2_2
}
So, the DDE is given by
\begin{equation}
\label{eq:DDE}
\mathcal{E}(\bm{v},q,\bm{\omega}):=e^{\gamma(\bm{v},q,\bm{\omega})}
\end{equation}
We denote by $\gamma(\bm{v})$ the process $(q,\bm{\omega})\mapsto \gamma(\bm{v},q,\bm{\omega})$  and by $\mathcal{E}(\bm{v})$ the process $(q,\bm{\omega})\mapsto \mathcal{E}(\bm{v},q,\bm{\omega})$.

Using the notation~\eqref{eq:uproot}, given $q\in[0,1]$, the processes $\gamma(\bm{v}_q)$ and $\mathcal{E}(\bm{v}_q)$ are defined as follows
\been{
\label{zeta}
\gamma(\bm{v}_{q}, q',\bm{\omega}):=
\begin{dcases}
\int^{q'}_{q} \bm{v}(t\,, \,\bm{\omega}) \cdot d\bm{\omega}(t) -\frac{1}{2}\int^{q'}_{q}dt\,\,\|\bm{v}(t\,, \,\bm{\omega})\|^2_2,\quad &\text{if}\,\quad q'>q;\\
0\,,\quad &\text{otherwise}.
\end{dcases}
}
and
\been{
\mathcal{E}(\bm{v}_{q},q',\bm{\omega}):=e^{\gamma(\bm{v}_{q},q',\bm{\omega})}=\bm{1}_{q'\leq q}+\bm{1}_{q'> q}\frac{\mathcal{E}( \bm{v},q',\bm{\omega})}{\mathcal{E}( \bm{v},q,\bm{\omega})}.
}

We also define the following sets (sets of equivalence classes):
\been{
D^{2,p}_n:=\left\{\bm{v}\in D_n;\,\mathbb{E}\left[\,\left(\int^1_0dq\,\|\bm{v}(q,\bm{\omega})\|^2_2\right)^{\frac{p}{2}}\]<\infty\right\};
}
\been{
\widehat{D}_n:=\left\{\bm{v}\in D_n;\,\E\[\int^{1}_0\mathcal{E}(\bm{v},q,\bm{\omega})\|\bm{v}(q,\bm{\omega})\|^2_2dq\]<\infty\right\};
}
\been{
S^p_n:=\left\{\bm{v}\in S_n;\,\mathbb{E}\Big[\,\underset{q\in[0,1]}{\sup}|\phi(q,\bm{\omega})|^{p}\,\Big]^{\frac{1}{p}}<\infty\right\};
}
Over the above sets, we define the following norms
\begin{align}
\label{norms1}
\Psi\in B_n \,:&\quad  \|\Psi\|_{\infty}:=\inf \,\left\{M\in \R, |\Psi(\bm{\omega})|\leq M\quad \W^{\otimes n}-a.s.\right\}\,,
\\
\label{norms2}
\bm{v}\in D^{2,p}_n \,:&\quad \|\bm{v}\|_{2,p}:=\mathbb{E}\left[\,\left(\int^1_0dq\,\|\bm{v}(q,\bm{\omega})\|_2^2\right)^{\frac{p}{2}}\,\right]^{\frac{1}{p}}\,,
\\
\label{norms3}
\phi\in S_n^p \,:&\quad \|\phi\|_{\infty,p}:=\mathbb{E}\Big[\,\underset{q\in[0,1]}{\sup}|\phi(q,\bm{\omega})|^{p}\,\Big]^{\frac{1}{p}}\,.
\end{align}
We will say that a sequence of pair $((\phi_k,\bm{v}_k))_{k\in \N}\subseteq S_n^p\times D^{2,p}_n$ converges in $S_n^p\times D^{2,p}_n$ norm if $(\phi_k)_{k\in \N}$ converges with respect the norm $\|\,\cdot\,\|_{\infty,p}$ and $(\bm{r}_k)_{k\in \N}$ converges with respect the norm $\|\,\cdot\,\|_{2,p}$. By Burkholder-Davis-Gundy Inequality  (see \cite[Theorem 3.28]{KS}) if $\bm{v}\in  D^{2,p}$, then $I(\bm{v})\in S_n^p$.

We denote by $\textup{Meas}([0,1])$ the vector space of bounded signed measure on $[0,1]$ endowed with the following norm:
\been{
\|\mu\|_{\infty}:=\sup_{q\in [0,1]}|\mu([0,q])|,\quad \mu\in \textup{Meas}([0,1]).
}
Let $\textup{Pr}([0,1])\subseteq\textup{Meas}([0,1]) $ be the set of probability measure on $[0,1]$.

Given $\bm{r}\in D_n$ and $\mu\in \textup{Meas}([0,1])$, we denote by $\mu \bm{r}\in  D_n$ the progressive process defined as 
\been{
(\mu \bm{r})(q,\bm{\omega}):=\mu([0,q])\bm{r}(q,\bm{\omega}).
}
Obviously, if $\bm{r}\in D^{2,p}_n$, then $\mu\bm{r}\in D^{2,p}_n$. Finally, given $\mu \in \textup{Pr}([0,1])$, let us define
\been{
\widehat{D}^{\mu }_n:=\left\{\bm{v}\in D_n;\,\E\[\int^{1}_0\mathcal{E}(\mu \bm{v},q,\bm{\omega})\mu([0,q])\|\bm{v}(q,\bm{\omega})\|^2_2dq\]<\infty\right\};
}
Since $\mu([0,q])\geq \mu^2([0,q])$, if $\bm{v}\in \widehat{D}^{\mu }_n$, then $\mu \bm{v}\in \widehat{D}_n$.
\section{Properties of the Doléans-Dade exponential (DDE)}
\label{sec:2}
In this section, we recall some classical results about the DDEs and obtain some new results that will be useful for the proof of the main theorems of the manuscript.  We refer the reader to Section 3.5 of the book of Karatzas and Shreve \cite{KS} for a detailed discussion of this topic. To the best of the author's knowledge, the first statement provided in Lemma~\ref{lem:martcond} is a new result.

For any process $\bm{v}\in D_n$, the DDE $\mathcal{E}(\bm{v})$ defined in~\eqref{eq:DDE} is a positive local martingale. Hence it is a supermartingale (see \cite[ Problem 2.28, Chapter 3]{KS}).

Since $I(\bm{v})$ is a local martingale, the definition~\eqref{eq:DDE} is well posed for $q\in [0,1)$. The random variable $\mathcal{E}(\bm{v},1,\bm{\omega})$ can be defined by taking the limit $q\to 1$.
\begin{lemma}
\label{lem:convDDE}
For any $\bm{v}\in D_n$
\been{
\liminf_{q\to 1}\mathcal{E}(\bm{v},q,\bm{\omega})=\limsup_{q\to 1}\mathcal{E}(\bm{v},q,\bm{\omega})\in [0,\infty)\quad \W^{\otimes n}-\textup{a.s.}.
}
Taking $\mathcal{E}(\bm{v},1,\bm{\omega})=\lim_{q\to 1}\mathcal{E}(\bm{v},q,\bm{\omega})$, the process $\mathcal{E}(\bm{v})$ is a $\W^{\otimes n}$--supermartingale adapted to $(\mathcal{F}^{\otimes n}_q)_{q\in [0,1]}$.
\end{lemma}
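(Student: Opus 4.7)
My plan is to (i) combine the just-cited fact that $\mathcal{E}(\bm{v})$ is a non-negative local martingale on $[0,1)$ with Doob's forward convergence theorem to define the terminal value $\mathcal{E}(\bm{v},1,\bm{\omega})$, and (ii) extend the supermartingale property from $[0,1)$ to $[0,1]$ via a conditional Fatou argument. No deep tools are required beyond standard continuous-time martingale theory as presented in \cite[Ch.~3]{KS}.

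For the existence of the limit, I would first record that since $\mathcal{E}(\bm{v})$ is a non-negative local martingale it is a non-negative $\W^{\otimes n}$-supermartingale on $[0,1)$, and therefore
\been{
\E[\mathcal{E}(\bm{v},q,\bm{\omega})]\leq \E[\mathcal{E}(\bm{v},0,\bm{\omega})]=1,\qquad q\in[0,1).
}
This uniform $L^1$ bound places us in the hypotheses of Doob's forward supermartingale convergence theorem, so the a.s. limit $\lim_{q\uparrow 1}\mathcal{E}(\bm{v},q,\bm{\omega})$ exists, which is exactly the statement $\liminf_{q\to 1}\mathcal{E}(\bm{v},q,\bm{\omega})=\limsup_{q\to 1}\mathcal{E}(\bm{v},q,\bm{\omega})$; an application of Fatou's lemma then gives $\E[\lim_{q\uparrow 1}\mathcal{E}(\bm{v},q,\bm{\omega})]\leq 1$, so the limit is a.s. finite, i.e. lies in $[0,\infty)$. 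This legitimizes the definition $\mathcal{E}(\bm{v},1,\bm{\omega}):=\lim_{q\uparrow 1}\mathcal{E}(\bm{v},q,\bm{\omega})$. As a sanity check, one can also see this directly: the quadratic variation $\int_0^1\|\bm{v}(t,\bm{\omega})\|_2^2\,dt$ is a.s. finite because $\bm{v}\in D_n$, so the continuous local martingale $I(\bm{v})$ has an a.s. finite limit at $q=1$ (time change / DDS, \cite[\S 3.4]{KS}), and therefore $\gamma(\bm{v},q,\bm{\omega})$ does too.

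It remains to upgrade the supermartingale property from $[0,1)$ to $[0,1]$. For $0\leq s<1$ fixed, I would start from the known inequality $\E[\mathcal{E}(\bm{v},q,\bm{\omega})\mid \mathcal{F}^{\otimes n}_s]\leq \mathcal{E}(\bm{v},s,\bm{\omega})$ valid for $q\in(s,1)$, send $q\uparrow 1$, and invoke the conditional Fatou lemma to obtain
\been{
\E[\mathcal{E}(\bm{v},1,\bm{\omega})\mid \mathcal{F}^{\otimes n}_s]\leq \liminf_{q\uparrow 1}\E[\mathcal{E}(\bm{v},q,\bm{\omega})\mid \mathcal{F}^{\otimes n}_s]\leq \mathcal{E}(\bm{v},s,\bm{\omega}),
}
which, combined with the already-known supermartingale property on $[0,1)$, yields the supermartingale property on $[0,1]$ and completes the proof. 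I do not expect any serious obstacle: the argument is essentially bookkeeping around Doob's theorem. The only mild point of care is that Doob's theorem is applied on the half-open interval $[0,1)$ rather than on $[0,\infty)$, which is harmless once the uniform $L^1$ bound $\E[\mathcal{E}(\bm{v},q)]\leq 1$ is in hand (alternatively one reparametrizes by any homeomorphism $[0,\infty)\to[0,1)$).
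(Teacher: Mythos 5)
Your proposal is correct and follows essentially the same route as the paper: both rest on the fact that $\mathcal{E}(\bm{v})$ is a non-negative supermartingale on $[0,1)$ and invoke the supermartingale convergence theorem (the paper cites \cite[Problem 3.16, Chapter 1]{KS}, which also yields the closure at the terminal time that you re-derive explicitly via conditional Fatou). The extra remarks (uniform $L^1$ bound, DDS sanity check) are harmless elaborations of the same argument.
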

\begin{proof}
The process $\mathcal{E}(\bm{v})$ is a non-negative supermartingale. Thus, by \cite[Problem 3.16, Chapter 1]{KS}, $\mathcal{E}(\bm{v},q,\bm{\omega})$ converges $\W^{\otimes n}$--almost surely as $q\to 1$ and, taking $\mathcal{E}(\bm{v},1,\bm{\omega})=\lim_{q\to 1}\mathcal{E}(\bm{v},q,\bm{\omega})$, the process $\mathcal{E}(\bm{v})$ is a $W^{\otimes n}$--supermartingale adapted to $(\mathcal{F}^{\otimes n}_q)_{q\in [0,1]}$. Moreover, the super-martingale property implies:
\been{
\E\[\mathcal{E}(\bm{v},1,\bm{\omega})\]\leq \E\[\mathcal{E}(\bm{v},0,\bm{\omega})\]=1
}
\end{proof}
If $\mathcal{E}(\bm{v})$ is a true martingale, then we can define a probability measure $\mathbb{W}_{\bm{v}}$ on the measurable space $(C([0,1],\R^n),\mathcal{F}^{\otimes n}_1)$ that is absolutely continuous with respect to the Wiener measure $\mathbb{W}^{\otimes n}$ and such that the DDE $\mathcal{E}(\bm{v})$ is the Radon-Nikodym derivative of $\mathbb{W}_{\bm{v}}$ with respect to $\mathbb{W}$:
\begin{equation}
\label{eq:defWv}
\frac{d\mathbb{W}_{\bm{v}} }{d\mathbb{W}^{\otimes n}}(\bm{\omega}):=\mathcal{E}\left(\,\bm{v},1,\bm{\omega}\,\right)\,.
\end{equation}
Determining whether a DDE $\mathcal{E}(\bm{v})$ defines a true martingale is a classical and subtle problem in stochastic calculus. Over the years, several sufficient conditions on the drift $\bm{v}$ have been developed, including the well-known Novikov and Kazamaki criteria \cite[Section 3.5.D]{KS}. In the proposition below, we establish a new sufficient condition tailored to the specific structure arising in this work. 

If $\mathcal{E}(\bm{v})$ is a martingale, then we can apply the Cameron--Martin, Girsanov Theorem (CMG).
\begin{theorem}
\label{lem:martcond} 
The following holds:
\begin{enumerate}
    \item If $\bm{v}\in \widehat{D}_n$ the process $\mathcal{E}(\bm{v})$ is a non-negative uniformly integrable martingale with mean $1$;
    \item(CMG Theorem) if $\mathcal{E}(\bm{v})$ is a martingale the measure $\mathbb{W}_{\bm{v}}$ defined in~\eqref{eq:defWv} is a probability measure, and  the following process
    \been{
\label{eq:shiftedBM}
\bm{W}_{\bm{v}}\left(\,q,\bm{\omega}\right):=\bm{\omega}(q)-\bm{\omega}(0)-\int^q_0 dt\, \, \bm{v}(t,\bm{\omega}\,)
}
is a $n$--dimensional Brownian with respect the probability measure $\mathbb{W}_{\bm{v}}$ adapted to $\Fqfilt{q}{[0,1]}$.
\end{enumerate} 
\end{theorem}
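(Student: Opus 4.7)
The two parts call for rather different techniques: Part (1) is handled by a localization plus relative-entropy argument, while Part (2) is a standard application of L\'evy's characterization. For Part (1), by Lemma~\ref{lem:convDDE} the process $\mathcal{E}(\bm{v})$ is a non-negative supermartingale with $\E[\mathcal{E}(\bm{v},0,\bm{\omega})]=1$, so it suffices to establish $\E[\mathcal{E}(\bm{v},1,\bm{\omega})]=1$, with uniform integrability (UI) of the whole trajectory following automatically. First I would introduce the reducing sequence $T_k:=\inf\{q\in[0,1):\mathcal{E}(\bm{v},q,\bm{\omega})\geq k\}\wedge 1$; since $\mathcal{E}(\bm{v})$ has continuous paths on $[0,1)$, the stopped process $\mathcal{E}(\bm{v})^{T_k}$ is bounded by $k$ and therefore is a UI true martingale. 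This legitimises the probability measure $\mathbb{W}_k$ with $d\mathbb{W}_k/d\mathbb{W}^{\otimes n}=\mathcal{E}(\bm{v},T_k,\bm{\omega})$, and the classical CMG theorem applied to this bounded setting guarantees that $\bm{W}_k(q,\bm{\omega}):=\bm{\omega}(q)-\bm{\omega}(0)-\int_0^{q\wedge T_k}\bm{v}(t,\bm{\omega})\,dt$ is an $n$-dimensional Brownian motion under $\mathbb{W}_k$.

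The central estimate is the relative entropy $H_k:=\E[\mathcal{E}(\bm{v},T_k)\log\mathcal{E}(\bm{v},T_k)]=\E_k[\log\mathcal{E}(\bm{v},T_k)]$. Inserting the CMG identity $d\bm{\omega}(t)=d\bm{W}_k(t)+\bm{v}(t)\bm{1}_{t\leq T_k}dt$ into the explicit expression for $\log\mathcal{E}(\bm{v},T_k)$ rewrites it as
\begin{equation*}
\log\mathcal{E}(\bm{v},T_k,\bm{\omega})=\int_0^{T_k}\bm{v}(t,\bm{\omega})\cdot d\bm{W}_k(t)+\tfrac{1}{2}\int_0^{T_k}\|\bm{v}(t,\bm{\omega})\|_2^2\,dt.
\end{equation*}
The stochastic integral has zero $\mathbb{W}_k$-expectation as soon as $\E_k[\int_0^{T_k}\|\bm{v}\|_2^2\,dt]<\infty$. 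By Fubini and optional sampling applied to the UI martingale $\mathcal{E}(\bm{v})^{T_k}$, this quantity equals $\E[\int_0^{T_k}\mathcal{E}(\bm{v},t)\|\bm{v}(t)\|_2^2\,dt]\leq \E[\int_0^1\mathcal{E}(\bm{v},t)\|\bm{v}(t)\|_2^2\,dt]<\infty$ by the hypothesis $\bm{v}\in\widehat{D}_n$. Hence $H_k\leq \tfrac{1}{2}\E[\int_0^1\mathcal{E}(\bm{v},t)\|\bm{v}(t)\|_2^2\,dt]$, a bound independent of $k$. The elementary inequality $x\log^+ x\leq x\log x + e^{-1}$ for $x\geq 0$ then yields the same uniform control on $\E[\mathcal{E}(\bm{v},T_k)\log^+\mathcal{E}(\bm{v},T_k)]$, and the de la Vall\'ee-Poussin criterion delivers UI of $\{\mathcal{E}(\bm{v},T_k)\}_{k\in\mathbb{N}}$. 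Combining $T_k\uparrow 1$ and Lemma~\ref{lem:convDDE} with UI upgrades the a.s.\ convergence $\mathcal{E}(\bm{v},T_k)\to\mathcal{E}(\bm{v},1)$ to $L^1$ convergence, so $\E[\mathcal{E}(\bm{v},1)]=\lim_k\E[\mathcal{E}(\bm{v},T_k)]=1$; a non-negative supermartingale with constant expectation is a UI martingale.

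For Part (2), once $\mathcal{E}(\bm{v})$ is a true mean-one martingale, $\mathbb{W}_{\bm{v}}$ defined in~\eqref{eq:defWv} is automatically a probability measure. To identify $\bm{W}_{\bm{v}}$ in~\eqref{eq:shiftedBM} as an $n$-dimensional Brownian motion under $\mathbb{W}_{\bm{v}}$, I would invoke L\'evy's characterization: each component must be a continuous $\mathbb{W}_{\bm{v}}$-local martingale with quadratic covariation $\delta_{ij}q$. The martingale property follows from Bayes' rule together with the observation that $\mathcal{E}(\bm{v},q)(\bm{W}_{\bm{v}}(q))_i$ and $\mathcal{E}(\bm{v},q)\bigl((\bm{W}_{\bm{v}}(q))_i(\bm{W}_{\bm{v}}(q))_j-\delta_{ij}q\bigr)$ are $\mathbb{W}^{\otimes n}$-local martingales; one checks this by integration by parts, using $d\mathcal{E}(\bm{v},q)=\mathcal{E}(\bm{v},q)\bm{v}(q)\cdot d\bm{\omega}(q)$ and $d(\bm{W}_{\bm{v}}(q))_i=d\omega_i(q)-v_i(q)dq$, whose cross-variation with $\mathcal{E}(\bm{v})$ exactly cancels the drift. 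Quadratic variations are preserved under the equivalent change of measure, producing $\delta_{ij}q$.

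The hardest step is the entropy computation in Part (1), and more precisely the justification that $\int_0^{T_k}\bm{v}\cdot d\bm{W}_k$ has vanishing $\mathbb{W}_k$-mean: this step requires transporting the $L^2$-integrability of $\bm{v}$ from $\mathbb{W}^{\otimes n}$ to $\mathbb{W}_k$, and it is precisely at this point that the hypothesis $\bm{v}\in\widehat{D}_n$ enters in an essential way, distinguishing the new sufficient condition from the classical Novikov and Kazamaki criteria, which instead control different exponential moments of $\int_0^1\|\bm{v}\|_2^2dt$.
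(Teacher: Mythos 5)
Your proposal is correct and follows essentially the same strategy as the paper's proof: localize so that the stopped density is a true martingale, apply Girsanov to the localized measure, compute the relative entropy $\E[\mathcal{E}\log\mathcal{E}]$ to get a $k$-uniform bound from the $\widehat{D}_n$ hypothesis, then conclude by de la Vallée-Poussin and Vitali that the supermartingale has constant mean $1$ and hence is a UI martingale. The only differences are cosmetic: you localize at the first time $\mathcal{E}(\bm{v})$ reaches level $k$ (using boundedness of the stopped exponential) whereas the paper stops when $\int_0^q\|\bm{v}\|_2^2$ reaches $k$ and invokes Novikov, and for part (2) you sketch the Lévy-characterization proof of Girsanov's theorem where the paper simply cites it.
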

\begin{proof}
If the DDE $\mathcal{E}(\bm{v})$ is a martingale, the second statement of the lemma follows from the CMG Theorem (see \cite[Corollary 5.2]{KS}). 

We have to prove the first statement. Let $(\tau_k)_{k\in \N}$ be the sequence of stopping times define as follows
\been{
\tau_k=\inf\left\{q\in [0,1); \,\int^{T_n}_0dq\|\bm{v}(q,\bm{\omega})\|^2_2\geq  k \right\},
}
with the convention $\inf \emptyset=1$. Let 
\been{
\bm{v}_{0,\tau_k}(q,\bm{\omega})=\bm{1}_{\{q\leq \tau_k\}}\bm{v}(q,\bm{\omega}).
}
Thus $\mathcal{E}^{\tau_k}(\bm{v})=\mathcal{E}(\bm{v}_{0,\tau_k})$. By Novikov Criterion \cite[Chapter 3, Proposition 5.2]{KS}, for each $k\in \N$, $\mathcal{E}(\bm{v}_{0,\tau_k})$ is a martingale of average $1$. Then, by the second point of this Theorem,  we can define the probability measure $\mathbb{W}_{\bm{v}_{0,\tau_k}}$ as in~\eqref{eq:defWv} and $\bm{W}_{\bm{v}_{0,\tau_k}}$ is a $\mathbb{W}_{\bm{v}_{0,\tau_k}}$--Brownian motion. Thus, the stochastic integral
\been{
\int^{t\wedge\tau_k }_0\bm{v}(q,\bm{\omega})\cdot d\bm{W}_{\bm{v}}(q,\bm{\omega})=I(\bm{v}_{0,\tau_k},t,\bm{\omega})-\int^{t\wedge\tau_k }_0dq\|\bm{v}(q,\bm{\omega})\|^2_2
}
is a $\mathbb{W}_{\bm{v}_{0,\tau_k}}$--square integrable martingale. Define
\been{
\phi(x)=x \log(x)+e^{-1}.
}By the martingale property, we get
\been{
\aled{
&\E\[\phi\(\mathcal{E}(\bm{v}_{0,\tau_k},t,\bm{\omega})\)\]\\
&=\E\[\mathcal{E}(\bm{v}_{0,\tau_k},t,\bm{\omega})\(\int^{t\wedge\tau_k }_0\bm{v}(q,\bm{\omega})\cdot d\bm{W}_{\bm{v}}(q,\bm{\omega})+\frac{1}{2}\int^{t\wedge\tau_k }_0dq\|\bm{v}(q,\bm{\omega})\|^2_2\)\]+e^{-1}\\
&=\frac{1}{2}\E\[\int^{t\wedge\tau_k }_0dq\mathcal{E}(\bm{v}_{0,\tau_k},q,\bm{\omega})\|\bm{v}(q,\bm{\omega})\|^2_2\]+e^{-1}.
}
}
The above identity and the Monotone Convergence Theorem give
\been{
\aled{
\sup_{k\in \N}\E\[\phi\(\mathcal{E}(\bm{v}_{0,\tau_k},t,\bm{\omega})\)\]\leq \frac{1}{2}\E\[\int^{1 }_0dq\mathcal{E}(\bm{v}_{0,\tau_k},q,\bm{\omega})\|\bm{v}(q,\bm{\omega})\|^2_2\]+e^{-1}<\infty.
}
}
The function $\phi$ is positive and $\lim_{x\to \infty} \phi(x)/x=\infty$. Thus, by de La Vallée Poussin Theorem, the sequence $\(\mathcal{E}(\bm{v}_{0,\tau_k},t,\bm{\omega})\)_{k\in \N}$ is uniformly integrable for any $t\in [0,1]$. Hence, by Vitali Convergence Theorem
\been{
\E\[\mathcal{E}(\bm{v},t,\bm{\omega})\]=\lim_{k\to \infty}\E\[\mathcal{E}(\bm{v}_{0,\tau_k},t,\bm{\omega})\]=1,\quad \forall t\in [0,1].
}
Since $\mathcal{E}(\bm{v})$ is a supermartingale, then, by the above identity, it is a martingale.
\end{proof}
Given $\bm{v}\in \widehat{D}_n$, we denote by $\widetilde{\E}_{\bm{v}}$ the expectation with respect to $(C([0,1],\R^n),\mathcal{F}^{\otimes n}_1,\mathbb{W}_{\bm{v}})$. Given $\bm{u}\in D_n$ and $\bm{v}\in \widehat{D}_n$, let us denote by $I_{\bm{v}}(\bm{u})$ the stochastic integral
\been{
\label{eq:Iv}
I_{\bm{v}}(\bm{u})(q,\bm{\omega}):=I_{\bm{v}}(\bm{u},q,\bm{\omega})=\int^q_0\bm{u}(t,\bm{\omega})\cdot d\bm{W}_{\bm{v}}(t,\bm{\omega}),\quad q\in [0,1).
}
The process above is a local martingale under the probability measure $\W_{\bm{v}}$. For the subsequent results, it is crucial to establish conditions on $\bm{u}$ that guarantee $I_{\bm{v}}(\bm{u})$ is a true martingale. The following lemma provides a straightforward characterization.
\begin{lemma}
\label{lem:mart_cond}
Let $\bm{v}\in \widehat{D}_n$ and $\bm{u}\in D_n$. If
\been{
\label{e:mart_cond}
\widetilde{\E}_{\bm{v}}\[\int^1_0dt\|\bm{u}(t,\bm{\omega})\|^2_2\]< \infty,
}
then $I_{\bm{v}}(\bm{u})$ is a square integrable martingale with mean $0$ with respect the probability measure $\W_{\bm{v}}$.
\end{lemma}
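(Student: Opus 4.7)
The plan is to reduce the claim to the classical characterization of Itô integrals as square--integrable martingales, applied in the probability space $(C([0,1],\R^n),\mathcal{F}^{\otimes n}_1,\mathbb{W}_{\bm{v}})$ where $\bm{W}_{\bm{v}}$ plays the role of the driving Brownian motion. The only substantive point is to check that the assumption $\bm{v}\in \widehat{D}_n$ and the integrability hypothesis on $\bm{u}$ give enough structure to invoke this classical result in the shifted probability space.

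First, since $\bm{v}\in \widehat{D}_n$, Theorem~\ref{lem:martcond} implies that $\mathcal{E}(\bm{v})$ is a uniformly integrable martingale of mean $1$, so $\mathbb{W}_{\bm{v}}$ is a bona fide probability measure, absolutely continuous with respect to $\mathbb{W}^{\otimes n}$, and $\bm{W}_{\bm{v}}$ defined in~\eqref{eq:shiftedBM} is an $n$--dimensional Brownian motion under $\mathbb{W}_{\bm{v}}$ adapted to $\Fqfilt{q}{[0,1]}$. Next, because $\bm{u}\in D_n$, the process $\bm{u}$ is $\Fqfilt{q}{[0,1)}$--progressive and satisfies $\int_0^1\|\bm{u}(t,\bm{\omega})\|^2_2\,dt<\infty$, $\mathbb{W}^{\otimes n}$--a.s.; by absolute continuity of $\mathbb{W}_{\bm{v}}$ with respect to $\mathbb{W}^{\otimes n}$ the same holds $\mathbb{W}_{\bm{v}}$--a.s. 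Therefore the stochastic integral $I_{\bm{v}}(\bm{u})$ defined in~\eqref{eq:Iv} is well posed as a continuous $\mathbb{W}_{\bm{v}}$--local martingale.

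To upgrade local martingale to square--integrable martingale, I would localize via the standard sequence of stopping times
\been{
\sigma_k:=\inf\left\{q\in[0,1);\,\int^q_0\|\bm{u}(t,\bm{\omega})\|^2_2\,dt\geq k\right\}\wedge 1,
}
with $\inf\emptyset=1$, so that each stopped process $I_{\bm{v}}(\bm{u})^{\sigma_k}$ is a bounded--variation--dominated $\mathbb{W}_{\bm{v}}$--martingale, and the Itô isometry yields
\been{
\widetilde{\E}_{\bm{v}}\[I_{\bm{v}}(\bm{u},q\wedge\sigma_k,\bm{\omega})^2\]=\widetilde{\E}_{\bm{v}}\[\int^{q\wedge\sigma_k}_0\|\bm{u}(t,\bm{\omega})\|^2_2\,dt\]\leq \widetilde{\E}_{\bm{v}}\[\int^1_0\|\bm{u}(t,\bm{\omega})\|^2_2\,dt\].
}
By the hypothesis~\eqref{e:mart_cond} the right-hand side is finite and independent of $k$, so the family $(I_{\bm{v}}(\bm{u})^{\sigma_k}(q,\bm{\omega}))_{k\in\N}$ is bounded in $L^2(\mathbb{W}_{\bm{v}})$, hence uniformly integrable. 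Letting $k\to\infty$ and using $\sigma_k\uparrow 1$ together with the Vitali Convergence Theorem, the limit $I_{\bm{v}}(\bm{u},q,\bm{\omega})$ inherits the $\mathbb{W}_{\bm{v}}$--martingale property and the $L^2(\mathbb{W}_{\bm{v}})$--bound, so in particular $\widetilde{\E}_{\bm{v}}[I_{\bm{v}}(\bm{u},q,\bm{\omega})]=0$ for every $q\in[0,1]$.

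The argument is essentially routine once the CMG change of measure has been justified, and there is no genuine obstacle; the only thing requiring a little care is that $\bm{u}$ is only assumed to belong to $D_n$ (which is a pathwise condition under $\mathbb{W}^{\otimes n}$), so one must invoke absolute continuity of $\mathbb{W}_{\bm{v}}$ with respect to $\mathbb{W}^{\otimes n}$ before treating $\bm{u}$ as an admissible integrand for $\bm{W}_{\bm{v}}$. Once this point is handled, the localization and Itô isometry proceed verbatim as in the classical theory.
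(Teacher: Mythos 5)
Your proof is correct and takes essentially the same route as the paper: the paper's own argument simply invokes \cite[Corollary 1.24, Chapter IV]{Yor}, which states precisely that a continuous local martingale whose quadratic variation has finite expectation is a square-integrable martingale, whereas you reproduce the standard localization/It\^o-isometry/uniform-integrability proof of that classical fact (after the same preliminary check, via Theorem~\ref{lem:martcond} and absolute continuity, that $I_{\bm{v}}(\bm{u})$ is a well-defined $\mathbb{W}_{\bm{v}}$--local martingale).
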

\begin{remark}
By the above lemma, for $\bm{v}\in \widehat{D}_n$, the stochastic integral $I_{\bm{v}}(\bm{v})$ is a square integrable martingale.
% This implies
% % \been{
% % \label{eq:log_exp}
% % \aled{
% % & \frac{1}{2}\E\[\int^{1}_0dq \mathcal{E}( \bm{v},q,\bm{\omega})\|\bm{v}(q,\bm{\omega})\|^2_2\]= \frac{1}{2}\E\[\int^{1}_0dq \E\[\mathcal{E}(\bm{v},q,\bm{\omega})|\mathcal{F}^{\otimes n}_q\]\|\bm{v}(q,\bm{\omega})\|^2_2\]\\
% % &=\frac{1}{2}\Ee{\bm{v}}\[\int^{1}_0dq\|\bm{v}(q,\bm{\omega})\|^2_2\]\\
% %  &=\Ee{\bm{v}}\[I_{\bm{v}}(\bm{v},1,\bm{\omega})-\frac{1}{2}\int^{1}_0dq\|\bm{v}(q,\bm{\omega})\|^2_2\]=\Ee{\bm{v}}\[\gamma(\bm{v},1,\bm{\omega})\].
% % }
% % }
 \end{remark}
\begin{proof}
If
\been{
\widetilde{\E}_{\bm{v}}\[\int^1_0dt\|\bm{u}(t,\bm{\omega})\|^2_2\]< \infty
}
then by \cite[Corollary 1.24, Chapter IV]{Yor}, the local martingale $I_{\bm{v}}(\bm{u})$ is a square integrable martingale.
\end{proof}

\section{Some properties of the solutions}
\label{sec:6}
In this section, we assume that the BSDE~\eqref{selfEq0Aux}
\been{
\label{selfEq1Aux}
\phi(q,\bm{\omega}) = \Psi(\bm{\omega}) - \int_q^1 \bm{r}(t, \bm{\omega}) \cdot d\bm{\omega}(t) + \tfrac{1}{2}\int_q^1 \mu([0,t])|\bm{r}(t, \bm{\omega})|_2^2  dt
}
admits at least one solution. Under this assumption, we derive several properties of the solution, which will be used in the proofs of Theorem~\ref{th:func_cont} and Theorem~\ref{th:BSDE}.

The main results of the section are stated in the following proposition.
\begin{proposition}
\label{prop:uniqueness2}
If $\Psi \in B_n$ and the pair $(\phi^*,\bm{r}^*)\in S_n\times D_n$ is a solution to the \BSDE, then $(\phi^*,\bm{r}^*)\in S^b_n\times  \widehat{D}^{\mu}_n$ and
\been{
\label{eq:uniqueness2}
\sup_{q\in [0,1]}|\phi^*(q,\bm{\omega})|\leq \|\Psi\|_{\infty},\quad \Wnas,
}
\been{
\label{eq:uniqueness22}
\sup_{q\in [0,1]}\left| \gamma(\mu\bm{r}^*,q,\bm{\omega})\right|
\leq 2 \|\Psi\|_{\infty},\quad \Wnas,
}
and the process $\mathcal{E}(\mu \bm{r}^*)$ is a $\Wn$--martingale adapted to $\Fqfilt{q}{[0,1]}$ with
\been{
\label{eq:uniqueness222}
\Ee{\mu\bm{r}^*}\[\int^1_0dq\mu([0,q])\,\|\bm{r}(q,\bm{\omega})\|^2_2\]\leq 4\|\Psi\|_{\infty}.
}
\end{proposition}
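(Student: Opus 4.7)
My plan is to first establish the pointwise bound~\eqref{eq:uniqueness2} on $\phi^*$ via the Cameron--Martin--Girsanov (CMG) theorem with careful localization, then derive~\eqref{eq:uniqueness22} by an It\^o integration-by-parts identity relating $\gamma(\mu\bm{r}^*)$ to $\phi^*$; the resulting boundedness of $\mathcal{E}(\mu\bm{r}^*)$ upgrades the local-martingale property to true martingality, and the energy bound~\eqref{eq:uniqueness222} then follows by applying CMG once more to the BSDE and computing expectations.

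\medskip For the pointwise bound, since $\mathcal{E}(\mu\bm{r}^*)$ is not yet known to be a true martingale, I localize via $\sigma_k:=\inf\{q\in[0,1]:\int_0^q\|\bm{r}^*(t)\|_2^2\,dt\ge k\}\wedge 1$, which satisfies $\sigma_k\uparrow 1$ $\Wn$--a.s.\ because $\bm{r}^*\in D_n$. Setting $\bm{r}^{*,k}:=\bm{r}^*\mathbf{1}_{[0,\sigma_k]}$, Novikov's criterion gives that $\mathcal{E}(\pm\mu\bm{r}^{*,k})$ are genuine martingales inducing equivalent probability measures $\mathbb{W}^{\pm,k}$ with Brownian motions $\bm{W}^{\pm,k}(q):=\bm{\omega}(q)\mp\int_0^q\mu([0,t])\bm{r}^{*,k}(t)\,dt$. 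Splitting the BSDE at $\sigma_k$, using CMG to rewrite $\int_q^{\sigma_k}\bm{r}^*\cdot d\bm{\omega}$, and recognizing $\phi^*(\sigma_k)$ through the BSDE at $\sigma_k$ yields, for $q\le\sigma_k$,
\begin{equation*}
\phi^*(q)+\tfrac12{\textstyle\int_q^{\sigma_k}}\mu([0,t])\|\bm{r}^*(t)\|_2^2\,dt=\phi^*(\sigma_k)-{\textstyle\int_q^{\sigma_k}}\bm{r}^*\cdot d\bm{W}^{+,k},
\end{equation*}
together with a symmetric identity where $+\tfrac12$ is replaced by $-\tfrac32$ and $\bm{W}^{+,k}$ by $\bm{W}^{-,k}$. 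Because $\int_0^{\sigma_k}\|\bm{r}^*\|_2^2\,dt\le k$ is bounded, Lemma~\ref{lem:mart_cond} makes the It\^o integrals zero-mean $\mathbb{W}^{\pm,k}$--martingales, and taking conditional expectations sandwiches $\phi^*(q)$ between $\mathbb{E}^{-,k}[\phi^*(\sigma_k)\mid\mathcal{F}^{\otimes n}_q]$ and $\mathbb{E}^{+,k}[\phi^*(\sigma_k)\mid\mathcal{F}^{\otimes n}_q]$. Letting $k\to\infty$, continuity of $\phi^*$ on $[0,1]$ (inherited from the BSDE representation) gives $\phi^*(\sigma_k)\to\Psi$ a.s., and a Vitali/Scheff\'e argument for the density convergence $\mathcal{E}(\pm\mu\bm{r}^{*,k},1)\to\mathcal{E}(\pm\mu\bm{r}^*,1)$ then yields $|\phi^*(q)|\le\|\Psi\|_\infty$ a.s., hence $\sup_q|\phi^*|\le\|\Psi\|_\infty$ $\Wn$--a.s.\ by continuity.

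\medskip For~\eqref{eq:uniqueness22}, applying It\^o's product rule to $F(q)\phi^*(q)$ with $F(q):=\mu([0,q])$ and noting that $F$ is deterministic of bounded variation (so the cross variation vanishes) gives
\begin{equation*}
\gamma(\mu\bm{r}^*,q)=F(q)\phi^*(q)-F(0)\phi^*(0)-{\textstyle\int_0^q}\phi^*(t)\,dF(t),
\end{equation*}
whence the bound on $\phi^*$ together with $F\le 1$ yields $|\gamma(\mu\bm{r}^*,q)|\le 2F(q)\|\Psi\|_\infty\le 2\|\Psi\|_\infty$. Consequently $\mathcal{E}(\mu\bm{r}^*)\le e^{2\|\Psi\|_\infty}$ is a bounded positive local martingale, hence a uniformly integrable true $\Wn$--martingale with mean $1$. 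For~\eqref{eq:uniqueness222}, CMG with drift $\mu\bm{r}^*$ (now rigorous) recasts the BSDE as $\phi^*(q)+\tfrac12\int_q^1\mu([0,t])\|\bm{r}^*\|_2^2\,dt=\Psi-\int_q^1\bm{r}^*\cdot d\bm{W}$ under $\widetilde{\mathbb{W}}:=\mathbb{W}_{\mu\bm{r}^*}$; localizing via $\sigma_k$ so that the stochastic integral becomes a genuine $\widetilde{\mathbb{W}}$--martingale, taking $\widetilde{\mathbb{E}}$ at $q=0$, and using $|\widetilde{\mathbb{E}}[\phi^*(\sigma_k)-\phi^*(0)]|\le 2\|\Psi\|_\infty$ produces $\tfrac12\widetilde{\mathbb{E}}[\int_0^{\sigma_k}\mu\|\bm{r}^*\|_2^2\,dt]\le 2\|\Psi\|_\infty$; monotone convergence then delivers~\eqref{eq:uniqueness222} together with the membership $\bm{r}^*\in\widehat{D}^\mu_n$. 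The main obstacle is the limit passage in the first step: $\phi^*(\sigma_k)$ is only $\Wn$--a.s.\ finite (not uniformly bounded) and the densities $\mathcal{E}(\pm\mu\bm{r}^{*,k},1)$ converge pointwise to $\mathcal{E}(\pm\mu\bm{r}^*,1)$ but may lose mass a priori, so the interchange of limit and conditional expectation requires a delicate uniform-integrability or bootstrap argument.
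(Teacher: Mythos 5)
Your sandwich is algebraically correct: under $\mathbb{W}^{\pm,k}$ one indeed obtains $\mathbb{E}^{-,k}[\phi^*(\sigma_k)\mid\mathcal{F}^{\otimes n}_q]\leq\phi^*(q)\leq\mathbb{E}^{+,k}[\phi^*(\sigma_k)\mid\mathcal{F}^{\otimes n}_q]$, and the remaining items (the integration-by-parts identity $\gamma(\mu\bm{r}^*,q)=F(q)\phi^*(q)-F(0)\phi^*(0)-\int_0^q\phi^*\,dF$, the upgrade of the bounded local martingale $\mathcal{E}(\mu\bm{r}^*)$ to a true martingale, and the energy estimate~\eqref{eq:uniqueness222}) essentially mirror the paper's argument once~\eqref{eq:uniqueness2} is in hand.

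The genuine gap is the limit $k\to\infty$, which you correctly flag but do not resolve, and the proposed Vitali/Scheff\'e repair is circular. Writing
\begin{equation*}
\mathbb{E}^{+,k}[\phi^*(\sigma_k)\mid\mathcal{F}^{\otimes n}_q]
=\Econd{\mathcal{E}(\mu\bm{r}^*_q,1)\Psi\bm{1}_{\{\sigma_k=1\}}}{q}
+\Econd{\mathcal{E}(\mu\bm{r}^*_q,\sigma_k)\phi^*(\sigma_k)\bm{1}_{\{\sigma_k<1\}}}{q},
\end{equation*}
the first term is controlled by $\|\Psi\|_\infty$, but in the second $\phi^*(\sigma_k)\bm{1}_{\{\sigma_k<1\}}$ carries no a priori bound (membership in $S_n$ only gives an a.s.-finite supremum), while the residual weight satisfies $\Econd{\mathcal{E}(\mu\bm{r}^*_q,\sigma_k)\bm{1}_{\{\sigma_k<1\}}}{q}\to 1-\Econd{\mathcal{E}(\mu\bm{r}^*_q,1)}{q}$, a limit that is strictly positive precisely when $\mathcal{E}(\mu\bm{r}^*_q)$ is a strict supermartingale; whether it is a true martingale is one of the conclusions of the proposition, so invoking Scheff\'e, which would require $\E[\mathcal{E}(\mu\bm{r}^*,\sigma_k)]\to\E[\mathcal{E}(\mu\bm{r}^*,1)]$ and hence mean one for the limit, begs the question. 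The paper is structured to break this circularity: Lemma~\ref{lem:martcond4} obtains the lower bound for free from the supermartingale property via the elementary inequality $\phi^*(t)-\phi^*(q)\leq\frac1x(\mathcal{E}(x\bm{r}^*_q,t)-1)$; the quantitative Cauchy--Schwarz tail estimate of Lemma~\ref{lem:k_lemma}, $\E[\mathcal{E}^{\sigma_{k,q}}(\epsilon\bm{r}^*_q,1)\bm{1}_{\{\sigma_{k,q}<1\}}]\leq e^{-\epsilon k}$ for a matched choice of $k=k(\epsilon)$, is then fed through an exponential Jensen inequality to give the $\epsilon$-dependent crude upper bound of Lemma~\ref{lem:martcond5}; only with that in hand can Lemma~\ref{lem:martcond6} bootstrap to a bounded DDE, a genuine martingale, and the sharp $\|\Psi\|_\infty$ bound. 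Your proposal is missing an analogue of this tail control, which is the heart of the hard half of the proof.
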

A direct consequence of the above Proposition is the following
\begin{corollary}
\label{rem:uniqueness222}
If $\Psi \in B_n$ and the pair $(\phi^*,\bm{r}^*)\in S_n\times D_n$ is a solution to the \BSDE, for any $p>0$, there exists  constant $K(p,\|\Psi\|_{\infty})$ independent of $\mu$ such that for any $\bm{v}\in \widehat{D}^{\mu}_n$
\begin{equation}
\label{bound_rr}
\widetilde{\mathbb{E}}_{\mu\bm{r}^*}\left[\left(\int^{1}_{0}dq\,\|\bm{r}^*(q,\bm{\omega})\|^{2}\right)^{p}\right]\leq K(p,\|\Psi\|_{\infty})
\end{equation}
\begin{equation}
\label{bound_rr2}
\E\left[\left(\int^{1}_{0}dq\,\|\bm{r}^*(q,\bm{\omega})\|^{2}\right)^{p}\right]\leq K(p,\|\Psi\|_{\infty})
\end{equation}
and
\been{
\label{bound_rr3}
\Eecond{\mu\bm{r}^*}{\int^{1}_{q}dt\,\|\bm{r}^*(t,\bm{\omega})\|^{2}}{q}\leq  K(1,\|\Psi\|_{\infty}),\quad \Wnas.
}
and
\been{
\label{bound_rr33}
\Econd{\int^{1}_{q}dt\,\|\bm{r}^*(t,\bm{\omega})\|^{2}}{q}\leq  K(1,\|\Psi\|_{\infty}),\quad \Wnas.
}
\end{corollary}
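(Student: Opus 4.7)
The plan is to exploit two consequences of Proposition~\ref{prop:uniqueness2}: the two-sided pointwise bound on the Radon--Nikodym density $\mathcal{E}(\mu\bm{r}^*)$ (from~\eqref{eq:uniqueness22}), which makes $\Wn$ and $\mathbb{W}_{\mu\bm{r}^*}$ equivalent with controlled densities, and the uniform bound $|\phi^*|\leq \|\Psi\|_{\infty}$ from~\eqref{eq:uniqueness2}. The proof proceeds in three steps: transfer between the two measures, the base case $p=1$ via It\^o localisation, and a Garsia--Neveu iteration for $p>1$.

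\emph{Measure transfer.} From~\eqref{eq:uniqueness22} one immediately has
\been{
e^{-2\|\Psi\|_{\infty}}\leq \mathcal{E}(\mu\bm{r}^*,q,\bm{\omega})\leq e^{2\|\Psi\|_{\infty}},\quad \forall q\in[0,1],\ \Wnas.
}
Since $\mathcal{E}(\mu\bm{r}^*)$ is a true martingale with mean $1$ by Proposition~\ref{prop:uniqueness2}, the Bayes formula for conditional expectations yields, for every non-negative $\mathcal{F}^{\otimes n}_1$-measurable $Y$ and every $q\in[0,1]$,
\been{
e^{-4\|\Psi\|_{\infty}}\,\Econd{Y}{q}\leq \Eecond{\mu\bm{r}^*}{Y}{q}\leq e^{4\|\Psi\|_{\infty}}\,\Econd{Y}{q},\quad \Wnas.
}
Hence~\eqref{bound_rr2} and~\eqref{bound_rr33} follow from~\eqref{bound_rr} and~\eqref{bound_rr3} at the cost of a multiplicative factor $e^{4\|\Psi\|_{\infty}}$ in the constant, so it suffices to prove the estimates under $\mathbb{W}_{\mu\bm{r}^*}$.

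\emph{Base case $p=1$.} By Theorem~\ref{lem:martcond}, under $\mathbb{W}_{\mu\bm{r}^*}$ the shifted process $\bm{W}_{\mu\bm{r}^*}$ is an $n$-dimensional Brownian motion and the \BSDE\ rewrites
\been{
\phi^*(q,\bm{\omega})=\Psi(\bm{\omega})-\int_q^1 \bm{r}^*\cdot d\bm{W}_{\mu\bm{r}^*}-\tfrac12\int_q^1 \mu([0,t])\|\bm{r}^*\|_2^2\,dt.
}
Introduce the localising stopping times $\tau_k:=\inf\{q\in[0,1):\int_0^q\|\bm{r}^*\|_2^2\,ds\geq k\}\wedge 1$. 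Since $\int_0^{\tau_k}\|\bm{r}^*\|_2^2\,dt\leq k$, the stochastic integrals between $q\wedge\tau_k$ and $\tau_k$ appearing below are true $\mathbb{W}_{\mu\bm{r}^*}$-square integrable martingales. Subtracting the rewritten BSDE at times $\tau_k$ and $q$, taking $\Eecond{\mu\bm{r}^*}{\,\cdot\,}{q}$ and using $|\phi^*|\leq \|\Psi\|_{\infty}$ gives
\been{
\Eecond{\mu\bm{r}^*}{\int_q^{\tau_k}\mu([0,t])\|\bm{r}^*\|_2^2\,dt}{q}\leq 4\|\Psi\|_{\infty}.
}
Next, applying It\^o's formula to $(\phi^*)^2$ on $[q\wedge\tau_k,\tau_k]$, taking $\Eecond{\mu\bm{r}^*}{\,\cdot\,}{q}$ and combining with the previous estimate, yields
\been{
\Eecond{\mu\bm{r}^*}{\int_q^{\tau_k}\|\bm{r}^*\|_2^2\,dt}{q}\leq \|\Psi\|_{\infty}^2+\|\Psi\|_{\infty}\cdot 4\|\Psi\|_{\infty}=5\|\Psi\|_{\infty}^2.
}
Monotone convergence as $k\to\infty$ (with $\tau_k\uparrow 1$ $\Wn$-a.s. since $\bm{r}^*\in D_n$) establishes~\eqref{bound_rr3} with $K_1:=5\|\Psi\|_{\infty}^2$.

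\emph{Iteration for $p>1$.} Setting $X(q):=\int_0^q\|\bm{r}^*\|_2^2\,dt$, the chain rule gives for every $p\geq 1$
\been{
(X(1)-X(q))^p=p\int_q^1 (X(1)-X(s))^{p-1}\|\bm{r}^*(s,\bm{\omega})\|_2^2\,ds.
}
Taking $\Eecond{\mu\bm{r}^*}{\,\cdot\,}{q}$, applying Fubini and the tower property with the induction hypothesis $\Eecond{\mu\bm{r}^*}{(X(1)-X(s))^{p-1}}{s}\leq (p-1)!\,K_1^{p-1}$, one obtains
\been{
\Eecond{\mu\bm{r}^*}{(X(1)-X(q))^p}{q}\leq p\,(p-1)!\,K_1^{p-1}\,\Eecond{\mu\bm{r}^*}{\int_q^1\|\bm{r}^*\|_2^2\,dt}{q}\leq p!\,K_1^p.
}
Setting $q=0$ and taking $\widetilde{\mathbb{E}}_{\mu\bm{r}^*}$ proves~\eqref{bound_rr}. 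The main technical obstacle is concentrated in the base case: because $\bm{r}^*\in D_n$ has only $\Wn$-a.s. finite quadratic variation, the stochastic integrals in the rewritten BSDE and in the It\^o expansion of $(\phi^*)^2$ are only local martingales a priori, so the localisation via $\tau_k$ together with the uniform $L^{\infty}$ bound on $\phi^*$ is what makes the conditional computations rigorous; once this is granted, the iteration and the measure transfer are entirely routine.
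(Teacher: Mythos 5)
Your proof is correct, but it takes a genuinely different route from the paper's.

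The paper's key trick is to change measure to $\mathbb{W}_{\frac{1}{2}\mu\bm{r}^*}$: under that measure the drift in $d\phi^*$ cancels exactly, so $\phi^*$ becomes a bounded martingale whose quadratic variation is $\int\|\bm{r}^*\|_2^2$, and the $L^p$ estimates then drop out of the It\^o isometry and Burkholder--Davis--Gundy applied to the martingale $I_{\frac{1}{2}\mu\bm{r}^*_{0,\sigma_{k,0}}}(\bm{r}^*_{0,\sigma_{k,0}})$. This requires the comparison inequalities~\eqref{eq:trivialzeta}, \eqref{eq:EE1}, \eqref{eq:EE2} to relate $\mathcal{E}(\mu\bm{r}^*)$ and $\mathcal{E}(\tfrac{1}{2}\mu\bm{r}^*)$, because the pointwise bound~\eqref{eq:uniqueness22} controls only the former, and then a Fatou passage $k\to\infty$. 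You instead stay under $\mathbb{W}_{\mu\bm{r}^*}$ (and transfer to $\mathbb{W}^{\otimes n}$ directly by the two-sided bound $e^{-2\|\Psi\|_\infty}\leq\mathcal{E}(\mu\bm{r}^*)\leq e^{2\|\Psi\|_\infty}$), absorb the non-vanishing drift by applying It\^o to $(\phi^*)^2$, and then replace BDG by the deterministic Garsia--Neveu chain-rule identity $\bigl(\int_q^1\|\bm{r}^*\|_2^2\bigr)^p=p\int_q^1\bigl(\int_s^1\|\bm{r}^*\|_2^2\bigr)^{p-1}\|\bm{r}^*(s)\|_2^2\,ds$ together with the tower property. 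Both arguments are valid; yours is somewhat more elementary and sidesteps both BDG and the need to compare the two exponential densities, while the paper's exploits the exact martingale structure of $\phi^*$ under the ``half-drift'' measure. One small presentational point: in the It\^o step the integrals $\int_q^{\tau_k}$ should be read as $\int_{q\wedge\tau_k}^{\tau_k}$ so the identity is trivially $0=0$ on $\{\tau_k<q\}$; this is implicit in your argument but worth making explicit, since otherwise the left side $\phi^*(q)^2$ in the rearranged identity should be $\phi^*(q\wedge\tau_k)^2$, and $\Eecond{\mu\bm{r}^*}{\int_{q\wedge\tau_k}^{\tau_k}\phi^*\bm{r}^*\cdot dW_{\mu\bm{r}^*}}{q}=0$ needs the boundedness of $\phi^*$ together with $\int_0^{\tau_k}\|\bm{r}^*\|_2^2\le k$ to guarantee the stopped integral is a genuine square-integrable martingale (you say this, correctly).
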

The proofs of the proposition and the corollary are postponed to the end of the section, after the necessary notation and auxiliary lemmas have been introduced.
The proof of the above corollary (as we
By definition, if $(\phi^*,\bm{r}^*)\in S_n\times D_n$ is a solution to the \BSDE, then:
\begin{enumerate}
\item the process $\phi^*$ is $\(\mathcal{F}^{\otimes n}_q\)_{q\in [0,1]}$--adapted and $\lim_{q\uparrow 1}\phi^*(q,\bm{\omega})=\Psi(\bm{\omega})$;
\item the process $\phi^*$ has a continuous sample path (we will simply say that it is a continuous process).
\end{enumerate}
Given $x \in (0,1]$, define  
\begin{equation}
\label{eq:qx}
q_x := \sup\{q \in [0,1] : \mu([0,q)) < x\},
\end{equation}
with the convention $\sup \emptyset := 0$. Set  
\begin{equation}
\label{eq:qx0}
q_0 := \inf_{x \in (0,1]} q_x = \sup\{q \in [0,1] : \mu([0,q)) = 0\}.
\end{equation}
Recalling the nation~\eqref{eq:uproot}, for $q \in [0,1)$, $\bm{r} \in D_n$, we define  
\begin{equation}
\label{eq:rq2}
\bm{r}_q(q',\bm{\omega}) := \bm{1}_{q' \geq q}\bm{r}(q',\bm{\omega}).
\end{equation}
If $q \in [q_x,1]$, then $\mu([0,q]) \geq x$. Thus, for any $\epsilon \in [0,x]$,
\begin{equation}
\epsilon \int_q^t \|\bm{r}^*(s,\bm{\omega})\|^2_2 \, ds \leq \int_q^t \mu([0,s])\|\bm{r}^*(s,\bm{\omega})\|^2_2 \, ds \leq \int_q^t \|\bm{r}^*(s,\bm{\omega})\|^2_2 \, ds.
\end{equation}
Hence, for any $q \in [q_x,1]$, $x \in (0,1]$, and $t \in [0,1]$, the \BSDE yields
\begin{equation}
\label{eq:Wineq2}
\epsilon \big(\phi^*(t,\bm{\omega}) - \phi^*(q,\bm{\omega})\big) \leq \gamma(\epsilon\bm{r}^*_q,t,\bm{\omega}),
\end{equation}
and
\begin{equation}
\label{eq:Wineq3}
\phi^*(t,\bm{\omega}) - \phi^*(q,\bm{\omega}) \geq \gamma(\bm{r}^*_q,t,\bm{\omega}),
\end{equation}
where $\gamma(\cdot)$ is as defined in~\eqref{zeta}.

We first prove the lower bound of $\phi^*$.
\begin{lemma}
\label{lem:martcond4}
If $(\phi^*,\bm{r}^*)\in S_n\times D_n$ is a solution to the BSDE~\eqref{selfEq1Aux} and
\been{
\E\[|\Psi(\bm{\omega})|\]<\infty,
}
% then $q_0\neq 1$, the process $\Delta_{q_0}(\phi^*)$ is a $\(\mathcal{F}^{\otimes n}_q\)_{q\in [0,1]}$-adapted $\W^{\otimes n}$--supermartingale. Morever, 
If
\been{
\Psi(\bm{\omega})\geq -C,\quad \Wnas
}
then 
\been{
\label{eq:psibound0}
\phi^*(q,\bm{\omega})\geq -C,\quad \forall q\in [q_0,1],\,\Wnas.
}
\end{lemma}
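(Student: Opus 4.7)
The plan is to derive an exponential super-martingale comparison from inequality~\eqref{eq:Wineq2}, take a conditional expectation, pass to the limit $t\uparrow 1$, and use $\Psi\geq -C$ to recover the pointwise lower bound on $\phi^*$. The one-sided direction of~\eqref{eq:Wineq2} is precisely what allows us to obtain a one-sided bound on $\phi^*$; a symmetric upper bound would instead require~\eqref{eq:Wineq3}.

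First, I fix $x\in (0,1]$, $\epsilon\in (0,x]$, and $q\in [q_x,1]$. Exponentiating~\eqref{eq:Wineq2} yields, for every $t\in [q,1]$,
\been{
e^{\epsilon(\phi^*(t,\bm{\omega})-\phi^*(q,\bm{\omega}))}\leq \mathcal{E}(\epsilon \bm{r}^*_q,t,\bm{\omega}),\quad \Wnas.
}
By Lemma~\ref{lem:convDDE}, the process $\mathcal{E}(\epsilon \bm{r}^*_q)$ is a non-negative super-martingale, and $\mathcal{E}(\epsilon \bm{r}^*_q, q, \bm{\omega}) = 1$ because both integrals defining $\gamma(\epsilon \bm{r}^*_q, q, \bm{\omega})$ vanish. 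Taking conditional expectations with respect to $\mathcal{F}^{\otimes n}_q$ therefore gives
\been{
\Econd{e^{\epsilon \phi^*(t,\bm{\omega})}}{q}\leq e^{\epsilon \phi^*(q,\bm{\omega})}\,\Econd{\mathcal{E}(\epsilon \bm{r}^*_q,t,\bm{\omega})}{q}\leq e^{\epsilon \phi^*(q,\bm{\omega})}, \quad \Wnas.
}

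Second, I let $t\uparrow 1$ along a countable dense sequence. Since $\phi^*$ has continuous sample paths with terminal value $\Psi$, we have $e^{\epsilon \phi^*(t,\bm{\omega})}\to e^{\epsilon \Psi(\bm{\omega})}$ $\Wnas$. Conditional Fatou (applicable because $e^{\epsilon \phi^*(t,\bm{\omega})}\geq 0$), combined with $\Psi\geq -C$, yields
\been{
e^{-\epsilon C}\leq \Econd{e^{\epsilon \Psi(\bm{\omega})}}{q}\leq \liminf_{t\uparrow 1}\Econd{e^{\epsilon \phi^*(t,\bm{\omega})}}{q}\leq e^{\epsilon \phi^*(q,\bm{\omega})},
}
so $\phi^*(q,\bm{\omega})\geq -C$ $\Wnas$ for every $q\in [q_x,1]$.

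Finally, because $q_x\downarrow q_0$ as $x\downarrow 0$ by~\eqref{eq:qx0}, every $q\in (q_0,1]$ lies in $[q_x,1]$ for some $x$ small enough; intersecting over rational $x\in (0,1]$ the full-measure events produced in the previous step yields a single event of probability one on which $\phi^*(q,\bm{\omega})\geq -C$ simultaneously for every $q\in (q_0,1]$, and sample-path continuity at $q_0$ then extends the inequality to $q_0$ itself via $\phi^*(q_0,\bm{\omega})=\lim_{q\downarrow q_0}\phi^*(q,\bm{\omega})$. I expect the main technical delicacy to be precisely the limit $t\uparrow 1$: since $\mathcal{E}(\epsilon \bm{r}^*_q)$ is only a super-martingale and $e^{\epsilon \phi^*(t,\bm{\omega})}$ need not be uniformly integrable, one cannot exchange limit and conditional expectation via dominated convergence; the non-negativity of $e^{\epsilon \phi^*(t,\bm{\omega})}$ makes Fatou available and this suffices for the one-sided bound, which is why the symmetric upper bound cannot be produced by the same argument and would require a separate treatment based on~\eqref{eq:Wineq3}.
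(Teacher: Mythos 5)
Your proof is correct and takes essentially the same route as the paper's: exponentiate~\eqref{eq:Wineq2} to a Dol\'eans--Dade supermartingale bound, use the supermartingale property together with conditional Fatou in the limit $t\uparrow 1$, and shrink $x\downarrow 0$ to reach $q_0$. The only cosmetic difference is that the paper first linearizes via $y\leq \frac{1}{x}(e^{xy}-1)$ to obtain the additive estimate $\E[\Psi(\bm{\omega})\,|\,\mathcal{F}^{\otimes n}_q]\leq\phi^*(q,\bm{\omega})$ and only then invokes $\Psi\geq -C$, whereas you remain multiplicative and read the bound directly off $e^{-\epsilon C}\leq\E[e^{\epsilon\Psi(\bm{\omega})}\,|\,\mathcal{F}^{\otimes n}_q]\leq e^{\epsilon\phi^*(q,\bm{\omega})}$; both suffice. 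One small imprecision in your closing step: the null set produced by the argument depends on the fixed $q$, not merely on $x$, so intersecting the full-measure events over rational $x\in(0,1]$ alone does not yield simultaneity over $q\in(q_0,1]$. You should intersect over a countable dense set of $q$-values in $(q_0,1]$ and then appeal to path continuity of $\phi^*$ (as you already do to reach the endpoint $q_0$); with that correction the argument is complete.
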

\begin{proof}
If $q_0=1$, the result is trivial. If $q_0<1$, for any $q>q_0$ there exists some $x>0$ such that $q>q_x$. Combining the inequality~\eqref{eq:Wineq2} with $\epsilon=x$, and the inequality $y \leq \frac{1}{x}(e^{xy} - 1)$ for any $y\in \R$, we get that 
\been{
\label{eq:deltaphi0}
\phi^*(t,\bm{\omega}) - \phi^*(q,\bm{\omega})\leq \frac{1}{x}\left(\mathcal{E}(x\bm{r}^*_{q},t,\bm{\omega})-1\right),\quad \forall t\geq q.
}
By Lemma~\ref{lem:convDDE}, $\mathcal{E}(x\bm{r}^*_{q})$ is a $\W^{\otimes n}$--supermartingale. This implies 
\been{
\E\[\mathcal{E}(x\bm{r}^*_{q},t,\bm{\omega})\middle|\mathcal{F}_s^{\otimes n}\]\leq \mathcal{E}(x\bm{r}^*_{q},s,\bm{\omega})=1,\quad \forall s\in [q_x,q].
}
Thus
\been{
\Econd{\Psi^*(1,\bm{\omega}) }{q}- \phi^*(q,\bm{\omega})=\Econd{\phi^*(1,\bm{\omega})- \phi^*(q,\bm{\omega})}{q}\leq 0.
}
We proved the above inequality for any $q\geq [q_x,1]$ with $x\in (0,1]$. We now take the limit $x\to 0$ and extend the inequality to all $q\in [q_0,1]$. The Fatou Lemma yields
\been{
\aled{
&\E\[\Psi^*(1,\bm{\omega}) - \phi^*(q_0,\bm{\omega})\middle|\mathcal{F}_{q_0}^{\otimes n}\]=\E\[\lim_{x\to 0}\(\Psi^*(1,\bm{\omega}) - \phi^*(q_x,\bm{\omega})\)\middle|\mathcal{F}_{q_0}^{\otimes n}\]\\
&=\E\[\lim_{x\to 0}\frac{1}{x}\left(\mathcal{E}(x\bm{r}^*_{q_x},t,\bm{\omega})-1\right)\middle|\mathcal{F}_{q_0}^{\otimes n}\]
\leq\lim_{x\to 0}\frac{1}{x}\left(\E\[\mathcal{E}(x\bm{r}^*_{q_x},t,\bm{\omega})\middle|\mathcal{F}_{q_0}^{\otimes n}\]-1\right)\leq 0.
}
}
Since $\Psi(\bm{\omega}) \geq -C$ $\W^{\otimes n}$--a.s., $\phi^*(q,\bm{\omega})\geq \E\[\Psi^*(1,\bm{\omega}) \middle|\mathcal{F}_{q}^{\otimes n}\]\geq -C$ for any $q\in [q_0,1]$.
\end{proof}
We now derive an upper bound for $\phi^*$. We define
\begin{equation}
\bm{r}^*_{q,T}(t,\bm{\omega}) := \bm{1}_{\{q \leq t \leq T\}} \bm{r}^*(t,\bm{\omega}).
\end{equation}
For any stopping time $T$, we have the following equivalent notations:
\begin{equation}
I^T(\bm{r}^*_{q,T},t,\bm{\omega}) = I_n(\bm{r}^*_{q,T},t,\bm{\omega}),\quad \textup{and}\quad 
\mathcal{E}^T(\epsilon \bm{r}^*_q) = \mathcal{E}(\epsilon \bm{r}^*_{q,T}).
\end{equation}
Given $q\in [0,1]$, we define the sequence of stopping times $(\sigma_{k,q})_{k\in \N}$ such that
\been{
\label{eq:stopping}
\sigma_{k,q}:=\inf\left\{t\in [q,1];\,\int^t_qds\|\bm{r}(s,\bm{\omega})\|^2_2\geq k\right\}.
}
with $\inf \emptyset=1$. Since $\bm{r}^*\in D_n$, then $\sigma_{k,q}\uparrow 1$ $\Wnas$.

Observe that, for any $\alpha \in \R$ and $k > 0$, we have 
$\alpha \bm{r}^*_{q,\sigma_{k,q}} \in \widehat{D}_n$. Hence, by 
Lemma~\ref{lem:martcond}, the process 
$\mathcal{E}(\alpha\bm{r}^*_{q,\sigma_{k,q}})$ is a positive martingale 
with mean one, which allows us to define the probability measure 
$\mathbb{W}_{\alpha\bm{r}^*_{q,\sigma_{k,q}}}$ as in 
\eqref{eq:defWv}. Following the notation in Section~\ref{sec:2}, we let 
$\widetilde{\E}_{\bm{r}^*_{q,\sigma_{k,q}}}$ denote expectation with 
respect to $\mathbb{W}_{\bm{r}^*_{q,\sigma_{k,q}}}$.

The proof of the upper bound proceeds through a sequence of intermediate steps.
\begin{lemma}
\label{lem:k_lemma}
There exists $\epsilon_0>0$ such that, if $\epsilon\in (0,\epsilon_0)$, then there exists $k>0$ such that
\begin{equation}
\E\!\left[\mathcal{E}^{\sigma_{k,q}}(\epsilon\bm{r}^*_q,1,\bm{\omega})\bm{1}_{\{\sigma_{k,q}<1\}}\right]\leq e^{-\epsilon k}.
\end{equation}
\end{lemma}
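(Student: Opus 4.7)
My plan is to combine the martingale property of $\mathcal{E}(\alpha\bm{r}^*_{q,\sigma_{k,q}})$---which is automatic for every $\alpha\in\mathbb{R}$, since the quadratic variation of $\alpha\bm{r}^*_{q,\sigma_{k,q}}$ is uniformly bounded by $\alpha^2 k$---with an explicit representation of the DDE obtained from the BSDE via It\^o's formula, and then use Cauchy--Schwarz to reduce the lemma to an exponential tail bound on $\int_q^1\|\bm{r}^*\|_2^2\,ds$.

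The first step is a key algebraic representation. Applying It\^o's formula to the auxiliary process $\mathcal{E}(\epsilon\bm{r}^*_q,t,\bm{\omega})\,e^{-\epsilon\phi^*(t,\bm{\omega})}$ and substituting the dynamics of $\phi^*$ given by the BSDE~\eqref{selfEq1Aux}, the martingale parts and the quadratic-variation corrections cancel, leaving a process of finite variation. This yields the identity, valid for every $q'\in[q,1]$,
\begin{equation*}
\mathcal{E}(\epsilon\bm{r}^*_q,q',\bm{\omega})=e^{\epsilon(\phi^*(q',\bm{\omega})-\phi^*(q,\bm{\omega}))}\exp\!\left(\tfrac{\epsilon}{2}\int_q^{q'}(\mu([0,s])-\epsilon)\|\bm{r}^*(s,\bm{\omega})\|_2^2\,ds\right),
\end{equation*}
which decouples the DDE into a $\phi^*$-factor (whose lower half is controlled by Lemma~\ref{lem:martcond4}) and an absolutely continuous drift whose sign is governed by $\mu-\epsilon$.

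By Cauchy--Schwarz, $\E[\mathcal{E}(\epsilon\bm{r}^*_{q,\sigma_{k,q}},1,\bm{\omega})\bm{1}_{\{\sigma_{k,q}<1\}}] \leq \E[\mathcal{E}(\epsilon\bm{r}^*_{q,\sigma_{k,q}},1,\bm{\omega})^2]^{1/2}\,\W^{\otimes n}(\sigma_{k,q}<1)^{1/2}$. Using $\mathcal{E}(\epsilon)^2=\mathcal{E}(2\epsilon)\exp(\epsilon^2\int\|\bm{r}^*\|_2^2\,ds)$, the deterministic bound $\int_q^{\sigma_{k,q}}\|\bm{r}^*\|_2^2\,ds\leq k$, and the martingale identity $\E[\mathcal{E}(2\epsilon\bm{r}^*_{q,\sigma_{k,q}},1,\bm{\omega})]=1$, the first factor is bounded by $e^{\epsilon^2 k/2}$. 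The lemma therefore reduces to producing an exponential tail bound $\W^{\otimes n}(\sigma_{k,q}<1)\leq C\,e^{-(2\epsilon+\epsilon^2)k}$ for $k$ sufficiently large.

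The main obstacle is this tail bound. To address it, I would restrict to $q\in[q_x,1]$ with some $x>0$ so that $\mu([0,s])\geq x$ on $[q,1]$, and choose $\epsilon<x$ so that $\mu-\epsilon\geq x-\epsilon>0$. Evaluating the identity of the second paragraph at $q'=\sigma_{k,q}$ on the event $\{\sigma_{k,q}<1\}$, where $\int_q^{\sigma_{k,q}}\|\bm{r}^*\|_2^2\,ds=k$, one finds $\mathcal{E}(\epsilon\bm{r}^*_{q,\sigma_{k,q}},1,\bm{\omega}) \geq e^{\epsilon(\phi^*(\sigma_{k,q})-\phi^*(q))}e^{\epsilon(x-\epsilon)k/2}$; combining this with the martingale identity $\E[\mathcal{E}(\epsilon\bm{r}^*_{q,\sigma_{k,q}},1,\bm{\omega})]=1$ and the lower bound $\phi^*(\sigma_{k,q})\geq -\|\Psi\|_\infty$ of Lemma~\ref{lem:martcond4} yields
\begin{equation*}
\E\!\left[e^{-\epsilon\phi^*(q,\bm{\omega})}\bm{1}_{\{\sigma_{k,q}<1\}}\right]\leq e^{\epsilon\|\Psi\|_\infty}e^{-\epsilon(x-\epsilon)k/2}.
\end{equation*}
The delicate point, which is the true technical heart of the argument, is to convert this estimate---which on its own only controls $\{\sigma_{k,q}<1\}\cap\{\phi^*(q,\bm{\omega})\leq 0\}$---into an unconditional exponential tail bound on $\W^{\otimes n}(\sigma_{k,q}<1)$ without using an a priori upper bound on $\phi^*$ (such a bound being precisely what Proposition~\ref{prop:uniqueness2} aims to establish). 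The plan to close this gap is to carry out a symmetric analysis with $\mathcal{E}(-\lambda\bm{r}^*_{q,\sigma_{k,q}})$ for $\lambda>0$, which gives the complementary control on the event $\{\phi^*(q,\bm{\omega})>0\}$; choosing $\epsilon_0<x$ small enough and $k$ sufficiently large then closes the Cauchy--Schwarz estimate and produces the bound $e^{-\epsilon k}$.
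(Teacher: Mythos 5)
Your Cauchy--Schwarz reduction (third paragraph) coincides with the paper's first step and is correct: writing $\mathcal{E}(\epsilon\bm{v})^2=\mathcal{E}(2\epsilon\bm{v})\exp\bigl(\epsilon^2\int\|\bm{v}\|_2^2\,ds\bigr)$, using $\int_q^{\sigma_{k,q}}\|\bm{r}^*\|_2^2\,ds\leq k$ and $\mathbb{E}[\mathcal{E}^{\sigma_{k,q}}(2\epsilon\bm{r}^*_q,1,\bm{\omega})]=1$ gives
\begin{equation*}
\mathbb{E}\bigl[\mathcal{E}^{\sigma_{k,q}}(\epsilon\bm{r}^*_q,1,\bm{\omega})\bm{1}_{\{\sigma_{k,q}<1\}}\bigr]\leq e^{\epsilon^2 k/2}\,\mathbb{W}^{\otimes n}(\sigma_{k,q}<1)^{1/2}.
\end{equation*}
But you then assert the lemma reduces to a quantitative tail estimate $\mathbb{W}^{\otimes n}(\sigma_{k,q}<1)\leq Ce^{-(2\epsilon+\epsilon^2)k}$, and your attempt to prove one via the BSDE representation and Lemma~\ref{lem:martcond4} stops short: as you note, your inequality controls only $\{\sigma_{k,q}<1\}\cap\{\phi^*(q,\bm{\omega})\leq 0\}$, and the proposed symmetric pass with $\mathcal{E}(-\lambda\bm{r}^*_{q,\sigma_{k,q}})$ does not supply the missing half. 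The same BSDE identity gives $\mathcal{E}(-\lambda\bm{r}^*_{q,\sigma_{k,q}},1,\bm{\omega}) = e^{-\lambda(\phi^*(\sigma_{k,q},\bm{\omega})-\phi^*(q,\bm{\omega}))}\exp\bigl(-\tfrac{\lambda}{2}\int_q^{\sigma_{k,q}}(\mu([0,s])+\lambda)\|\bm{r}^*\|_2^2\,ds\bigr)$; running the martingale argument requires a \emph{lower} bound on this quantity on $\{\sigma_{k,q}<1\}$, which in turn needs $\phi^*(\sigma_{k,q},\bm{\omega})$ controlled from \emph{above}---precisely the estimate Lemmas~\ref{lem:martcond5} and~\ref{lem:martcond6} are trying to derive from Lemma~\ref{lem:k_lemma}. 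So this route is circular, and the gap you flag is real.

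The paper's proof uses no quantitative tail bound at all. Set $f_q(k):=-\log\mathbb{W}^{\otimes n}(\sigma_{k,q}<1)$. Since $\bm{r}^*\in D_n$, $\int_q^1\|\bm{r}^*\|_2^2\,ds<\infty$ almost surely, so $f_q(k)\to\infty$ as $k\to\infty$; that qualitative fact is the only input needed. The Cauchy--Schwarz bound above reads $\exp\bigl(\tfrac12\epsilon^2k-\tfrac12 f_q(k)\bigr)$, and the lemma's quantifier order---there exists $\epsilon_0$ such that for all $\epsilon<\epsilon_0$ there exists $k$---lets one choose $k$ \emph{after} $\epsilon$. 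With $g_q(k):=-1+\sqrt{1+f_q(k)/k}$ one checks that $\epsilon\leq g_q(k)$ is equivalent to $f_q(k)\geq(2\epsilon+\epsilon^2)k$, which is equivalent to $\exp(\tfrac12\epsilon^2k-\tfrac12 f_q(k))\leq e^{-\epsilon k}$. Taking $\epsilon_0:=\sup_{k>0}g_q(k)$, which is strictly positive because $f_q(k)>0$ eventually, gives exactly the claim. In particular the restriction $q\in[q_x,1]$, the It\^o identity for $\mathcal{E}(\epsilon\bm{r}^*_q)e^{-\epsilon\phi^*}$, and the lower bound from Lemma~\ref{lem:martcond4} play no role in this lemma; those ingredients belong to the subsequent Lemma~\ref{lem:martcond5}.
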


\begin{proof}
For $k>0$, set
\begin{equation}
f_q(k):=-\log\W^{\otimes n}(\sigma_{k,q}<1)=-\log\left(\W^{\otimes n}\left(\int^1_0dt\|\bm{r}(t,\bm{\omega})\|^2_2>k\right)\),
\end{equation}
with $-\log(0)=\infty$. Then $f_q(k)\ge 0$ and, since $\bm{r}^*\in D_n$, we have $\lim_{k\to\infty} f_q(k)=\infty$.  By the Cauchy--Schwarz inequality
\been{
\aled{
&\E[\mathcal{E}^{\sigma_{k,q}}(\epsilon\bm{r}^*_q,1,\bm{\omega})\bm{1}_{\{\sigma_{k,q}<1\}}]\\
&\leq \sqrt{\E\[\exp\(2\epsilon I(\bm{r}^*_q,\sigma_{k,q},\bm{\omega})-\epsilon^2\int^{\sigma_{k,q}}_qdt\|\bm{r}^*_q(t,\bm{\omega})\|^2_2\)\]}\sqrt{\E[\bm{1}_{\{\sigma_{k,q}<1\}}]}\\
&\leq \sqrt{\E\[\mathcal{E}^{\sigma_{k,q}}(2\epsilon \bm{r}^*_q,1,\bm{\omega})\exp\(\epsilon^2\int^{\sigma_{k,q}}_qdt\|\bm{r}^*_q(t,\bm{\omega})\|^2_2\)\]}\sqrt{\E[\bm{1}_{\{\sigma_{k,q}<1\}}]}\leq e^{\frac{1}{2}\epsilon^2k-\frac{1}{2}f_q(k)}.
}
}
Define
\begin{equation}
g_q(k):=\frac{-k+\sqrt{k^2+k f_q(k)}}{k}.
\end{equation}
Since $f_q(k)\to\infty$, there exists $k_0$ such that $g_q(k)>0$ for all $k>k_0$. Taking $\epsilon_0\leq \sup_{k>0}g_q(k)$, for any $\epsilon\in (0,\epsilon_0)$ one can find $k>k_0$ with $\epsilon\le g_q(k)$, which implies
\begin{equation}
\exp\!\Big(\tfrac{1}{2}\epsilon^2k-\tfrac{1}{2}f_q(k)\Big)\le e^{-\epsilon k}.
\end{equation}
\end{proof}

We now derive a preliminary upper bound. Although this estimate does not hold uniformly for all $q \in [q_0,1]$, it serves as a necessary intermediate step toward establishing the full result.

\begin{lemma}
\label{lem:martcond5}
If $(\phi^*,\bm{r}^*)\in S_n\times D_n$ is a solution to the BSDE~\eqref{selfEq1Aux} and
\been{
\E\[|\Psi(\bm{\omega})|\]<\infty,
}
If
\been{
\Psi(\bm{\omega})\leq C,\quad \Wnas
}
then for sufficiently small $\epsilon>0$ there exists $M_\epsilon>0$ such that
\been{
\label{eq:psibound00}
\phi^*(q,\bm{\omega})\leq C+M_{\epsilon},\quad \forall q\in [q_\epsilon,1],\,\Wnas.
}
\end{lemma}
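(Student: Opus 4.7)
The plan is to mirror the argument of Lemma~\ref{lem:martcond4}, but since inequality~\eqref{eq:Wineq2} only yields a lower bound on $\phi^*(q)$, the upper bound must be obtained from inequality~\eqref{eq:Wineq3} combined with the stopping time $\sigma_{k,q}$ from~\eqref{eq:stopping}, a Girsanov change of measure, and the tail estimate of Lemma~\ref{lem:k_lemma}. The closure is a Gronwall-type fixed-point argument on the positive excess $(\phi^*-C)^+$.

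First I would perform the change of measure via $\mathcal{E}(\epsilon\bm{r}^*_{q,\sigma_{k,q}})$, which is a genuine martingale by Novikov (the stopped quadratic variation is bounded by $k$), and apply Theorem~\ref{lem:martcond}. Under the tilted probability $\widetilde{\mathbb{W}}_{\epsilon\bm{r}^*_{q,\sigma_{k,q}}}$, the BSDE~\eqref{selfEq1Aux} on $[q,\sigma_{k,q}]$ rewrites as
\been{
\phi^*(\sigma_{k,q},\bm{\omega})-\phi^*(q,\bm{\omega})=\int_q^{\sigma_{k,q}}\bm{r}^*\cdot d\bm{W}_{\epsilon\bm{r}^*_{q,\sigma_{k,q}}}+\int_q^{\sigma_{k,q}}\left(\epsilon-\tfrac{1}{2}\mu([0,t])\right)\|\bm{r}^*\|^2_2\,dt.
}
The stochastic integral is a true $\widetilde{\mathbb{W}}$-martingale by Lemma~\ref{lem:mart_cond}. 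Taking tilted conditional expectation and using $\mu([0,t])\in[\epsilon,1]$ for every $t\geq q\geq q_\epsilon$ yields the key estimate
\been{
\phi^*(q,\bm{\omega})\leq\Eecond{\epsilon\bm{r}^*_{q,\sigma_{k,q}}}{\phi^*(\sigma_{k,q},\bm{\omega})}{q}+(\tfrac{1}{2}-\epsilon)k, \qquad \Wnas.
}

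Next I would close the bound via a fixed-point argument. Splitting the tilted conditional expectation over $\{\sigma_{k,q}=1\}$ and its complement: on the first event, $\phi^*(\sigma_{k,q})=\Psi\leq C$; on the second, a conditional version of Lemma~\ref{lem:k_lemma} yields $\widetilde{\mathbb{W}}_{\epsilon\bm{r}^*_{q,\sigma_{k,q}}}(\sigma_{k,q}<1\,|\,\Fq{q})\leq e^{-\epsilon k}$. Setting $\psi^*:=(\phi^*-C)^+$ and $M:=\mathrm{ess\,sup}_{q\in[q_\epsilon,1],\bm{\omega}}\psi^*(q,\bm{\omega})$, subtracting $C$ throughout in the preceding display leads to
\been{
\psi^*(q,\bm{\omega})\leq M\,e^{-\epsilon k}+(\tfrac{1}{2}-\epsilon)k, \qquad \Wnas.
}
Taking the essential supremum on the left yields the Gronwall-type inequality $M(1-e^{-\epsilon k})\leq(\tfrac{1}{2}-\epsilon)k$; choosing $k$ sufficiently large that $e^{-\epsilon k}\leq\tfrac{1}{2}$ delivers $M\leq(1-2\epsilon)k=:M_\epsilon$.

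The hard part will be justifying a priori that $M<\infty$; otherwise the fixed-point inequality is vacuous, and the conditional expectation in the first estimate need not even be finite. Pathwise finiteness of $\sup_t|\phi^*(t,\bm{\omega})|$ follows from $\phi^*\in S_n$, but uniformity in $\bm{\omega}$ is not granted. I would handle this by truncating with $T_L:=\inf\{t\geq q:\phi^*(t)\geq C+L\}\wedge 1$ and rerunning the entire argument with $\sigma_{k,q}$ replaced by $\sigma_{k,q}\wedge T_L$. On this stopped interval $\phi^*$ is bounded by $C+L$ pointwise, so all integrability issues disappear and one obtains a bound featuring an additional term of the form $L\,\widetilde{\mathbb{W}}(T_L<1\,|\,\Fq{q})$. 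A monotone limit $L\uparrow\infty$, exploiting the a.s.\ finiteness of $\sup_t\phi^*(t,\bm{\omega})$ to ensure that $L\,\widetilde{\mathbb{W}}(T_L<1)\to 0$, then recovers the untruncated estimate and closes the argument.
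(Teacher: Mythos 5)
Your route is genuinely different from the paper's, and the gap sits exactly where you suspect. The paper never works additively with $\phi^*$: it exponentiates the \BSDE{} on $[q,\sigma_{k,q}]$ to get the pathwise identity
\been{
\aled{
&\mathcal{E}^{\sigma_{k,q}}(\epsilon\bm{r}^*_q,1,\bm{\omega})\exp\(-\bm{1}_{\{\sigma_{k,q}=1\}}\tfrac{\epsilon}{2}\int^{\sigma_{k,q}}_qdt\,(\mu([0,t])-\epsilon)\|\bm{r}^*(t,\bm{\omega})\|^2_2\)\\
&\qquad=e^{\epsilon\Psi(\bm{\omega})-\epsilon\phi^*(q,\bm{\omega})}\bm{1}_{\{\sigma_{k,q}=1\}}+\mathcal{E}^{\sigma_{k,q}}(\epsilon\bm{r}^*_q,1,\bm{\omega})\bm{1}_{\{\sigma_{k,q}<1\}},
}
}
so that the unknown enters only through the $\Fq{q}$--measurable factor $e^{-\epsilon\phi^*(q,\bm{\omega})}$, the bad event $\{\sigma_{k,q}<1\}$ contributes only the stopped DDE (controlled by Lemma~\ref{lem:k_lemma}), and the left-hand side is bounded below by $e^{-\epsilon k/2}$ via Jensen; solving for $\phi^*(q,\bm{\omega})$ then requires no integrability or a priori bound on $\phi^*$ whatsoever. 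Your additive estimate $\phi^*(q,\bm{\omega})\le\Eecond{\epsilon\bm{r}^*_{q,\sigma_{k,q}}}{\phi^*(\sigma_{k,q},\bm{\omega})}{q}+(\tfrac12-\epsilon)k$ is correct (the Girsanov step and the martingale claim are fine), but exploiting it forces you to control $\phi^*(\sigma_{k,q})$ on $\{\sigma_{k,q}<1\}$, which is where your argument breaks.

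Concretely, your fixed-point inequality is vacuous unless $M=\operatorname{ess\,sup}_{q\in[q_\epsilon,1],\bm{\omega}}(\phi^*-C)^+<\infty$; membership in $S_n$ gives only pathwise finiteness of $\sup_t|\phi^*(t,\bm{\omega})|$, not an essential bound in $\bm{\omega}$, and finiteness of $M$ is essentially the conclusion of the lemma (it is only available after Proposition~\ref{prop:uniqueness2}). The truncation repair does not close this: replacing $\sigma_{k,q}$ by $\sigma_{k,q}\wedge T_L$ leaves an error term of order $L\,\W_{\epsilon\bm{r}^*_{q,\sigma_{k,q}}}(T_L<1\mid \Fq{q})$, and almost sure finiteness of $\sup_t\phi^*(t,\bm{\omega})$ only yields $\W_{\epsilon\bm{r}^*}(T_L<1)\to 0$, not $L\,\W_{\epsilon\bm{r}^*}(T_L<1)\to 0$; the latter is a first-moment/uniform-integrability statement for $\sup_t(\phi^*-C)^+$ under the tilted measure, i.e.\ precisely the kind of estimate you are trying to establish, so the limit $L\uparrow\infty$ cannot be justified from the stated hypotheses. (A smaller point, which the paper's own write-up shares: Lemma~\ref{lem:k_lemma} is an unconditional bound, and invoking it inside a conditional expectation given $\Fq{q}$ needs an additional argument.) To repair your scheme you would have to treat the bad event multiplicatively, e.g.\ estimate $e^{\epsilon\phi^*}$ there instead of $\phi^*$ --- which is exactly the paper's proof.
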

\begin{proof}
Given $q\in [q_{\epsilon},1]$, we have
\been{
\aled{
&\mathcal{E}^{\epsilon \sigma_{k,q}}(\bm{r}^*_q,1,\bm{\omega})\exp\(-\bm{1}_{\{\sigma_{k,q}=1\}}\frac{\epsilon}{2}\int^{\sigma_{k,q}}_qdt(\mu([0,t])-\epsilon)\|\bm{r}^*(t,\bm{\omega})\|^2_2\)\\
&= e^{\epsilon \Psi^*(\bm{\omega})-\epsilon \phi^*(q,\bm{\omega})}\bm{1}_{\{\sigma_{k,q}=1\}}+\mathcal{E}^{\sigma_{k,q}}(\bm{r}^*_q,1,\bm{\omega})\bm{1}_{\{\sigma_{k,q}<1\}}.
}
}
Note that $\mu([0,t])-\epsilon\geq 0$ fr any $t\in [q_{\epsilon},1]$. Using the above equivalence and Lemma~\ref{lem:k_lemma}, for $\epsilon$ small enough there exist $k>0$ such that
\been{
\aled{
&\Eecond{\epsilon \bm{r}^*_{q,\sigma_{k,q}}}{\exp\(-\bm{1}_{\{\sigma_{k,q}=1\}}\frac{\epsilon}{2}\int^{\sigma_{k,q}}_qdt(\mu([0,t])-\epsilon)\|\bm{r}^*(t,\bm{\omega})\|^2_2\)}{q}\\
&\leq \Econd{e^{\epsilon \Psi^*(\bm{\omega})-\epsilon \phi^*(q,\bm{\omega})}}{q}+e^{-\epsilon k}
}
}
On the other hand, the Jensen inequality gives
\been{
\aled{
&\Eecond{\epsilon \bm{r}^*_{q,\sigma_{k,q}}}{\exp\(-\bm{1}_{\{\sigma_{k,q}=1\}}\frac{\epsilon}{2}\int^{\sigma_{k,q}}_qdt(\mu([0,t])-\epsilon)\|\bm{r}^*(t,\bm{\omega})\|^2_2\)}{q}\\
&\geq \exp\left(-\frac{\epsilon}{2}\Eecond{\epsilon \bm{r}^*_{q,\sigma_{k,q}}}{\bm{1}_{\{\sigma_{k,q}=1\}}\int^{\sigma_{k,q}}_qdt(\mu([0,t])-\epsilon)\|\bm{r}^*(t,\bm{\omega})\|^2_2}{q}\right)\geq e^{-\frac{\epsilon}{2}k}
}
}
Combining the above two inequalities, we get
\been{
\phi^*(q,\bm{\omega})\leq \frac{1}{\epsilon}\log\left(\Econd{e^{\epsilon \Psi(\bm{\omega})}}{q}\right)-\frac{1}{\epsilon}\log(e^{-\frac{\epsilon}{2}k}-e^{-\epsilon k}).
}
the term $\frac{1}{\epsilon}\log(e^{-\frac{\epsilon}{2}k}-e^{-\epsilon k})$ is a positive constant dependent on $\epsilon$. Moreover, since $\Psi(\bm{\omega})<C$ $\Wnas$, we have $\Econd{e^{\epsilon \Psi(\bm{\omega})}}{q}\leq e^{\epsilon C}$
\end{proof}
The next lemma eliminates the dependence on $\epsilon$ in the upper bound and establishes a uniform bound for $\phi^*$ on $[q_0,1]$.
\begin{lemma}
\label{lem:martcond6}
If $(\phi^*,\bm{r}^*)\in S_n\times D_n$ is a solution to the BSDE~\eqref{selfEq1Aux} and
\been{
\E\[|\Psi(\bm{\omega})|\]<\infty.
}
If
\been{
\Psi(\bm{\omega})\leq C,\quad \Wnas
}
then
\been{
\label{eq:psibound01}
\phi^*(q,\bm{\omega})\leq C,\quad \forall q\in [q_0,1],\,\Wnas.
}
\end{lemma}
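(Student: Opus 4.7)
The plan is to bootstrap the non-uniform bound $\phi^*(q)\leq C+M_\epsilon$ of Lemma~\ref{lem:martcond5} into the sharp bound $\phi^*(q)\leq C$ by an exponential tilting argument combined with a stopping time truncation. First I would observe that, since $\mu([0,s])\leq 1$, the BSDE~\eqref{selfEq1Aux} applied between $q$ and any stopping time $\tau\in[q,1]$ yields
\been{
\phi^*(\tau,\bm{\omega})-\phi^*(q,\bm{\omega})\geq \gamma(\bm{r}^*_{q,\tau},1,\bm{\omega}),
}
so that $e^{\phi^*(\tau,\bm{\omega})-\phi^*(q,\bm{\omega})}\geq \mathcal{E}(\bm{r}^*_{q,\tau},1,\bm{\omega})$.

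Next I would choose $\tau=\sigma_{k,q}$, the stopping time from~\eqref{eq:stopping}, so that $\int_0^1\|\bm{r}^*_{q,\sigma_{k,q}}\|_2^2\,dt\leq k$. By Novikov's criterion (or equivalently Lemma~\ref{lem:martcond}, since $\bm{r}^*_{q,\sigma_{k,q}}\in\widehat{D}_n$ trivially), $\mathcal{E}(\bm{r}^*_{q,\sigma_{k,q}})$ is a genuine $\mathbb{W}^{\otimes n}$-martingale with initial value $\mathcal{E}(\bm{r}^*_{q,\sigma_{k,q}},q,\bm{\omega})=1$. Taking conditional expectation given $\mathcal{F}_q^{\otimes n}$ in the previous inequality therefore gives
\been{
e^{-\phi^*(q,\bm{\omega})}\,\Econd{e^{\phi^*(\sigma_{k,q},\bm{\omega})}}{q}\geq \Econd{\mathcal{E}(\bm{r}^*_{q,\sigma_{k,q}},1,\bm{\omega})}{q}=1,
}
which rearranges to $\phi^*(q,\bm{\omega})\leq \log \Econd{e^{\phi^*(\sigma_{k,q},\bm{\omega})}}{q}$.

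The key step is then to pass to the limit $k\to\infty$. Since $\bm{r}^*\in D_n$, $\sigma_{k,q}\uparrow 1$ $\Wnas$, and by the continuity of $\phi^*$ at time $1$ we have $\phi^*(\sigma_{k,q},\bm{\omega})\to\Psi(\bm{\omega})$ $\Wnas$. For $q\in[q_\epsilon,1]$ and hence $\sigma_{k,q}\in[q_\epsilon,1]$, Lemma~\ref{lem:martcond5} supplies the uniform upper bound $\phi^*(\sigma_{k,q},\bm{\omega})\leq C+M_\epsilon$, which provides the integrable dominating envelope needed for dominated convergence. Letting $k\to\infty$ and using $\Psi\leq C$ yields
\been{
\phi^*(q,\bm{\omega})\leq \log \Econd{e^{\Psi(\bm{\omega})}}{q}\leq C,\qquad \forall\, q\in[q_\epsilon,1],\,\Wnas.
}
Finally, sending $\epsilon\downarrow 0$ (so that $q_\epsilon\downarrow q_0$) extends the bound to $(q_0,1]$, and right-continuity of the sample paths of $\phi^*$ at $q_0$ promotes it to $[q_0,1]$.

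I expect the main technical obstacle to be the justification of the dominated-convergence step: without the preliminary estimate from Lemma~\ref{lem:martcond5}, the values $\phi^*(\sigma_{k,q},\bm{\omega})$ could a priori drift to $+\infty$ along a subsequence as $k\to\infty$, so that $e^{\phi^*(\sigma_{k,q})}$ would lack an integrable majorant and the tilting argument would collapse. Thus Lemma~\ref{lem:martcond5} is not merely a stepping stone but the essential input that closes the bootstrap, and it is precisely the restriction to $q\in[q_\epsilon,1]$ there that forces the two-stage argument ending with the limit $\epsilon\downarrow 0$.
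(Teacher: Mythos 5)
Your proposal is correct, and it shares the paper's overall skeleton: bootstrap the non-uniform bound $\phi^*\leq C+M_\epsilon$ of Lemma~\ref{lem:martcond5} on $[q_\epsilon,1]$ into $\phi^*(q)\leq \log\Econd{e^{\Psi}}{q}\leq C$ via the exponential inequality coming from $\mu([0,t])\leq 1$ (the paper's~\eqref{eq:Wineq3}), and then send $\epsilon\downarrow 0$. Where you diverge is in how the relevant Dol\'eans--Dade exponential is shown to be a true martingale. The paper combines the lower bound of Lemma~\ref{lem:martcond4} with the upper bound of Lemma~\ref{lem:martcond5} to get $\gamma(\bm{r}^*_q,t,\bm{\omega})\leq \phi^*(t,\bm{\omega})-\phi^*(q,\bm{\omega})\leq 2C+M_\epsilon$, so that $\mathcal{E}(\bm{r}^*_q)$ is a \emph{bounded} local martingale, hence a martingale, and then evaluates directly at $t=1$; no further limit is needed. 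You instead localize with the stopping times $\sigma_{k,q}$ of~\eqref{eq:stopping}, use Novikov on the truncated process, and pass $k\to\infty$ by conditional dominated convergence, with $e^{C+M_\epsilon}$ as the dominating constant. Your route buys independence from Lemma~\ref{lem:martcond4} (only the upper preliminary bound is used) at the cost of an extra limiting argument; the paper's route is shorter but needs both preliminary bounds. Two small remarks: the parenthetical claim that $\bm{r}^*_{q,\sigma_{k,q}}\in\widehat{D}_n$ ``trivially'' is not immediate from the pathwise bound $\int_q^{\sigma_{k,q}}\|\bm{r}^*\|_2^2\,dt\leq k$ alone (membership in $\widehat{D}_n$ involves the weight $\mathcal{E}(\bm{v},t)$), but this is harmless since Novikov, your primary justification, applies directly and the $\widehat{D}_n$ membership then follows under the tilted measure. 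Also, your explicit use of path continuity to carry the bound from $(q_0,1]$ to the endpoint $q_0$ is a point the paper glosses over when it ``takes the limit $\epsilon\to 0$,'' so your treatment of that step is, if anything, more careful.
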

\begin{proof}
If $q_0=1$, the result is trivial. If $q_0<1$, then for $\epsilon>0$ small enough $q_{\epsilon}<1$. Taking $q\in [q_{\epsilon},1]$ and $t\in [q,1]$, the inequality~\eqref{eq:Wineq3}, Lemma~\eqref{lem:martcond4}, and Lemma~\eqref{lem:martcond5} give
\been{
\gamma(\bm{r}^*_q,t,\bm{\omega})\leq \phi^*(t,\bm{\omega})- \phi^*(q,\bm{\omega})\leq 2C+M_{\epsilon}.
}
Hence the DDE $\mathcal{E}(\bm{r}^*_q)$ is bounded and thus it is a martingale on $[0,1]$. So we have
\been{
\Econd{e^{\Psi(\bm{\omega})- \phi^*(q,\bm{\omega})}}{q}\geq \Econd{\mathcal{E}(\bm{r}^*_q,1,\bm{\omega})}{q}=1.
}
Therefore we get
\been{
\phi^*(q,\bm{\omega})\leq \log\(\Econd{e^{\Psi(\bm{\omega})}}{q}\)\leq C.
}
for any $\epsilon>0$ small enough and $q\in [q_{\epsilon},1]$. Taking the limit $\epsilon\to 0$, we end the proof.
\end{proof}
The above lemmas prove almost all the statement of the Proposition~\ref{prop:uniqueness2}. In order to complete the proof, we have to extend the bound to all $q\in [0,1]$.
We first study the properties of the solution at $q\in [0,q_0]$.
\begin{lemma}
\label{lem:BSDEproof00}
If $(\phi^*,\bm{r}^*)\in S_n\times D_n$ is a solution to the BSDE~\eqref{selfEq1Aux} and
\been{
\label{eq:sol00}
\sup_{q\in [q_0,1]}|\phi^*(q,\bm{\omega})|\leq \|\Psi\|_{\infty},\quad \Wnas,
}
\been{
\sup_{q\in [q_0,1]}\left| \gamma(\mu\bm{r}^*_{q_0},q,\bm{\omega})\right|
\leq 2 \|\Psi\|_{\infty},\quad \Wnas,
}
and
\been{
\Ee{\mu\bm{r}^*_{q_0}}\[\int^1_{q_0}dq\mu([0,q])\,\|\bm{r}(q,\bm{\omega})\|^2_2\]\leq 4\|\Psi\|_{\infty}.
}
then Proposition~\ref{prop:uniqueness2} holds. Moreover
\been{
\label{eq:solq0}
\phi^*(q,\bm{\omega}):=\Econd{\phi^*( q_0,\,\bm{\omega}\,)}{q},\quad \forall q\in [0,q_0],
}
and $\bm{r}^*$ is the unique progressive process such that
\been{
\label{eq:solq01}
\phi^*(q,\bm{\omega})=\E\[\phi^*( q_0,\,\bm{\omega}\,)\]+I(\bm{r}^*,q,\bm{\omega}),\quad \forall q\in [0,q_0].
}
\end{lemma}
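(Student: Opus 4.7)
The strategy is to exploit the definition~\eqref{eq:qx0} of $q_0$: for every $t \in [0, q_0)$ one has $\mu([0, t]) = 0$, so the drift in BSDE~\eqref{selfEq1Aux} vanishes on $[0, q_0]$. This reduces the equation on that interval to a pure stochastic integral representation, to which I can apply martingale representation theory. Concretely, writing BSDE~\eqref{selfEq1Aux} at $q \in [0, q_0]$ and at $q_0$ and subtracting, the quadratic drifts on $[q, q_0]$ cancel, yielding
\been{
\phi^*(q, \bm{\omega}) = \phi^*(q_0, \bm{\omega}) - \int_q^{q_0} \bm{r}^*(t, \bm{\omega}) \cdot d\bm{\omega}(t), \qquad q \in [0, q_0].
}

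This identity presents $(\phi^*(q))_{q \in [0, q_0]}$ as a continuous local martingale whose terminal value $\phi^*(q_0)$ is bounded by $\|\Psi\|_\infty$ by hypothesis~\eqref{eq:sol00}. I would localize via the stopping times $\sigma_{k, 0}$ from~\eqref{eq:stopping}, so that $I(\bm{r}^*)^{\sigma_{k, 0}}$ is a genuine $L^2$-martingale, and then pass to the limit $k \to \infty$, controlling the stopped values of $\phi^*$ by combining continuity of $\phi^*$ with the uniform $L^\infty$ bound on $\phi^*(q_0)$ through conditional Jensen. This upgrades $I(\bm{r}^*)$ to a true martingale on $[0, q_0]$ and produces
\been{
\phi^*(q, \bm{\omega}) = \Econd{\phi^*(q_0, \bm{\omega})}{q}, \qquad q \in [0, q_0],
}
which is~\eqref{eq:solq0}; conditional contractivity then extends $|\phi^*(q)| \leq \|\Psi\|_\infty$ to all of $[0, 1]$, giving~\eqref{eq:uniqueness2}. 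Specialising at $q = 0$, where $\mathcal{F}^{\otimes n}_0$ is trivial under the standard augmentation, yields $\phi^*(0) = \mathbb{E}[\phi^*(q_0, \bm{\omega})]$, and substituting this back into the representation of Step 1 produces~\eqref{eq:solq01}. Uniqueness of $\bm{r}^*$ on $[0, q_0]$ then follows from the martingale representation theorem: any two admissible candidates in $D_n$ differ by a process whose It\^o integral vanishes identically, and It\^o isometry (after localisation) forces agreement as equivalence classes in $D_n$.

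To finish Proposition~\ref{prop:uniqueness2}, the remaining bounds must be extended from $[q_0, 1]$ to $[0, 1]$. Since $\mu([0, t]) = 0$ for $t < q_0$, both integrals defining $\gamma(\mu \bm{r}^*, q, \bm{\omega})$ contribute nothing on $[0, q_0]$, so $\gamma(\mu \bm{r}^*, q, \bm{\omega}) = \gamma(\mu \bm{r}^*_{q_0}, q, \bm{\omega})$ for every $q \in [0, 1]$, and the hypothesised bound immediately yields~\eqref{eq:uniqueness22}. Consequently $\mathcal{E}(\mu \bm{r}^*) \equiv 1$ on $[0, q_0]$ and coincides with $\mathcal{E}(\mu \bm{r}^*_{q_0})$ on $[q_0, 1]$, so the martingale property, granted by Lemma~\ref{lem:martcond} using the bound~\eqref{eq:uniqueness22}, propagates from $\mathcal{E}(\mu \bm{r}^*_{q_0})$ to $\mathcal{E}(\mu \bm{r}^*)$; finally~\eqref{eq:uniqueness222} follows since its integrand vanishes on $[0, q_0]$.

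The principal obstacle is the martingale upgrade in the second paragraph: the hypothesis delivers only pointwise essential boundedness of $\phi^*(q_0)$, whereas a priori $I(\bm{r}^*)$ on $[0, q_0]$ is merely a local martingale associated with $\bm{r}^* \in D_n$. Turning it into a true martingale, and thereby legitimating the conditional expectation identity together with the uniqueness step, requires a careful localisation-plus-dominated-convergence argument that couples the path continuity of $\phi^*$ with the $L^\infty$ control inherited from the terminal value $\phi^*(q_0)$.
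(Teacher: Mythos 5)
Your argument follows a genuinely different route from the paper's, and the step you flag as ``the principal obstacle'' is not merely a technical nuisance left to fill in --- it is where the strategy actually fails as described.

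The paper does not attempt to show that the given $\phi^*$ is a true martingale on $[0,q_0]$. It constructs a candidate: it defines $\widetilde\phi(q):=\E[\phi^*(q_0)\mid\mathcal{F}^{\otimes n}_q]$, which is a \emph{bounded} martingale by construction (bounded by $\|\Psi\|_\infty$ via hypothesis~\eqref{eq:sol00} and conditional Jensen applied to the genuine martingale $\widetilde\phi$), applies the Martingale Representation Theorem to obtain $\widetilde{\bm r}$, verifies the pair solves the truncated BSDE~\eqref{eq:runcated}, and then proves that any $S_n\times D_n$ solution must coincide with it by applying It\^o's formula to the squared difference $\delta\phi^2$, localizing, and using that $\delta\phi(q_0)=0$ forces both nonnegative terms to vanish. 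Your approach instead tries to establish the martingale property of the \emph{given} local martingale $\phi^*$ directly.

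That upgrade cannot be carried out with the tools you propose. After localizing with $\sigma_{k,0}$ you have $\phi^*(q\wedge\sigma_{k,0})=\E[\phi^*(q_0\wedge\sigma_{k,0})\mid\mathcal{F}^{\otimes n}_q]$, and to pass to the limit you need $\phi^*(q_0\wedge\sigma_{k,0})\to\phi^*(q_0)$ in $L^1$. But $q_0\wedge\sigma_{k,0}$ can lie strictly inside $[0,q_0)$, and the hypothesis~\eqref{eq:sol00} bounds $\phi^*$ only on $[q_0,1]$. On $[0,q_0)$ the only a priori information is $\phi^*\in S_n$, which gives almost surely finite but not uniformly bounded sample paths, so no integrable dominating function is available. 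The appeal to conditional Jensen is circular: it would give $|\phi^*(q)|\le\|\Psi\|_\infty$ on $[0,q_0]$ only after you already know $\phi^*(q)=\E[\phi^*(q_0)\mid\mathcal{F}^{\otimes n}_q]$, which is exactly the identity being proved. And the obstruction is genuine, not just a missing detail: a continuous local martingale with bounded terminal value need not be a true martingale --- one can add to $\E[\phi^*(q_0)\mid\mathcal{F}^{\otimes n}_q]$ any strict local martingale whose terminal value vanishes, and such objects exist with integrand in $D_n$. This is precisely what makes the paper's uniqueness computation via $\delta\phi^2$ necessary, and what your direct route never rules out.

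Your remaining steps --- observing that $\gamma(\mu\bm r^*)=\gamma(\mu\bm r^*_{q_0})$ and $\mathcal{E}(\mu\bm r^*)\equiv 1$ on $[0,q_0]$ because $\mu$ puts no mass there, and recovering uniqueness of $\bm r^*$ via localized It\^o isometry once~\eqref{eq:solq0} and~\eqref{eq:solq01} are in hand --- are correct and coincide with what the paper does.
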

\begin{proof}
Since $\mu([0,q])=0$, then $\mu\bm{r}=0$ and the statements~\eqref{eq:uniqueness22} and~\eqref{eq:uniqueness222} of Proposition~\ref{prop:uniqueness2} hold trivially.  For $q\in [0,q_0]$, the \BSDE takes the form
\been{
\label{eq:runcated}
\phi(q,\bm{\omega})=-\int^{q_0}_q\bm{r}(q,\bm{\omega})\cdot d\bm{\omega}(t)+\phi^*( q_0,\,\bm{\omega}\,)=-I(\bm{r}_q,q_0,\bm{\omega})+\phi^*( q_0,\,\bm{\omega}\,).
}
By the Martingale Representation Theorem, there exists a unique process $\bm{r}^*\in D_n$ such that the martingale $\phi^*\in S^b_n$, defined in~\eqref{eq:solq0}, verifies~\eqref{eq:solq01}. Comparing~\eqref{eq:solq0},~\eqref{eq:solq01}, and~\eqref{eq:runcated}, we see that the pair $(\phi^*,\bm{r}^*)\in S_n\times D_n$, defined in this way, verifies the BSDE~\eqref{eq:runcated} for all $q\in [0,q_0]$. We have to show that it is the unique solution.

Let us suppose there exists a second pair $(\overline{\phi},\overline{\bm{r}})\in S_n\times D_n$ solving the BSDE~\eqref{eq:runcated} for $q\in [0,q_0]$. The two solutions verify
\been{
\phi^*(q,\bm{\omega})-\overline{\phi}(q,\bm{\omega})=I(\overline{\bm{r}},q,\bm{\omega})-I(\bm{r}^*,q,\bm{\omega})=I(\overline{\bm{r}}-\bm{r}^*,q,\bm{\omega}).
}
We denote
\been{
\delta\phi:=\phi^*-\overline{\phi},\quad \delta\bm{r}:=\bm{r}^*-\overline{\bm{r}}.
}
By the Ito Formula
\been{\label{eq:Itophi2}
-\delta\phi^2(q,\bm{\omega})=\delta\phi^2(q_0,\bm{\omega})-\delta\phi^2(q,\bm{\omega})=2I(\delta\phi\delta\bm{r}_q,q_0,\bm{\omega})+\int^{q_0}_qds\|\delta\bm{r}(s,\bm{\omega})\|^2_2,
}
with
\been{
I(\delta\phi\delta\bm{r}_q,q_0,\bm{\omega}):=\int^{q_0}_q\delta\phi(q,\bm{\omega})\delta\bm{r}(s,\bm{\omega})\cdot d\bm{\omega}(s).
}
Let $(\tau_k)_{k\in \N}$ the following increasing sequence of stopping time
\been{
\tau_k:=\inf\left\{t\in [0,q_0];\,\max\left\{\int^t_0ds\phi^2(s,\bm{\omega})\|\delta\bm{r}(s,\bm{\omega})\|^2_2,\int^t_0ds\|\delta\bm{r}(s,\bm{\omega})\|^2_2\right\}\geq k\right\}.
}
with $\inf \emptyset=q_0$. Since $\delta\bm{r}\in D_n$ and $\delta\phi\in S_n$, $\tau_{k}$ converges to $q_0$ $\Wnas$ as $k\to \infty$. The processes $I^{\tau_k}(\delta\bm{r}_q)$ and $I^{\tau_k}(\delta\phi\delta\bm{r}_q)$ are square integrable martingale with mean $0$. Thus the equation~\eqref{eq:Itophi2} gives
\been{
\aled{
\E\[(\delta\phi(t\wedge \tau_k,\bm{\omega}))^2\]+\E\[\int^{t\wedge\tau_k}_qds\|\delta\bm{r}(s,\bm{\omega})\|^2_2\]=\E\[I(\delta\phi\delta\bm{r}_q,q_0,\bm{\omega})\]=0.
}
}
Thus, taking the limit $k\to \infty$, we get that
\been{
\delta\phi(q,\bm{\omega})=0,\, \forall q\in [0,q_0],\quad \textup{and}\quad  \int^{q_0}_0ds\|\delta\bm{r}(s,\bm{\omega})\|^2_2=0,\quad  \Wnas.
}So $\overline{\phi}=\phi^*$ and $\bm{r}^*=\overline{\bm{r}}$ (in the sense that they are in the same equivalence class as defined in Section~\ref{sec:3}). Finally, if $\phi^*$ verifies~\eqref{eq:sol00} and~\eqref{eq:solq0}, then it verifies~\eqref{eq:uniqueness2}.
\end{proof}
Combining the above results, we prove Proposition~\ref{prop:uniqueness2}.
\begin{proof}[Proof of Proposition~\ref{prop:uniqueness2}]
By Lemma~\ref{lem:martcond4} and Lemma~\ref{lem:martcond5}, if $|\Psi(\bm{\omega)|}\leq C$ $\W^{\otimes n}-\textup{a.s.}$, then
\been{
\label{eq:start}
\sup_{q\in [q_0,1]}|\phi^*(q,\bm{\omega})|\leq C,\quad \W^{\otimes n}-\textup{a.s.}.
}
 We now prove the statement~\eqref{eq:uniqueness22} for $q\in [q_0,1]$. The differential notation of the \BSDE gives
\been{
d\phi^*(q,\bm{\omega})=\bm{r}^*(q,\bm{\omega})\cdot d\bm{\omega}(q) -\frac{1}{2}\mu([0,q])\|\bm{r}^*(q,\bm{\omega})\|^2_2dt.
}
Thus, the integration by part formula gives
\been{
\label{eq:zeta_part}
\aled{
&\gamma(\mu\bm{r}^*,t,\bm{\omega})=\int^{t}_{q_0}\mu([0,s])\(\bm{r}^*(s,\bm{\omega})\cdot d\bm{\omega}(s) -\frac{1}{2}\mu([0,s])\|\bm{r}^*(s,\bm{\omega})\|^2_2ds\)\\
&=\int^{t}_{q_0}\mu([0,s])d\phi^*(s,\bm{\omega})=\mu([0,t])\phi^*(t,\bm{\omega})-\int^{t}_{q}\mu(ds)\phi^*(s,\bm{\omega})\\
&=\int^{t}_{q_0}\mu(ds)\(\phi^*(t,\bm{\omega})-\phi^*(s,\bm{\omega})\).
}
}
Since $\phi^*(t,\bm{\omega})\leq \|\Psi\|_{\infty}$, the above representation gives $\sup_{t\in [q_0,1]} \gamma(\mu\bm{r}^*,t,\bm{\omega})\leq 2 \|\Psi\|_{\infty}$ proving the statement~\eqref{eq:uniqueness22}. 

We deduce that $\mathcal{E}(\mu\bm{r}^*)$ is bounded, and then it is a martingale. Moreover, since $\phi^*$ is bounded as well. So the martingale property gives
\been{
\aled{
&\E\[\int^1_{q_0}dt\mathcal{E}(\mu\bm{r}^*,t,\bm{\omega})\mu([0,t])\|\bm{r}^*(t,\bm{\omega})\|^2_2\]=\E\[\int^1_{q_0}dt\mathcal{E}(\mu\bm{r}^*,t,\bm{\omega})\mu([0,t])\|\bm{r}^*(t,\bm{\omega})\|^2_2\]\\
&=2\E\[\mathcal{E}(\mu\bm{r}^*,t,\bm{\omega})\(\phi^*(1,\bm{\omega})-\phi^*(q_0,\bm{\omega})\)\]\leq 4\|\Psi\|_{\infty}.
}
}
proving the statement~\eqref{eq:uniqueness222}. The fact that $\mathcal{E}(\mu\bm{r}^*)$ is a $\Wn$--martingale adapted to $\Fqfilt{q}{[0,1]}$ is a consequence of the above inequality and Theorem~\ref{lem:martcond}.

The above results and Lemma~\ref{lem:BSDEproof00} complete the proof.

\end{proof}
\begin{proof}[Proof of Corollary~\ref{rem:uniqueness222}]
For any $q\in [0,1]$ and $k>0$, let $\sigma_{k,q}$ be the stopping time defined in~\eqref{eq:stopping}. The DDE $\mathcal{E}(\frac{1}{2}\mu \bm{r}^*_{q,\sigma_{k,q}})$ is a $\Wn$--martingale. Hence, by the \BSDE and the bound~\eqref{eq:uniqueness2}:
\been{
I_{\frac{1}{2}\mu \bm{r}^*_{q,\sigma_{k,q}}}(\bm{r}^*_{q,\sigma_{k,q}},q',\bm{\omega})=\left(\phi^*(q'\wedge \sigma_{k,0},\bm{\omega})-\phi^*(q,\bm{\omega})\)\bm{1}_{q'\geq q}.
}
is a martingale with respect to the probability measure $\W_{\frac{1}{2}\mu \bm{r}^*_{0,\sigma_{k,0}}}$ bounded by $2\|\Psi\|_{\infty}$. Thus 
\been{
\label{eq:BDG0}
\aled{
&\Eecond{\frac{1}{2}\mu \bm{r}^*_{q,\sigma_{k,q}}}{\int^{\sigma_{k,q}}_qdt\|\bm{r}^*(t,\bm{\omega})\|^2_2}{q}\\
&=\Eecond{\frac{1}{2}\mu \bm{r}^*_{q,\sigma_{k,q}}}{\left(I_{\frac{1}{2}\mu \bm{r}^*_{q,\sigma_{k,q}}}(\bm{r}^*_{q,\sigma_{k,q}},q',\bm{\omega})\)^2}{q}\leq 4\|\Psi\|^2_{\infty}.
}
}
A straightforward computation gives
\begin{equation}
\label{eq:trivialzeta}
\gamma(\mu\bm{r}^*,\tau_m,\bm{\omega})\leq \gamma\left(\tfrac{1}{2}\mu\bm{r}^*,\tau_{m},\bm{\omega}\right)+\tfrac{1}{2}\gamma(\mu\bm{r}^*,\tau_m,\bm{\omega}).
\end{equation}
Thus, using the inequality~\eqref{eq:uniqueness22} and the Proposition~\ref{prop:uniqueness2} we get 
\been{
\label{eq:EE1}
\mathcal{E}(\mu \bm{r}^*_{0,\sigma_{k,0}},q,\bm{\omega})\leq e^{\|\Psi\|_{\infty}}\mathcal{E}\(\frac{1}{2}\mu \bm{r}^*_{0,\sigma_{k,0}},q,\bm{\omega}\)
}
and
\been{
\label{eq:EE2}
1=\frac{\mathcal{E}(\mu \bm{r}^*_{0,\sigma_{k,0}},q,\bm{\omega})}{\mathcal{E}(\mu \bm{r}^*_{0,\sigma_{k,0}},q,\bm{\omega})}\leq e^{2\|\Psi\|_{\infty}}\mathcal{E}(\mu \bm{r}^*_{0,\sigma_{k,0}},q,\bm{\omega})\leq e^{3\|\Psi\|_{\infty}}\mathcal{E}\(\frac{1}{2}\mu \bm{r}^*_{0,\sigma_{k,0}},q,\bm{\omega}\).
}
Taking the limit $k\to \infty$ and using the Fatou Lemma, the inequalities~\eqref{eq:EE1} and~\eqref{eq:BDG0} prove~\eqref{bound_rr3} and the inequalities~\eqref{eq:EE2} and~\eqref{eq:BDG0} prove~\eqref{bound_rr33}.

Moreover, taking $q=0$, by the Burkholder-Davis-Gundy inequality, for any $p\geq 1$ there exists a constant $C_p>0$ such that
\been{
\label{eq:BDG}
\aled{
&\Ee{\frac{1}{2}\mu \bm{r}^*_{0,\sigma_{k,0}}}\[\(\int^{\sigma_{k,0}}_{0}dt\|\bm{r}^*(t,\bm{\omega})\|^2_2\)^p\]\\
&\leq C_p\Ee{\frac{1}{2}\mu \bm{r}^*_{0,\sigma_{k,0}}}\[\sup_{q\in [0,1]}\(I_{\frac{1}{2}\mu \bm{r}^*_{0,\sigma_{k,0}}}(\bm{r}^*_{0,\sigma_{k,0}},q,\bm{\omega})\)^p\]\leq C_p\|\Psi\|^p_{\infty}.
}
}
So, taking the limit $k\to \infty$ and using the inequalities~\eqref{eq:EE1} and~\eqref{eq:EE2} as before, we prove~\eqref{bound_rr} and~\eqref{bound_rr2}.
\end{proof}

\section{Existence of the finite-RSB solution}
\label{sec:41}
In this section, we start the proof of the existence of the solution to the \BSDE.

Here, we explicitly derive the solution in the case where the Parisi parameter $\mu\in \textup{Pr}([0,1])$ is a discrete measure with a finite number of atoms. 

Consider two increasing sequences of $K+2 \in \mathbb{N}$ numbers $q_0,\cdots, q_{K+1}$ and $ x_0,\cdots, x_{K+1}$ with
\begin{equation}
\label{eq:q_seq}
0=q_0\leq q_1\leq\cdots\leq q_K\leq q_{K+1}=1
\end{equation}
and
\begin{equation}
\label{eq:x_seq}
0=x_0< x_1\leq\cdots\leq x_K\leq x_{K+1}=1.
\end{equation}
The number $x_1$ must be non-zero. Let us consider the $\mu \in \textup{Pr}([0,1]) $ defined as
\begin{equation}
\label{discrete}
\mu([0,q]):=\sum_{l\in [K+1]} x_l\bm{1}_{q\in [q_{l-1},q_{l})}+\bm{1}_{q=1}.
\end{equation}
We denote by $\textup{Pr}([0,1])^{\circ}\subset\textup{Pr}([0,1])$ the space of probability measure of this form, for any $K\in \mathbb{N}$. Let us define a sequence of random variables $(\Xi_{l})_{l\in [K+1]_0}$ such that
\been{
\label{eq:rec10}
\Xi_{K+1}(\bm{\omega}):=e^{\Psi(\bm{\omega})},
}
and the remaining $(\Xi_{l})_{l\in [K]}$ are defined via the following backward recursion:
\been{
\label{eq:rec1}
\Xi_{l}(\bm{\omega}):=\Econd{\(\Xi_{l+1}(\bm{\omega})\)^{\frac{x_{l}}{x_{l+1}}}}{q_{l}},\quad \forall l\in [K].
}
Note that each $\Xi_l$ is $\Fq{q_{l}}$--measurable. Moreover, if $\Psi\in B_n$ and $x_{l}\leq x_{l+1}$, then $\Xi_{l}\in B_n$, with
\been{
\label{eq:boundXi}
|\log\(\Xi_{l}(\bm{\omega})\)|\leq  \|\Psi\|_{\infty},\quad \Wnas,\,\forall l\in [K+1].
}
For each $l\in [K+1]$, let us consider the adapted process $\xi_l$ defined as
\been{
\label{eq:rec2}
\xi_{l}(q,\bm{\omega}):=\Econd{\(\Xi_{l}(\bm{\omega})\)^{\frac{x_{l-1}}{x_{l}}}}{q}.
}
Note that, for any  $l\in [K]$
\been{
\label{eq:left_cont}
\xi_{l}(q_{l-1},\bm{\omega})=\Xi_{l-1}(\bm{\omega}),\quad \xi_{l}(q_{l},\bm{\omega})=\(\Xi_l(\bm{\omega})\)^{\frac{x_l}{x_{l+1}}}=\(\xi_{l+1}(q_l,\bm{\omega})\)^{\frac{x_l}{x_{l+1}}}.
}
For a fixed $l\in[K]$, the process $\xi_{l}$ is a bounded $\Wn$--martingale with
\been{
\label{eq:bound_xi}
\sup_{q\in [0,1]}|\log\(\xi_{l}(q,\bm{\omega})\)|\leq \|\Psi\|_{\infty},\quad \Wnas.
}
By the Martingale Representation Theorem, for each $l\in [K]$, there exists a progressive process $\bm{m}_l\in D_n$ such that for any $q\in [q_{l-1},1]$ it holds
\been{
\label{eq:ximart}
\aled{
&\xi_{l}(q,\bm{\omega})-\xi_{l}(q_{l-1},\bm{\omega})=\int^q_{q_{l-1}}\bm{m}_l(t,\bm{\omega})\cdot d\bm{\omega}(t).
}
}
From the previous setting, we now construct the pair of processes $(\phi,\bm{r})\in S_n\times D_n$, where
\been{
 \label{discrete_equation0}
 \aled{
\phi(q,\bm{\omega}):=\sum_{l\in [K+1]}\frac{1}{x_{l}}\log\(\xi_{l}(q,\bm{\omega})\)\bm{1}_{q\in [q_{l-1},q_{l})}+\Psi(\bm{\omega})\bm{1}_{q=1},
}
}
and
\been{
\label{auxiliry_Order_piecewise0}
\bm{r}(q,\bm{\omega}):=\sum_{l\in [K+1]}\frac{1}{x_{l}\xi_{l}(q,\bm{\omega})}\bm{m}_{l}(q,\bm{\omega})\bm{1}_{q\in [q_{l-1},q_l)}.
}
Since $(\xi_{l})_{l\in [K+1]}$ is a sequence of bounded process and different from $0$ $\Wnas$ and $\{x_k\}_{l\in [K+1]}$ is a sequence of strictly positive numbers, $I(\bm{r})$ is a $\Wn$--martingale.

Note that the above iteration is equivalent to the finite--RSB iteration given by equations $21$ and $22$ in \cite{MyPaper}.

We will prove that the pair $(\phi,\bm{r})$  is a solution to the \BSDE. We first show that  the process $\phi$ has almost surely continuous sample paths.
\begin{lemma}
\label{inequality_prop_theo}
Let $\mu\in \textup{Pr}([0,1])^{\circ}$ as in~\eqref{discrete} and $\Psi\in B_n$. The process $\phi$, defined in~\eqref{discrete_equation0} is $\Fqfilt{q}{[0,1]}$--adapted and has almost surely continuous sample paths.
\end{lemma}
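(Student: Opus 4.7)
The plan is to address adaptedness and almost-sure continuity separately, with continuity split into (i) continuity on each open subinterval $(q_{l-1}, q_l)$, (ii) matching the sample path at the junction points $q_1, \ldots, q_K$, and (iii) matching at the terminal time $q = 1$.

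Adaptedness is immediate. Each $\xi_l(q, \cdot)$ from~\eqref{eq:rec2} is a conditional expectation with respect to $\Fq{q}$, hence $\Fq{q}$--measurable, and the uniform bound~\eqref{eq:bound_xi} keeps it bounded away from zero, so $\log \xi_l$ is well defined and $\Fq{q}$--measurable. The expression~\eqref{discrete_equation0} is a piecewise combination of these adapted processes with deterministic switching points, and $\phi(1, \bm{\omega}) = \Psi(\bm{\omega})$ is $\Fq{1}$--measurable, so $\phi$ is $\Fqfilt{q}{[0,1]}$--adapted.

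For continuity on each open subinterval $(q_{l-1}, q_l)$, I would use that $\xi_l$ is a bounded positive $\Wn$--martingale with respect to the Brownian filtration, hence it admits a continuous modification; this is already exhibited through the Itô representation~\eqref{eq:ximart} provided by the Martingale Representation Theorem. Combined with the uniform lower bound coming from~\eqref{eq:bound_xi}, $\log \xi_l$ is continuous, and so is $\phi = x_l^{-1}\log \xi_l$ on the interior of the interval.

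The main work, and the main obstacle, is matching the sample path at the junction points and at the terminal time. At each $q_l$ with $l \in [K]$, the left limit is $x_l^{-1}\log \xi_l(q_l, \bm{\omega})$ while the value prescribed on the next piece is $x_{l+1}^{-1}\log \xi_{l+1}(q_l, \bm{\omega})$. Substituting the identity $\xi_l(q_l, \bm{\omega}) = \xi_{l+1}(q_l, \bm{\omega})^{x_l/x_{l+1}}$ from~\eqref{eq:left_cont}, the exponent $x_l/x_{l+1}$ exactly cancels the prefactor $1/x_l$, so the two values coincide. At $q = 1$, the formula reduces on $[q_K, 1)$ to $\log \xi_{K+1}(q, \bm{\omega})$ because $x_{K+1} = 1$; the bounded martingale $\xi_{K+1}$ converges almost surely to its terminal value as $q \uparrow 1$, which via the $\Fq{1}$--measurability of $\Psi$ matches $\phi(1, \bm{\omega}) = \Psi(\bm{\omega})$. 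The delicate point is the algebraic telescoping between the exponent $x_l/x_{l+1}$ in~\eqref{eq:left_cont} and the piecewise prefactor $1/x_l$ in~\eqref{discrete_equation0}; this is what makes the sample path stitch continuously across the finitely many jumps of $\mu$.
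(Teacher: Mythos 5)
Your proof is correct and takes essentially the same route as the paper: adaptedness from the conditional-expectation structure, interior continuity from the continuous-martingale property of each $\xi_l$ (as exhibited through the Itô representation~\eqref{eq:ximart}), and matching at the interior partition points and at $q=1$ via the identities~\eqref{eq:left_cont} together with martingale convergence. The cancellation you highlight, the exponent $x_l/x_{l+1}$ in~\eqref{eq:left_cont} absorbing the change of prefactor from $1/x_l$ to $1/x_{l+1}$, is exactly the content of the paper's displayed chain of equalities at the jump times.
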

\begin{proof}
The process $\phi$ is obviously adapted and continuous in $[0,1)\setminus \{q_1,\cdots,q_K\}$. The continuity is provided by the equalities~\eqref{eq:left_cont},
\been{
\aled{
&\lim_{q\uparrow q_{l}}\phi(q,\bm{\omega})=\frac{1}{x_{l}}\log\(\xi_{l}(q_l,\bm{\omega})\)=\frac{1}{x_{l+1}}\log\(\xi_{l+1}(q_{l+1},\bm{\omega})\)=\phi(q_{l},\bm{\omega}),\quad \forall l\in [K],
}
}
and
\been{
\lim_{q\uparrow 1}\phi(q,\bm{\omega})=\lim_{q\uparrow 1}\frac{1}{x_{K+1}}\log\(\xi_{K+1}(q,\bm{\omega})\)=\log\(\Xi_{K+1}(\bm{\omega})\)=\Psi(\bm{\omega}).
}
\end{proof}
We can now prove that $(\phi,\bm{r})$ is a solution to the \BSDE when $\mu$ is the form~\eqref{discrete}.
\begin{proposition}[finite--RSB solution]
\label{prop:existk}
By definition
\been{
\phi(q,\bm{\omega}))-\phi(q_{l-1},\bm{\omega}))=\frac{1}{x_{l}}\log(\xi_{l}(q,\bm{\omega}))-\frac{1}{x_{l}}\log(\xi_{l}(q,\bm{\omega}))
}
Let $\mu\in \textup{Pr}([0,1])^{\circ}$ as in~\eqref{discrete} and $\Psi\in B_n$. The pair of processes $(\phi,\bm{r})\in S_n\times D_n$, given by~\eqref{discrete_equation0} and~\eqref{auxiliry_Order_piecewise0}, is a solution to the \BSDE.
\end{proposition}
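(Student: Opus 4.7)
The plan is to verify the BSDE~\eqref{selfEq1Aux} in its differential form on each subinterval $[q_{l-1},q_l)$ by applying It\^o's formula to $\frac{1}{x_l}\log\xi_l$, and then stitch the pieces together using the continuity of $\phi$ at the breakpoints $q_1,\dots,q_K$ already established in Lemma~\ref{inequality_prop_theo}.

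\emph{Step 1 (membership).} I would first confirm that $(\phi,\bm{r})\in S_n\times D_n$. The bound~\eqref{eq:bound_xi} gives $|\log\xi_l(q,\bm{\omega})|\leq \|\Psi\|_\infty$ almost surely, hence $|\phi(q,\bm{\omega})|\leq \|\Psi\|_\infty/x_1$ and $\phi\in S_n^b$. Moreover $\xi_l\geq e^{-\|\Psi\|_\infty}$ is uniformly bounded away from zero, and $\xi_l$ is itself a bounded martingale, so It\^o isometry places $\bm{m}_l$ in $D^{2,2}_n$; combined with the lower bound on $\xi_l$, this yields $\bm{r}\in D^{2,2}_n\subset D_n$.

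\emph{Step 2 (It\^o and piecewise identity).} For fixed $l\in[K+1]$, the representation~\eqref{eq:ximart} gives $d\xi_l(t,\bm{\omega})=\bm{m}_l(t,\bm{\omega})\cdot d\bm{\omega}(t)$ on $[q_{l-1},q_l]$, with $\xi_l$ strictly positive. Applying It\^o's formula to $y\mapsto \frac{1}{x_l}\log y$ yields, for $q\in[q_{l-1},q_l)$,
\begin{equation*}
\phi(q,\bm{\omega})-\phi(q_{l-1},\bm{\omega}) = \int_{q_{l-1}}^q\frac{\bm{m}_l(t,\bm{\omega})}{x_l\,\xi_l(t,\bm{\omega})}\cdot d\bm{\omega}(t) - \frac{1}{2}\int_{q_{l-1}}^q\frac{\|\bm{m}_l(t,\bm{\omega})\|_2^2}{x_l\,\xi_l(t,\bm{\omega})^2}\,dt.
\end{equation*}
Since $\mu([0,t])=x_l$ on $[q_{l-1},q_l)$ by~\eqref{discrete} and $\bm{r}(t,\bm{\omega})=\bm{m}_l(t,\bm{\omega})/(x_l\xi_l(t,\bm{\omega}))$ there by~\eqref{auxiliry_Order_piecewise0}, the right-hand side coincides with $\int_{q_{l-1}}^q \bm{r}(t,\bm{\omega})\cdot d\bm{\omega}(t)-\tfrac{1}{2}\int_{q_{l-1}}^q\mu([0,t])\|\bm{r}(t,\bm{\omega})\|_2^2\,dt$.

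\emph{Step 3 (patching and terminal limit).} Thanks to the continuity of $\phi$ at each $q_l$ (Lemma~\ref{inequality_prop_theo}, which uses the matching identity~\eqref{eq:left_cont}), summing the identities of Step 2 over consecutive intervals telescopes to
\begin{equation*}
\phi(q',\bm{\omega})-\phi(q,\bm{\omega})=\int_q^{q'}\bm{r}(t,\bm{\omega})\cdot d\bm{\omega}(t)-\frac{1}{2}\int_q^{q'}\mu([0,t])\|\bm{r}(t,\bm{\omega})\|_2^2\,dt
\end{equation*}
for all $0\leq q\leq q'<1$. Letting $q'\uparrow 1$, the left-hand side tends to $\Psi(\bm{\omega})-\phi(q,\bm{\omega})$ by the terminal limit in Lemma~\ref{inequality_prop_theo}, while the stochastic and Lebesgue integrals on the right extend to $q'=1$ because $\bm{r}\in D^{2,2}_n$ and $\mu([0,t])\leq 1$. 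This yields~\eqref{selfEq1Aux}.

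The main potential obstacle is not analytic but bookkeeping: one must be attentive that the definition~\eqref{discrete_equation0} uses $\xi_l$ (hence $x_{l-1}/x_l$ as the exponent relevant to $\Xi_l$) on the half-open interval $[q_{l-1},q_l)$, while the continuity at the endpoint $q_l$ rests precisely on the compatibility relation $\xi_l(q_l,\bm{\omega})=\xi_{l+1}(q_l,\bm{\omega})^{x_l/x_{l+1}}$ in~\eqref{eq:left_cont}. Once this matching is in place, the argument is a clean application of It\^o's formula on each slab, with all integrals genuinely (not just locally) integrable thanks to the uniform positivity and boundedness of the $\xi_l$.
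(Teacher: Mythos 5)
Your proposal is correct and follows essentially the same route as the paper: both apply It\^o's formula to $\tfrac{1}{x_l}\log\xi_l$ on each slab $[q_{l-1},q_l)$ using the martingale representation~\eqref{eq:ximart}, identify the drift with $\tfrac{1}{2}\mu([0,t])\|\bm{r}\|_2^2$ via $\mu([0,t])=x_l$ there, and patch the slabs together through the continuity of $\phi$ at the breakpoints established in Lemma~\ref{inequality_prop_theo}; the paper phrases the patching as a backward induction on $l$, you phrase it as a telescoping sum, and your Step~1 makes the $D^{2,2}_n$ membership of $\bm{r}$ (hence the convergence of the integrals as $q'\uparrow 1$) slightly more explicit than the paper does, but these are cosmetic differences.
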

\begin{proof}
We proceed by induction over $l\in [K]$. Let us assume that the pair $(\phi,\bm{r})$ verifies the \BSDE for any $q\in [q_{l},1]$. Using the martingale representation~\eqref{eq:ximart} and the Ito Formula, for any $q_{l}\leq q\leq q'<q_l$ we get
\been{
\aled{
&\frac{1}{x_{l}}\(\log(\xi_{l}(q',\bm{\omega}))-\log(\xi_{l}(q,\bm{\omega}))\)=\int^{q'}_{q}\frac{\bm{m}_k(t,\bm{\omega})\cdot d\bm{\omega}(t)}{x_{l}\xi_{l}(t,\bm{\omega})}
-\frac{1}{2}\int^{q'}_{q}dt\frac{\|\bm{m}_k(t,\bm{\omega})\|^2_2}{x_{l}\xi^2_{l}(t,\bm{\omega})}.
}
}
By the definition~\eqref{discrete_equation0} and Lemma~\ref{inequality_prop_theo}
\been{
\lim_{q'\to q_l}\frac{1}{x_{l}}\(\log(\xi_{l}(q',\bm{\omega}))-\log(\xi_{l}(q,\bm{\omega}))\)=\lim_{q'\to q_l}\(\phi(q',\bm{\omega})-\phi(q,\bm{\omega})\)=\phi(q_l,\bm{\omega})-\phi(q,\bm{\omega})
}
and by the definition~\eqref{auxiliry_Order_piecewise0}
\been{
\int^{q'}_{q}\frac{\bm{m}_l(t,\bm{\omega})\cdot d\bm{\omega}(t)}{x_{l}\xi_{l}(t,\bm{\omega})}=\int^{q'}_{q}\bm{r}(t,\bm{\omega})\cdot d\bm{\omega}(t),
}
and
\been{
\int^{q'}_{q}dt\frac{\|\bm{m}_k(t,\bm{\omega})\|^2_2}{x_{l}\xi^2_{l}(t,\bm{\omega})}=\int^{q'}_{q}dt\mu([0,t))\|\bm{r}(t,\bm{\omega})\|^2_2
}
where we used the fact that $x_{l}=\mu([0,t])$ for any $t\in [q_{l-1},q_{l})$. Thus  the pair $(\phi,\bm{r})$ verifies the \BSDE for any $q\in [q_{l-1},1]$, proving the induction hypothesis.
\end{proof}
We finish the section by providing a second iterative representation of the process $\phi$, that will be useful in the following.
\begin{lemma}
\label{lem:rec3}
Let $\mu\in \textup{Pr}([0,1])^{\circ}$ as in~\eqref{discrete} and $\Psi\in B_n$. The process $\phi\in S^b_n$, given by~\eqref{discrete_equation0}, verifies the following recursion:
\been{
\label{eq:rec3}
\phi(q,\bm{\omega})=\frac{1}{\mu([0,q])} \log\,\Econd{\exp \left(\,\mu([0,q])\phi(\mu^{(t)},\,q_{k_q+1}\,,\bm{\omega}\,)\,\,\right)}{q},\quad \forall q\in [0,1)
}
where
\been{
\label{eq:kq}
k_q=\max\{l\in [K]_0;\,q_{l}\leq q\}.
}
and
\been{
\label{eq:rec3_1}
\phi(1,\bm{\omega})=\Psi(\bm{\omega}).
}
Moreover
\been{
\label{eq:rec3exp}
\log\mathcal{E}\(\mu\bm{r}_q,q_{k_q+1},\bm{\omega}\)=\mu([0,q])(\phi(q_{k_q+1},\bm{\omega})-\phi(q,\bm{\omega})).
}
\end{lemma}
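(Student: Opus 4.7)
The strategy is direct: the process $\phi$ is defined piecewise through the bounded continuous martingales $(\xi_l)_{l\in [K+1]}$, and the identities~\eqref{eq:rec3} and~\eqref{eq:rec3exp} will follow by matching this explicit construction against the \BSDE\ on each ``slab'' $[q_{l-1},q_l)$ where $\mu([0,\cdot])$ is constant. The boundary identity~\eqref{eq:rec3_1} is immediate from the continuity of $\phi$ at $q=1$ already established in Lemma~\ref{inequality_prop_theo}.

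For~\eqref{eq:rec3}, fix $q\in [0,1)$ and let $l:=k_q+1$, so that $q\in [q_{l-1},q_l)$, $\mu([0,q])=x_l$, and $q_{k_q+1}=q_l$. By~\eqref{discrete_equation0} one has $\phi(q,\bm{\omega})=\tfrac{1}{x_l}\log\xi_l(q,\bm{\omega})$, while~\eqref{eq:ximart} exhibits $\xi_l$ as a stochastic integral added to an $\Fq{q_{l-1}}$-measurable initial value, so $\xi_l$ is a bounded continuous $\Wn$-martingale. Hence
\been{
\xi_l(q,\bm{\omega})=\Econd{\xi_l(q_l,\bm{\omega})}{q}.
}
The key step is to identify $\xi_l(q_l,\bm{\omega})=\exp(x_l\,\phi(q_l,\bm{\omega}))$: the sample-path continuity of $\phi$ (Lemma~\ref{inequality_prop_theo}) together with the continuity of $\xi_l$ gives $\phi(q_l,\bm{\omega})=\lim_{q'\uparrow q_l}\tfrac{1}{x_l}\log\xi_l(q',\bm{\omega})=\tfrac{1}{x_l}\log\xi_l(q_l,\bm{\omega})$. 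Substituting this identification, taking the logarithm, and dividing by $x_l=\mu([0,q])$ yields~\eqref{eq:rec3}.

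For~\eqref{eq:rec3exp}, observe that $\mu([0,s])\equiv x_l$ on $[q,q_l)$. Applying the \BSDE\ between $q$ and $q_l$ to the pair $(\phi,\bm{r})$ constructed in Proposition~\ref{prop:existk} gives
\been{
\phi(q_l,\bm{\omega})-\phi(q,\bm{\omega})=\int_q^{q_l}\bm{r}(t,\bm{\omega})\cdot d\bm{\omega}(t)-\tfrac{1}{2}x_l\int_q^{q_l}\|\bm{r}(t,\bm{\omega})\|^2_2\,dt,
}
while expanding the definition~\eqref{DDE_xr} of the DDE with $\mu([0,s])=x_l$ on the same interval produces
\been{
\log\mathcal{E}(\mu\bm{r}_q,q_l,\bm{\omega})=x_l\int_q^{q_l}\bm{r}(t,\bm{\omega})\cdot d\bm{\omega}(t)-\tfrac{1}{2}x_l^2\int_q^{q_l}\|\bm{r}(t,\bm{\omega})\|^2_2\,dt.
}
Multiplying the former identity by $x_l=\mu([0,q])$ and comparing with the latter yields~\eqref{eq:rec3exp}.

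The only mildly delicate point is the continuity-based identification $\xi_l(q_l,\bm{\omega})=\exp(x_l\,\phi(q_l,\bm{\omega}))$, which hinges on the left-continuity of $\phi$ at the junction points $q_l$ supplied by Lemma~\ref{inequality_prop_theo}. The rest of the argument is a direct substitution of the explicit formulas~\eqref{discrete_equation0} and~\eqref{auxiliry_Order_piecewise0} into the \BSDE\ and into~\eqref{DDE_xr}, exploiting that $\mu$ is piecewise constant so that the It\^o and Doléans-Dade integrals simplify algebraically on each slab.
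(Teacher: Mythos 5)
Your proof is correct and follows essentially the same route as the paper's: for~\eqref{eq:rec3} you identify $\phi$ on the slab $[q_{l-1},q_l)$ with $\tfrac{1}{x_l}\log\xi_l$, use the martingale property to write $\xi_l(q) = \Econd{\xi_l(q_l)}{q}$, and invoke the left-continuity of $\phi$ at the knot $q_l$ (Lemma~\ref{inequality_prop_theo}, equivalently~\eqref{eq:left_cont}) to recognize $\xi_l(q_l)=e^{x_l\phi(q_l)}$; for~\eqref{eq:rec3exp} you apply the BSDE on the slab where $\mu([0,\cdot])\equiv x_l$ and compare with the explicit Doléans-Dade exponent, exactly as the paper does. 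The only superficial difference is that the paper spells out the identity $\xi_{k_q+1}(q) = \Econd{(\Xi_{k_q+1})^{x_{k_q}/x_{k_q+1}}}{q}$ directly from the defining recursion rather than citing the martingale property of $\xi_l$, but this is the same fact phrased in two ways.
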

\begin{proof}
Since
\been{
q\in [q_{k_q},q_{k_q+1}),
}
it follows that
\been{
\label{eq:rec330}
\phi(q,\bm{\omega})=\frac{1}{x_{k_q+1}}\log\(\xi_{k_q+1}(q,\bm{\omega})\)=\frac{1}{x_{k_q+1}}\log\( \Econd{\(\Xi_{k_q+1}(\bm{\omega})\)^{\frac{x_{k_q}}{x_{k_q+1}}}}{q}\),
}
If $k_q\in [K-1]_0$, by~\eqref{eq:left_cont}
\been{
\label{eq:rec33}
\(\Xi_{k_q+1}(\bm{\omega})\)^{\frac{x_{k_q}}{x_{k_q+1}}}=\xi_{k_q+1}(q_{k_q+1},\bm{\omega})=e^{x_{{k_q}+1}\phi(q_{k_q+1},\bm{\omega})}.
}
Since $x_{k_q+1}=\mu([0,q])$, combining~\eqref{eq:rec330} and~\eqref{eq:rec33}, we prove~\eqref{eq:rec3}.
 Finally, using the \BSDE we get
\been{
\aled{
&\mu([0,q])(\phi(q_{k_q+1},\bm{\omega})-\phi(q,\bm{\omega}))\\
&=x_{k_q+1}I(\bm{r}_q,q_{k_q+1},\bm{\omega})-\frac{x_{k_q+1}}{2}\int^{q_{k_q+1}}_{q}dtx_{k_q+1}\|\bm{r}(t,\bm{\omega})\|^2_2\\
&=\gamma(x_{k_q+1}\bm{r}_q,q_{k_q+1},\bm{\omega})=\gamma(\mu\bm{r}_q,q_{k_q+1},\bm{\omega}),
}
}
proving the last statement.
\end{proof}

\section{Existence of the full-RSB solution}
\label{sec:42}
Proposition~\ref{prop:existk} provides an existence result for a very specific class of Parisi parameters. In this section, we prove the existence of the solution to the BSDE~\eqref{selfEq1Aux} when the Parisi parameter is a generic probability measure $\mu\in \textup{Pr}([0,1])$. 

Through the section, we keep $\Psi\in B_n$ fixed, and denote by $(\phi(\mu),\bm{r}(\mu) )$ the solution to the BSDE~\eqref{selfEq1Aux} corresponding to the Parisi parameter $\mu\in\textup{Pr}([0,1])$. Given $\mu \in \textup{Pr}([0,1])$, we will consider a proper sequence of Parisi parameters $(\mu^{(k)})_{k\in \N}\subset \textup{Pr}([0,1])^{\circ}$ converging in $\|\cdot\|_{\infty}$ norm to $\mu$. Hence, we show that the sequence of corresponding solution $((\phi(\mu^{(k)}),\bm{r}(\mu^{(k)})))_{k\in\N}$ converges to a pair of processes $(\phi,\bm{r})\in S_n\times D_n$ that solves the \BSDE corresponding to the Parisi parameter $\mu$. 
\begin{proposition}
\label{convergence_result}
Given a Parisi parameter $\mu\in\textup{Pr}([0,1])$, consider a sequence of piecewise constant Parisi parameters $(\mu^{(k)})_{k\in \N}\subset \textup{Pr}([0,1])^{\circ}$, where
\begin{equation}
\|\mu^{(k)}-\mu\|_{\infty}\leq 2^{-k}
\end{equation}
The sequence of the solutions $\left(\,\phi(\mu^{(k)}),\bm{r}(\mu^{(k)})\,\right)$ converges almost surely and in $S^p_n \times D^{2,p}_n$ norm to a pair $(\phi,\bm{r})$ that is a solution to the BSDE~\eqref{selfEq1Aux} with Parisi parameter $\mu\in \textup{Pr}([0,1])$.
\end{proposition}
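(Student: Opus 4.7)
The plan is to prove that the sequence $(\phi(\mu^{(k)}),\bm{r}(\mu^{(k)}))_{k\in\N}$ is Cauchy in $S^p_n\times D^{2,p}_n$ for every $p\geq 1$, and then identify the limit as the desired solution. I would handle the $S^p_n$ component first, via a Girsanov linearization that tames the quadratic nature of the driver, and deduce the $D^{2,p}_n$ Cauchy property afterwards from an It\^o energy estimate.

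For the $S^p_n$ component, fix $k<k'$ and set $\delta\phi:=\phi(\mu^{(k)})-\phi(\mu^{(k')})$, $\delta\bm{r}:=\bm{r}(\mu^{(k)})-\bm{r}(\mu^{(k')})$. Subtracting the two BSDEs and linearizing via the algebraic identity $\mu^{(k)}a-\mu^{(k')}b=\tfrac{1}{2}(\mu^{(k)}+\mu^{(k')})(a-b)+\tfrac{1}{2}(\mu^{(k)}-\mu^{(k')})(a+b)$ with $a=\|\bm{r}(\mu^{(k)})\|_2^2$ and $b=\|\bm{r}(\mu^{(k')})\|_2^2$ recasts the difference as a linear BSDE driven by $\delta\bm{r}$, perturbed by a term of size $\|\mu^{(k)}-\mu^{(k')}\|_\infty$. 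Setting $\bm{u}:=\tfrac{1}{4}(\mu^{(k)}+\mu^{(k')})(\bm{r}(\mu^{(k)})+\bm{r}(\mu^{(k')}))$ one obtains
\begin{equation*}
\delta\phi(q,\bm{\omega})=-\int_q^1 \delta\bm{r}(t,\bm{\omega})\cdot\bigl(d\bm{\omega}(t)-\bm{u}(t,\bm{\omega})\,dt\bigr)+\tfrac{1}{4}\int_q^1 (\mu^{(k)}-\mu^{(k')})([0,t])\bigl(\|\bm{r}(\mu^{(k)})\|_2^2+\|\bm{r}(\mu^{(k')})\|_2^2\bigr)dt.
\end{equation*}
A stopping-time localization combined with Lemma~\ref{lem:martcond} and the $\widehat{D}^{\mu}_n$-type bounds of Proposition~\ref{prop:uniqueness2} shows $\bm{u}\in\widehat{D}_n$, so that $\mathcal{E}(\bm{u})$ is a genuine $\Wn$-martingale and Girsanov yields a probability measure $\mathbb{W}_{\bm{u}}$ under which $\tilde{W}_{\bm{u}}(t):=\bm{\omega}(t)-\bm{\omega}(0)-\int_0^t\bm{u}(s,\bm{\omega})\,ds$ is a Brownian motion. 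Taking conditional expectation $\widetilde{\E}_{\bm{u}}[\,\cdot\,|\mathcal{F}^{\otimes n}_q]$ kills the martingale part, and the bound~\eqref{bound_rr3} transferred to $\mathbb{W}_{\bm{u}}$ yields the pointwise Lipschitz estimate $|\delta\phi(q,\bm{\omega})|\leq C(\|\Psi\|_\infty)\|\mu^{(k)}-\mu^{(k')}\|_\infty$ $\Wnas$, hence Cauchy in $S^p_n$ for all $p$.

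For the $D^{2,p}_n$ component, apply It\^o's formula to $(\delta\phi)^2$ on $[q,1]$. Using the pointwise Lipschitz-in-$\mu$ bound on $\delta\phi$ just derived together with the moment bound~\eqref{bound_rr2} on $\int_0^1\|\bm{r}(\mu^{(l)})\|_2^2\,dt$, the cross term $\E[\int_q^1\delta\phi\,(\mu^{(k)}\|\bm{r}(\mu^{(k)})\|_2^2-\mu^{(k')}\|\bm{r}(\mu^{(k')})\|_2^2)dt]$ is $O(\|\mu^{(k)}-\mu^{(k')}\|_\infty)$, yielding $\E[\int_0^1\|\delta\bm{r}\|_2^2\,dt]\leq C\|\mu^{(k)}-\mu^{(k')}\|_\infty$. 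Extension to $p\geq 2$ uses Burkholder-Davis-Gundy for the martingale part in the It\^o expansion. The rate $\|\mu^{(k)}-\mu\|_\infty\leq 2^{-k}$ combined with Borel-Cantelli then delivers $\Wn$-almost sure convergence along a subsequence to a pair $(\phi,\bm{r})\in S^p_n\times D^{2,p}_n$. To pass to the limit in the BSDE, the stochastic integral converges in $L^2$ by It\^o's isometry, and the quadratic drift is handled via the decomposition $\mu^{(k)}\|\bm{r}(\mu^{(k)})\|_2^2-\mu\|\bm{r}\|_2^2=(\mu^{(k)}-\mu)\|\bm{r}(\mu^{(k)})\|_2^2+\mu\langle\bm{r}(\mu^{(k)})-\bm{r},\bm{r}(\mu^{(k)})+\bm{r}\rangle$, the first summand being $O(\|\mu^{(k)}-\mu\|_\infty)$ in $L^1$ by Corollary~\ref{rem:uniqueness222} and the second vanishing by Cauchy-Schwarz.

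The principal technical obstacle lies in the Girsanov step: since $\bm{u}$ is a symmetric combination of drifts arising from two distinct BSDE solutions rather than the drift of a single one, the martingale property of $\mathcal{E}(\bm{u})$ and the square-integrability of $\delta\bm{r}$ under $\mathbb{W}_{\bm{u}}$ do not follow directly from the estimates of Section~\ref{sec:6}. A careful stopping-time localization of the type used in the proof of Lemma~\ref{lem:martcond}, patched together using the uniform estimates of Corollary~\ref{rem:uniqueness222} applied separately to $\bm{r}(\mu^{(k)})$ and $\bm{r}(\mu^{(k')})$, should be sufficient to push the argument through.
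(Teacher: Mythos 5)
Your overall architecture — a Girsanov linearization to get an $S^p_n$--Lipschitz estimate, then It\^o on $(\delta\phi)^2$ for the $D^{2,p}_n$ estimate, then Borel--Cantelli and passage to the limit — is the standard linearization scheme for quadratic BSDEs. It differs genuinely from the paper's route, which instead proves a derivative-in-$\mu$ formula (Lemma~\ref{derivative_theo}) by backward induction on the explicit finite--RSB recursion~\eqref{eq:rec3}, and deduces the $S^p_n$ Lipschitz estimate (Corollary~\ref{cor:corder}) by integrating the derivative along the segment $\mu^{(t)}=(1-t)\mu^{(1)}+t\mu^{(2)}$; the $D^{2,p}_n$ step (Lemma~\ref{uniform_convergence_theo}) is the same It\^o/BDG energy estimate you sketch, though the paper actually closes a quadratic inequality in $X=\|\delta\bm{r}\|_{2,2p}$ to obtain the sharper rate $O(\|\delta\mu\|^2_\infty)$ rather than $O(\|\delta\mu\|_\infty)$.

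The gap you flag as "the principal technical obstacle" is, however, a real one that your suggested fix does not resolve. With the symmetric drift $\bm{u}=\tfrac14(\mu^{(k)}+\mu^{(k')})(\bm{r}(\mu^{(k)})+\bm{r}(\mu^{(k')}))$ you need two things: (i) that $\mathcal{E}(\bm{u})$ is a true martingale, and (ii) a uniform a.s.\ bound on $\widetilde{\E}_{\bm{u}}\big[\int_q^1(\|\bm{r}(\mu^{(k)})\|^2_2+\|\bm{r}(\mu^{(k')})\|^2_2)\,dt\,\big|\,\mathcal{F}^{\otimes n}_q\big]$. Both requirements in the paper rest on the pathwise bound $|\gamma(\mu\bm{r}^*)|\le 2\|\Psi\|_\infty$ from Proposition~\ref{prop:uniqueness2}, which is derived from the BSDE itself and is specific to drifts of the form $\mu\bm{r}^*$ where $(\phi^*,\bm{r}^*)$ is a solution with Parisi parameter $\mu$. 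Your $\bm{u}$ mixes drifts belonging to two different BSDEs; expanding, $\bm{u}$ contains the cross terms $\tfrac14\mu^{(k)}\bm{r}(\mu^{(k')})$ and $\tfrac14\mu^{(k')}\bm{r}(\mu^{(k)})$, and none of the estimates from Section~\ref{sec:6} or Corollary~\ref{rem:uniqueness222} control $\gamma$ evaluated on such mixed products. In particular, ``applying Corollary~\ref{rem:uniqueness222} separately to $\bm{r}(\mu^{(k)})$ and $\bm{r}(\mu^{(k')})$'' gives moment bounds under $\W^{\otimes n}$ and under each $\W_{\mu^{(l)}\bm{r}(\mu^{(l)})}$, but not under $\W_{\bm{u}}$ (the change of measure is by $\mathcal{E}(\bm{u})$, not by either of those densities), so the stopping-time argument of Theorem~\ref{lem:martcond} has nothing to close against. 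The fix is not another localization but a change of pivot: use the \emph{one-sided} linearization with drift $\mu^{(k)}\bm{r}(\mu^{(k)})$ (and separately $\mu^{(k')}\bm{r}(\mu^{(k')})$), as in the algebraic manipulation of~\eqref{eq:dif1}--\eqref{eq:dif2} in the uniqueness proof and in Lemma~\ref{deltaphieq}. With that drift $\mathcal{E}(\mu^{(k)}\bm{r}(\mu^{(k)}))$ is bounded and a martingale by Proposition~\ref{prop:uniqueness2}, the error term carries a favourable sign, and the needed conditional bound on $\int_q^1\|\bm{r}(\mu^{(k')})\|^2_2$ transfers from $\W^{\otimes n}$ to $\W_{\mu^{(k)}\bm{r}(\mu^{(k)})}$ precisely because the Radon--Nikodym density is sandwiched in $[e^{-4\|\Psi\|_\infty},e^{4\|\Psi\|_\infty}]$. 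As written, your proposal does not establish the pointwise Lipschitz estimate that the rest of your argument relies on.
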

The proof of the above proposition relies on several useful properties of the finite--RSB solution. Note that, since the pair of processes $(\phi(\mu^{(k)}),\bm{r}(\mu^{(k)}))\in  S_n\times D_n$ defined in~\eqref{discrete_equation0} and~\eqref{auxiliry_Order_piecewise0} are solutions to the \BSDE, they verify Propositions~\ref{prop:uniqueness2} and Corollary~\ref{prop:uniqueness2}.

We start by compering the two processes $\phi(\mu^{(1)})$ and $\phi(\mu^{(2)})$, corresponding to the piecewise constant Parisi parameters $\mu^{(1)}$ and $\mu^{(2)}$. 

Let
\begin{equation}
\label{eq:deltamut}
\delta \mu=\mu^{(2)}-\mu^{(1)}\in\textup{Meas}([0,1]),
\end{equation}
and 
\begin{equation}
\label{eq:mut}
\mu^{(t)}=(1-t)\mu^{(1)}+t \mu^{(2)}\in \textup{Pr}([0,1])^{\circ}.
\end{equation}
The functions $q\mapsto \delta \mu([0,q]) $ and $q\mapsto \mu^{(t)}([0,q])$ are piecewise constant.

Let $0= q_0<q_1< \cdots q_K<q_{K+1}=1$ be the union of the discontinuity points of the two functions. We can write
\been{
\delta \mu([0,q])=\sum_{k\in [K+1]}\delta x_{k}\bm{1}_{q\in [q_{k-1},q_{k})},
}
and
\been{
\label{equation_Discretemut}
\mu^{(t)}([0,q])=\sum_{k\in [K+1]} x^{(t)}_k\bm{1}_{q\in [q_{k-1},q_{k})}+\bm{1}_{q=1}.
}
Since $\mu^{(t)}\in \textup{Pr}([0,1])^{\circ}$ we can define the pair $(\phi(\mu^{(t)}),\bm{r}(\mu^{(t)}))\in S_n\times D_n$ solution of the \BSDE, corresponding to the Parisi parameter $\mu^{(t)}$. By Proposition~\ref{prop:uniqueness} and Corollary~\ref{prop:uniqueness2}, the DDE $\mathcal{E}(\mu^{(t)}\bm{r}(\mu^{(t)}))$ is a $\Wn$--martingale and $I_{\mu^{(t)}\bm{r}(\mu^{(t)})}(\bm{r}(\mu^{(t)}))$ is a $\W_{\mu^{(t)}\bm{r}(\mu^{(t)})}$--square integrable martingale.

Given $q\in [0,1]$, let $k_{q}\in [K+1]$ be the integer defined as in \eqref{eq:kq}.
\begin{lemma}
\label{derivative_theo}
Consider two Parisi parameters $\mu^{(1)}$ and $\mu^{(2)}$ in $\textup{Pr}([0,1])^{\circ}$. 
Let $(\phi(\mu^{(t)}),\bm{r}(\mu^{(t)}))$ be the solution corresponding to the Parisi parameter $\mu^{(t)}$. Then, for all $q\in [0,1]$ and $t\in [0,1]$  the quantity $\phi(\mu^{(t)},q,\bm{\omega})$ is $\Wn$--almost surely derivable on $t$ and
\begin{equation}
\label{first_derivative}
\frac{\partial \phi(\mu^{(t)},q,\bm{\omega})}{\partial t}=\frac{1}{2}\widetilde{\mathbb{E}}_{\mu^{(t)}\bm{r}(\mu^{(t)})}\left[\int^{1}_{q}dp\,\delta \mu([0,p])\,\|\bm{r}(\mu^{(t)},p,\bm{\omega})\|^2_2\Bigg| \mathcal{F}^{\otimes n}_q\right].
\end{equation}
\end{lemma}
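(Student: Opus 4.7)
Since $\mu^{(1)},\mu^{(2)}\in \textup{Pr}([0,1])^{\circ}$, the interpolant $\mu^{(t)}$ is piecewise constant with the common breakpoints $0=q_0<q_1<\cdots<q_{K+1}=1$ and affine weights $x_l^{(t)}=(1-t)x_l^{(1)}+t\,x_l^{(2)}$, $l\in [K+1]$. The plan is to prove~\eqref{first_derivative} by backward induction on $l\in [K+1]$, establishing the identity for every $q\in [q_{l-1},q_l)$. The base case $q=1$ is immediate: $\phi(\mu^{(t)},1,\bm{\omega})=\Psi(\bm{\omega})$ is $t$--independent, and~\eqref{first_derivative} holds vacuously.

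For the inductive step, fix $l\in [K+1]$ and $q\in [q_{l-1},q_l)$, so that $\mu^{(t)}([0,q])=x_l^{(t)}$ and $q_{k_q+1}=q_l$. By Lemma~\ref{lem:rec3},
\been{
\label{eq:proposal_rec}
\phi(\mu^{(t)},q,\bm{\omega})=\frac{1}{x_l^{(t)}}\log \Econd{\exp\(x_l^{(t)}\,\phi(\mu^{(t)},q_l,\bm{\omega})\)}{q}.
}
The uniform bound $|\phi(\mu^{(t)},q_l,\bm{\omega})|\leq \|\Psi\|_{\infty}$ from Proposition~\ref{prop:uniqueness2} legitimizes, via dominated convergence, differentiating~\eqref{eq:proposal_rec} in $t$ under the conditional expectation. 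A direct chain-rule computation yields
\been{
\aled{
\frac{\partial \phi(\mu^{(t)},q,\bm{\omega})}{\partial t}
&=\frac{\delta x_l}{x_l^{(t)}}\,\widetilde{Q}_q\!\(\phi(\mu^{(t)},q_l,\bm{\omega})-\phi(\mu^{(t)},q,\bm{\omega})\)\\
&\quad+\widetilde{Q}_q\!\(\frac{\partial\phi(\mu^{(t)},q_l,\bm{\omega})}{\partial t}\),
}
}
where $\widetilde{Q}_q(Y):=\Econd{\mathcal{D}\,Y}{q}$ carries the tilt $\mathcal{D}=\exp\(x_l^{(t)}(\phi(\mu^{(t)},q_l)-\phi(\mu^{(t)},q))\)$. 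Identity~\eqref{eq:rec3exp} of Lemma~\ref{lem:rec3} matches $\mathcal{D}$ with the DDE $\mathcal{E}(\mu^{(t)}\bm{r}(\mu^{(t)})_q,q_l,\bm{\omega})$, and the martingale property of $\mathcal{E}(\mu^{(t)}\bm{r}(\mu^{(t)}))$ from Proposition~\ref{prop:uniqueness2} together with the tower property gives $\widetilde{Q}_q(Y)=\Eecond{\mu^{(t)}\bm{r}(\mu^{(t)})}{Y}{q}$ for every $\Fq{q_l}$--measurable $Y$.

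The Girsanov formula~\eqref{eq:shiftedBM} applied to the \BSDE on $[q,q_l]$, where $\mu^{(t)}([0,s])=x_l^{(t)}$, gives
\been{
\phi(\mu^{(t)},q_l)-\phi(\mu^{(t)},q)=\int_q^{q_l}\bm{r}(\mu^{(t)})\cdot d\bm{W}_{\mu^{(t)}\bm{r}(\mu^{(t)})}+\frac{x_l^{(t)}}{2}\int_q^{q_l}\|\bm{r}(\mu^{(t)})\|_2^2\,ds.
}
The stochastic integral has mean zero under $\W_{\mu^{(t)}\bm{r}(\mu^{(t)})}$ by Lemma~\ref{lem:mart_cond}, whose hypothesis is supplied by~\eqref{bound_rr} in Corollary~\ref{rem:uniqueness222}. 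Therefore $\widetilde{Q}_q(\phi(q_l)-\phi(q))=\tfrac{x_l^{(t)}}{2}\,\Eecond{\mu^{(t)}\bm{r}(\mu^{(t)})}{\int_q^{q_l}\|\bm{r}(\mu^{(t)})\|_2^2\,ds}{q}$; multiplying by $\delta x_l/x_l^{(t)}$ and using $\delta\mu([0,s])=\delta x_l$ on $[q,q_l)$ recovers the contribution $\tfrac{1}{2}\,\Eecond{\mu^{(t)}\bm{r}(\mu^{(t)})}{\int_q^{q_l}\delta\mu([0,s])\,\|\bm{r}(\mu^{(t)})\|_2^2\,ds}{q}$. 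The inductive hypothesis at $q_l$, combined with the tower property, evaluates the second term as $\tfrac{1}{2}\,\Eecond{\mu^{(t)}\bm{r}(\mu^{(t)})}{\int_{q_l}^1\delta\mu([0,s])\,\|\bm{r}(\mu^{(t)})\|_2^2\,ds}{q}$. Concatenating the two integrals into one over $[q,1]$ closes the induction.

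The main technical point, and the step that I expect to require the most care, is the identification $\widetilde{Q}_q(\,\cdot\,)=\widetilde{\E}_{\mu^{(t)}\bm{r}(\mu^{(t)})}[\,\cdot\,|\Fq{q}]$ on $\Fq{q_l}$--measurable quantities; this rests on $\mathcal{E}(\mu^{(t)}\bm{r}(\mu^{(t)}))$ being a genuine martingale, already guaranteed by Proposition~\ref{prop:uniqueness2}. The remaining ingredients --- dominated convergence for interchanging $\partial_t$ with the conditional expectation, the vanishing of the Girsanov--shifted stochastic integral via Lemma~\ref{lem:mart_cond}, and propagation through the tower --- follow routinely from the uniform bounds on $\phi$ and $\bm{r}$ provided by Proposition~\ref{prop:uniqueness2} and Corollary~\ref{rem:uniqueness222}.
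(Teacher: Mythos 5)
Your proof is correct and follows essentially the same route as the paper: backward induction on the breakpoints using the recursion of Lemma~\ref{lem:rec3}, differentiation under the conditional expectation, identification of the exponential tilt with $\mathcal{E}(\mu^{(t)}\bm{r}(\mu^{(t)})_q,q_l,\cdot)$ via~\eqref{eq:rec3exp}, the BSDE/Girsanov decomposition to evaluate $\phi(q_l)-\phi(q)$, and the tower property to concatenate with the inductive hypothesis. The only cosmetic difference is that you collapse the paper's terms~\eqref{eq:termI} and~\eqref{eq:termII} into a single expression from the start and state the $\Fq{q_l}$--measurability restriction explicitly when identifying $\widetilde{Q}_q$ with $\widetilde{\E}_{\mu^{(t)}\bm{r}(\mu^{(t)})}[\,\cdot\,|\Fq{q}]$, which the paper leaves implicit.
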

\begin{proof}
We proceed by induction.  At $q=1$, the random variable $\phi(\mu^{(t)},1,\Cdot)$ is independent of $t$, hence
\begin{equation}
\label{recursion_derivative_start}
\frac{\partial}{\partial t}\phi(\mu^{(t)},1,\bm{\omega})=\frac{\partial}{\partial t}\Psi(\bm{\omega})=0.
\end{equation}
Now fix $q<1$ and assume that \eqref{first_derivative} holds for all $p\in [q_{k_q},1]$. We must show that it also holds at $q$. Differentiating the right member of the recursion~\eqref{eq:rec3} with respect to $t$ we get
\begin{align}
%\label{iterative_der_proto}
\frac{\partial}{\partial t}{\phi(\mu^{(t)})}(q,\bm{\omega})&=-\frac{\delta \mu([0,q])}{(\mu^{(t)}([0,q]))^2} \log\,\mathbb{E}\left[\exp \left(\,\mu^{(t)}([0,q])\phi(\mu^{(t)},\,q_{k_q+1}\,,\bm{\omega}\,)\,\,\right)\big|\mathcal{F}^{\otimes n}_{q}\right]\label{eq:termII}\\
&+\frac{\delta \mu([0,q])}{\mu^{(t)}([0,q])}\frac{\mathbb{E}\left[\exp \left(\,\phi(\mu^{(t)},\,q_{k_q+1}\,,\bm{\omega}\,)\,\,\right)\phi(\mu^{(t)},\,q_{k_q+1}\,,\bm{\omega}\,)\big|\mathcal{F}^{\otimes n}_{q}\right]}{\mathbb{E}\left[\exp \left(\,\mu^{(t)}([0,q])\phi(\mu^{(t)},\,q_{k_q+1}\,,\bm{\omega}\,)\,\,\right) \big|\mathcal{F}^{\otimes n}_{q}\right]}\label{eq:termI}\\
&+\frac{\mathbb{E}\left[\exp \left(\,\mu^{(t)}([0,q])\phi(\mu^{(t)},\,q_{k_q+1}\,,\bm{\omega}\,)\,\,\right)\tfrac{\partial}{\partial t}\phi(\mu^{(t)},\,q_{k_q+1}\,,\bm{\omega}\,)\big|\mathcal{F}^{\otimes n}_{q}\right]}{\mathbb{E}\left[\exp \left(\,\mu^{(t)}([0,q])\phi(\mu^{(t)},\,q_{k_q+1}\,,\bm{\omega}\,)\,\,\right) \big|\mathcal{F}^{\otimes n}_{q}\right]}\label{eq:termIII}.
\end{align}
Equation~\eqref{eq:rec3} gives
\begin{equation}
\label{eq:rel0}
\eqref{eq:termII}=-\frac{\delta \mu([0,q])}{\mu^{(t)}([0,q])}\phi\(\mu^{(t)},\,q\,,\bm{\omega}\,\right)
=-\frac{\delta \mu([0,q])}{\mu^{(t)}([0,q])}\widetilde{\mathbb{E}}_{\mu^{(t)}\bm{r}(\mu^{(t)})}\big[\phi(\mu^{(t)},q,\bm{\omega})\big| \mathcal{F}^{\otimes n}_q\big],
\end{equation}
where, in the last equality, we used the fact that $\phi(\mu^{(t)},q,\cdot)$ is $\Fq{q}$--measurable.

Combining the above equalities~\eqref{eq:rec3} and~\eqref{eq:rec3exp}, for any $\Fq{1}$--measurable random variable $A$ we get
\been{
\label{eq:rel1111}
\aled{
&\frac{\Econd{\exp \left(\,\mu^{(t)}([0,q])\phi(\mu^{(t)},\,q_{k_q+1}\,,\bm{\omega}\,)\right)A(\bm{\omega})\,\,}{q}}{\Econd{\exp \left(\,\mu^{(t)}([0,q])\phi(\mu^{(t)},\,q_{k_q+1}\,,\bm{\omega}\,)\,\,\right)}{q}}\\
&=\Econd{\exp \left(\,\mu^{(t)}([0,q])(\phi(\mu^{(t)},\,q_{k_q+1}\,,\bm{\omega}\,)-\phi(\mu^{(t)},\,q\,,\bm{\omega}\,)\right)A(\bm{\omega})\,\,}{q}\\
&=\Econd{\exp \left(\,\gamma(\mu^{(t)}\bm{r}_q(\mu^{(t)}),q_{k_q+1}\,,\bm{\omega})\right)A(\bm{\omega})\,\,}{q}=\Eecond{\mu^{(t)}\bm{r}(\mu^{(t)})}{A(\bm{\omega})}{q}.
}
}
So
\been{
\label{eq:rel1}
\eqref{eq:termI}=\frac{\delta \mu([0,q])}{\mu^{(t)}([0,q])}\Eecond{\mu^{(t)}\bm{r}(\mu^{(t)})}{\phi(\mu^{(t)},q_{k_q+1},\bm{\omega})}{q}
}
and
\been{
\label{eq:rel2}
\aled{
\eqref{eq:termIII}&=\Eecond{\mu^{(t)}\bm{r}(\mu^{(t)})}{\frac{\partial \phi(\mu^{(t)},q_{k_q+1},\bm{\omega})}{\partial t}}{q}\\
&=\frac{1}{2}\widetilde{\mathbb{E}}_{\mu^{(t)}\bm{r}(\mu^{(t)})}\left[\int^{1}_{q_{k_q+1}}dp\,\delta \mu([0,p])\,\|\bm{r}(\mu^{(t)},p,\bm{\omega})\|^2_2\Bigg| \mathcal{F}^{\otimes n}_q\right],
}
}
where the last equality follows from the induction hypothesis.

Since $\mu^{(t)}([0,p])=x_{k+1}^{(t)}$ and $\delta\mu^{(t)}([0,p])=\delta x_{k+1}$ for all $p\in [q,q_{k_q+1}]$, then combining \eqref{eq:rel0} and \eqref{eq:rel1}
\begin{equation}
\label{eq:rel3}
\aled{
&\eqref{eq:termI}+\eqref{eq:termII}\\
&=\frac{\delta x_{k+1}}{x_{k+1}^{(t)}}\widetilde{\mathbb{E}}_{\mu^{(t)}\bm{r}(\mu^{(t)})}\Bigg[\int^{q_{k_q+1}}_{q_k} d\bm{\omega}(q') \cdot\bm{r}(q',\bm{\omega})-\frac{x_{k+1}^{(t)}}{2}\int^{q_{k_q+1}}_{q_k} dq' \,\|\bm{r}(q',\bm{\omega})\|^2_2\Bigg| \Fq{q}\Bigg]\\
&=\frac{1}{2}\widetilde{\mathbb{E}}_{\mu^{(t)}\bm{r}(\mu^{(t)})}\left[\int^{q_{k_q+1}}_{q} dp\delta\mu([0,p]) \,\|\bm{r}(p,\bm{\omega})\|^2_2\Bigg| \Fq{q}\right],
}
\end{equation}
Thus
\been{
\frac{\partial \phi(\mu^{(t)},q,\bm{\omega})}{\partial t}=\eqref{eq:termII}+\eqref{eq:termI}+\eqref{eq:termIII}=\frac{1}{2}\widetilde{\mathbb{E}}_{\mu^{(t)}\bm{r}(\mu^{(t)})}\left[\int^{1}_{q} dp\delta\mu([0,p]) \,\|\bm{r}(p,\bm{\omega})\|^2_2\Bigg| \Fq{q}\right],
}
concluding the proof.
\end{proof}
In the following, we consider two fixed Parisi parameters $\mu^{(1)}$ and $\mu^{(2)}$ in $\textup{Pr}([0,1])^{\circ}$ and define
\begin{equation}
\label{eq:phirt}
 \delta\phi=\phi(\mu^{(2)})-\phi(\mu^{(1)}),\quad \delta\bm{r}=\bm{r}(\mu^{(2)})-\bm{r}(\mu^{(1)}).
\end{equation}
We denote by $\delta\mu$ and $\mu^{(t)}$ the (signed)measures defined in \eqref{eq:deltamut} and \eqref{eq:mut} respectively. 

An immediate consequence of the above proposition is the following.
\begin{corollary}
\label{cor:corder}
For any $p\geq 1$ there exists a constant $a_p(\|\Psi\|_{\infty})$ depending only $p$ and $\|\Psi\|_{\infty}$ such that:
\begin{equation}
\label{eq:Lip0}
\E\left[\underset{q\in [0,1]}{\sup}\Big|\delta\phi(q,\bm{\omega})\Big|^p\right]^{\frac{1}{p}}\leq a_p(\|\Psi\|_{\infty}) \|\delta\mu\|_{\infty}
\end{equation}
Moreover:
\begin{equation}
\label{eq:incr}
\textup{\emph{if}  }\mu^{(1)}([0,q])\leq \mu^{(2)}([0,q])\,\,\forall q\in[0,1],\textup{  \emph{then}  }
\phi(\mu^{(1)},q,\bm{\omega})\leq \phi(\mu^{(2)}q,\bm{\omega})\,\,\forall q\in[0,1].
\end{equation}
\end{corollary}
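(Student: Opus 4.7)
My plan is to start from the explicit derivative formula of Lemma~\ref{derivative_theo} and integrate it on $t\in[0,1]$. Recalling $\mu^{(t)}=(1-t)\mu^{(1)}+t\mu^{(2)}$, the fundamental theorem of calculus gives the representation
\begin{equation*}
\delta\phi(q,\bm{\omega})=\tfrac{1}{2}\int_0^1 dt\,\Eecond{\mu^{(t)}\bm{r}(\mu^{(t)})}{\int_q^1 dp\,\delta\mu([0,p])\,\|\bm{r}(\mu^{(t)},p,\bm{\omega})\|_2^2}{q}.
\end{equation*}
The monotonicity statement~\eqref{eq:incr} is immediate from this representation: when $\mu^{(1)}([0,q])\le\mu^{(2)}([0,q])$ for every $q$, the integrand is pointwise non-negative, so $t\mapsto\phi(\mu^{(t)},q,\bm{\omega})$ is non-decreasing, and evaluating at the endpoints $t=0$ and $t=1$ yields the claim.

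For the quantitative bound~\eqref{eq:Lip0}, I would dominate $|\delta\mu([0,p])|$ pointwise by $\|\delta\mu\|_\infty$, enlarge $\int_q^1$ to $\int_0^1$, and then convert the tilted conditional expectation into a $\Wn$-object via the Bayes identity $\Eecond{\bm{v}}{Z}{q}=\mathcal{E}(\bm{v},q,\bm{\omega})^{-1}\Econd{\mathcal{E}(\bm{v},1,\bm{\omega})Z}{q}$. The decisive input is that, combining identity~\eqref{eq:zeta_part} with the sup bound~\eqref{eq:uniqueness22} of Proposition~\ref{prop:uniqueness2}, the DDE $\mathcal{E}(\mu^{(t)}\bm{r}(\mu^{(t)}),q,\bm{\omega})$ is $\Wn$-almost surely sandwiched between $e^{-2\|\Psi\|_\infty}$ and $e^{2\|\Psi\|_\infty}$ uniformly in both $q$ and $t$. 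This yields
\begin{equation*}
\sup_{q\in[0,1]}|\delta\phi(q,\bm{\omega})|\le \tfrac{1}{2}\|\delta\mu\|_\infty e^{2\|\Psi\|_\infty}\int_0^1 dt\,\sup_{q\in[0,1]}N^{(t)}_q(\bm{\omega}),
\end{equation*}
where $N^{(t)}_q(\bm{\omega}):=\Econd{\mathcal{E}(\mu^{(t)}\bm{r}(\mu^{(t)}),1,\bm{\omega})\int_0^1 dp\,\|\bm{r}(\mu^{(t)},p,\bm{\omega})\|_2^2}{q}$ is a non-negative $\Wn$-martingale.

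For $p>1$, Minkowski's integral inequality pulls $\int_0^1 dt$ outside the $L^p$-norm and Doob's $L^p$-maximal inequality bounds $\|\sup_q N^{(t)}_q\|_p$ by $(p/(p-1))\|N^{(t)}_1\|_p$; the latter is in turn controlled by $e^{2\|\Psi\|_\infty}$ times the $L^p$-norm of $\int_0^1 dp\,\|\bm{r}(\mu^{(t)},p,\bm{\omega})\|_2^2$, which estimate~\eqref{bound_rr2} of Corollary~\ref{rem:uniqueness222} bounds by a constant depending only on $p$ and $\|\Psi\|_\infty$, independently of $\mu^{(t)}$. Combining these yields~\eqref{eq:Lip0} for $p>1$ with $a_p(\|\Psi\|_\infty)$ independent of $\mu^{(1)},\mu^{(2)}$; the case $p=1$ follows from the $p=2$ case by Jensen's inequality. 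The only subtlety worth flagging is measurability of the supremum in $q$, which is guaranteed by the sample-path continuity of $\phi(\mu^{(t)})$ established in Lemma~\ref{inequality_prop_theo}. Once the uniform two-sided control on $\mathcal{E}$ is in hand, the remaining argument is standard martingale analysis, so the genuine difficulty was already absorbed into Proposition~\ref{prop:uniqueness2}.
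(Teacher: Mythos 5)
Your proposal is correct and follows essentially the same route as the paper: integrate the derivative identity of Lemma~\ref{derivative_theo}, exploit the uniform two-sided control on $\mathcal{E}(\mu^{(t)}\bm{r}(\mu^{(t)}))$ provided by~\eqref{eq:uniqueness22}, and then absorb $\sup_{q}$ via a martingale maximal estimate together with the moment bound~\eqref{bound_rr2}. The paper is terse on the final maximal step while you spell it out explicitly (Doob's $L^p$ inequality plus a separate $L^2\hookrightarrow L^1$ argument for $p=1$), but the underlying argument is the same.
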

\begin{proof}
A straightforward computation gives
\been{
\label{eq:deltaphi00}
\aled{
&\phi(\mu^{(2)},q,\bm{\omega})-\phi(\mu^{(1)},q,\bm{\omega})=\int^1_0dt\frac{\partial \phi(\mu^{(t)},q,\bm{\omega})}{\partial t}\\
&=\frac{1}{2}\int^1_0dt\widetilde{\mathbb{E}}_{\mu^{(t)}\bm{r}(\mu^{(t)})}\left[\int^{1}_{q}dq'\,\delta \mu([0,q'])\,\|\bm{r}(\mu^{(t)},q',\bm{\omega})\|^2_2\Bigg| \mathcal{F}^{\otimes n}_q\right].
}
}
If $\mu^{(1)}([0,q])\leq \mu^{(2)}([0,q]),\,\forall q\in[0,1]$, then
\been{
\delta \mu([0,q])\geq 0,\,\forall q\in[0,1]
}
So $\phi(\mu^{(2)})-\phi(\mu^{(1)})\geq 0$, proving \eqref{eq:incr}. The equivalence \eqref{eq:deltaphi00} gives
\been{
\aled{
&(\phi(\mu^{(2)},q,\bm{\omega})-\phi(\mu^{(1)},q,\bm{\omega}))^p\\
&\leq\frac{ \|\delta \mu\|^p_{\infty}}{2^p}\int^1_0dt\widetilde{\mathbb{E}}_{\mu^{(t)}\bm{r}(\mu^{(t)})}\left[\(\int^{1}_{0}dp\,\,\|\bm{r}(\mu^{(t)},p,\bm{\omega})\|^2_2\)^p\Bigg| \mathcal{F}^{\otimes n}_q\right]
}
}
By the inequality , $\mathcal{E}(\mu^{(t)}\bm{r}(\mu^{(t)}))\leq e^{2\|\Psi\|_{\infty}}$. Using this inequality, taking the $\sup$ over $q\in [0,1]$, the expectation over $\Wn$ and using the upper bounds \eqref{eq:uniqueness22} and \eqref{bound_rr2} , we complete the proof.
\end{proof}

In the following lemma, we will prove that also the map $\textup{Pr}([0,1])^{\circ}\ni \mu\mapsto \bm{r}(\mu^{(2)})$ is Lipschitz. 

As before, we consider two fixed Parisi parameters $\mu^{(1)}$ and $\mu^{(2)}$ in $\textup{Pr}([0,1])^{\circ}$ and define $\delta\phi$, $\delta\bm{r}$, $\delta\mu$ and $\mu^{(t)}$ as in~\eqref{eq:phirt},~\eqref{eq:deltamut}, and~\eqref{eq:mut}.
\begin{lemma}
\label{uniform_convergence_theo}
For any $p\geq 1$, there exist a constant $b_p(\|\Psi\|_{\infty})$ depending only $p$ and $\|\Psi\|_{\infty}$ such that:
\begin{equation}
\E\left[\left(\int^1_0dt\|\,\delta\bm{r}(q,\bm{\omega})\,\|^2_2\right)^p\right]^{\frac{1}{p}}\leq b_p(\|\Psi\|_{\infty})\|\delta\mu\|^2_{\infty}.
\end{equation}
\end{lemma}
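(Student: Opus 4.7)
My plan is to apply It\^o's formula to $(\delta\phi)^2$ on $[0,1]$, using the terminal condition $\delta\phi(1,\bm{\omega})=0$ inherited from both BSDEs, and then extract $\int_0^1\|\delta\bm{r}\|^2_2\,dt$ by absorbing the resulting cross-terms. The key preliminary input, to be used throughout, is the almost-sure bound $\sup_{q,\bm{\omega}}|\delta\phi(q,\bm{\omega})|\leq C_0(\|\Psi\|_\infty)\|\delta\mu\|_\infty$. This follows at once from the integral representation in Lemma~\ref{derivative_theo} combined with the a.s. conditional bound~\eqref{bound_rr3} from Corollary~\ref{rem:uniqueness222}, which controls $\Eecond{\mu^{(t)}\bm{r}(\mu^{(t)})}{\int_q^1\|\bm{r}(\mu^{(t)})\|^2_2\,dp}{q}$ by $K(1,\|\Psi\|_\infty)$ uniformly in $t$.

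Subtracting the two BSDEs~\eqref{selfEq1Aux} and using the linearization
\been{
\mu^{(2)}([0,t])\|\bm{r}(\mu^{(2)})\|^2_2-\mu^{(1)}([0,t])\|\bm{r}(\mu^{(1)})\|^2_2=\delta\mu([0,t])\|\bm{r}(\mu^{(1)})\|^2_2+\mu^{(2)}([0,t])\bigl(\bm{r}(\mu^{(2)})+\bm{r}(\mu^{(1)})\bigr)\cdot\delta\bm{r}
}
casts $d(\delta\phi)$ in a form that is linear in $\delta\bm{r}$ plus an ``external'' drift proportional to $\delta\mu$. Applying It\^o to $(\delta\phi)^2$, localizing by the stopping times $\sigma_{k,0}$ of~\eqref{eq:stopping} and then passing to the limit, yields
\been{
\aled{
&(\delta\phi(0,\bm{\omega}))^2+\int_0^1\|\delta\bm{r}\|^2_2\,dt=\int_0^1\delta\phi\,\delta\mu([0,t])\|\bm{r}(\mu^{(1)})\|^2_2\,dt\\
&\quad+\int_0^1\delta\phi\,\mu^{(2)}([0,t])(\bm{r}(\mu^{(2)})+\bm{r}(\mu^{(1)}))\cdot\delta\bm{r}\,dt-2\int_0^1\delta\phi\,\delta\bm{r}\cdot d\bm{\omega}(t).
}
}
Young's inequality $|ab|\leq\tfrac14 a^2+b^2$ applied to the cross drift, together with $\|\delta\phi\|_\infty\leq C_0\|\delta\mu\|_\infty$, absorbs $\tfrac14\int\|\delta\bm{r}\|^2_2$ back on the left and produces
\been{
\tfrac34\int_0^1\|\delta\bm{r}\|^2_2\,dt\leq C(\|\Psi\|_\infty)\|\delta\mu\|_\infty^2\int_0^1\bigl(\|\bm{r}(\mu^{(1)})\|^2_2+\|\bm{r}(\mu^{(2)})\|^2_2\bigr)dt+2\Big|\int_0^1\delta\phi\,\delta\bm{r}\cdot d\bm{\omega}\Big|.
}

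To pass to $L^p$, I raise to the $p$-th power, take $\Wn$-expectation, and apply Burkholder-Davis-Gundy to the stochastic integral to get $\E[|\int_0^1\delta\phi\,\delta\bm{r}\cdot d\bm{\omega}|^p]\leq C_p C_0^p\|\delta\mu\|_\infty^p\E[(\int_0^1\|\delta\bm{r}\|^2_2\,dt)^{p/2}]$. A Young step $\|\delta\mu\|_\infty^p Y^{p/2}\leq \epsilon Y^p+C_\epsilon\|\delta\mu\|_\infty^{2p}$ (with $Y=\int_0^1\|\delta\bm{r}\|^2_2\,dt$) then reabsorbs the resulting $\E[Y^p]$ into the left-hand side, while the moment bound~\eqref{bound_rr2} of Corollary~\ref{rem:uniqueness222} controls $\E[(\int_0^1\|\bm{r}(\mu^{(i)})\|^2_2\,dt)^p]\leq K(p,\|\Psi\|_\infty)$. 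Together these yield $\E[(\int_0^1\|\delta\bm{r}\|^2_2\,dt)^p]\leq b_p(\|\Psi\|_\infty)^p\|\delta\mu\|_\infty^{2p}$, which is the stated estimate.

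The main obstacle is the rigorous local-martingale control: $\int_0^\cdot\delta\phi\,\delta\bm{r}\cdot d\bm{\omega}$ is a priori only a local martingale since $\delta\bm{r}\in D_n$ only satisfies $\int_0^1\|\delta\bm{r}\|^2_2<\infty$ $\Wnas$, so the expectation/BDG steps must first be performed on the processes stopped by $\sigma_{k,0}$, with bounds uniform in $k$, before passing to the limit $k\to\infty$ via monotone convergence. A more subtle point is that the Young absorption delivers the \emph{quadratic} dependence $\|\delta\mu\|_\infty^2$ (rather than linear) only because the factor $\|\delta\phi\|_\infty^2$ in the cross term already carries an a.s. factor of $\|\delta\mu\|_\infty^2$ via Lemma~\ref{derivative_theo}; without the strong a.s. bound on $\delta\phi$ the scheme would recover only a linear dependence on $\|\delta\mu\|_\infty$.
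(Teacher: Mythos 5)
Your proof is correct and follows essentially the same strategy as the paper's: apply Itô's formula to $(\delta\phi)^2$ with the terminal condition $\delta\phi(1,\bm\omega)=0$, decompose the drift difference into a $\delta\mu$-term plus a cross term $\mu^{(i)}\bm{r}_{+}\cdot\delta\bm{r}$, and absorb $\int_0^1\|\delta\bm{r}\|_2^2\,dt$ back into the left-hand side using the $L^p$-moment bounds from Corollary~\ref{rem:uniqueness222} and the Lipschitz control on $\delta\phi$. The only differences are cosmetic: you absorb the cross term via a \emph{pathwise} Young inequality before raising to the $p$-th power, whereas the paper raises to the $p$-th power first and solves the resulting quadratic inequality in $X=\sqrt{\E[(\int\|\delta\bm{r}\|_2^2)^p]}$; and you re-derive and use an almost-sure $L^\infty$ bound $|\delta\phi|\leq C_0\|\delta\mu\|_\infty$ from Lemma~\ref{derivative_theo} and~\eqref{bound_rr3}, while the paper uses the $L^p$ bound of Corollary~\ref{cor:corder} — both work and are interchangeable here.
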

\begin{proof}
Let
\been{
\quad \bm{r}_{+}=\bm{r}(\mu^{(2)})+\bm{r}(\mu^{(1)}).
}
By the Lemma~\ref{rem:uniqueness222}, the processes $\bm{r}(\mu^{(1)})$ and $\bm{r}(\mu^{(2)})$ are in $D^{2,p}_n$ and the stochastic integrals $I(\bm{r}(\mu^{(1)}))$ and $I(\bm{r}(\mu^{(2)}))$ are $\Wn-L_p$--integrable martingales. Thus $\delta\bm{r} \in D^{2,p}_n$, $\bm{r}_{+}\in  D^{2,p}_n$ and $I(\delta\bm{r})$ is a $\Wn-L_p$--integrable martingale. In the same way, by Proposition~\ref{prop:uniqueness2}, $\phi(\mu^{(1)})\in S^b_n$, $\phi(\mu^{(2)})\in S^b_n$, and $\delta\phi\in S^b_n$. The \BSDE  gives
\been{
\aled{
&\delta\phi(q,\bm{\omega})-\delta\phi(0,\bm{\omega})\\
&=I(\delta\bm{r},q,\bm{\omega})-\frac{1}{2}\int^q_0dt\(\mu^{(2)}([0,t])\|\bm{r}(\mu^{(2)},t,\bm{\omega})\|^2_2-\mu^{(1)}([0,t])\|\bm{r}(\mu^{(1)},t,\bm{\omega})\|^2_2\)\\
&=I(\delta\bm{r},q,\bm{\omega})-\frac{1}{2}\int^q_0dt\delta\mu([0,t])\|\bm{r}(\mu^{(2)},t,\bm{\omega})\|^2_2-\frac{1}{2}\int^q_0dt\mu^{(1)}([0,t])\bm{r}_+(t,\bm{\omega})\cdot \delta\bm{r}(t,\bm{\omega}).
}
}
Using the Ito Formula, we get
\been{
\aled{
&\delta\phi^2(q,\bm{\omega})-\delta\phi^2(0,\bm{\omega})\\
&=2\int^q_0\delta\phi(t,\bm{\omega})\bm{r}_{\bm{a}}(t,\bm{\omega})\cdot d\bm{\omega}(t)-\int^q_0dt\delta\phi(t,\bm{\omega})\delta\mu([0,t])\|\bm{r}(\mu^{(2)},t,\bm{\omega})\|^2_2\\
&-\int^q_0dt\delta\phi(t,\bm{\omega})\mu^{(1)}([0,t])\bm{r}_+(t,\bm{\omega})\cdot \delta\bm{r}(t,\bm{\omega})+\int^q_0dt\|\delta\bm{r}(t,\bm{\omega})\|^2_2\\
}
}
All the quantities in the above expression are in $\Wn-L_p$--integrable. Taking $q=1$, we get
\been{
\delta\phi(1,\bm{\omega})=0.
}
Moreover, we put $\int^1_0 dq\|\bm{r}_{\bm{a}}(q,\bm{\omega})\|^2_2$ on the left-hand side of the equation and the other terms in the right-hand side and take the absolute value raised to the power $p$ in both sides. Using the inequality $|A+B+C+D|^p\leq4^{p-1}(|A|^p+|B|^p+|C|^p+|D|^{p})$ and taking the expectation value, we get
\been{
\label{inequality_r}
\E\[\(\int^1_0dt\|\delta\bm{r}(t,\bm{\omega})\|^2_2\)^p\]\leq 2^{2p-1}\(\text{I}+\text{II}+\text{III}+\text{IV}\),
}
where
\been{
\text{I}=\mathbb{E}\left[\left|\delta\phi(0,\bm{\omega})\right|^{2p}\right],
}
\been{
\text{II}=2^{p}\mathbb{E}\left[\left|\int^1_0\delta\phi(q,\bm{\omega})\delta\bm{r}(q,\bm{\omega}) \cdot d\bm{\omega}(q)\right|^p\right],
}
\begin{equation}
\begin{multlined}
\text{III}=\mathbb{E}\left[\left|\int^1_0 dq\,\delta \mu([0,q])\,\delta\phi(q,\bm{\omega})\|\bm{r}_{2}(q,\bm{\omega})\|^2_2\right|^{p}\right],
\end{multlined}
\end{equation}
\begin{equation}
\begin{multlined}
\text{IV}=\mathbb{E}\left[\left|\int^1_0 dq\,\mu^{(2)}(q)\delta\phi(q,\bm{\omega})\bm{r}_{+}(q,\bm{\omega})\cdot\delta \bm{r}(q,\bm{\omega})\right|^{p}\right].
\end{multlined}
\end{equation}
By Corollary~\ref{cor:corder}
\begin{equation}
\label{eq:Iined}
\text{I}\leq a^{2p}_{2p}\|\delta \mu\|^{2p}_{\infty}.
\end{equation}
By the Burkholder Davis and Gundy inequality there exists a universal constant $C_p>0$ such that
\been{
\aled{
&\E\[\(\int^q_0\delta\phi(t,\bm{\omega})\delta\bm{r}(t,\bm{\omega})\cdot d\bm{\omega}(t)\)^{p}\]\leq \E\[\sup_{q\in [0,1]}\(\int^q_0\delta\phi(t,\bm{\omega})\delta\bm{r}(t,\bm{\omega})\cdot d\bm{\omega}(t)\)^{p}\]\\
&\leq C_p\E\[\(\int^1_0dt(\delta\phi(t,\bm{\omega}))^2\|\delta\bm{r}(t,\bm{\omega})\|^2_2\)^{\frac{p}{2}}\]\leq C_p\E\[\sup_{q\in [0,1]}|\delta\phi(q,\bm{\omega})|^{p}\(\int^1_0dt\|\delta\bm{r}(t,\bm{\omega})\|^2_2\)^{\frac{p}{2}}\]\\
&\leq C_p\sqrt{\E\[\sup_{q\in [0,1]}|\delta\phi(q,\bm{\omega})|^{2p}\]}\sqrt{\E\[\(\int^1_0dt\|\delta\bm{r}(t,\bm{\omega})\|^2_2\)^{p}\]}\,.
}
}
Thus, using the inequality~\eqref{eq:Iined}
\been{
\text{II}\leq 2^p\E\[\(\int^q_0\delta\phi(t,\bm{\omega})\delta\bm{r}(t,\bm{\omega})\cdot d\bm{\omega}(t)\)^{p}\]\leq 2^pa^{p}_{2p}\|\delta \mu\|^{p}_{\infty}\sqrt{\E\[\(\int^1_0dt\|\delta\bm{r}(t,\bm{\omega})\|^2_2\)^{p}\]}.
}
Using the H\"older inequality, Lemma~\ref{rem:uniqueness222} and the inequality~\eqref{eq:Iined}, we get
\begin{equation}
\aled{
&\text{III}\leq \mathbb{E}\left[\underset{q\in[0,1]}{\sup}|\delta \phi(q,\bm{\omega})|^{p}\left(\int^1_0 dq\,|\delta \mu([0,q])|\|\bm{r}_{2}(q,\bm{\omega})\|^2_2\right)^{p}\right]\\
&\leq \sqrt{\mathbb{E}\left[\underset{q\in[0,1]}{\sup}|\delta \phi(q,\bm{\omega})|^{2p}\right]}\sqrt{\E\left[\left(\int^1_0 dq\,\|\bm{r}_{2}(q,\bm{\omega})\|^2_2\right)^{2p}\right]}\|\delta \mu\|^p_{\infty}\leq a^p_{2p}\,K^p_{2p}\|\delta \mu\|_{\infty}^{2p}.
}
\end{equation}
Finally
\begin{equation}
\aled{
&\text{IV}\leq \mathbb{E}\left[\underset{q\in[0,1]}{\sup}|\delta \phi(q,\bm{\omega})|^{p}\left|\int^1_0 dq\,\mu^{(2)}(q)\,\bm{r}_{+}(q,\bm{\omega})\cdot\delta \bm{r}(q,\bm{\omega})\right|^{p}\right]\\
&\leq\mathbb{E}\left[\underset{q\in[0,1]}{\sup}|\delta \phi(q,\bm{\omega})|^{p}\left|\int^1_0 dq\,\,\|\bm{r}_{+}(q,\bm{\omega})\|^2_2\right|^{\frac{p}{2}}\left|\int^1_0 dq\,\|\delta \bm{r}(q,\bm{\omega})\|^2_2\right|^{\frac{p}{2}}\right]\\
&\leq \sqrt{\mathbb{E}\left[\underset{q\in[0,1]}{\sup}|\delta \phi(q,\bm{\omega})|^{2p}\left|\int^1_0 dq\,\,\|\bm{r}_{+}(q,\bm{\omega})\|^2_2\right|^{p}\right]}\sqrt{\mathbb{E}\left[\left(\int^1_0 dq\,\|\delta \bm{r}(q,\bm{\omega})\|^2_2\right)^{p}\right]},
}
\end{equation}
and using Lemma~\ref{rem:uniqueness222} and the Lemma~\ref{cor:corder}
\been{
\aled{
&\mathbb{E}\left[\underset{q\in[0,1]}{\sup}|\delta \phi(q,\bm{\omega})|^{2p}\left|\int^1_0 dq\,\,\|\bm{r}_{+}(q,\bm{\omega})\|^2_2\right|^{p}\right]\\
&\leq \sqrt{\mathbb{E}\left[\underset{q\in[0,1]}{\sup}|\delta \phi(q,\bm{\omega})|^{4p}\right]}\sqrt{\mathbb{E}\left[\left|\int^1_0 dq\,\,\|\bm{r}_{+}(q,\bm{\omega})\|^2_2\right|^{2p}\right]}\leq a^{2p}_{4p}K_{2p}\|\delta\mu\|^{2p}_{\infty}.
}
}
So
\been{
\text{IV}\leq a^p_{4p}\sqrt{K_{2p}}\|\delta\mu\|^{p}_{\infty}\sqrt{\mathbb{E}\left[\left(\int^1_0 dq\,\|\delta \bm{r}(q,\bm{\omega})\|^2_2\right)^{p}\right]}.
}
Let us take
\begin{equation}
X=\sqrt{\mathbb{E}\left[\left(\int^1_0dq \|\delta\bm{r}(q,\bm{\omega})\|^2_2\right)^{p}\right]}
\end{equation}
and combine the above inequalities in~\eqref{inequality_r}. We get:
\begin{equation}
\label{inequality_2}
X^2\leq \alpha_p \|\delta \mu\|_{\infty}^p X+\beta_p\|\delta \mu\|_{\infty}^{2p}
\end{equation}
where $\alpha_p$ and $\beta_p$ are two positive constants that depends only on $p$. That implies that
\begin{equation}
X\leq \,\frac{1}{2}\left(\alpha_p+\sqrt{\alpha^2_p+4\beta_p\,}\right)\,\|\delta \mu\|_{\infty}^{p},
\end{equation}
completing the proof.
\end{proof}
We now present the proof main result of this section.

\begin{proof}[Proof of Proposition~\ref{convergence_result}]
Let us consider the sequence of pairs of non-negative random variables
\begin{equation}
U_k=\underset{q\in[0,1]}{\sup}\left|\phi(\mu^{(k+1)},q,\bm{\omega})-\phi(\mu^{(k)},q,\bm{\omega})\right|
\end{equation}
and
\begin{equation}
V_k=\int^1_0 dq\|\bm{r}(\mu^{(k+1)},q,\bm{\omega})-\bm{r}(\mu^{(k)},q,\bm{\omega})\|^2_2.
\end{equation}
Corollary~\eqref{cor:corder} and Lemma~\eqref{uniform_convergence_theo} yield:
\begin{equation}
\label{inequality_start}
\E\big[ U^p_k\big]\leq a_p2^{-p k},\quad
\E\big[V^p_k\big]\leq b_p2^{-2p k}.
\end{equation}
Thus for any $n\leq m$ it holds
\been{
\aled{
\sup_{m>n}\E\[\underset{q\in[0,1]}{\sup}\left|\phi(\mu^{(m)},q,\bm{\omega})-\phi(\mu^{(n)},q,\bm{\omega})\right|^p\]^{\frac{1}{p}}\leq \sum^m_{k=n} \E\big[ U^p_k\big]^{\frac{1}{p}}\leq a_p^{\frac{1}{p}}2^{-n}
}
}
and
\been{
\sup_{m>n}\E\[\(\int^1_0 dq\|\bm{r}(\mu^{(m)},q,\bm{\omega})-\bm{r}(\mu^{(n)},q,\bm{\omega})\|^2_2\)^p\]^{\frac{1}{p}}\\
\leq \sum^m_{k=n} \E\big[ V^p_k\big]^{\frac{1}{p}}\leq b_p^{\frac{1}{p}}\frac{4}{3}2^{-2n}
} 
from which it is straightforward to obtain that the sequence $\(\left(\,\phi(\mu^{(k)}),\bm{r}(\mu^{(k)})\,\right)\)_{k\in \N}$ converges in $S^p_n \times D^{2,p}_n$ norm to a pair $(\phi,\bm{r})$. Moreover, by Markov inequality and~\eqref{inequality_start}, one gets
\begin{equation}
\Wn\left(\,U_k>2^{-\frac{k}{2}} \right)\leq \frac{\mathbb{E}\big[U^p_k\big]}{\epsilon^p}\leq a_p\left(\frac{1}{2^{\frac{k}{2}}}\right)^p,
\end{equation}
and in the same way:
\begin{equation}
\mathbb{W}\(\,V_k>2^{-k}\right)\leq b_p\left(\frac{1}{2^{k}}\right)^p,
\end{equation}
Hence, the Borel-Cantelli lemma yields
\been{
\Wn\(U_k> 2^{-\frac{k}{2}}\quad  \textup{for infitely many } k\)=0,
}
and
\been{
\Wn\(V_k> 2^{-k}\quad  \textup{for infitely many } k\)=0.
}
the above two inequalities, implies
\been{
\sum^{\infty}_{k=1}U_k<\infty,\quad \Wnas,
}
and
\been{
\sum^{\infty}_{k=1}V_k<\infty,\quad \Wnas,
}
Thus,
\been{
\lim_{n\to \infty}\sup_{m>n}\underset{q\in[0,1]}{\sup}\left|\phi_{\mu^{(m)}}(q,\bm{\omega})-\phi_{\mu^{(n)}}(q,\bm{\omega})\right|\leq \lim_{n\to \infty}\sum^{\infty}_{k=n}U_k=0,\quad \Wnas,
}
and
\been{
\lim_{n\to \infty}\sup_{m>n}\int^1_0 dq\|\bm{r}_{\mu^{(m)}}(q,\bm{\omega})-\bm{r}_{\mu^{(n)}}(q,\bm{\omega})\|^2_2\leq \lim_{n\to \infty}\sum^{\infty}_{k=n}V_k=0,\quad \Wnas.
}
Thus $\(\left(\,\phi(\mu^{(k)}),\bm{r}(\mu^{(k)})\,\right)\)_{k\in \N}$ converges almost surely. The almost sure convergence and the convergence in $S^p_n \times D^{2,p}_n$ norm to $(\phi,\bm{r})\in S^p_n \times D^{2,p}_n$, implies that the sequence  $\(\left(\,\phi(\mu^{(k)}),\bm{r}(\mu^{(k)})\,\right)\)_{k\in \N}$  converges to $(\phi,\bm{r})$ $\Wn$--almost surely.

Moreover, the definition of $(U_k)_{k\in \N}$ implies that $(\phi(\mu^{(k)}))_{k\in \N}$ converges to $\phi$ almost surely uniformly in the interval $[0,1]$. So the process $\phi$ has almost surely continuous sample paths in $[0,1]$ since, by Proposition~\ref{prop:existk}, the processes $\phi(\mu^{(k)})$ has almost surely continuous sample paths in $[0,1]$.

It remains to show that the pair $(\phi,\bm{r})$ is a solution to the equation~\eqref{selfEq1Aux} corresponding to the $\mu\in \textup{Pr}([0,1])$. Since the process $\bm{r}$ is in $ D^{2,p}_n$, for any $p\geq1$, then the Ito integral $I(\bm{r})$ and the integral $(q,\bm{\omega})\mapsto \int^q_0 dq' \mu([0,q']) \|\bm{r}(q',\bm{\omega})\|^2_2$  and are in $S^{p}_n$. Let
\begin{equation}
\text{I}_k(q,\bm{\omega})=\int^1_q\bm{r}(\mu^{(k)},q',\bm{\omega})\cdot d\bm{\omega}(q'),\quad \text{II}_k(q,\bm{\omega})=\int^1_q dq' \mu^{(k)}([0,q']) \|\bm{r}(\mu^{(k)},q',\bm{\omega})\|^2_2
\end{equation}
and
\begin{equation}
\text{I}(q,\bm{\omega})=\int^1_q \bm{r}(q',\bm{\omega})\cdot d\bm{\omega}(q'),\quad \text{II}(q,\bm{\omega})=\int^1_q dq' \mu([0,q']) \|\bm{r}(q',\bm{\omega})\|^2_2.
\end{equation} 
Note that $\text{I}_k$, $\text{II}_k$, $\text{I}$ and $\text{II}$ are not adapted process. We must prove the almost sure convergence of the sequences $(\text{I}_k)_{k\in \N}$ and $(\text{II}_k)_{k\in \N}$ to $\text{I}$ and $\text{II}$ respectively. From the BSDE it holds
\been{
\label{eq:almostk}
\phi(\mu^{(k)},q,\bm{\omega})+\text{I}_k(q,\bm{\omega})-\frac{1}{2}\text{II}_k(q,\bm{\omega})=\Psi(\bm{\omega})
}

Let us define the following non-negative random variables
\begin{equation}
G_k=\underset{q\in[0,1]}{\sup}\left|\,\text{I}_k(q,\bm{\omega})-\text{I}(q,\bm{\omega})\,\right|=\underset{q\in[0,1]}{\sup}\left|\int^1_q  \left(\bm{r}(\mu^{(k)},q',\bm{\omega})-\bm{r}(q',\bm{\omega})\right)\cdot d\bm{\omega}(q')\right|
\end{equation}
and
\begin{equation}
\aled{
&F_k=\underset{q\in[0,1]}{\sup}\left|\,\text{II}_k(q,\bm{\omega})-\text{II}(q,\bm{\omega})\,\right|\\
&=\underset{q\in[0,1]}{\sup}\left|\int^1_q dq'\left(\mu^{(k)}([0,q'])\|\bm{r}(\mu^{(k)},q',\bm{\omega})\|^2_2-\mu([0,q']) \|\bm{r}(q',\bm{\omega})\|^2_2\right)\right|.
}
\end{equation}
By BDG inequality, there is a positive constant $C_p$, depending only on $p$, such that
\begin{equation}
\E\left[G_k^p\right]\leq C_p \E\left[V_k^{p/2}\right]\leq C_p b_p 2^{-kp}.
\end{equation}
 Moreover, the inequality~\eqref{bound_rr2} yields
\begin{equation}
\aled{
&\E\left[F_k^p\right]\\
&\leq \|\mu^{(k)}-\mu\|^{p}_{\infty}\E\left[\left(\int^1_0 dq' \|\bm{r}(\mu^{(k)},q',\bm{\omega})\|^2_2\right)^p\right]\\
&+\E\left[\left(\int^1_0 dq' \mu([0,q']) \|\bm{r}(\mu^{(k)},q',\bm{\omega})-\bm{r}(q',\bm{\omega})\|^2_2\right)^p\right]\leq  2 ^{-pk}K_p+\E\left[V_k^{p/2}\right]\leq 2 ^{-pk} c_p
}
\end{equation}
where $c_p$ is a positive constant depending only on $p$. As the above two inequalities imply that the sequences $(G_k)_{k\in \N}$ and $(F_k)_{k\in \N}$ converge in $L^p$ and $\Wn$--almost surely to $0$, so the random variables $(\text{I}_k)_{k\in \N}$ and $(\text{II}_k)_{k\in \N}$ converge $\Wn$--almost surely uniformly in $q\in [0,1]$ to $\text{I}$ and $\text{II}$ respectively. Since $(\phi(\mu^{(k)}))_{k\in \N}$ converges $\Wn$--almost surely to to $\phi$, the equation~\eqref{eq:almostk} gives
\been{
\aled{
&\phi(q,\bm{\omega})+\text{I}(q,\bm{\omega})-\frac{1}{2}\text{II}(q,\bm{\omega})\\
&=\lim_{k\to \infty}\(\phi(\mu^{(k)},q,\bm{\omega})+\text{I}_k(q,\bm{\omega})-\frac{1}{2}\text{II}_k(q,\bm{\omega})\)=\Psi(\bm{\omega}),
}
}
completing the proof.
\end{proof}
\section{Uniqueness of the solution and proof of Theorem~\ref{th:BSDE}}
\label{sec:43}
In this section we establish the uniqueness result, which provides the last missing piece in the proof of Theorem~\ref{th:BSDE}. 

 \begin{proposition}[Uniqueness]
\label{prop:uniqueness}
The solution to the BSDE~\eqref{selfEq1Aux} is unique on $S_n\times D_{n}$.
\end{proposition}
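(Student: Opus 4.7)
The strategy is to subtract the two candidate solutions to obtain a linear BSDE for the difference, then perform a Girsanov change of measure (after suitable localization) to cast $\delta\phi$ as a martingale and to deduce it vanishes via an energy estimate. Fix two solutions $(\phi_i,\bm{r}_i)\in S_n\times D_n$, $i=1,2$, and set $\delta\phi := \phi_1-\phi_2$, $\delta\bm{r} := \bm{r}_1-\bm{r}_2$, and $\bm{v} := \tfrac{1}{2}\mu(\bm{r}_1+\bm{r}_2)$. Using $\|\bm{r}_1\|_2^2-\|\bm{r}_2\|_2^2=(\bm{r}_1+\bm{r}_2)\cdot\delta\bm{r}$, subtracting the two instances of~\eqref{selfEq1Aux} yields the linearized BSDE
\begin{equation*}
\delta\phi(q,\bm{\omega}) = -\int_q^1 \delta\bm{r}(t,\bm{\omega})\cdot d\bm{\omega}(t) + \int_q^1 \bm{v}(t,\bm{\omega})\cdot\delta\bm{r}(t,\bm{\omega})\,dt,\qquad \delta\phi(1,\bm{\omega})=0.
\end{equation*}
The aim is $\delta\phi\equiv 0$ in $S_n$ and $\delta\bm{r}\equiv 0$ in $D_n$. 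Lemma~\ref{lem:BSDEproof00} already provides uniqueness on $[0,q_0]$ once the value at $q_0$ is pinned down, so it suffices to work on $[q_0,1]$.

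Next I would localize with stopping times $\tau_k := \inf\{t\in[q_0,1]:\int_{q_0}^t(\|\bm{r}_1\|_2^2+\|\bm{r}_2\|_2^2)\,ds\geq k\}\wedge 1$, which satisfy $\tau_k\uparrow 1$ almost surely by~\eqref{bound_rr2}, and introduce the truncated drift $\bm{v}_k(t,\bm{\omega}):=\bm{v}(t,\bm{\omega})\bm{1}_{\{q_0\leq t\leq \tau_k\}}$. Because $\int_0^1\|\bm{v}_k\|_2^2\,dt\leq k/2$ is uniformly bounded, Novikov's criterion makes $\mathcal{E}(\bm{v}_k)$ a genuine $\Wn$-martingale, and by Theorem~\ref{lem:martcond} the measure $\widetilde{\mathbb{W}}_k:=\mathbb{W}_{\bm{v}_k}$ is a probability under which $\bm{W}_k(t):=\bm{\omega}(t)-\bm{\omega}(0)-\int_0^{t\wedge\tau_k}\bm{v}(s,\bm{\omega})\,ds$ is a Brownian motion. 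Rewriting the linearized BSDE in $\bm{W}_k$-coordinates gives $\delta\phi(q\wedge\tau_k)-\delta\phi(\tau_k)=-\int_{q\wedge\tau_k}^{\tau_k}\delta\bm{r}\cdot d\bm{W}_k$, a $\widetilde{\mathbb{W}}_k$-local martingale whose quadratic variation $\int_{q_0}^{\tau_k}\|\delta\bm{r}\|_2^2\,dt\leq 2k$ is bounded, hence a true martingale. Applying Ito to $(\delta\phi)^2$ under $\widetilde{\mathbb{W}}_k$ yields the energy identity
\begin{equation*}
\widetilde{\mathbb{E}}_k\bigl[(\delta\phi(q\wedge\tau_k))^2\bigr]+\widetilde{\mathbb{E}}_k\!\left[\int_{q\wedge\tau_k}^{\tau_k}\|\delta\bm{r}\|_2^2\,dt\right]=\widetilde{\mathbb{E}}_k\bigl[(\delta\phi(\tau_k))^2\bigr]\leq 4\|\Psi\|_\infty^2\,\widetilde{\mathbb{W}}_k(\tau_k<1),
\end{equation*}
where the last inequality uses $\delta\phi(1)=0$ together with the $L^\infty$ bound~\eqref{eq:uniqueness2}.

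The decisive and expectedly hardest step is to show $\widetilde{\mathbb{W}}_k(\tau_k<1)=\mathbb{E}[\mathcal{E}(\bm{v}_k,1,\bm{\omega})\bm{1}_{\{\tau_k<1\}}]\to 0$ as $k\to\infty$. Since $\Wn(\tau_k<1)\to 0$ by construction, it is enough to establish uniform integrability of the density family $\{\mathcal{E}(\bm{v}_k,1,\bm{\omega})\}_k$ under $\Wn$. The polynomial moment bounds of Corollary~\ref{rem:uniqueness222} do not suffice; what is required is exponential moment control of $\int_{q_0}^1\|\bm{v}\|_2^2\,dt$, to be extracted from a BMO estimate on the stochastic integral $t\mapsto\int_0^t\bm{r}^*\cdot d\bm{\omega}$. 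Indeed, taking conditional expectations in~\eqref{selfEq1Aux} and invoking~\eqref{eq:uniqueness2} gives the bound $\mathbb{E}\bigl[\int_\tau^1\mu([0,t])\|\bm{r}^*\|_2^2\,dt\bigm|\mathcal{F}^{\otimes n}_\tau\bigr]\leq 4\|\Psi\|_\infty$ for every stopping time $\tau\geq q_0$, from which a John-Nirenberg style argument should deliver uniform $L^p$ bounds on $\mathcal{E}(\bm{v}_k,1,\bm{\omega})$ for some $p>1$. Once this uniform integrability is secured, dominated convergence gives $\widetilde{\mathbb{E}}_k[(\delta\phi(\tau_k))^2]\to 0$, and Fatou applied to the energy identity (after transport back to $\Wn$ through the density $\mathcal{E}(\bm{v}_k)$) forces both $\delta\phi\equiv 0$ on $[q_0,1]$ and $\int_{q_0}^1\|\delta\bm{r}\|_2^2\,dt=0$ $\Wn$--almost surely; combined with Lemma~\ref{lem:BSDEproof00} this concludes the uniqueness proof in $S_n\times D_n$.
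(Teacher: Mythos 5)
Your approach is genuinely different from the paper's, and it contains a gap at precisely the step you flag as the hardest. You linearize the difference BSDE using the \emph{midpoint} drift $\bm{v}=\tfrac12\mu(\bm{r}_1+\bm{r}_2)$, which makes $\delta\phi$ a local martingale under the changed measure and then forces an energy estimate. This choice eliminates the quadratic term $\tfrac12\mu\|\delta\bm{r}\|^2_2$ that would otherwise appear, so there is no one-sided coercivity to exploit, and you must instead show that the localized densities $\mathcal{E}(\bm{v}_k,1,\bm{\omega})$ are uniformly integrable so that $\widetilde{\mathbb{W}}_k(\tau_k<1)\to 0$. You gesture at a BMO/John--Nirenberg estimate but do not carry it out, and the moment bounds available in the paper (Proposition~\ref{prop:uniqueness2}, Corollary~\ref{rem:uniqueness222}) give only polynomial control of $\int\|\bm{r}^*\|^2_2$, not directly the exponential integrability you need. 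The BMO bound you derive from the BSDE, $\mathbb{E}\bigl[\int_\tau^1\mu([0,t])\|\bm{r}^*\|^2_2\,dt\bigm|\mathcal{F}^{\otimes n}_\tau\bigr]\leq 4\|\Psi\|_\infty$, is correct and would indeed imply (via Kazamaki--Ikeda theory) that $\mathcal{E}(\bm{v})$ is a uniformly integrable martingale, but this machinery is not in the paper and you do not supply it, so as written the proof has a hole at its decisive point.

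The paper avoids this difficulty entirely by writing the difference BSDE with \emph{each} solution's own drift in turn, rather than the midpoint. Using $\bm{r}^*$ as the Girsanov drift, the quadratic term appears with a minus sign and one gets $\delta\phi(q,\bm{\omega})=-\tfrac12\widetilde{\mathbb{E}}_{\mu\bm{r}^*}\bigl[\int_q^1\mu([0,t])\|\delta\bm{r}\|^2_2\,dt\mid\mathcal{F}^{\otimes n}_q\bigr]\leq 0$; using $\overline{\bm{r}}$ as the drift gives the opposite sign and $\delta\phi\geq 0$, so $\delta\phi\equiv 0$, and then both remainders vanish, forcing $\delta\bm{r}=0$ on $[q_0,1]$. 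The key enabler is that Proposition~\ref{prop:uniqueness2} already establishes $|\gamma(\mu\bm{r}^*,q,\bm{\omega})|\leq 2\|\Psi\|_\infty$, so each $\mathcal{E}(\mu\bm{r}^*)$ is a \emph{bounded} positive martingale (no localization or UI argument is needed), and Corollary~\ref{rem:uniqueness222} plus Lemma~\ref{lem:mart_cond} ensure the stochastic integrals $I_{\mu\bm{r}^*}(\delta\bm{r})$ and $I_{\mu\overline{\bm{r}}}(\delta\bm{r})$ are true martingales under the respective changed measures. This two-sided squeeze is considerably lighter than your energy route and also dispenses with the $\tau_k$-localization entirely. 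If you wish to salvage your argument, you should either (i) import the Kazamaki BMO criterion and verify its hypotheses to show $\mathcal{E}(\bm{v})$ is a UI martingale, or (ii) switch to the two-sided drift choice, which is what the paper does.
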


\begin{proof}
By Proposition~\ref{prop:uniqueness2}, any solution to the \BSDE is in $S^b_n\times \widehat{D}^{\mu}_{n}$. Thus we only need to prove the uniqueness in $S^b_n\times \widehat{D}^{\mu}_{n}$. Let $(\phi^*,\bm{r}^*)\in S^b_n\times \widehat{D}^{\mu}_{n}$ and  $(\overline{\phi},\overline{\bm{r}})\in S^b_n\times \widehat{D}^{\mu}_{n}$ be two solutions. Let us define
\been{
\delta\phi=\phi^*-\overline{\phi},\quad \delta\bm{r}=\bm{r}^*-\overline{\bm{r}}.
}
By the \BSDE, we have
\been{
\label{eq:dif1}
\aled{
&\delta\phi(q,\bm{\omega})=-I(\delta\bm{r}_q,1,\bm{\omega})+\frac{1}{2}\int^1_qdt\mu([0,t])\|\bm{r}^*(t,\bm{\omega})\|^2_2-\frac{1}{2}\int^1_qdt\mu([0,t])\|\overline{\bm{r}}(t,\bm{\omega})\|^2_2\\
&=-I_{\mu\overline{\bm{r}}}(\delta\bm{r}_q,1,\bm{\omega})+\frac{1}{2}\int^1_qdt\mu([0,t])\|\delta\bm{r}(t,\bm{\omega})\|^2_2
}
}
or, in a similar way
\been{
\label{eq:dif2}
\delta\phi(q,\bm{\omega})=-I_{\mu \bm{r}^*}(\delta\bm{r}_q,1,\bm{\omega})-\frac{1}{2}\int^1_qdt\mu([0,t])\|\delta\bm{r}(t,\bm{\omega})\|^2_2.
}
As usual $\delta\bm{r}_q(q',\bm{\omega}):=\bm{1}_{q'>q}\delta\bm{r}_q(q',\bm{\omega})$. By Proposition~\ref{prop:uniqueness2}, the processes $\mathcal{E}(\mu\bm{r}^*)$ and $\mathcal{E}(\mu\overline{\bm{r}})$ are a non-negative $\Wn$--martingale of average $1$ adapted to $\Fqfilt{q}{[0,1]}$. Thus, we can define the probability measures $\W_{\mu\bm{r}^*}$ and $\W_{\mu\overline{\bm{r}}}$. Moreover, by Corollary~\ref{rem:uniqueness222} the process $I_{\mu\bm{r}^*}(\delta \bm{r})$ is a $\W_{\mu\bm{r}^*}$--martingale and $I_{\mu\overline{\bm{r}}}(\delta \bm{r})$ is a $\W_{\mu\overline{\bm{r}}}$--martingale. Since $\phi^*$ and $\overline{\phi}$ are $\Fqfilt{q}{[0,1]}$--adapted, from~\eqref{eq:dif1} we get
\been{
\label{eq:dif10}
\aled{
&\delta\phi(q,\bm{\omega})=\Eecond{\mu\overline{\bm{r}}}{\delta\phi(q,\bm{\omega})}{q}\\
&=\Eecond{\mu\overline{\bm{r}}}{-I_{\mu \overline{\bm{r}} }(\delta\bm{r}_q,1,\bm{\omega})}{q} +\frac{1}{2}\Eecond{\mu\bm{r}^*}{\int^1_qdt\mu([0,t])\|\delta\bm{r}(t,\bm{\omega})\|^2_2}{q}\\
&=\frac{1}{2}\Eecond{\mu\overline{\bm{r}}}{\int^1_qdt\mu([0,t])\|\delta\bm{r}(t,\bm{\omega})\|^2_2}{q}\geq 0,
}
}
and from~\eqref{eq:dif2} we get
\been{
\label{eq:dif20}
\aled{
&\delta\phi(q,\bm{\omega})=\Eecond{\mu\bm{r}^*}{\delta\phi(q,\bm{\omega})}{q}\\
&=\Eecond{\mu\bm{r}^*}{-I_{\mu\bm{r}^* }(\delta\bm{r}_q,1,\bm{\omega})}{q} -\frac{1}{2}\Eecond{\mu\bm{r}^*}{\int^1_qdt\mu([0,t])\|\delta\bm{r}(t,\bm{\omega})\|^2_2}{q}\\
&=-\frac{1}{2}\Eecond{\mu\bm{r}^*}{\int^1_qdt\mu([0,t])\|\delta\bm{r}(t,\bm{\omega})\|^2_2}{q}\leq 0.
}
}
So $\overline{\phi}(q,\bm{\omega})-\phi^*(q,\bm{\omega})\geq 0$ and $\phi^*(q,\bm{\omega})-\overline{\phi}(q,\bm{\omega})\geq 0$ for any $q\in[0,1]$. This implies that
\been{
\overline{\phi}(q,\bm{\omega})=\phi^*(q,\bm{\omega}),\quad \forall q\in [0,1],\,\W^{\otimes n}-\textup{a.s.}.
}
Moreover, the above identity and the identities~\eqref{eq:dif10} and~\eqref{eq:dif20} give
\been{
\aled{
&\Ee{\mu\bm{r}^*}\left[ \int^1_0 dt \mu([0,t])\,\|\bm{r}^*(t, \,\bm{\omega})- \overline{\bm{r}}(t, \,\bm{\omega})\|^2_2\right]\\
&=\Ee{\mu\overline{\bm{r}}}\left[ \int^1_0 dt \mu([0,t])\,\|\bm{r}^*(t, \,\bm{\omega})- \overline{\bm{r}}(t, \,\bm{\omega})\|^2_2\right]=0.
}
}
Let $q_0\in [0,1]$ be the quantity defined in~\eqref{eq:qx0}. The above identity implies
\been{
\int^1_{q_0} dt \,\|\bm{r}^*(t, \,\bm{\omega})- \overline{\bm{r}}(t, \,\bm{\omega})\|^2_2=0,\quad \W^{\otimes n}-\textup{a.s.}
}
By the above results $(\phi^*,\bm{r}^*)=(\overline{\phi},\overline{\bm{r}})$ for all $q\in [q_0,1]$ (in the sense of $S^n_n\times D_n$). Finally, by Lemma~\ref{lem:BSDEproof00}, the existention of the solution in $q\in [0,q_0]$ is unique and it is given by~\eqref{eq:solq0}, proving that  $(\phi^*,\bm{r}^*)=(\overline{\phi},\overline{\bm{r}})$ for all $q\in[0,1]$ $\Wnas$.
\end{proof}
The proof of Theorem~\ref{th:BSDE} is now trivial.
\begin{proof}[Proof of Theorem~\ref{th:BSDE}]
Proposition~\ref{prop:existk} and the convergence result~\ref{convergence_result} implies the existence of the solution to the \BSDE for any allowable Parisi parameter. Proposition~\ref{prop:uniqueness2} yields the uniqueness.
\end{proof}
\section{Proof of Theorem~\ref{th:func_cont}}
\label{sec:4}
Given the existence and uniqueness result for the \BSDE, we can finally prove Theorem~\ref{th:func_cont}.

We recall the definition of the functionals $\Gamma:B_n\times \textup{Pr}([0,1])\times D_n\times [0,1)\times S_n$ and $\Phi:B_n\times \textup{Pr}([0,1])\times D_n\times [0,1)\times S_n$ defined in~\eqref{eq:auxiliary_func0} and~\eqref{eq:auxiliary_funccond}
 \been{
\label{eq:auxiliary_func0bis}
 \aled{
&\Gamma\big(\,\Psi,\,\mu,\,\bm{r},\,q\,,\bm{\omega}\big)=\\
&\mathbb{E}\left[\mathcal{E}(\mu \bm{r},1,\bm{\omega})\Psi ( \bm{\omega}\,)\Big| \,\mathcal{F}^{\otimes n}_q\, \right]-\frac{1}{2}\mathbb{E}\left[\int^1_q dt\,\mu([0,t])\mathcal{E}(\mu \bm{r},t,\bm{\omega})\,\|\,\bm{r}(\,t, \,\bm{\omega}\,) \,\|^2_2\,\,\bigg| \,\mathcal{F}^{\otimes n}_q\,\right]
}
}
and
\been{
\label{eq:auxiliary_funccondbis}
\Phi(\Psi,\mu,q,\bm{\omega})=\sup_{\bm{r}\in D_n}\Gamma\big(\,\Psi,\,\mu,\,\bm{r},\,q,\bm{\omega}).
}
Proposition~\ref{prop:uniqueness2} ensures the following integrability property.
\begin{lemma}
\label{lem:martcon2}
If $(\phi^*,\bm{r}^*)\in S_n^b\times \widehat{D}^\mu_n$ is a solution to the BSDE~\eqref{selfEq1Aux}, then for any $\bm{v}\in \widehat{D}^\mu_n$ the process $I_{\mu \bm{v}}(\bm{r}^*)$, defined as in~\eqref{eq:Iv}, is a $\W_{\mu\bm{v}}$--uniformly integrable martingale with mean $0$. Moreover
\been{
\label{eq:martcon2bound}
\Ee{\mu\bm{v}}\[\int^1_{0}dt\mu([0,t])\|\bm{r}^*(t,\bm{\omega})\|^2_2\]<\infty.
}
\end{lemma}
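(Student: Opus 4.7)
My plan is to combine a localization argument with a Girsanov rewriting of the BSDE, and then exploit the uniform bound $|\phi^*|\le \|\Psi\|_\infty$ from Proposition~\ref{prop:uniqueness2}. Since $\bm{v}\in \widehat{D}^\mu_n$ implies $\mu\bm{v}\in \widehat{D}_n$, Theorem~\ref{lem:martcond} gives that $\mathcal{E}(\mu\bm{v})$ is a true martingale, so the measure $\mathbb{W}_{\mu\bm{v}}$ is well defined and equivalent to $\mathbb{W}^{\otimes n}$; under $\mathbb{W}_{\mu\bm{v}}$ the CMG theorem yields $d\bm{\omega}(t)=d\bm{W}_{\mu\bm{v}}(t)+\mu([0,t])\bm{v}(t,\bm{\omega})\,dt$. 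Rearranging the \BSDE{} pathwise gives the identity
\begin{equation}
\label{eq:planrewr}
I_{\mu\bm{v}}(\bm{r}^*,q,\bm{\omega}) \;=\; \phi^*(q,\bm{\omega})-\phi^*(0,\bm{\omega}) -\int_0^q \mu([0,t])\,\bm{v}(t,\bm{\omega})\cdot\bm{r}^*(t,\bm{\omega})\,dt + \tfrac{1}{2}\int_0^q \mu([0,t])\,\|\bm{r}^*(t,\bm{\omega})\|_2^2\,dt,
\end{equation}
valid $\mathbb{W}_{\mu\bm{v}}$--a.s.\ (equivalently $\mathbb{W}^{\otimes n}$--a.s.).

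Next I would localize using the stopping times $\sigma_{k,0}$ from \eqref{eq:stopping}; then $\bm{r}^*\mathbf{1}_{[0,\sigma_{k,0}]}\in D_n$ has $\int_0^1\|\bm{r}^*\mathbf{1}_{[0,\sigma_{k,0}]}\|_2^2\,dt\le k$ a.s., so by Lemma~\ref{lem:mart_cond} the process $I_{\mu\bm{v}}(\bm{r}^*\mathbf{1}_{[0,\sigma_{k,0}]})$ is a genuine square integrable $\mathbb{W}_{\mu\bm{v}}$--martingale with mean $0$. Evaluating \eqref{eq:planrewr} at $q=\sigma_{k,0}$ and taking $\widetilde{\mathbb{E}}_{\mu\bm{v}}$, the stochastic integral drops out and setting $X_k:=\widetilde{\mathbb{E}}_{\mu\bm{v}}\big[\int_0^{\sigma_{k,0}}\mu([0,t])\,\|\bm{r}^*\|_2^2\,dt\big]$ together with Cauchy--Schwarz and the bound $|\phi^*|\le\|\Psi\|_\infty$ yields
\begin{equation}
\tfrac{1}{2}X_k \;\le\; 2\|\Psi\|_\infty \;+\; \sqrt{C_{\bm{v}}}\,\sqrt{X_k},\qquad C_{\bm{v}}:=\mathbb{E}\!\left[\int_0^1 \mathcal{E}(\mu\bm{v},t,\bm{\omega})\mu([0,t])\|\bm{v}(t,\bm{\omega})\|_2^2\,dt\right]<\infty,
\end{equation}
where $C_{\bm{v}}<\infty$ because $\bm{v}\in\widehat{D}^\mu_n$ and the equality $\widetilde{\mathbb{E}}_{\mu\bm{v}}[\int_0^1 \mu([0,t])\|\bm{v}\|_2^2\,dt]=C_{\bm{v}}$ follows from Fubini and the martingale property of $\mathcal{E}(\mu\bm{v})$. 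Solving this quadratic inequality gives $X_k\le K(\|\Psi\|_\infty,C_{\bm{v}})$ uniformly in $k$, and monotone convergence as $\sigma_{k,0}\uparrow 1$ delivers the bound \eqref{eq:martcon2bound}.

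Having \eqref{eq:martcon2bound}, I would then return to the pathwise identity \eqref{eq:planrewr} without any stopping: the first two terms are bounded by $2\|\Psi\|_\infty$, the fourth is non-negative and $\mathbb{W}_{\mu\bm{v}}$--integrable by \eqref{eq:martcon2bound}, and Cauchy--Schwarz applied to the third term together with \eqref{eq:martcon2bound} and $\widetilde{\mathbb{E}}_{\mu\bm{v}}[\int_0^1\mu([0,t])\|\bm{v}\|_2^2\,dt]=C_{\bm{v}}$ shows it is also in $L^1(\mathbb{W}_{\mu\bm{v}})$. Therefore
\begin{equation}
\sup_{q\in[0,1]} \big| I_{\mu\bm{v}}(\bm{r}^*,q,\bm{\omega})\big|
\;\le\; 2\|\Psi\|_\infty + \int_0^1 \mu([0,t])\,|\bm{v}\cdot\bm{r}^*|\,dt + \tfrac{1}{2}\int_0^1 \mu([0,t])\,\|\bm{r}^*\|_2^2\,dt \;\in\; L^1(\mathbb{W}_{\mu\bm{v}}).
\end{equation}
Since $I_{\mu\bm{v}}(\bm{r}^*)$ is a continuous $\mathbb{W}_{\mu\bm{v}}$--local martingale dominated by an integrable random variable, it is a uniformly integrable martingale; its value at $q=0$ is zero, hence it has mean zero.

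The one delicate step is the passage from the localized identity to the limit: one must verify that $I_{\mu\bm{v}}(\bm{r}^*\mathbf{1}_{[0,\sigma_{k,0}]})$ really is a true (not merely local) martingale under $\mathbb{W}_{\mu\bm{v}}$, which is where invoking Lemma~\ref{lem:mart_cond} with the trivial $\mathbb{W}_{\mu\bm{v}}$--a.s.\ bound $\int_0^{\sigma_{k,0}}\|\bm{r}^*\|_2^2\,dt\le k$ (inherited from $\mathbb{W}^{\otimes n}$ via equivalence of the two measures) is essential; everything else is a controlled quadratic estimate.
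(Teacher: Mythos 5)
Your proof is correct and follows essentially the same strategy as the paper: Girsanov-rewrite the BSDE under $\mathbb{W}_{\mu\bm{v}}$, localize with stopping times so the stochastic integral is a genuine martingale, extract a uniform a priori bound on the drift terms from $|\phi^*|\le\|\Psi\|_\infty$, pass to the limit by monotone convergence, and conclude uniform integrability by domination. The one genuine (if minor) divergence is in how you handle the quadratic drift term: the paper completes the square in the identity, rewriting it as $I_{\mu\bm{v}}(\bm{r}^*,q)=K(\bm{v},\bm{r}^*,q,\bm{\omega})+\tfrac12\int_0^q\mu([0,t])\|\bm{v}-\bm{r}^*\|_2^2\,dt$, so that taking expectation directly bounds $\widetilde{\mathbb{E}}_{\mu\bm{v}}[\int\mu\|\bm{v}-\bm{r}^*\|_2^2]$ linearly, and then converts this into a bound on $\int\mu\|\bm{r}^*\|_2^2$ at the end; you instead leave the cross term $\int\mu\,\bm{v}\cdot\bm{r}^*$ as is, control it by Cauchy--Schwarz, and solve the resulting quadratic inequality in $\sqrt{X_k}$, which yields the bound~\eqref{eq:martcon2bound} directly without a final transfer step. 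Both work, and your route has the small advantage of producing the stated estimate on $\int\mu\|\bm{r}^*\|_2^2$ head-on rather than via the intermediate quantity $\|\bm{v}-\bm{r}^*\|_2^2$.
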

\begin{proof}

Let $K(\bm{v},\bm{r}^*)$ be the process
\been{
K(\bm{v},\bm{r}^*,q,\bm{\omega})=\phi^*(q,\bm{\omega})-\phi^*(0,\bm{\omega})-\frac{1}{2}\int^1_0dt\mu([0,t])\|\bm{v}(t,\bm{\omega})\|^2_2.
}
% Given $0\leq q\leq 1$, we have
% \been{
% \aled{
% &J(\bm{r}^*,q,\bm{\omega})=\(\Psi(\bm{\omega})-\phi^*(0,\bm{\omega})\)-\(\Psi(\bm{\omega})-\phi^*(q,\bm{\omega})\)=\phi^*(q,\bm{\omega})-\phi^*(0,\bm{\omega}).
% }
% }
% with $\phi(1,\bm{\omega})=\Psi(\bm{\omega})$.
The inequality~\eqref{eq:uniqueness2} yields
\been{
\aled{
& \sup_{q\in [0,1]}|K(\bm{v},\bm{r}^*,q,\bm{\omega})|\leq 2\|\Psi\|_{\infty}+\frac{1}{2}\int^1_0dt\mu([0,1])\|\bm{v}(t,\bm{\omega})\|^2_2,\quad \W^{\otimes n}-\textup{a.s.}
}
}
So, since $\bm{v}\in \widehat{D}^{\mu}_n$, it follows that:
\been{
\label{eq:boundenss}
\aled{
&\Ee{\mu\bm{v}}\[\sup_{q\in [0,1]}|K(\bm{v},\bm{r}^*,q,\bm{\omega})|\]\leq 2\|\Psi\|_{\infty}+\frac{1}{2}\Ee{\mu\bm{v}}\[\int^q_0dt\mu([0,q])\|\bm{v}(q,\bm{\omega})\|^2_2\]<\infty.
}
}
Let us define the increasing sequence of stopping times $(\tau_k)_{k\in \N}$ such that
\been{
\tau_k=\inf\left\{q\in [q_0,1]; \int^q_{0}dt \,\|\bm{r}^*(t, \,\bm{\omega})\|^2_2\geq k\right \}
}
with $\inf\emptyset=1$. Since $\bm{r}^*\in D_n$, $\tau_k\uparrow 1$ $\Wnas$. 
% By definition
% \been{
% \label{eq:locboundness}
% \Ee{\mu \bm{v}}\[\int^{\tau_{k}}_{0} dt \mu([0,t])\,\left\|\bm{r}^*(t, \,\bm{\omega})\right\|^2_2\]\leq\Ee{\mu \bm{v}}\[\int^{\tau_{k}}_{0} dt \,\left\|\bm{r}^*(t, \,\bm{\omega})\right\|^2_2\] \leq k,
% }
% So,
By Lemma~\ref{lem:mart_cond}, the stochastic integral $I^{\tau_k}_{\mu \bm{v}}(\bm{r}^*)$ is a $\W_{\mu\bm{v}}$--martingale with mean $0$.
% Thus, for any stopping time $T$ taking value on $[0,1)$, it holds
% \been{
% \label{eq:martIv}
% \Ee{\mu\bm{v}}\[I^{\tau_k}_{\mu \bm{v}}(\bm{r}^*,T,\bm{\omega})\]=\Ee{\mu\bm{v}}\[I^{\tau_k}_{\mu \bm{v}}(\bm{r}^*,0,\bm{\omega})\]=0,
% }
The \BSDE gives
\been{
\label{eq:KI}
I_{\mu\bm{v}}(\bm{r}^*,q,\bm{\omega})=K(\bm{v},\bm{r}^*,q,\bm{\omega})+\frac{1}{2}\int^{q}_{0} dt \mu([0,t])\,\left\|\bm{v}(t, \,\bm{\omega})-\bm{r}^*(t, \,\bm{\omega})\right\|^2_2.
}
As a consequence, for any stopping time $T\in  [0,1]$ the above equality~\eqref{eq:KI} implies
\been{
\label{eq:upperK0}
\frac{1}{2}\Ee{\mu \bm{v}}\[\int^{T\wedge \tau_{k}}_{0} dt \mu([0,t])\,\left\|\bm{v}(t, \,\bm{\omega})-\bm{r}^*(t, \,\bm{\omega})\right\|^2_2\]=-\Ee{\mu\bm{v}}\[K^{\tau_k}(\bm{v},\bm{r}^*,T,\bm{\omega})\].
}
The right-hand member is upper bounded by the right-hand member of \eqref{eq:boundenss}, that is a finite number independent of $k$ and $T$. Thus the Monotone Convergence Theorem gives
\been{
\label{eq:boundnness2}
\aled{
&\Ee{\mu \bm{v}}\[\int^{T}_{0}dt\mu([0,t])\|\bm{v}(t, \,\bm{\omega})-\bm{r}^*(t, \,\bm{\omega})\|^2_2\]\\
&=\sup_{k\in \N}\Ee{\mu \bm{v}}\[\int^{T\wedge \tau_{k}}_{0} dt \mu([0,t])\,\left\|\bm{v}(t, \,\bm{\omega})-\bm{r}^*(t, \,\bm{\omega})\right\|^2_2\]<\infty.
}
}
% where the right hand member is bounded, since $\bm{v}\in \widehat{D}^{\mu}_n$. Note that
% \been{
% \label{eq:upperK}
% \sup_{k\in \N}|K^{\tau_k}(\bm{v},\bm{r}^*,T,\bm{\omega})|=\sup_{k\in \N}|K(\bm{v},\bm{r}^*,T\wedge \tau_k,\bm{\omega})|\leq \sup_{q\in [0,1]}|K(\bm{v},\bm{r}^*,q,\bm{\omega})|.
% }
% Thus the triangular inequality, the inequality~\eqref{eq:upperK0}, the above inequality, and the inequality~\eqref{eq:boundenss} yield
% \been{
% \label{eq:boundnness2}
% \aled{
% &\Ee{\mu \bm{v}}\[\int^{T}_{0}dt\mu([0,t])\|\bm{v}(t, \,\bm{\omega})-\bm{r}^*(t, \,\bm{\omega})\|^2_2\]\\
% &\leq
% \sup_{k\in \N}\Ee{\mu \bm{v}}\[\int^{T\wedge \tau_{k}}_{0} dt \mu([0,t])\,\left\|\bm{v}(t, \,\bm{\omega})-\bm{r}^*(t, \,\bm{\omega})\right\|^2_2\]\\
% &\leq2 \Ee{\mu \bm{v}}\[\sup_{q\in [0,1]}|K(\bm{v},\bm{r}^*,q,\bm{\omega})|\]+\Ee{\mu \bm{v}}\[\int^{1}_{0}dt\mu([0,t])\|\bm{v}(t,\bm{\omega})\|^2_2\]<\infty.
% }
% }
By ~\eqref{eq:KI}, we have
\been{
\label{eq:upper_b}
|I_{\mu\bm{v}}(\bm{r}^*,q,\bm{\omega})|\leq  \sup_{q\in [0,1]}|K(\bm{v},\bm{r}^*,q,\bm{\omega})|+\frac{1}{2}\int^{1}_{0} dt \mu([0,t])\,\left\|\bm{v}(t, \,\bm{\omega})-\bm{r}^*(t, \,\bm{\omega})\right\|^2_2.
}
Thus, by~\eqref{eq:boundenss} and~\eqref{eq:boundnness2}, the processes $I_{\mu\bm{v}}(\bm{r}^*)$ and $\(I^{T\wedge\tau_k}_{\mu\bm{v}}(\bm{r}^*)\)_{k\in \N}$ are all upper bounded by the same $\Fq{1}$--masurable $\W_{\mu\bm{v}}$--integrable random variable. So $I_{\mu\bm{v}}(\bm{r}^*)$ is $\W_{\mu\bm{v}}$ uniformly integrable. Thus it is a $\W_{\mu\bm{v}}$ uniformly integrable martingale and it is $\Fqfilt{q}{[0,1]}$--adapted. Furthermore, the triangular inequality yields
\been{
\label{eq:qv_boundess}
\aled{
&\Ee{\mu\bm{v}}\[\int^1_{0}dt\mu([0,t])\|\bm{r}^*(t,\bm{\omega})\|^2_2\]\\
&\leq \Ee{\mu\bm{v}}\[\int^1_{0}dt\mu([0,t])\|\bm{v}(t,\bm{\omega})\|^2_2\]+\Ee{\mu \bm{v}}\[\int^{1}_{0}dt\mu([0,t])\|\bm{r}^*(t,\bm{\omega})-\bm{v}(t,\bm{\omega})\|^2_2\]<\infty,
}
}
proving~\eqref{eq:martcon2bound}. 
\end{proof}
The above lemma implies that for any $\bm{v}\in \widehat{D}^\mu_n$
\been{
\label{eq:after_lem}
\Eecond{\mu\bm{v}}{I(\bm{r}^*_q,1,\bm{\omega})}{q}=\Eecond{\mu\bm{v}}{\int^1_{0}dt \mu([0,t])\, \bm{r}^*(t,\bm{\omega})\cdot\bm{v}(t,\bm{\omega}) }{q}<\infty.
}
where $\bm{r}^*_q$ is defined in \eqref{eq:rq2}.

We now establish the relation between the solution of the BSDE and the functional $\Gamma$.
\begin{lemma}
\label{lemma_Global_minimum}
If the pair of processes $(\phi^*,\bm{r}^*)\in S_n^b\times \widehat{D}^{\mu}_n$ is a solution to the BSDE~\eqref{selfEq1Aux}, then, for any $\bm{v}\in \widehat{D}^\mu_n$, it holds
\been{
\label{eq:G_diff}
\Gamma(\Psi,\mu,\bm{v},q,\bm{\omega})-\phi^*(q,\bm{\omega})=-\frac{1}{2}\Ee{\mu\bm{v}}\left[ \int^1_q dt \mu([0,t])\,\|\bm{r}^*(t, \,\bm{\omega})- \bm{v}(t, \,\bm{\omega})\|^2_2\Bigg|\mathcal{F}^{\otimes n}_q\right]
}
and
\begin{equation}
\label{value_process_solution}
\phi^*(q,\bm{\omega})=\Gamma(\Psi,\mu,\bm{r}^*,q,\bm{\omega})\,.
\end{equation}
\end{lemma}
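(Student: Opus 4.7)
My plan is to change measure to $\mathbb{W}_{\mu\bm{v}}$, rewrite \BSDE under that measure by completing a square, and read off the identity from the conditional expectation. Since $\bm{v}\in\widehat{D}^{\mu}_n$ implies $\mu\bm{v}\in\widehat{D}_n$, Theorem~\ref{lem:martcond} guarantees that $\mathcal{E}(\mu\bm{v})$ is a true $\mathbb{W}^{\otimes n}$--martingale of mean $1$, so $\mathbb{W}_{\mu\bm{v}}$ is a well-defined probability measure and the shifted process $\bm{W}_{\mu\bm{v}}$ from~\eqref{eq:shiftedBM} is a $\mathbb{W}_{\mu\bm{v}}$--Brownian motion.

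Substituting $d\bm{\omega}(t)=d\bm{W}_{\mu\bm{v}}(t)+\mu([0,t])\bm{v}(t)\,dt$ in the differential form $d\phi^*(t)=\bm{r}^*(t)\cdot d\bm{\omega}(t)-\tfrac12\mu([0,t])\|\bm{r}^*(t)\|^2_2\,dt$ of \BSDE and completing the square via $\bm{r}^*\cdot\bm{v}-\tfrac12\|\bm{r}^*\|^2_2=\tfrac12\|\bm{v}\|^2_2-\tfrac12\|\bm{r}^*-\bm{v}\|^2_2$, I obtain, after integrating over $[q,1]$,
\been{
\aled{
\Psi(\bm{\omega})-\phi^*(q,\bm{\omega})&=I_{\mu\bm{v}}(\bm{r}^*_q,1,\bm{\omega})+\tfrac12\int_q^1\mu([0,t])\|\bm{v}(t,\bm{\omega})\|^2_2\,dt\\
&\quad-\tfrac12\int_q^1\mu([0,t])\|\bm{r}^*(t,\bm{\omega})-\bm{v}(t,\bm{\omega})\|^2_2\,dt.
}
}
By Lemma~\ref{lem:martcon2}, $I_{\mu\bm{v}}(\bm{r}^*)$ is a $\mathbb{W}_{\mu\bm{v}}$--uniformly integrable martingale with mean $0$, and its bound~\eqref{eq:martcon2bound}, together with $\bm{v}\in\widehat{D}^{\mu}_n$, makes both pathwise quadratic integrals $\mathbb{W}_{\mu\bm{v}}$--integrable. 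Applying $\Eecond{\mu\bm{v}}{\cdot}{q}$ on both sides therefore annihilates the stochastic term and leaves a pointwise identity between $\phi^*(q,\bm{\omega})$, $\Eecond{\mu\bm{v}}{\Psi}{q}$, and the two pathwise integrals.

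To finish, I identify the combination $\Eecond{\mu\bm{v}}{\Psi}{q}-\tfrac12\Eecond{\mu\bm{v}}{\int_q^1\mu([0,t])\|\bm{v}\|^2_2\,dt}{q}$ with $\Gamma(\Psi,\mu,\bm{v},q,\bm{\omega})$, which follows from the Bayes rule for conditional expectation under change of measure, the tower property, the martingale property of $\mathcal{E}(\mu\bm{v})$, and the relation $\mathcal{E}(\mu\bm{v}_q,t)=\mathcal{E}(\mu\bm{v},t)/\mathcal{E}(\mu\bm{v},q)$ for $t\geq q$. Rearranging yields~\eqref{eq:G_diff}, and~\eqref{value_process_solution} follows by specialising $\bm{v}=\bm{r}^*$, which kills the quadratic error term. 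The main obstacle is exactly the pair of facts that $I_{\mu\bm{v}}(\bm{r}^*)$ is a genuine (not merely local) $\mathbb{W}_{\mu\bm{v}}$--martingale and that the pathwise quadratic integrals are $\mathbb{W}_{\mu\bm{v}}$--integrable; both are precisely what Lemma~\ref{lem:martcon2} supplies, via the bounds of Proposition~\ref{prop:uniqueness2}---which is why those auxiliary results were established first.
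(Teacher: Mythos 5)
Your proof is correct and takes essentially the same route as the paper: both hinge on the change of measure to $\mathbb{W}_{\mu\bm{v}}$, the uniform-integrability of $I_{\mu\bm{v}}(\bm{r}^*)$ supplied by Lemma~\ref{lem:martcon2}, and the same completion of the square $\bm{r}^*\cdot\bm{v}-\tfrac12\|\bm{r}^*\|^2_2=\tfrac12\|\bm{v}\|^2_2-\tfrac12\|\bm{r}^*-\bm{v}\|^2_2$. The only cosmetic difference is the order of operations---you apply Girsanov to the BSDE first and then identify the result with $\Gamma$, whereas the paper rewrites $\Gamma$ under $\mathbb{W}_{\mu\bm{v}}$ first and then substitutes the BSDE---but the ingredients and the logical dependencies are identical.
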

\begin{proof}
The \BSDE and the equation \eqref{eq:after_lem} give
% If $(\phi^*,\bm{r}^*)\in S_n^b\times \widehat{D}^{\mu}_n$ is a solution to the \BSDE, then
% \been{
% \label{eq:psiPhi}
% \Psi(\bm{\omega})=\phi^*(q,\bm{\omega})+J(\bm{r}^*,1,\bm{\omega})-J(\bm{r}^*,q,\bm{\omega}).
% }
% Let $\bm{v}\in \widehat{D}^{\mu}_n$. Since $\phi^*$ and $J(\bm{r}^*)$ are adapted, we get
% \been{
% \label{eq:adapted1}
% \widetilde{\mathbb{E}}_{\mu\bm{v}}\left[\left.\phi^*(q,\bm{\omega})\right|\mathcal{F}^{\otimes n}_q \]=\phi^*(q,\bm{\omega}),
% }
% and
% \been{
% \label{eq:adapted2}
% \widetilde{\mathbb{E}}_{\mu\bm{v}}\left[\left.J(\bm{r}^*,q,\bm{\omega})\right|\mathcal{F}^{\otimes n}_q \]=J(\bm{r}^*,q,\bm{\omega}).
% }
% Combining~\eqref{eq:psiPhi},~\eqref{eq:adapted1}, and~\eqref{eq:adapted2} we have
\begin{equation}
\label{eq:KI0}
\begin{aligned}
&\Gamma(\Psi,\,\mu,\,\bm{v},\,q,\bm{\omega})\\&=\widetilde{\mathbb{E}}_{\mu\bm{v}}\left[\Psi ( \,\bm{\omega}\,)\,|\mathcal{F}^{\otimes n}_q\right]-\frac{1}{2} \widetilde{\mathbb{E}}_{\mu\bm{v}}\left[\int^1_q dt \mu([0,t])\,\left \|\,\bm{v}(t, \,\bm{\omega}) \,\right\|^2_2\,\Bigg|\mathcal{F}^{\otimes n}_q\right]\\
&=\phi^*(q,\bm{\omega})+\widetilde{\mathbb{E}}_{\mu\bm{v}}\left[\left.I(\bm{r}^*_q,1,\bm{\omega})-\frac{1}{2}\int^1_q dt \mu([0,t])\,\(\left \|\,\bm{v}(t, \,\bm{\omega}) \,\right\|^2_2+\left \|\,\bm{r}^*(t, \,\bm{\omega}) \,\right\|^2_2\)\,\right|\mathcal{F}^{\otimes n}_q\right]\\
&=\phi^*(q,\bm{\omega})-\frac{1}{2}\widetilde{\mathbb{E}}_{\mu\bm{v}}\left[\int^{q}_{0} dt \mu([0,t])\,\left\|\bm{v}(t, \,\bm{\omega})-\bm{r}^*(t, \,\bm{\omega})\right\|^2_2\].
\end{aligned}
\end{equation}
%  Since $\bm{r}^*\in D_n$ and $\bm{v}\in D_n$, we can write
% \been{
% \label{eq:KI2}
% \aled{
% &J(\bm{r}^*,q,\bm{\omega})-\frac{1}{2}\int^q_0 dt \mu([0,t])\,\left \|\,\bm{v}(t, \,\bm{\omega}) \,\right\|^2_2\\
% &=I_{\mu\bm{v}}(\bm{r}^*,q,\bm{\omega})-\frac{1}{2}\int^{q}_{0} dt \mu([0,t])\,\left\|\bm{v}(t, \,\bm{\omega})-\bm{r}^*(t, \,\bm{\omega})\right\|^2_2.
% }
% }
% By the Lemma~\ref{lem:martcon2}. $I_{\mu\bm{v}}(\bm{r}^*)$ is a $\W_{\mu\bm{v}}$--uniformly integrable martingale with mean $0$. Thus
% \been{
% \label{eq:KI3}
% \aled{
% &\widetilde{\mathbb{E}}_{\mu\bm{v}}\left[\left.J(\bm{r}^*,1,\bm{\omega})-\frac{1}{2}\int^1_q dt \mu([0,t])\,\left \|\,\bm{v}(t, \,\bm{\omega}) \,\right\|^2_2\,\right|\mathcal{F}^{\otimes n}_q\right]-J(\bm{r}^*,q,\bm{\omega})\\
% &=
% -\frac{1}{2}\widetilde{\mathbb{E}}_{\mu\bm{v}}\left[\int^1_q dt \mu([0,t])\,\left \|\,\bm{v}(t, \,\bm{\omega}) -\bm{r}^*(q, \,\bm{\omega})\,\right\|^2_2\]<\infty,
% }
% }
proving \eqref{eq:G_diff}. The proof of \eqref{value_process_solution} is given by replacing $\bm{v}$ with $\bm{r}^*$ in~\eqref{eq:G_diff}.
\end{proof}
The proof of Theorem~\ref{th:func_cont} is now trivial.
\begin{proof}[Proof of Theorem~\ref{th:func_cont}]
If $|\Psi(\bm{v})|<C$ $\Wn$--a.s., then, by Lemma~\ref{lem:convDDE}, for any $\bm{v}\in D_n$ it holds
\been{
\left|\E\[\mathcal{E}(\mu\bm{v},1,\bm{\omega})\Psi(\bm{\omega})\]\right|\leq C.
}
Thus, if $\bm{v}\in D_n\setminus \widehat{D}^{\mu}_n$
\been{
\label{eq:min1}
\Gamma(\Psi,\mu,\bm{v},q,\bm{\omega})=-\infty,\quad \forall \bm{v}\in D_n\setminus \widehat{D}^{\mu}_n.
}
Let $(\phi^*,\bm{r}^*)\in S^b_n\times\widehat{D}^{\mu}_n$ be the solution to the \BSDE. If $\bm{v}\in \widehat{D}^{\mu}_n$, then Lemma~\ref{lemma_Global_minimum} gives
\been{
\label{eq:min2}
\aled{
&\Gamma(\Psi,\mu,\bm{v},q,\bm{\omega})=\phi^*(q,\bm{\omega})-\frac{1}{2}\Eecond{\bm{v}}{\int^1_qdt\mu([0,t])\|\bm{r}^*(t,\bm{\omega})-\bm{v}(t,\bm{\omega})\|^2_2}{q}\\
&=\Gamma(\Psi,\mu,\bm{r}^*,q,\bm{\omega})-\frac{1}{2}\Eecond{\bm{v}}{\int^1_qdt\mu([0,t])\|\bm{r}^*(t,\bm{\omega})-\bm{v}(t,\bm{\omega})\|^2_2}{q}\\
&\leq \Gamma(\Psi,\mu,\bm{r}^*,q,\bm{\omega}),\quad \forall \bm{v}\in \widehat{D}^{\mu}_n
}
}
Thus
\been{
\phi^*(q,\bm{\omega})=\Gamma(\Psi,\mu,\bm{r}^*,q,\bm{\omega})=\sup_{\bm{v}\in S_n}\Gamma(\Psi,\mu,\bm{v},q,\bm{\omega})=\Phi(\Psi,\mu,q,\bm{\omega}).
}
\end{proof}
\section{Proof of Theorem~\ref{th:func_quant}}
\label{sec:7}
In this section, we derive the properties of the solution to the \BSDE, proving Theorem~\ref{th:func_quant}.

Given $\Psi\in B_n$ and $\mu\in\textup{Pr}([0,1]$, let $(\phi(\Psi,\mu),\bm{r}(\Psi,\mu))\in S^b_n\times \widehat{D}^{\mu}_n$ the solution to the \BSDE with the final condition $\Psi$ and the Parisi parameter $\mu$.
% \been{
% \label{eq:sol}
% \phi(\Psi,\mu,q,\bm{\omega})=\Psi(\bm{\omega})-\int^1_q\bm{r}(\Psi,\mu,t,\bm{\omega})\cdot d\bm{\omega}(t)(t)+\frac{1}{2}\int^1_qdt\mu([0,t])\|\bm{r}(\Psi,\mu,t,\bm{\omega})\|^2_2.
% }
By the existence and uniqueness results in Theorem~\ref{th:BSDE}, $(\Psi,\mu)\mapsto \phi(\Psi,\mu)$ is a well-defined functional from $B_n\times \textup{Pr}([0,1])$ to $S^b_n$. Moreover, by Theorem~\ref{th:func_cont}, we can represent the solution to the \BSDE as the $\mu$--RSB expectation $\Phi(\Psi,\mu)$, given in the variational formula $\Phi(\Psi,\mu)$ in~\eqref{eq:auxiliary_funccondbis}. 

We start by proving the statement $1)$ of Theorem~\ref{th:func_quant}
\begin{proposition}
Given $\Psi\in B_n$ and $\mu\in\textup{Pr}([0,1])$, the process $\mathcal{E}(\mu\bm{r}(\Psi,\mu))$ is a non-negative $\Wn$--martingale, adapted to $\Fqfilt{q}{[0,1]}$ and with mean $1$.
\end{proposition}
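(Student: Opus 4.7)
The plan is to assemble this proposition from pieces already in hand rather than repeat the hard work. First I would apply Theorem~\ref{th:BSDE} to get that $(\phi(\Psi,\mu),\bm{r}(\Psi,\mu))$ is the unique solution of BSDE~\eqref{selfEq1Aux} in $S_n\times D_n$, and then invoke Proposition~\ref{prop:uniqueness2} to upgrade this to the statement that the solution in fact lies in $S_n^b\times \widehat{D}_n^{\mu}$. The point of doing this is to obtain the integrability
\been{
\label{eq:finalprop1}
\E\!\left[\int_0^1 \mathcal{E}(\mu\bm{r}(\Psi,\mu),q,\bm{\omega})\,\mu([0,q])\,\|\bm{r}(\Psi,\mu,q,\bm{\omega})\|_2^2\,dq\right]<\infty,
}
which is exactly the defining property of $\widehat{D}_n^{\mu}$.

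Next I would pass from $\widehat{D}_n^{\mu}$ to $\widehat{D}_n$ for the rescaled process $\mu\bm{r}(\Psi,\mu)$. By the definitions introduced at the end of Section~\ref{sec:3}, one has $(\mu\bm{r})(q,\bm{\omega})=\mu([0,q])\bm{r}(q,\bm{\omega})$, so that $\|\mu\bm{r}(q,\bm{\omega})\|_2^2 = \mu^2([0,q])\|\bm{r}(q,\bm{\omega})\|_2^2 \leq \mu([0,q])\|\bm{r}(q,\bm{\omega})\|_2^2$, since $\mu([0,q])\in[0,1]$. Combined with \eqref{eq:finalprop1} this yields
\been{
\E\!\left[\int_0^1 \mathcal{E}(\mu\bm{r}(\Psi,\mu),q,\bm{\omega})\,\|\mu\bm{r}(\Psi,\mu)(q,\bm{\omega})\|_2^2\,dq\right]<\infty,
}
i.e.\ $\mu\bm{r}(\Psi,\mu)\in \widehat{D}_n$. (This is precisely the implication already recorded immediately after the definition of $\widehat{D}_n^{\mu}$.)

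At this point the first statement of Theorem~\ref{lem:martcond}, applied to $\bm{v}=\mu\bm{r}(\Psi,\mu)\in \widehat{D}_n$, gives directly that $\mathcal{E}(\mu\bm{r}(\Psi,\mu))$ is a non-negative uniformly integrable $\mathbb{W}^{\otimes n}$--martingale with $\E[\mathcal{E}(\mu\bm{r}(\Psi,\mu),q,\bm{\omega})]=1$ for every $q\in[0,1]$. Adaptedness to $(\mathcal{F}_q^{\otimes n})_{q\in[0,1)}$ is automatic from the definition~\eqref{eq:DDE} of the Doléans-Dade exponential as the exponential of a process built from Itô integrals of the $\Fqfilt{q}{[0,1)}$--progressive process $\mu\bm{r}(\Psi,\mu)$, and adaptedness at $q=1$ is the almost sure convergence asserted in Lemma~\ref{lem:convDDE}.

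Since every step is a direct invocation of a previously established result, there is no genuine analytic obstacle here; the only care needed is bookkeeping, namely checking that the inequality $\mu^2([0,q])\leq \mu([0,q])$ is the precise bridge between the spaces $\widehat{D}_n^{\mu}$ and $\widehat{D}_n$. Had Proposition~\ref{prop:uniqueness2} not already produced the quantitative bound \eqref{eq:finalprop1}, the harder task would have been establishing the true-martingale property of $\mathcal{E}(\mu\bm{r})$ directly from the variational representation~\eqref{eq:auxiliary_func}, which is exactly what Theorem~\ref{lem:martcond} avoids by providing the new Novikov-type sufficient condition on $\widehat{D}_n$.
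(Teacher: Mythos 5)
Your proposal is correct and takes essentially the same route as the paper: the paper's proof is a one-line citation of Lemma~\ref{lem:convDDE}, Theorem~\ref{lem:martcond}, and Proposition~\ref{prop:uniqueness2}, and your proposal simply unpacks that combination, correctly identifying the bridge $\mu^2([0,q])\le\mu([0,q])$ that carries $\bm{r}\in\widehat{D}^\mu_n$ to $\mu\bm{r}\in\widehat{D}_n$ so that statement (1) of Theorem~\ref{lem:martcond} applies. The only cosmetic difference is that Proposition~\ref{prop:uniqueness2} already asserts the martingale property and adaptedness on $[0,1]$ outright, so even the explicit appeal to Theorem~\ref{lem:martcond} is, strictly speaking, subsumed.
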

\begin{proof}
The proof is given by combining lemmas~\ref{lem:convDDE},~\ref{lem:martcond}, and Proposition~\ref{prop:uniqueness2}.
\end{proof}
The remaining statements of the theorem are proved in the following subsection.
\subsection{Proof of the statement 2) of Theorem~\ref{th:func_quant}}
In this subsection, we prove the statement $1)$ of Theorem~\ref{th:func_quant}.

The convexity is a direct consequence of Theorem~\ref{th:func_cont} and~\ref{th:BSDE}.
\begin{proposition}[Convexity on $\Psi$]
For any $\Psi^{(0)}\in B_n$, $\Psi^{(1)}\in B_n$ and $\mu\in \textup{Pr}([0,1])$, it holds
\begin{equation}
\label{first_derivative_complete_psi}
\aled{
&t\phi(\Psi^{(1)},\mu,q,\bm{\omega})+(1-t)\phi(\Psi^{(0)},\mu,q,\bm{\omega})\geq \phi(t\Psi^{(1)}+(1-t)\Psi^{(0)},\mu,q,\bm{\omega}).
}
\end{equation}
\end{proposition}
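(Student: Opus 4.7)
The plan is to exploit the variational representation established in Theorem~\ref{th:func_cont}, namely
\been{
\phi(\Psi,\mu,q,\bm{\omega}) \;=\; \Phi(\Psi,\mu,q,\bm{\omega}) \;=\; \sup_{\bm{r}\in D_n}\Gamma(\Psi,\mu,\bm{r},q,\bm{\omega}),\qquad \Wnas,
}
and then observe that the inner functional $\Gamma$ is \emph{affine} in $\Psi$. Indeed, inspecting the definition~\eqref{eq:auxiliary_func0bis}, the only dependence of $\Gamma(\Psi,\mu,\bm{r},q,\bm{\omega})$ on $\Psi$ enters through the single linear term $\E\[\mathcal{E}(\mu\bm{r},1,\bm{\omega})\Psi(\bm{\omega})\,\big|\,\Fq{q}\]$, while the subtracted term depends only on $\bm{r}$ and $\mu$. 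Since $\Psi^{(0)},\Psi^{(1)}\in B_n$ and $t\Psi^{(1)}+(1-t)\Psi^{(0)}\in B_n$ for $t\in[0,1]$, for every admissible $\bm{r}\in D_n$ one has
\been{
\Gamma(t\Psi^{(1)}+(1-t)\Psi^{(0)},\mu,\bm{r},q,\bm{\omega}) = t\,\Gamma(\Psi^{(1)},\mu,\bm{r},q,\bm{\omega}) + (1-t)\,\Gamma(\Psi^{(0)},\mu,\bm{r},q,\bm{\omega}),\quad \Wnas.
}

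From this identity the convexity is immediate: bounding each term on the right by the corresponding supremum over $\bm{r}\in D_n$, and then taking the supremum over $\bm{r}$ on the left, gives
\been{
\phi(t\Psi^{(1)}+(1-t)\Psi^{(0)},\mu,q,\bm{\omega}) \;\leq\; t\,\phi(\Psi^{(1)},\mu,q,\bm{\omega}) + (1-t)\,\phi(\Psi^{(0)},\mu,q,\bm{\omega}),\quad \Wnas,
}
which is exactly~\eqref{first_derivative_complete_psi}.

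There is essentially no obstacle here: the argument is the standard principle that a pointwise supremum of a family of affine functionals is convex. The only subtlety worth noting is that the conditional expectation and the supremum over $\bm{r}$ are taken pointwise $\Wnas$, but Theorem~\ref{th:func_cont} guarantees that the supremum is attained by the unique optimizer $\bm{r}(\Psi,\mu)\in\widehat{D}^{\mu}_n$, so the identification of $\phi$ with $\Phi$ is valid for the three choices of argument $\Psi^{(0)}$, $\Psi^{(1)}$, and $t\Psi^{(1)}+(1-t)\Psi^{(0)}$ simultaneously on a $\Wn$--full measure set.
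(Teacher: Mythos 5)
Your proposal is correct and follows essentially the same route as the paper: identify $\phi$ with the variational formula $\sup_{\bm{v}\in D_n}\Gamma(\Psi,\mu,\bm{v},q,\bm{\omega})$ via Theorems~\ref{th:func_cont} and~\ref{th:BSDE}, use the linearity of $\Gamma$ in $\Psi$, and bound the supremum of the convex combination by the convex combination of suprema. No gap to report.
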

\begin{proof}
Let
\been{
\Psi^{(t)}=t\Psi^{(1)}+(1-t)\Psi^{(0)}.
}
By Teorems~\ref{th:func_cont} and~\ref{th:BSDE}, we have
\been{
\label{eq:eqphiPhi}
\phi(\Psi^{(t)},\mu,q,\bm{\omega})=\Phi(\Psi^{(t)},\mu,q,\bm{\omega})=\sup_{\bm{v}\in D_n}\Gamma(\Psi^{(t)},\mu,\bm{v},q,\bm{\omega})
}
The functional $\Gamma$ is linear on $\Psi$. So
\been{
\Gamma(\Psi^{(t)},\mu,\bm{v},q,\bm{\omega})=t\Gamma(\Psi^{(1)},\mu,\bm{v},q,\bm{\omega})+(1-t)\Gamma(\Psi^{(0)},\mu,\bm{v},q,\bm{\omega})
}
Thus the equivalence~\eqref{eq:eqphiPhi} we get
\been{
\aled{
&\phi(\Psi^{(t)},\mu,q,\bm{\omega})=\sup_{\bm{v}\in D_n}\(t\Gamma(\Psi^{(1)},\mu,\bm{v},q,\bm{\omega})+(1-t)\Gamma(\Psi^{(0)},\mu,\bm{v},q,\bm{\omega})\)\\
&\leq t\sup_{\bm{v}\in D_n}\Gamma(\Psi^{(1)},\mu,\bm{v},q,\bm{\omega})+(1-t)\sup_{\bm{v}\in D_n}\Gamma(\Psi^{(0)},\mu,\bm{v},q,\bm{\omega})\\
&\leq t\phi(\Psi^{(1)},\mu,q,\bm{\omega})+(1-t)\phi(\Psi^{(0)},\mu,q,\bm{\omega}).
}
}
\end{proof}
We now provide the proof of the derivative formula. Given $\Delta \Psi\in B_n$ and $t\in [0,1]$, let
\been{
\Psi^{(t)}=\Psi+t\Delta\Psi,
}
\been{
\Delta_t\phi(q,\bm{\omega})=\phi(\Psi^{(t)},\mu,q,\bm{\omega}) -\phi(\Psi,\mu,q,\bm{\omega}),
}
and
\been{
\Delta_t \bm{r}(q,\bm{\omega})=\bm{r}(\Psi^{(t)},\mu,q,\bm{\omega}) -\bm{r}(\Psi,\mu,q,\bm{\omega}).
}
Given $q\in [0,1]$, let $ \Delta_t \bm{r}_q(q',\bm{\omega})=\bm{1}_{q'>q}\Delta_t \bm{r}(q',\bm{\omega})$. Let us denote by $(\phi,\bm{r})$, $(\phi^{(t)},\bm{r}^{(t)})$ respectively the solutions of the \BSDE corresponding to the final conditions $\Psi$ and $\Psi^{(t)}$. By Propositions~\ref{prop:uniqueness2}, the DDEs $\mathcal{E}(\mu \bm{r})$ and $\mathcal{E}(\mu \bm{r}^{(t)})$ are $\Fqfilt{q}{[0,1]}$--adapted $\Wn$--martingales.

We first give the following two preliminary results.
\begin{lemma}
\label{lem:martdeltar}
The process $I_{\mu\bm{r}^{(t)}}(\Delta_t\bm{r}_q)$ is a $\W_{\mu\bm{r}^{(t)}}$--square integrable martingale with mean $0$ and  $I_{\mu\bm{r}}(\Delta_t\bm{r}_q)$ is a $\W_{\mu\bm{r}}$--square integrable martingale with mean $0$. Moreover
\end{lemma}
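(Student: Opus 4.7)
The plan is to apply Lemma~\ref{lem:mart_cond} to each of the two stochastic integrals, which reduces the claim to verifying the integrability
\been{
\Ee{\mu\bm{r}^{(t)}}\[\int^1_0 \|\Delta_t\bm{r}_q(s,\bm{\omega})\|^2_2\,ds\] <\infty,
}
together with its analogue under $\W_{\mu\bm{r}}$. Since $\|\Delta_t\bm{r}_q(s,\bm{\omega})\|_2^2 \leq \|\Delta_t\bm{r}(s,\bm{\omega})\|_2^2 \leq 2\|\bm{r}^{(t)}(s,\bm{\omega})\|_2^2 + 2\|\bm{r}(s,\bm{\omega})\|_2^2$ pointwise, it is enough to bound the four quantities $\Ee{\mu\bm{r}^{(t)}}[\int \|\bm{r}^{(t)}\|^2_2\,ds]$, $\Ee{\mu\bm{r}^{(t)}}[\int \|\bm{r}\|^2_2\,ds]$, $\Ee{\mu\bm{r}}[\int \|\bm{r}^{(t)}\|^2_2\,ds]$, $\Ee{\mu\bm{r}}[\int \|\bm{r}\|^2_2\,ds]$ separately.

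The diagonal estimates, in which the integrand matches the drift defining the measure, are immediate from Corollary~\ref{rem:uniqueness222}, inequality~\eqref{bound_rr} with $p=1$, applied to the solution $(\phi,\bm{r})$ corresponding to $\Psi$ and to $(\phi^{(t)},\bm{r}^{(t)})$ corresponding to $\Psi^{(t)}$.

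The cross estimates are the only point requiring a small additional argument. Here I would revert to the reference measure $\Wn$ using the uniform boundedness of the Dol\'eans--Dade exponential. Proposition~\ref{prop:uniqueness2}, specifically~\eqref{eq:uniqueness22}, gives $\sup_{q\in[0,1]}|\gamma(\mu\bm{r}^{(t)},q,\bm{\omega})|\leq 2\|\Psi^{(t)}\|_\infty$ almost surely, hence $\mathcal{E}(\mu\bm{r}^{(t)},1,\bm{\omega})\leq e^{2\|\Psi^{(t)}\|_\infty}$, and symmetrically $\mathcal{E}(\mu\bm{r},1,\bm{\omega})\leq e^{2\|\Psi\|_\infty}$. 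Consequently
\been{
\Ee{\mu\bm{r}^{(t)}}\[\int^1_0\|\bm{r}(s,\bm{\omega})\|_2^2\,ds\] = \E\[\mathcal{E}(\mu\bm{r}^{(t)},1,\bm{\omega})\int^1_0\|\bm{r}(s,\bm{\omega})\|_2^2\,ds\] \leq e^{2\|\Psi^{(t)}\|_\infty}\,K(1,\|\Psi\|_\infty),
}
using~\eqref{bound_rr2} for the $\Wn$--expectation at the last step. The other cross term is bounded by the same device with the roles of $\bm{r}$ and $\bm{r}^{(t)}$ exchanged.

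With all four integrability bounds in hand, Lemma~\ref{lem:mart_cond} yields at once that $I_{\mu\bm{r}^{(t)}}(\Delta_t\bm{r}_q)$ is a square integrable $\W_{\mu\bm{r}^{(t)}}$--martingale with mean zero and that $I_{\mu\bm{r}}(\Delta_t\bm{r}_q)$ is a square integrable $\W_{\mu\bm{r}}$--martingale with mean zero. There is no genuine obstacle here beyond correctly feeding the ingredients into Lemma~\ref{lem:mart_cond}; the only nuance worth flagging is that the truncation at $q$ only shrinks the $L^2$ norm pointwise, so each bound transfers immediately from $\Delta_t\bm{r}$ to $\Delta_t\bm{r}_q$, and progressive measurability of the truncation is automatic.
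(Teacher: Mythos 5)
Your proof is correct and follows essentially the same route as the paper: reduce to Lemma~\ref{lem:mart_cond}, establish the required integrability by bounding the Dol\'eans--Dade exponential via Proposition~\ref{prop:uniqueness2} (inequality~\eqref{eq:uniqueness22}) and invoking the moment estimates of Corollary~\ref{rem:uniqueness222}. The only stylistic difference is that you split the four terms into diagonal ones (handled directly by~\eqref{bound_rr}) and cross ones (reverted to $\Wn$ and then~\eqref{bound_rr2}), whereas the paper reverts all terms to the Wiener expectation at once using the uniform DDE bound and then applies~\eqref{bound_rr2} to both; you also correctly keep the factor $2$ in the elementary bound $\|\Delta_t\bm{r}\|_2^2\leq 2\|\bm{r}^{(t)}\|_2^2+2\|\bm{r}\|_2^2$, which the paper's display silently drops.
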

\begin{proof}
By Proposition~\ref{prop:uniqueness2}, $\bm{r}^{(t)}\in \widehat{D}^{\mu}_n$ and Corollary~\ref{rem:uniqueness222} we have
\been{
\aled{
&\Ee{\mu\bm{r}^{(t)}}\[\int^1_qds \|\Delta_t\bm{r}(s,\bm{\omega})\|^2_2\]\leq e^{2\|\Psi^{(t)}\|_{\infty}}\E\[\int^1_qds \|\Delta\bm{r}(s,\bm{\omega})\|^2_2\]\\
&\leq e^{2\|\Psi^{(t)}\|_{\infty}}\(\E\[\int^1_0ds \|\bm{r}(s,\bm{\omega})\|^2_2\]+\E\[\int^1_0ds \|\bm{r}^{(t)}(s,\bm{\omega})\|^2_2\]\)\\
&\leq e^{2\|\Psi^{(t)}\|_{\infty}}(K(1,\|\Psi^{(t)}\|_{\infty})+K(1,\|\Psi\|_{\infty}))<\infty.
}
}
and, in the same way,
\been{
\aled{
&\Ee{\mu\bm{r}}\[\int^1_0ds \|\Delta_t\bm{r}(s,\bm{\omega})\|^2_2\]\leq e^{2\|\Psi\|_{\infty}}(K(1,\|\Psi^{(t)}\|_{\infty})+K(1,\|\Psi\|_{\infty}))<\infty.
}
}
Hence, Lemma~\ref{lem:mart_cond} completes the proof.
\end{proof}
We can now prove the derivative formula.
\begin{proposition}[Derivative on $\Psi$]
For any $\Psi\in B_n$, $\Delta\Psi\in B_n$ and $\mu\in \textup{Pr}([0,1])$, it holds
\begin{equation}
\label{first_derivative_complete_psi2}
\aled{
&\left.\frac{\partial \phi(\Psi+t\Delta \Psi,\mu,q,\bm{\omega})}{\partial t}\right|_{t=0}=\Eecond{\mu\bm{r}(\Psi,\mu)}{\Delta \Psi(\bm{\omega})}{q},\quad \Wnas.
}
\end{equation}
\end{proposition}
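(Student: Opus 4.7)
The plan is to exploit the envelope representation established in Theorem~\ref{th:func_cont}, $\phi(\Psi,\mu,q,\bm{\omega})=\sup_{\bm{v}\in \widehat{D}^\mu_n}\Gamma(\Psi,\mu,\bm{v},q,\bm{\omega})$, with the supremum attained at $\bm{v}=\bm{r}(\Psi,\mu)$, together with the affine dependence of $\Gamma$ on $\Psi$. Only the first term in~\eqref{eq:auxiliary_func0} depends on $\Psi$, and does so linearly, so setting $\Psi^{(s)}:=\Psi+s\Delta\Psi$ one obtains, for every $\bm{v}\in \widehat{D}^\mu_n$,
\begin{equation}
\label{eq:plan_lin}
\Gamma(\Psi^{(s)},\mu,\bm{v},q,\bm{\omega})=\Gamma(\Psi,\mu,\bm{v},q,\bm{\omega})+s\,\Eecond{\mu\bm{v}}{\Delta\Psi(\bm{\omega})}{q}.
\end{equation}

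The second step converts~\eqref{eq:plan_lin} into a two-sided sandwich for the increment $\phi(\Psi^{(t)},\mu,q,\bm{\omega})-\phi(\Psi,\mu,q,\bm{\omega})$. Combining the envelope inequality $\phi(\Psi^{(t)},\mu,q,\bm{\omega})\geq \Gamma(\Psi^{(t)},\mu,\bm{r}(\Psi,\mu),q,\bm{\omega})$ with the value-process identity $\phi(\Psi,\mu,q,\bm{\omega})=\Gamma(\Psi,\mu,\bm{r}(\Psi,\mu),q,\bm{\omega})$ from Lemma~\ref{lemma_Global_minimum} yields the lower bound
\begin{equation}
\phi(\Psi^{(t)},\mu,q,\bm{\omega})-\phi(\Psi,\mu,q,\bm{\omega})\geq t\,\Eecond{\mu\bm{r}(\Psi,\mu)}{\Delta\Psi(\bm{\omega})}{q},
\end{equation}
and symmetrically $\phi(\Psi,\mu,q,\bm{\omega})\geq\Gamma(\Psi,\mu,\bm{r}(\Psi^{(t)},\mu),q,\bm{\omega})$ gives
\begin{equation}
\phi(\Psi^{(t)},\mu,q,\bm{\omega})-\phi(\Psi,\mu,q,\bm{\omega})\leq t\,\Eecond{\mu\bm{r}(\Psi^{(t)},\mu)}{\Delta\Psi(\bm{\omega})}{q}.
\end{equation}
Dividing by $t>0$ (respectively $t<0$, which flips the inequalities) reduces the proposition to the continuity
\begin{equation}
\label{eq:plan_cont}
\lim_{t\to 0}\Eecond{\mu\bm{r}(\Psi^{(t)},\mu)}{\Delta\Psi(\bm{\omega})}{q}=\Eecond{\mu\bm{r}(\Psi,\mu)}{\Delta\Psi(\bm{\omega})}{q},\qquad \Wnas.
\end{equation}

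The main obstacle is~\eqref{eq:plan_cont}. My plan is to mirror the scheme of Lemma~\ref{uniform_convergence_theo}, with $\Psi$ now playing the role of the perturbed parameter at fixed $\mu$. Setting $\delta\phi=\phi(\Psi^{(t)},\mu)-\phi(\Psi,\mu)$ and $\delta\bm{r}=\bm{r}(\Psi^{(t)},\mu)-\bm{r}(\Psi,\mu)$, subtracting the two \BSDEs and linearizing the quadratic drift through $\bm{r}(\Psi^{(t)},\mu)+\bm{r}(\Psi,\mu)$ gives
\begin{equation}
\delta\phi(q,\bm{\omega})=t\Delta\Psi(\bm{\omega})-\int_q^1\delta\bm{r}(s,\bm{\omega})\cdot d\bm{\omega}(s)+\tfrac{1}{2}\int_q^1\mu([0,s])\,\delta\bm{r}(s,\bm{\omega})\cdot\bigl(\bm{r}(\Psi^{(t)},\mu,s,\bm{\omega})+\bm{r}(\Psi,\mu,s,\bm{\omega})\bigr)ds.
\end{equation}
Applying It\^o's formula to $(\delta\phi)^2$ with terminal condition $\delta\phi(1,\bm{\omega})=t\Delta\Psi(\bm{\omega})$, taking expectation, and invoking the uniform bounds of Proposition~\ref{prop:uniqueness2} and Corollary~\ref{rem:uniqueness222} (which depend on $\|\Psi^{(t)}\|_\infty$ only, hence are uniform for $|t|\leq 1$) together with Burkholder--Davis--Gundy will yield an estimate of the form
\begin{equation}
\|\delta\bm{r}\|_{2,p}+\|\delta\phi\|_{\infty,p}\leq C_p\|\Delta\Psi\|_\infty\,|t|.
\end{equation}
Combined with the $\Wn$--a.s.\ exponential bound~\eqref{eq:uniqueness22} on $\gamma(\mu\bm{r}(\Psi^{(s)},\mu))$, uniform for $s\in[0,t]$, the elementary inequality $|e^x-e^y|\leq e^{|x|\vee|y|}|x-y|$ then delivers $L^1(\Wn)$--convergence of $\mathcal{E}(\mu\bm{r}_q(\Psi^{(t)},\mu),1,\bm{\omega})$ to $\mathcal{E}(\mu\bm{r}_q(\Psi,\mu),1,\bm{\omega})$, and~\eqref{eq:plan_cont} follows from the Bayes identity $\Eecond{\mu\bm{v}}{A}{q}=\Econd{\mathcal{E}(\mu\bm{v}_q,1,\bm{\omega})A}{q}$ and $\|\Delta\Psi\|_\infty<\infty$. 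The principal difficulty in executing this scheme is the $L^p$ bookkeeping for the stochastic integral arising from $\delta\bm{r}$ under the linearized drift, where the tools of Section~\ref{sec:6} are essential.
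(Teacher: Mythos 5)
Your route is genuinely different from the paper's and is essentially sound. The paper works inside the \BSDE: it subtracts the two equations to obtain two distinct representations of $\Delta_t\phi$, one as $t\,\widetilde{\E}_{\mu\bm{r}^{(t)}}[\Delta\Psi\mid\cdot]$ minus a nonnegative error and one as $t\,\widetilde{\E}_{\mu\bm{r}}[\Delta\Psi\mid\cdot]$ plus a nonnegative error, and then takes their difference. You instead start from the variational representation of Theorem~\ref{th:func_cont} and the affine dependence of $\Gamma$ on $\Psi$, producing the two-sided sandwich
$\Eecond{\mu\bm{r}(\Psi,\mu)}{\Delta\Psi}{q}\le\frac{\phi(\Psi^{(t)})-\phi(\Psi)}{t}\le\Eecond{\mu\bm{r}(\Psi^{(t)},\mu)}{\Delta\Psi}{q}$ (for $t>0$, reversed for $t<0$). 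Both approaches then land on the same remaining task, namely the continuity of $t\mapsto\Eecond{\mu\bm{r}(\Psi^{(t)},\mu)}{\Delta\Psi}{q}$. For that, the paper appeals directly to dominated convergence with the uniform bound from Proposition~\ref{prop:uniqueness2}, whereas you develop a new Lipschitz estimate $\|\bm{r}(\Psi^{(t)},\mu)-\bm{r}(\Psi,\mu)\|_{2,p}\lesssim|t|\,\|\Delta\Psi\|_\infty$ by adapting the It\^o/BDG scheme of Lemma~\ref{uniform_convergence_theo} to the $\Psi$-direction. This is a valid alternative that makes the quantitative dependence on $\Psi$ explicit; note the $\|\delta\phi\|_\infty$ bound you need for the bookkeeping is immediate from your own sandwich (both sides are $\le\|\Delta\Psi\|_\infty$ in modulus), so Proposition~\ref{prop:uniqueness2} is not even required for that part.

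One genuine gap remains in the last step: $L^1(\Wn)$ convergence of $\mathcal{E}(\mu\bm{r}_q(\Psi^{(t)},\mu),1,\bm{\omega})$ yields convergence of $\Eecond{\mu\bm{r}(\Psi^{(t)},\mu)}{\Delta\Psi}{q}$ in probability, not $\Wnas$ as required by~\eqref{eq:plan_cont}. You should close this by extracting, for a sequence $t_n\to 0$, a subsequence along which the convergence holds almost surely, and then use the convexity in $\Psi$ established just before this proposition: convexity makes $t\mapsto\frac{\phi(\Psi^{(t)})-\phi(\Psi)}{t}$ monotone, so the one-sided limits exist everywhere and can be computed along any single sequence. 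Combined with the sandwich (which holds simultaneously for all $t$ on one null set, since $\phi$ and the conditional expectations are fixed up to modification), this upgrades the subsequential convergence to the full two-sided derivative. With that observation added, the argument is complete.
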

\begin{proof}
Using the \BSDE, after a bit of manipulation, we get
\been{
\label{Dif1}
\aled{
&\Delta_t \phi(q,\bm{\omega})=t\Delta\Psi(\bm{\omega})-I_{\mu\bm{r}^{(t)}}(\Delta_t\bm{r}_q,1,\bm{\omega})-\frac{1}{2}\int^1_qds\mu([0,s])\|\Delta_t \bm{r}(s,\bm{\omega})\|^2_2.
}
}
Manipulating the \BSDE in a different way, we also get
\been{
\label{Dif2}
\aled{
&\Delta_t \phi(q,\bm{\omega})=t\Delta\Psi(\bm{\omega})-I_{\mu\bm{r}}(\Delta_t\bm{r}_q,1,\bm{\omega})+\frac{1}{2}\int^1_qds\mu([0,s])\|\Delta_t\bm{r}(s,\bm{\omega})\|^2_2.
}
}
The random variable $\Delta_t \phi(q,\bm{\omega})$ $\Fq{q}$--measurable. Since, by Lemma~\ref{lem:martdeltar}, $I_{\mu\bm{r}^{(t)}}(\Delta_t\bm{r}_q)$ is a $\W_{\mu\bm{r}^{(t)}}$--square integrable martingale, \eqref{Dif1} gives
\been{
\label{Dif11}
\aled{
&\Delta_t \phi(q,\bm{\omega})=\Eecond{\mu\bm{r}^{(t)}}{\Delta_t \phi(q,\bm{\omega})}{q}\\
&=t\Eecond{\mu\bm{r}^{(t)}}{\Delta\Psi(\bm{\omega})}{q}-\frac{1}{2}\Eecond{\mu\bm{r}^{(t)}}{\int^1_qds\mu([0,s])\|\Delta_t\bm{r}(s,\bm{\omega})\|^2_2}{q}
}
}
Since, by Lemma~\ref{lem:martdeltar}, $I_{\mu\bm{r}}(\Delta_t\bm{r}_q)$ is a $\W_{\mu\bm{r}}$--square integrable martingale, \eqref{Dif2} gives
\been{
\label{Dif21}
\aled{
&\Delta_t \phi(q,\bm{\omega})=\Eecond{\mu\bm{r}}{\Delta_t \phi(q,\bm{\omega})}{q}\\
&=t\Eecond{\mu\bm{r}}{\Delta\Psi(\bm{\omega})}{q}+\frac{1}{2}\Eecond{\mu\bm{r}}{\int^1_qds\mu([0,s])\|\Delta_t\bm{r}(s,\bm{\omega})\|^2_2}{q}.
}
}
Thus, taking the difference from~\eqref{Dif21} and~\eqref{Dif11}, we get
\been{
\label{eq:subtract}
\aled{
&2t\(\Eecond{\mu\bm{r}^{(t)}}{\Delta\Psi(\bm{\omega})}{q}-\Eecond{\mu\bm{r}}{\Delta\Psi(\bm{\omega})}{q}\)\\
&=\Eecond{\mu\bm{r}}{\int^1_qds\mu([0,s])\|\Delta_t\bm{r}(s,\bm{\omega})\|^2_2}{q}+\Eecond{\mu\bm{r}^{(t)}}{\int^1_qds\mu([0,s])\|\Delta_t\bm{r}(s,\bm{\omega})\|^2_2}{q}.
}
}
By Proposition~\ref{prop:uniqueness2}, it holds
\been{
\mathcal{E}(\mu\bm{r}^{(t)},1,\bm{\omega}|q)\Delta\Psi(\bm{\omega})\leq e^{4\|\Psi^{(t)}\|_{\infty}}\leq  e^{4\|\Psi\|_{\infty}+4\|\Delta\Psi\|_{\infty}}<\infty.
}
Thus the Dominated Convergence Theorem gives
\been{
\lim_{t\to 0}\(\Eecond{\mu\bm{r}^{(t)}}{\Delta\Psi(\bm{\omega})}{q}-\Eecond{\mu\bm{r}}{\Delta\Psi(\bm{\omega})}{q}\)=0,\quad \Wnas.
}
Since the right-hand member~\eqref{eq:subtract} is a sum of two non-negative quantities, the above limit and the equality~\eqref{eq:subtract} gives
\been{
\lim_{t\to 0}\frac{1}{2 t}\Eecond{\mu\bm{r}}{\int^1_qds\mu([0,s])\|\Delta_t\bm{r}(s,\bm{\omega})\|^2_2}{q}=0,\quad \Wnas.
}
Using the above limit and~\eqref{Dif21}, we get
\been{
\lim_{t\to 0}\frac{\Delta_t \phi(q,\bm{\omega})}{ t}=\Eecond{\mu\bm{r}}{\Delta\Psi(\bm{\omega})}{q}.
}
\end{proof}
\subsection{Proof of the statement 3) of Theorem~\ref{th:func_quant}}
This subsection is dedicated to proving the derivative formula stated in the point $2)$ of Theorem~\ref{th:func_quant}. The results is an extension of the Lemma~\ref{derivative_theo}, proven when the Parisi parameters are in $\textup{Pr}([0,1])^{\circ}$, to arbitrary Parisi parameters in $\textup{Pr}([0,1])$. 
\begin{proposition}[Derivative on $\mu$]
\label{derivative_theo_coplete}
For any $\Psi\in B_n$, it holds
\begin{equation}
\label{first_derivative_complete}
\aled{
&\left.\frac{\partial \phi(\Psi,\mu^{(0)}+t(\mu^{(1)}-\mu^{(0)}),q,\bm{\omega})}{\partial t}\right|_{t=0}\\
&=\frac{1}{2}\widetilde{\mathbb{E}}_{\mu^{(0)}\bm{r}(\Psi,\mu^{(0)})}\left[\int^{1}_{q}ds\,(\mu^{(1)}-\mu^{(0)})([0,s])\,\|\bm{r}(\Psi,\mu,s,\bm{\omega})\|^2_2\Bigg| \mathcal{F}^{\otimes n}_q\right],\quad \Wnas.
}
\end{equation}
Moreover there exists a constant $K(\|\Psi\|_{\infty})$ (depending only on $\|\Psi\|_{\infty}$)  such that
\been{
\aled{
&\Eecond{\mu^{(0)}\bm{r}(\Psi,\mu^{(0)})}{\int^1_qdt(\mu^{(1)}-\mu^{(0)})([0,t])\|r(\Psi,\mu,t,\bm{\omega})\|^2_2}{q}\\
&\leq K(\|\Psi\|_{\infty})\sup_{t\in[0,1]}\|\mu^{(1)}-\mu^{(0)}\|_{\infty},\quad \Wnas.
}
}
\end{proposition}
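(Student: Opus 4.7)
The plan is to prove the bound first, which follows quickly from Corollary~\ref{rem:uniqueness222}, and then to derive the differentiation formula by a density argument that reduces matters to the piecewise-constant case already handled in Lemma~\ref{derivative_theo}. For the bound, starting from the pointwise inequality $|(\mu^{(1)}-\mu^{(0)})([0,t])|\leq\|\mu^{(1)}-\mu^{(0)}\|_{\infty}$, I would pull this constant out of the conditional expectation and apply~\eqref{bound_rr3} from Corollary~\ref{rem:uniqueness222} to the remaining conditional second moment $\Eecond{\mu^{(0)}\bm{r}(\Psi,\mu^{(0)})}{\int_q^1 ds\,\|\bm{r}(\Psi,\mu^{(0)},s,\bm{\omega})\|_2^2}{q}\leq K(1,\|\Psi\|_{\infty})$, producing a constant $K(\|\Psi\|_{\infty}):=K(1,\|\Psi\|_{\infty})$ independent of $\mu^{(0)}$ and $\mu^{(1)}$.

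For the derivative, I would choose approximating sequences $(\mu^{(0,k)}),(\mu^{(1,k)})\subset\textup{Pr}([0,1])^{\circ}$ with $\|\mu^{(i,k)}-\mu^{(i)}\|_{\infty}\leq 2^{-k}$, set $\mu^{(\tau)}:=\mu^{(0)}+\tau(\mu^{(1)}-\mu^{(0)})$ and $\mu^{(\tau,k)}:=\mu^{(0,k)}+\tau(\mu^{(1,k)}-\mu^{(0,k)})$---each $\mu^{(\tau,k)}$ again lies in $\textup{Pr}([0,1])^{\circ}$---apply Lemma~\ref{derivative_theo} to the pair $(\mu^{(0,k)},\mu^{(\tau,k)})$, and integrate the resulting derivative identity over $t\in[0,1]$ as in the proof of Corollary~\ref{cor:corder}. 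Factoring out $\tau$ then gives
\begin{equation*}
\frac{\phi(\Psi,\mu^{(\tau,k)},q,\bm{\omega})-\phi(\Psi,\mu^{(0,k)},q,\bm{\omega})}{\tau}=\frac{1}{2}\int_0^1 dt\,\Eecond{\mu^{(t\tau,k)}\bm{r}(\Psi,\mu^{(t\tau,k)})}{\int_q^1 du\,(\mu^{(1,k)}-\mu^{(0,k)})([0,u])\,\|\bm{r}(\Psi,\mu^{(t\tau,k)},u,\bm{\omega})\|_2^2}{q}.
\end{equation*}

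Sending $k\to\infty$ at fixed $\tau>0$, the left-hand side converges by Corollary~\ref{cor:corder}. On the right-hand side, the uniform bound $\sup_q|\gamma(\mu\bm{r}^*,q,\bm{\omega})|\leq 2\|\Psi\|_{\infty}$ from Proposition~\ref{prop:uniqueness2} pins every DDE in $[e^{-2\|\Psi\|_{\infty}},e^{2\|\Psi\|_{\infty}}]$, while Lemma~\ref{uniform_convergence_theo} together with Proposition~\ref{convergence_result} yields $D^{2,p}_n$-convergence $\bm{r}(\Psi,\mu^{(t\tau,k)})\to\bm{r}(\Psi,\mu^{(t\tau)})$. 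Rewriting the Girsanov conditional expectation as $\Econd{\mathcal{E}(\mu^{(t\tau,k)}\bm{r},1,\bm{\omega})(\cdots)}{q}/\mathcal{E}(\mu^{(t\tau,k)}\bm{r},q,\bm{\omega})$ and using the two-sided bound on the denominator then gives pointwise convergence (in $t$) of the integrand; the bound derived above dominates the $dt$-integrand uniformly in $k$, so dominated convergence moves the limit inside $\int_0^1 dt$. The resulting identity now holds for the full measures $\mu^{(0)},\mu^{(1)}\in\textup{Pr}([0,1])$ at every $\tau>0$, and letting $\tau\to 0$ the continuity of $s\mapsto(\phi(\Psi,\mu^{(s)}),\bm{r}(\Psi,\mu^{(s)}))$ (Corollary~\ref{cor:corder}, Lemma~\ref{uniform_convergence_theo}) combined with the same DDE bound forces the integrand at each $t$ to converge to its value at $s=0$; a second application of dominated convergence collapses $\int_0^1 dt$ to the claimed conditional expectation, yielding the derivative formula.

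The main obstacle is the joint limit $k\to\infty$ at fixed $\tau$: the Girsanov measure $\mathbb{W}_{\mu^{(t\tau,k)}\bm{r}(\Psi,\mu^{(t\tau,k)})}$ (through the DDE) and the integrand $\|\bm{r}(\Psi,\mu^{(t\tau,k)})\|_2^2$ move together as $k\to\infty$, so convergence of the weighted conditional expectation is not automatic from weak convergence of measures. The two-sided exponential bound on the DDE from Proposition~\ref{prop:uniqueness2}, combined with the $D^{2,p}_n$-convergence of $\bm{r}$ from Lemma~\ref{uniform_convergence_theo}, is what makes the dominated convergence argument go through uniformly in $t$.
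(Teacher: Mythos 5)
Your proof of the bound matches the paper's: factor out $\|\mu^{(1)}-\mu^{(0)}\|_\infty$, apply the two-sided DDE bound, and invoke~\eqref{bound_rr3}. For the derivative formula you take a genuinely different route, and there is a gap in it.

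The paper does not approximate the derivative formula by discrete measures. Instead, Lemma~\ref{deltaphieq} is derived directly from the \BSDE\ for arbitrary $\mu^{(0)},\mu^{(1)}\in\textup{Pr}([0,1])$: it writes the finite difference $\delta\phi$ as $\frac{1}{2}\Eecond{\mu^{(1)}\bm{r}^{(1)}}{\int_q^1\delta\mu\|\bm{r}^{(0)}\|^2}{q}$ plus a remainder of order $\Econd{\int_q^1\|\delta\bm{r}\|^2}{q}$. The crucial structural point is that the drift $\|\bm{r}^{(0)}\|^2$ in the main term is frozen at the base point; only the Girsanov weight and the quadratic error move with $t$. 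Lemma~\ref{lem:convas} then shows the remainder is $o(t)$ a.s., and a single dominated-convergence step finishes. Your route has two moving pieces inside a $dt$ integral at every stage: both the weight $\mathcal{E}\bigl(\mu^{(t\tau,k)}\bm{r}(\Psi,\mu^{(t\tau,k)})\bigr)$ and the integrand $\|\bm{r}(\Psi,\mu^{(t\tau,k)})\|_2^2$ depend on $k$ and $\tau$.

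The gap is in the passage to the limit $k\to\infty$ (and then $\tau\to 0$). You appeal to the two-sided bound $e^{\pm 2\|\Psi\|_\infty}$ on the DDE together with $D^{2,p}_n$-convergence of $\bm{r}$ to claim ``pointwise convergence (in $t$) of the integrand.'' Neither fact implies a.s.\ convergence of the DDE $\mathcal{E}\bigl(\mu^{(t\tau,k)}\bm{r}(\Psi,\mu^{(t\tau,k)}),q',\bm{\omega}\bigr)$: the two-sided bound is a domination statement, not a convergence statement, and $D^{2,p}_n$-convergence of the drift only gives $L^p$ (hence subsequential a.s.) convergence of $I(\mu^{(t\tau,k)}\bm{r}(\Psi,\mu^{(t\tau,k)}))$ at each fixed $(t,\tau)$. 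To make the argument rigorous you would have to (i) upgrade this to a.s.\ convergence of the DDE via BDG and Borel--Cantelli from $\|\mu^{(t\tau,k)}-\mu^{(t\tau)}\|_\infty\le 2^{-k}$ and Lemma~\ref{deltaphir}; (ii) show that the a.s.\ limit survives the outer conditional expectation $\Econd{\cdot}{q}$ (an $L^1$ or Scheffé-type argument, since a.s.\ convergence alone does not pass through conditional expectations); and (iii) handle the joint $(t,\omega)$-measurability and Fubini step so that the $dt$-integrand converges for a.e.\ $t$ on a single full-measure set. None of these is done, and the ``main obstacle'' paragraph incorrectly claims the two facts you cite already resolve the issue. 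The paper's Lemma~\ref{deltaphieq} is precisely engineered to sidestep (i)--(iii) by producing a single quadratic remainder that can be controlled separately.
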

Given two Parisi parameters $\mu^{(0)}\in \textup{Pr}([0,1])$ and $\mu^{(1)}\in \textup{Pr}([0,1])$, we define
\been{
(\phi^{(0)},\bm{r}^{(0)}):=(\phi(\Psi,\mu^{(0)}),\bm{r}(\Psi,\mu^{(0)})),\quad (\phi^{(1)},\bm{r}^{(1)}):=(\phi(\Psi,\mu^{(1)}),\bm{r}(\Psi,\mu^{(1)})),
}
and
\been{
\delta \phi:=\phi^{(1)}-\phi^{(0)},\quad \delta \bm{r}:=\bm{r}^{(1)} -\bm{r}^{(0)},\quad 
\delta\mu:=\mu^{(1)}-\mu^{(0)}
}
We first provide the extension of Lemma \ref{uniform_convergence_theo} in $\textup{Pr}([0,1])$.
\begin{proposition}
\label{deltaphir}
Let $\mu^{(1)}$ and $\mu^{(0)}$ be two elements of $\textup{Pr}([0,1])$. Then, for any $p\geq 1$, there exist a constant $b_p(\|\Psi\|_{\infty})$ depending only $p$ and $\|\Psi\|_{\infty}$ such that:
\begin{equation}
\label{eq:Lipr0complete}
\E\left[\left(\int^1_0dt\|\,\delta \bm{r}(q,\bm{\omega})\|^2_2\right)^p\right]^{\frac{1}{p}}\leq b_p(\|\Psi\|_{\infty})\|\delta \mu\|^2_{\infty}.
\end{equation}
\end{proposition}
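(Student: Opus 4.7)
The plan is to extend Lemma~\ref{uniform_convergence_theo} from the discrete subclass $\textup{Pr}([0,1])^{\circ}$ to arbitrary Parisi parameters by a density--and--limit argument, exploiting the $S^p_n\times D^{2,p}_n$--norm convergence already established in Proposition~\ref{convergence_result}. Fix $p\geq 1$ and, for each $i\in\{0,1\}$, construct a sequence $(\mu^{(i)}_k)_{k\in\N}\subset\textup{Pr}([0,1])^{\circ}$ with $\|\mu^{(i)}_k-\mu^{(i)}\|_{\infty}\leq 2^{-k}$; such sequences exist because the right--continuous non--decreasing function $q\mapsto \mu^{(i)}([0,q])$ can be approximated uniformly by piecewise constant cumulative distribution functions on a dyadic grid of mesh $2^{-k}$. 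By Proposition~\ref{convergence_result}, the corresponding finite--RSB solutions $(\phi(\Psi,\mu^{(i)}_k),\bm{r}(\Psi,\mu^{(i)}_k))$ converge almost surely and in $S^{2p}_n\times D^{2,2p}_n$ norm to the full--RSB solutions $(\phi(\Psi,\mu^{(i)}),\bm{r}(\Psi,\mu^{(i)}))$.

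Next, for every $k\in\N$ both $\mu^{(0)}_k$ and $\mu^{(1)}_k$ lie in $\textup{Pr}([0,1])^{\circ}$, so Lemma~\ref{uniform_convergence_theo} applies and yields
\begin{equation*}
\E\left[\left(\int_0^1 dt\,\|\bm{r}(\Psi,\mu^{(1)}_k,t,\bm{\omega})-\bm{r}(\Psi,\mu^{(0)}_k,t,\bm{\omega})\|^2_2\right)^p\right]^{\frac{1}{p}}\leq b_p(\|\Psi\|_{\infty})\,\|\mu^{(1)}_k-\mu^{(0)}_k\|_{\infty}^2,
\end{equation*}
with a constant $b_p(\|\Psi\|_{\infty})$ depending only on $p$ and $\|\Psi\|_{\infty}$ and, crucially, not on $k$. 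The triangle inequality gives $\|\mu^{(1)}_k-\mu^{(0)}_k\|_{\infty}\to \|\delta\mu\|_{\infty}$, so the right--hand side tends to $b_p(\|\Psi\|_{\infty})\|\delta\mu\|_{\infty}^2$. For the left--hand side, another application of the triangle inequality in $D^{2,2p}_n$ shows that $\bm{r}(\Psi,\mu^{(1)}_k)-\bm{r}(\Psi,\mu^{(0)}_k)\to\delta\bm{r}$ in the $\|\cdot\|_{2,2p}$--norm, since each summand converges by Proposition~\ref{convergence_result}. By continuity of the norm, the left--hand side of the displayed inequality converges to $\E\left[(\int_0^1 dt\,\|\delta\bm{r}(t,\bm{\omega})\|^2_2)^p\right]^{1/p}$, and passing $k\to\infty$ gives exactly~\eqref{eq:Lipr0complete}.

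The argument is essentially routine once Proposition~\ref{convergence_result} and Lemma~\ref{uniform_convergence_theo} are in place, so no serious obstacle is expected. The only point deserving attention is that the constant in Lemma~\ref{uniform_convergence_theo} is uniform over $\textup{Pr}([0,1])^{\circ}\times\textup{Pr}([0,1])^{\circ}$, which is precisely what allows the direct $D^{2,p}_n$--Lipschitz estimate for $\mu\mapsto\bm{r}(\Psi,\mu)$ on the whole of $\textup{Pr}([0,1])$ to be recovered as a simple limiting statement, without any additional uniform control having to be supplied beyond the norm convergence of the approximating solutions.
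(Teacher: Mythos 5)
Your proof is correct and follows essentially the same route as the paper's: approximate $\mu^{(0)}$ and $\mu^{(1)}$ by sequences in $\textup{Pr}([0,1])^{\circ}$, apply the discrete Lipschitz bound of Lemma~\ref{uniform_convergence_theo} (whose constant is $k$-independent), and transfer the estimate using the $D^{2,p}_n$-norm convergence supplied by Proposition~\ref{convergence_result}. The only cosmetic difference is that the paper fixes a sufficiently large $k$ and absorbs the residual approximation error into the constant, whereas you pass $k\to\infty$ and invoke continuity of the $\|\cdot\|_{2,2p}$ norm.
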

\begin{proof}
If $\mu^{(0)}=\mu^{(1)}$ the proposition is proven by the uniqueness result~\ref{prop:uniqueness}. So we consider two distinct measures.
Let $(\nu^{(0)}_k)_{k\in \N}\subseteq \textup{Pr}([0,1])^{\circ}$ and $(\nu^{(1)}_k)_{k\in \N}\subseteq \textup{Pr}([0,1])^{\circ}$ be two sequences of discrete probability measures such that
\been{
\|\nu^{(0)}_k-\mu^{(0)}\|_{\infty}\leq 2^{-k-1}\|\delta\mu\|\leq 2^{-k},\quad \|\nu^{(1)}_k-\mu^{(1)}\|_{\infty}\leq 2^{-k-1}\|\delta\mu\|\leq 2^{-k}.
}
Let us also define
\been{
\delta\nu_k:=\nu^{(1)}_k-\nu^{(0)}_k.
}
Note that 
\been{
\label{eq:nu_ineq}
\|\delta\nu_k\|_{\infty}\leq \|\delta\mu\|_{\infty}+\|\nu^{(0)}_k-\mu^{(0)}\|_{\infty}+\|\nu^{(1)}_k-\mu^{(1)}\|_{\infty}\leq (1+2^{-k})\|\delta\mu\|_{\infty}.
}
Let us denote by $(\phi^{(0)},\bm{r}^{(0)})$, $(\phi^{(1)},\bm{r}^{(1)})$, $(\phi^{(0)}_{k},\bm{r}^{(0)}_{k})$, $(\phi_{1,k},\bm{r}^{(1)}_{k})$ respectively the solutions of the \BSDE corresponding to the Parisi parameters $\mu^{(0)}$, $\mu^{(1)}$, $\nu^{(0)}_k$ and $\nu^{(1)}_k$ and
\been{
\delta \bm{r}_k:=\bm{r}^{(1)}_{k}-\bm{r}^{(0)}_{k},\quad \Delta \bm{r}^{(0)}_k:=\bm{r}^{(0)}_{k}-\bm{r}^{(0)},\quad \Delta \bm{r}^{(1)}_k:=\bm{r}^{(1)}_{k}-\bm{r}^{(1)}.
}
The triangular inequality gives
\been{
\label{eq:triangle1}
\aled{
&\E\[\left\|\delta\bm{r}(q,\bm{\omega})\right\|^p\]^{\frac{1}{p}}\leq\E\[\left\|\Delta\bm{r}^{(0)}_{k}(q,\bm{\omega})\right\|^p\]^{\frac{1}{p}}+\E\[\left\|\Delta\bm{r}^{(1)}_{k}(q,\bm{\omega})\right\|^p\]^{\frac{1}{p}}+\E\[\left\|\delta\bm{r}_{k}(q,\bm{\omega})\right\|^p\]^{\frac{1}{p}}
}
}
By Proposition~\ref{convergence_result}, $\bm{r}^{(0)}_{k}$ and $\bm{r}^{(1)}_{k}$ converge in $D^{2,p}_n$ norm to $\bm{r}^{(0)}$ and $\bm{r}^{(1)}$ respectively. Thus there exists $k_{*}$ such that
\been{
\label{eq:triangle11}
\E\[\left\|\Delta\bm{r}^{(i)}_{k}(q,\bm{\omega})\right\|^p\]^{\frac{1}{p}}\leq \|\delta\mu\|^2_{\infty},\quad \forall k\geq k_{*},\,i\in \{0,1\}.
}
By Lemma~\ref{uniform_convergence_theo}, for any two measures in $\textup{Pr}([0,1])^{\circ}$, $\nu^{(0)}_k$ and $\nu^{(1)}_k$, it holds
\been{
\label{eq:triangle12}
\E\[\left\|\delta\bm{r}_{k}(q,\bm{\omega})\right\|^p\]^{\frac{1}{p}}\leq b_p(\|\Psi\|_{\infty})\|\delta\nu^{(k)}\|^2_{\infty}\leq b_p(\|\Psi\|_{\infty})(1+2^{-k})\|\delta\mu\|^2_{\infty},
}
where, in the least inequality, we used the inequality~\eqref{eq:nu_ineq}. The coefficient $b_p(\|\Psi\|_{\infty})$ does not depend on $k$. Hence, combining the inequalities~\eqref{eq:triangle1},~\eqref{eq:triangle11}, and~\eqref{eq:triangle12}, we prove the inequality~\eqref{eq:Lipr0complete}.
\end{proof}
The above result is the key to extending the derivative formula in Lemma~\ref{derivative_theo} to general $\mu\in \textup{Pr}([0,1])$. We now provide a bound of the finite difference.
\begin{lemma}
\label{deltaphieq}

The following inequality holds
\been{
\aled{
&\left|\delta\phi(q,\bm{\omega})-\frac{1}{2}\Eecond{\mu^{(1)}\bm{r}^{(1)}}{\int^1_qds\delta\mu([0,s])\|\bm{r}^{(0)}(\,s,\bm{\omega})\|^2_2}{q}\right|\\
&\leq\frac{1}{2}e^{2\|\Psi\|_{\infty}}\Econd{\int^1_qds\|\delta\bm{r}(s,\bm{\omega})\|^2_2}{q}.
}
}
\end{lemma}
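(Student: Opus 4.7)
The plan is to subtract the two \BSDEs corresponding to $\mu^{(0)}$ and $\mu^{(1)}$, identify a Girsanov shift under $\W_{\mu^{(1)}\bm{r}^{(1)}}$ to eliminate the martingale part, and then control the resulting quadratic remainder through a sharp uniform bound on $\mathcal{E}(\mu^{(1)}\bm{r}^{(1)}_q,1,\bm{\omega})$. Subtracting the two \BSDEs gives
\been{
\delta\phi(q,\bm{\omega})=-\int_q^1\delta\bm{r}(t,\bm{\omega})\cdot d\bm{\omega}(t)+\tfrac12\int_q^1\!\big(\mu^{(1)}([0,t])\|\bm{r}^{(1)}(t,\bm{\omega})\|^2_2-\mu^{(0)}([0,t])\|\bm{r}^{(0)}(t,\bm{\omega})\|^2_2\big)dt.
}
Using the identity $\|\bm{r}^{(1)}\|^2_2-\|\bm{r}^{(0)}\|^2_2=2\bm{r}^{(1)}\!\cdot\delta\bm{r}-\|\delta\bm{r}\|^2_2$ and isolating the $\delta\mu([0,t])\|\bm{r}^{(0)}\|^2_2$ contribution, the remaining $d\bm{\omega}$-integral combines with $\int_q^1\mu^{(1)}([0,t])\bm{r}^{(1)}\!\cdot\delta\bm{r}\,dt$ into the stochastic integral of $\delta\bm{r}_q$ against the shifted Brownian motion $\bm{W}_{\mu^{(1)}\bm{r}^{(1)}}$ from \eqref{eq:shiftedBM}, producing
\been{
\delta\phi(q,\bm{\omega})=-I_{\mu^{(1)}\bm{r}^{(1)}}(\delta\bm{r}_q,1,\bm{\omega})+\tfrac12\int_q^1\delta\mu([0,t])\|\bm{r}^{(0)}\|^2_2\,dt-\tfrac12\int_q^1\mu^{(1)}([0,t])\|\delta\bm{r}\|^2_2\,dt.
}
The Girsanov change of measure is justified by Proposition~\ref{prop:uniqueness2}, which guarantees that $\mathcal{E}(\mu^{(1)}\bm{r}^{(1)})$ is a $\Wn$-martingale.

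Next I would apply $\Eecond{\mu^{(1)}\bm{r}^{(1)}}{\,\cdot\,}{q}$ to both sides. Since $\delta\phi(q,\cdot)$ is $\Fq{q}$-measurable, the left-hand side is unchanged, while the stochastic integral vanishes provided $I_{\mu^{(1)}\bm{r}^{(1)}}(\delta\bm{r}_q)$ is a genuine (not merely local) $\W_{\mu^{(1)}\bm{r}^{(1)}}$-martingale. I would verify this through Lemma~\ref{lem:mart_cond}: by Corollary~\ref{rem:uniqueness222} both $\bm{r}^{(0)},\bm{r}^{(1)}\in D^{2,p}_n$ for every $p\geq 1$, and the global bound $\mathcal{E}(\mu^{(1)}\bm{r}^{(1)},1,\bm{\omega})\leq e^{2\|\Psi\|_{\infty}}$ from Proposition~\ref{prop:uniqueness2} makes $\Ee{\mu^{(1)}\bm{r}^{(1)}}[\int_0^1\|\delta\bm{r}\|^2_2\,dt]$ finite. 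Taking absolute values of the resulting identity and dropping $\mu^{(1)}([0,t])\leq 1$ yields
\been{
\left|\delta\phi(q,\bm{\omega})-\tfrac12\Eecond{\mu^{(1)}\bm{r}^{(1)}}{\int_q^1\delta\mu([0,t])\|\bm{r}^{(0)}\|^2_2\,dt}{q}\right|\leq\tfrac12\Eecond{\mu^{(1)}\bm{r}^{(1)}}{\int_q^1\|\delta\bm{r}\|^2_2\,dt}{q}.
}

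It then remains to convert the shifted expectation on the right-hand side back to a $\Wn$-expectation. By Bayes' formula, $\Eecond{\mu^{(1)}\bm{r}^{(1)}}{X}{q}=\Econd{\mathcal{E}(\mu^{(1)}\bm{r}^{(1)}_q,1,\bm{\omega})X}{q}$ for non-negative $X$, so I only need to establish the sharp pointwise bound $\mathcal{E}(\mu^{(1)}\bm{r}^{(1)}_q,1,\bm{\omega})\leq e^{2\|\Psi\|_{\infty}}$. I would derive this by replaying the integration-by-parts computation \eqref{eq:zeta_part} of Proposition~\ref{prop:uniqueness2} on the sub-interval $[q,1]$: using the differential form of the \BSDE for $\phi^{(1)}$ and integrating $\mu^{(1)}([0,s])\,d\phi^{(1)}(s,\bm{\omega})$ by parts yields, after rearrangement,
\been{
\gamma(\mu^{(1)}\bm{r}^{(1)}_q,1,\bm{\omega})=\mu^{(1)}([0,q])\big(\Psi(\bm{\omega})-\phi^{(1)}(q,\bm{\omega})\big)+\int_q^1\!\big(\Psi(\bm{\omega})-\phi^{(1)}(s,\bm{\omega})\big)\mu^{(1)}(ds),
}
from which $|\gamma(\mu^{(1)}\bm{r}^{(1)}_q,1,\bm{\omega})|\leq 2\|\Psi\|_{\infty}\mu^{(1)}([0,1])=2\|\Psi\|_{\infty}$ follows using $|\phi^{(1)}|\leq\|\Psi\|_{\infty}$. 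The main obstacle is precisely this sharp exponent: a naive application of the triangle inequality to the two global estimates for $\gamma(\mu^{(1)}\bm{r}^{(1)},\cdot)$ would produce only the weaker constant $e^{4\|\Psi\|_{\infty}}$, and the correct factor $e^{2\|\Psi\|_{\infty}}$ is recovered only by exploiting that $\mu^{(1)}$ is a probability measure in the integration-by-parts performed directly on $[q,1]$.
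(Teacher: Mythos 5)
Your proposal is correct and follows essentially the same route as the paper: subtract the two \BSDEs, absorb the cross term into the stochastic integral against the shifted Brownian motion under $\W_{\mu^{(1)}\bm{r}^{(1)}}$, kill it by the true-martingale property (the paper cites Lemma~\ref{lem:martcon2}, you reach the same conclusion via Lemma~\ref{lem:mart_cond} and Corollary~\ref{rem:uniqueness222}), and then bound the Radon--Nikodym factor by $e^{2\|\Psi\|_{\infty}}$ together with $\mu^{(1)}([0,t])\leq 1$. Your explicit integration-by-parts on $[q,1]$ showing $|\gamma(\mu^{(1)}\bm{r}^{(1)}_q,1,\bm{\omega})|\leq 2\|\Psi\|_{\infty}$ is exactly the computation~\eqref{eq:zeta_part} restricted to the subinterval, and it correctly justifies the sharp constant that the paper's one-line appeal to Proposition~\ref{prop:uniqueness2} leaves implicit.
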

\begin{proof}
 Using the \BSDE we have
\been{
\label{eq:diBSDE}
\aled{
&\delta \phi(q,\bm{\omega})\\
&=-I(\delta\bm{r}_q,1,\bm{\omega})+\frac{1}{2}\int^1_qds\mu^{(1)}([0,s])\|\bm{r}^{(1)}(s,\bm{\omega})\|^2_2-\frac{1}{2}\int^1_qds\mu^{(0)}([0,s])\|\bm{r}^{(0)}(\,t,\bm{\omega})\|^2_2\\
&=-I_{\mu^{(1)}\bm{r}^{(1)}}(\delta\bm{r}_q,1,\bm{\omega})+\frac{1}{2}\int^1_qds\delta\mu([0,s])\|\bm{r}^{(0)}(s,\bm{\omega})\|^2_2-\frac{1}{2}\int^1_qds\mu^{(1)}([0,s])\|\delta \bm{r}(s,\bm{\omega})\|^2_2.
}
}
By Proposition~\ref{prop:uniqueness2} $\bm{r}^{(1)}\in D^{\mu}_n$. Thus, by Lemma~\ref{lem:martcon2} $I_{\mu^{(1)}\bm{r}^{(1)}}(\bm{r}_q^{(1)})$ and $I_{\mu^{(1)}\bm{r}^{(1)}}(\bm{r}_q^{(0)})$ are $\W_{\mu^{(1)}\bm{r}^{(1)}}$--martingales with mean $0$. Hence $I_{\mu^{(1)}\bm{r}^{(1)}}(\delta\bm{r}_q)$ is  $\W_{\mu^{(1)}\bm{r}^{(1)}}$--martingale with mean $0$.
% and Lemma~\ref{rem:uniqueness222} $\bm{r}^{(1)}\in \widehat{D}^{\mu}_n$ and
% \been{
% \aled{
% &\Ee{\mu^{(1)}\bm{r}^{(1)}}\[\int^1_qdt \|\delta\bm{r}(t,\bm{\omega})\|^2_2\]\leq e^{2\|\Psi\|_{\infty}}\E\[\int^1_qdt \|\delta\bm{r}(t,\bm{\omega})\|^2_2\]\\
% &\leq e^{2\|\Psi\|_{\infty}}\(\E\[\int^1_qdt \|\bm{r}^{(0)}(t,\bm{\omega})\|^2_2\]+\E\[\int^1_qdt \|\bm{r}^{(1)}(t,\bm{\omega})\|^2_2\]\)\leq 2 K(1,\|\Psi\|_{\infty})<\infty.
% }
% }
% Thus, by Lemma~\ref{lem:mart_cond}, the process $I_{\mu^{(1)}\bm{r}^{(1)}}(\delta\bm{r})$ is a $\W_{\mu^{(1)}\bm{r}^{(1)}}$--square integrable martingale with mean $0$ and
% \been{
% \label{eq:ineqdeltamu}
% \Ee{\mu^{(1)}\bm{r}^{(1)}}\[\int^1_qdt\mu^{(1)}([0,t])\|\delta \bm{r}(t,\bm{\omega})\|^2_2\]<e^{2\|\Psi\|_{\infty}}\|\mu^{(1)}\|_{\infty}\E\[\int^1_0dt\|\delta \bm{r}(t,\bm{\omega})\|^2_2\]<\infty.
% }
Thus, since $\delta \phi$ is adapted, \eqref{eq:diBSDE} gives
\been{
\aled{
&2\delta \phi(q,\bm{\omega})=2\Eecond{\mu^{(1)}\bm{r}^{(1)}}{\delta \phi(q,\bm{\omega})}{q}\\
&=\Eecond{\mu^{(1)}\bm{r}^{(1)}}{\int^1_qds\delta\mu([0,s])\|\bm{r}^{(0)}(s,\bm{\omega})\|^2_2}{q}\\
&+\Eecond{\mu^{(1)}\bm{r}^{(1)}}{\int^1_qds\mu^{(1)}([0,s])\|\delta \bm{r}(s,\bm{\omega})\|^2_2}{q}.
}
}
By Proposition~\ref{prop:uniqueness2} $\mathcal{E}(\mu^{(1)}\bm{r}^{(1)},1,\bm{\omega})\leq e^{2\|\Psi\|_{\infty}}$ and $\mu^{(1)}([0,t])\leq 1$, completing the proof.
\end{proof}
Given $t\in [0,1]$, let
\begin{equation}
\mu^{(t)}:=\mu^{(0)}+t \delta\mu\in \textup{Pr}([0,1]),\quad \bm{r}^{(t)}:=\bm{r}(\Psi,\mu^{(t)}),\quad \delta_{0,t}\bm{r}:=\bm{r}^{(t)}-\bm{r}^{(0)}.
\end{equation}
\begin{lemma}
\label{lem:convas}
For any $q\in [0,1]$
\been{
\lim_{t\to 0}\frac{1}{t}\Econd{\int^1_qds\|\delta_{0,t}\bm{r}(s,\bm{\omega})\|^2_2}{q}=0,\quad \Wnas.
}
\end{lemma}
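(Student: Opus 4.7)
The plan is to combine the Lipschitz--type estimate of Proposition~\ref{deltaphir} with a conditional Jensen inequality, and conclude via Markov's inequality together with Borel--Cantelli. First, I would apply Proposition~\ref{deltaphir} to the Parisi parameters $\mu^{(0)}$ and $\mu^{(t)} = \mu^{(0)} + t\delta\mu$. Since $\|\mu^{(t)} - \mu^{(0)}\|_\infty = t\|\delta\mu\|_\infty$, for every integer $p \geq 1$ this yields
\begin{equation*}
\mathbb{E}\left[\left(\int^1_0 \|\delta_{0,t}\bm{r}(s,\bm{\omega})\|^2_2 \, ds\right)^p\right] \leq b_p(\|\Psi\|_\infty)^p \, t^{2p} \|\delta\mu\|^{2p}_\infty.
\end{equation*}
The crucial feature is the \emph{quadratic} dependence on $t$, which will overcome the $t^{-p}$ coming from the normalization in $Y_t(\bm{\omega}) := \tfrac{1}{t}\Econd{\int^1_q\|\delta_{0,t}\bm{r}(s,\bm{\omega})\|^2_2\,ds}{q}$.

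Second, I would bound the moments of $Y_t$ by means of the conditional Jensen inequality applied to the convex map $x \mapsto x^p$:
\begin{equation*}
Y_t^p \leq \frac{1}{t^p}\,\Econd{\left(\int^1_q \|\delta_{0,t}\bm{r}\|^2_2 \, ds\right)^p}{q}.
\end{equation*}
Taking unconditional expectation and invoking the previous bound gives $\mathbb{E}[Y_t^p] \leq b_p^p \, t^p \|\delta\mu\|^{2p}_\infty$. Specializing to $p=4$ along the dyadic sequence $t_k := 2^{-k}$, Markov's inequality with threshold $t_k^{1/2}$ yields
\begin{equation*}
\mathbb{W}^{\otimes n}\bigl(Y_{t_k} > t_k^{1/2}\bigr) \leq t_k^{-2}\,\mathbb{E}[Y_{t_k}^4] \leq b_4^4 \, t_k^2 \,\|\delta\mu\|^8_\infty,
\end{equation*}
and the right-hand side is summable in $k$. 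Borel--Cantelli then gives $Y_{t_k} \to 0$ $\mathbb{W}^{\otimes n}$--almost surely.

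The main obstacle is upgrading this dyadic a.s.\ convergence to the continuous limit $t \to 0^+$ required by the statement, since the exceptional null set in Borel--Cantelli depends on the chosen sequence. This should be handled by a Kolmogorov-type continuity argument: applying Proposition~\ref{deltaphir} to arbitrary pairs $(\mu^{(t)},\mu^{(s)})$ gives $\mathbb{E}\bigl[(\int_0^1\|\bm{r}^{(t)}-\bm{r}^{(s)}\|^2_2\,ds)^p\bigr]^{1/p} \lesssim |t-s|^2$, which provides ample Hölder regularity to produce an a.s.\ continuous modification of $t \mapsto Y_t$ on $(0,1]$; combined with the subsequential limit above, this delivers $\lim_{t\to 0^+}Y_t = 0$ a.s.
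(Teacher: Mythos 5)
Your moment estimate---Proposition~\ref{deltaphir} combined with conditional Jensen to obtain $\mathbb{E}[Y_t^p]\leq b_p^p\,t^p\|\delta\mu\|^{2p}_\infty$---and the ensuing Markov/Borel--Cantelli argument along $t_k=2^{-k}$ are correct and yield $Y_{t_k}\to 0$ almost surely. This is a slightly different route to the subsequential result than the one in the paper, which instead telescopes the increments $U_{0,t_{k+1}}-U_{0,t_k}$, uses a triangle-type inequality to reduce them to the Lipschitz bound, and proves a.s.\ summability of $\sum_k|U_{0,t_{k+1}}-U_{0,t_k}|$. Your version is arguably cleaner: the quadratic gain in $\|\delta\mu\|_\infty$ in Proposition~\ref{deltaphir} translates directly into $t^p$ decay of the $p$-th moment of $Y_t$, so the Borel--Cantelli summability is immediate and one avoids the somewhat delicate decomposition of $U_{0,t_1}-U_{0,t_2}$ in the paper.

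The weak point is the passage from the dyadic subsequence to the continuous limit, and your proposed Kolmogorov fix does not close it as stated. An a.s.\ continuous modification of $t\mapsto Y_t$ on the \emph{open} interval $(0,1]$, combined with $Y_{t_k}\to 0$ along a single null sequence, does not force $\lim_{t\to 0^+}Y_t=0$: continuity on $(0,1]$ imposes no uniform control of oscillations near the endpoint, and a function can vanish on the dyadics yet oscillate with non-decaying amplitude on the intervening intervals. What is actually needed is an a.s.\ continuous extension up to $t=0$ with $Y_0=0$, i.e.\ a Kolmogorov-type bound $\mathbb{E}[|Y_t-Y_s|^p]\lesssim|t-s|^{1+\gamma}$ that holds uniformly down to $s=0$. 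Moreover, your Hölder estimate is stated for $\bm{r}^{(t)}-\bm{r}^{(s)}$, and transferring it to $Y_t-Y_s$ is not automatic: the expression contains both the $1/t$ vs.\ $1/s$ mismatch and the squared-norm difference, and one needs $\|a\|^2-\|b\|^2=(a-b)\cdot(a+b)$ together with the separate $L^p$ decay of $\bm{r}^{(0)}-\bm{r}^{(t)}$ (order $t$) in order to cancel the dangerous $1/t$ normalization. With these additional pieces the Kolmogorov route can be made to work, but as written it has a genuine hole. (For comparison, the paper asserts that a.s.\ Cauchy behaviour along every decreasing sequence, plus convergence in probability, delivers the a.s.\ continuous limit; that final step is also terse, since the exceptional null set there depends on the chosen sequence.)
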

\begin{proof}
Given $t\in [0,1]$, $t_1\in [0,t]$, and $t_2\in[t_1,t]$, let $\delta_{t_1,t_2}\bm{r}:=\bm{r}^{(t_1)}-\bm{r}^{(t_2)}$ and
\been{
U_{t_1,t_2}(q,\bm{\omega})=\frac{1}{t_2}\Econd{\int^1_qds\|\delta_{t_1,t_2}\bm{r}(s,\bm{\omega})\|^2_2}{q}
}
By Lemma~\ref{deltaphir}, for any $p\geq 1$ it holds
% \been{
% \E\[|U_{t}(q,\bm{\omega}|\]\leq \frac{1}{t_1}\E\[\left|\int^1_qds\delta\bm{r}^{(t_1)}(s,\bm{\omega})\|^2_2\right|\]\leq b_1(\|\Psi\|_{\infty})t\|\mu\|^2_{\infty},
% }
%  By the same computation, we have
\been{
\label{eq:Uineq1}
\aled{
&\E\[|U_{t_1,t_2}(q,\bm{\omega})|^p\]\leq \frac{1}{t^p_2}\E\[\left|\int^1_qds\|\delta\bm{r}_{t_1,t_2}(s,\bm{\omega})\|^2_2\right|^p\]\leq b^p_p(\|\Psi\|_{\infty})\frac{\|\mu^{(t_1)}-\mu^{(t_2)}\|^{2p}_{\infty}}{t^p_2},
}
}
where, since $0\leq t_2-t_1\leq t_2$ and $2p-1\geq p$,
\been{
\label{eq:Uineq2}
\|\mu^{(t_1)}-\mu^{(t_2)}\|^{2p}_{\infty}\leq (t_1-t_2)^{2p}\|\delta\mu\|^{2p}_{\infty}\leq t^{p}_2(t_1-t_2)\|\delta\mu\|^{2p}_{\infty}.
}
Using the inequality
\been{
\aled{
\left|U_{0,t_1}(q,\bm{\omega})-U_{0,t_2}(q,\bm{\omega})\right|&\leq \frac{t_2-t_1}{t_2}U_{0,t_1}(q,\bm{\omega})+U_{t_1,t_2}(q,\bm{\omega}).
}
}
and the inequality $2(|a|+|b|)^p\leq 2^p |a|^p+2^p |b|^p$, for any decreasing sequence $(t_k)_{k\in \N}$ converging to $0$ we have
\been{
\aled{
&\E\[|U_{0,t_{k+1}}(q,\bm{\omega})-U_{0,t_k}(q,\bm{\omega})|^p\]\\
&\leq 2^{p-1}\(\frac{(t_{k}-t_{k+1})^p}{ t^p_k}\E\[|U_{0,t_{k+1}}(q,\bm{\omega})|^p\]+\E\[|U_{t_{k+1},t_k}(q,\bm{\omega})|^p\]\)\\
&\leq 2^pb^p_p(\|\Psi\|_{\infty})\|\delta\mu\|^{2p}_{\infty}(t_{k}-t_{k+1}).
}
}
Thus
\been{
\aled{
\sum_{k\in \N}\E\[|U_{0,t_{k+1}}(q,\bm{\omega})-U_{0,t_k}(q,\bm{\omega})|^p\]\leq 2^pb^p_p(\|\Psi\|_{\infty})\|\delta\mu\|^{2p}_{\infty}t_1<\infty,\quad \forall p\geq 0.
}
}
In particular, if we take $p=1$ and $p=2$, the above inequality implies that both the series of the expectation values and of the variances converge. By Kolmogorov's two-series Theorem, $\sum_{k\in \N}|U_{0,t_{k+1}}(q,\bm{\omega})-U_{0,t_k}(q,\bm{\omega})|$ converges $\Wnas$. Thus $\Wnas$ it holds 
\been{ 
\lim_{M\to\infty}\sup_{m>M}|U_{0,t_{M}}(q,\bm{\omega})-U_{0,t_m}(q,\bm{\omega})|\leq \lim_{M\to\infty}\sum^{\infty}_{k=M}|U_{0,t_{k+1}}(q,\bm{\omega})-U_{0,t_k}(q,\bm{\omega})|=0.
}
So $(U_{0,t_{k}})_{k\in\N}$ is a Cauchy sequence $\Wnas$. Moreover, the inequalities \eqref{eq:Uineq1} and  \eqref{eq:Uineq2} give
\been{
\lim_{t\to 0}\E\[|U_{0,t}(q,\bm{\omega})|\]=0.
}
It follows that $U_{0,t}$ converges to $0$ in probability as $t\to 0$. Since, for any decreasing sequence $t_{k}\downarrow 0$, $(U_{0,t_{k}})_{k\in\N}$ is Cauchy, then $U_{0,t}$ converges to $0$ $\Wnas$.

% Using the Markov inequality and the inequalities \eqref{eq:Uineq2} and \eqref{eq:Uineq1}, for any $\lambda>0$ we have
% \been{
% \aled{
% &\Wn\(\sup_{\substack{t_1\in (0,t)\\t_2\in (0,t)}}|U_{t_1,0}(q,\bm{\omega})-U_{t_2,0}(q,\bm{\omega})|\geq \lambda\)\leq \frac{1}{\lambda}\E\[|U_{t_1,0}(q,\bm{\omega})-U_{t_2,0}(q,\bm{\omega})|\]\\
% &\leq \frac{t_2-t_1}{\lambda t_2}\E\[|U_{0,t_1}(q,\bm{\omega})|\]+\frac{1}{\lambda }\E\[|U_{t_1,t_2}(q,\bm{\omega})|\]\leq \frac{2}{\lambda}b_1(\|\Psi\|_{\infty})\|\delta\mu\|^2_{\infty}t.
% }
% }
% Thus, by continuity of measure
% \been{
% \aled{
% &\Wn\(\lim_{t\to 0}\sup_{\substack{t_1\in (0,t)\\t_2\in (0,t)}}|U_{t_1}(q,\bm{\omega})-U_{t_2}(q,\bm{\omega})|\geq \lambda\)\\
% &=\lim_{t\to 0}\Wn\(\sup_{\substack{t_1\in (0,t)\\t_2\in (0,t)}}|U_{t_1}(q,\bm{\omega})-U_{t_2}(q,\bm{\omega})|\geq \lambda\)=0,\quad \forall \lambda>0.
% }
% }
%Thus $U_t$ converges $\Wnas$ for $t\to 0$. Since it converges in probability to $0$, then it converges to $0$ $\Wnas$.
\end{proof}
Combing Lemma~\ref{deltaphieq} and~\ref{lem:convas}, we can finally extend Lemma~\ref{derivative_theo} to general $\mu\in \textup{Pr}([0,1])$.

\begin{proof}[Proof of Proposition~\ref{first_derivative_complete}]
Let $\phi^{(t)}=\phi(\Psi,\mu^{(t)})$ and $\bm{r}^{(t)}=\bm{r}(\Psi,\mu^{(t)})$ and
\been{
A_{t,q}(\bm{\omega})=\mathcal{E}(\mu^{(t)}\bm{r}^{(t)},q,\bm{\omega})\int^1_qds\delta\mu([0,s])\|\bm{r}^{(0)}(s,\bm{\omega})\|^2_2.
}
By Lemma~\ref{deltaphieq} and Lemma~\ref{lem:convas}
\been{
\lim_{t\to 0}\(\frac{1}{t}\delta\phi^{(t)}(q,\bm{\omega})-\frac{1}{2}\Econd{A_{t,q}(\bm{\omega})}{q}\)=0,\quad \Wnas.
}
Using Proposition~\ref{prop:uniqueness2}, we have
\been{
\label{eq:xiiii}
A_{t,q}(\bm{\omega})\leq e^{2\|\Psi\|_{\infty}}\int^1_0ds\|\bm{r}^{(t)}(s,\bm{\omega})\|^2_2.
}
By Corollary~\ref{rem:uniqueness222}, the random variable $\int^1_qds\|\bm{r}^{(t)}(s,\bm{\omega})\|^2_2$ is $\Wn$--integrable. Thus the Dominated Convergence Theorem gives
\been{
\lim_{t\to 0 }\Econd{A_{t,q}(\bm{\omega})}{q}=\Econd{A_{0,q}(\bm{\omega})}{q},\quad \Wnas.
}
Thus, the limit $\lim_{t\to 0}\frac{1}{t}\delta\phi^{(t)}(q,\bm{\omega})$ exists almost surely and it is equal to $\Econd{\Xi_{0,q}(\bm{\omega})}{q}$. Finally, Corollary~\ref{rem:uniqueness222} and the inequality \eqref{eq:xiiii} completes the proof.
\end{proof}
\section{Proof of Theorem~\ref{th:The_Theorem}}
\label{sec:10}
This section is devoted to the proof of Theorem~\ref{th:The_Theorem}.

We briefly recall the $K$--RSB formalism introduced in Section~\ref{sec:intro}, and clarify some of the associated topological aspects.

As before, let $\mathcal{M}^{(K+1)} := [-1,1]$ endowed with the standard topology on $\mathbb{R}$ and the associated Borel $\sigma$--algebra. For each $l\in [K]_0$, define $\mathcal{M}^{(l)}$ as the space of probability measures on $\mathcal{M}^{(l+1)}$, equipped with the weak topology of measures and the associated Borel $\sigma$--algebra. We then define the product space
\begin{equation}
\label{eq:product-space-M}
\mathcal{M}^{[K+1]_0} := \mathcal{M}^{(0)} \times \cdots \times \mathcal{M}^{(K+1)}.
\end{equation}
Under these choices for the $\sigma$--algebras, each $\mathcal{M}^{(l)}$ is a standard Borel space, and so is the product space $\mathcal{M}^{[K+1]_0}$ (or details on the choice of the $\sigma$--algebra, see Remark~\ref{rem:topo}).

Given $\zeta \in \mathcal{M}^{(0)}$, define the random sequence
\begin{equation}
\label{eq:random-sequence2}
W = (M^{(0)}, \cdots, M^{(K+1)}),
\end{equation}
as in~\eqref{eq:random-sequence}, starting from $M^{(0)} = \zeta$, and denote by $\mathbb{M}_{\zeta}$ its distribution. Let $\bm{W} = (W_1, \cdots, W_n)$
denote a collection of independent realizations of the sequence~\eqref{eq:random-sequence2}.

For each $l \in [K+1]_0$, we denote by $\mathcal{A}_l$ the $\sigma$--algebra generated by the tuple $(M^{(0)}, \cdots, M^{(l)})$, and by $\mathcal{A}^{\otimes n}_l$ the $n$--fold product $\sigma$--algebra, as defined in~\eqref{eq:delAl}.

For a given increasing sequence $x=(x_l)_{l\in [K+1]_0}$ defined as in~\eqref{eq:x_seq0} and a real-valued, bounded, and $\mathcal{A}^{\otimes n}_{K+1}$--measurable random variable $\Psi$, let $\widehat{\Phi}(\Psi, x)$ the quantity defined in~\eqref{eq:phihat}.

We also recall the definition of the Wiener filtered probability space
\been{
(C([0,1],\R^n),\mathcal{F}^{\otimes n},\mathbb{W}^{\otimes n},\(\mathcal{F}^{\otimes n}_q\)_{q\in[0,1]}),
}
defined in Sections~\ref{sec:intro} and~\ref{sec:3} and used throughout the manuscript.

The proof of Theorem~\ref{th:The_Theorem} is based on the following intermediate result.
\begin{proposition}
\label{prop:equivphiphi}
Let $K\in \N$ and let $\mathcal{M}^{[K+1]_0}$ be the space defined in~\eqref{eq:product-space-M}. Let $x=(x_l)_{l\in [K+1]_0}$ be the increasing sequence defined as in~\eqref{eq:x_seq0}. 

Let
\been{
\label{eq:q_seq2}
0=q_0\leq q_1\leq\cdots\leq q_K\leq q_{K+1}=1,
}
and define a discrete Parisi parameter $\mu \in \textup{Pr}([0,1])^{\circ}$ as in~\eqref{discrete}:
\begin{equation}
\label{discrete2}
\mu([0,q]):=\sum_{l\in [K+1]_0} x_l\bm{1}_{q\in [q_{l-1},q_{l})}+\bm{1}_{q=1}.
\end{equation}
 Given $\zeta\in M^{(0)}$ and $W\sim \mathbb{M}_{\zeta}$ be the $\mathcal{M}^{[K+1]_0}$--valued random sequence defined in~\eqref{eq:random-sequence}. Then
there exists a $\mathcal{F}_1$--measurable Wiener functional $\bar{m}:C([0,1],\R^n)\to \mathcal{M}^{[K+1]_0}$ such that
\begin{enumerate}
    \item 
    $
    W\overset{d}{=}\bar{m}(\omega).
    $
    \item Given $n\in \N$ and a bounded, real-valued, and $\mathcal{A}^{\otimes n}_{K+1}$--measurable functional $\Psi:\mathcal{M}^{[K+1]_0}\to \R$, define \\
    \been{
    \widetilde{\Psi}(\omega_1,\cdots,\omega_n)=\Psi(\bar{m}(\omega_1),\cdots,\bar{m}(\omega_n)).
    }
    Then
    \been{
\label{eq:equiv_to_prove}
\widehat{\Phi}(\Psi, x)=\Phi(\widetilde{\Psi},\mu,0),
}
where $\widehat{\Phi}$ is the operator defined in~\eqref{eq:phihat} and $\Phi$ is the $\mu$--RSB expectation defined in~\eqref{eq:auxiliary_func}.
\end{enumerate}
\end{proposition}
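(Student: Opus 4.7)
The strategy is to build $\bar{m}:C([0,1],\mathbb{R})\to\mathcal{M}^{[K+1]_0}$ level-by-level, feeding disjoint pieces of the Brownian path through a chain of measurable sampling kernels. Since each $\mathcal{M}^{(l)}$ is standard Borel (Remark~\ref{rem:topo}), the Rokhlin disintegration theorem yields Borel maps $\Xi_{l-1}:\mathcal{M}^{(l-1)}\times[0,1]\to\mathcal{M}^{(l)}$ such that $\Xi_{l-1}(\nu,U)\sim\nu$ whenever $U\sim\mathrm{Unif}([0,1])$ is independent of $\nu$. A uniform source for each level can be extracted from disjoint pieces of the path: set $U_1(\omega):=F_{\mathcal{N}(0,1)}(\omega(0))$, and for $l\geq 2$ with $q_{l-1}>q_{l-2}$, set $U_l(\omega):=F_{\mathcal{N}(0,\,q_{l-1}-q_{l-2})}(\omega(q_{l-1})-\omega(q_{l-2}))$; degenerate cases $q_{l-1}=q_{l-2}$ are handled by merging the coincident levels, which carry no extra Parisi randomness. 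Setting $\bar{m}^{(0)}(\omega):=\zeta$ and recursively $\bar{m}^{(l)}(\omega):=\Xi_{l-1}(\bar{m}^{(l-1)}(\omega),U_l(\omega))$ makes $\bar{m}^{(l)}$ $\mathcal{F}_{q_{l-1}}$-measurable, and the independence of Brownian increments forces the conditional law of $\bar{m}^{(l)}$ given $(\bar{m}^{(0)},\ldots,\bar{m}^{(l-1)})$ to equal $\bar{m}^{(l-1)}$---exactly the Markov property defining $W$. This gives $\bar{m}(\omega)\stackrel{d}{=}W$, proving the first claim.

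The same mechanism yields the stronger Markov identification, for any bounded Borel $G:(\mathcal{M}^{[K+1]_0})^n\to\mathbb{R}$,
\begin{equation*}
\mathbb{E}\!\left[G\bigl(\bar{m}(\omega_1),\ldots,\bar{m}(\omega_n)\bigr)\,\Big|\,\Fq{q_{l-1}}\right](\bm{\omega}) = h_l\bigl((\bar{m}^{(j)}(\omega_i))_{j\leq l,\,i\leq n}\bigr),
\end{equation*}
where $h_l((M^{(j)}_i)_{j\leq l,\,i\leq n}):=\mathbb{E}[G(W_1,\ldots,W_n)\mid\mathcal{A}^{\otimes n}_l]$, because the sources $U_{l+1},\ldots,U_{K+1}$ used to build the remaining levels are measurable with respect to $\sigma(\omega_i(q)-\omega_i(q_{l-1}):q\geq q_{l-1},\,i\leq n)$, hence independent of $\Fq{q_{l-1}}$, and push the correct Parisi kernels through $\Xi_\cdot$.

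To establish~\eqref{eq:equiv_to_prove}, introduce the Parisi iterates $V_l:=x_l^{-1}\log\widehat{\Xi}_l$. A short manipulation yields $V_{K+1}=\Psi$, $V_l=x_l^{-1}\log\mathbb{E}[e^{x_lV_{l+1}}\mid\mathcal{A}^{\otimes n}_l]$ for $l\in[K]$, and $\widehat{\Phi}(\Psi,x)=\mathbb{E}[V_1(\bm{W})]$. The plan is to prove by backward induction on $l\in[K+1]$ the identification
\begin{equation*}
\phi(q_{l-1},\bm{\omega})=V_l\bigl(\bar{m}(\omega_1),\ldots,\bar{m}(\omega_n)\bigr),\quad\Wnas,
\end{equation*}
where $\phi$ denotes the finite-RSB solution $\Phi(\widetilde{\Psi},\mu,\cdot)$ constructed in Section~\ref{sec:41}. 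For the base case $l=K+1$, the construction makes $\bar{m}^{(K+1)}$ $\mathcal{F}_{q_K}$-measurable, so $\widetilde{\Psi}$ is $\Fq{q_K}$-measurable and Lemma~\ref{lem:rec3} at $q_K$ collapses to $\phi(q_K,\bm{\omega})=\widetilde{\Psi}(\bm{\omega})=V_{K+1}(\bar{m}(\bm{\omega}))$. For the inductive step, Lemma~\ref{lem:rec3} reads $\phi(q_{l-1},\bm{\omega})=x_l^{-1}\log\mathbb{E}[e^{x_l\phi(q_l,\bm{\omega})}\mid\Fq{q_{l-1}}]$; plugging in the inductive hypothesis replaces the inner exponent by $x_lV_{l+1}(\bar{m}(\bm{\omega}))$, and the Markov identification converts the conditional expectation on $\Fq{q_{l-1}}$ into the Parisi conditional expectation on $\mathcal{A}^{\otimes n}_l$, producing $V_l(\bar{m}(\bm{\omega}))$. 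Taking the unconditional expectation at $l=1$ then yields $\Phi(\widetilde{\Psi},\mu,0)=\mathbb{E}[\phi(0,\bm{\omega})]=\mathbb{E}[V_1(\bm{W})]=\widehat{\Phi}(\Psi,x)$.

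The main obstacle is the existence, as a single jointly Borel map, of the sampling kernels $\Xi_{l-1}:\mathcal{M}^{(l-1)}\times[0,1]\to\mathcal{M}^{(l)}$. Although conceptually standard---based on the Borel isomorphism $\mathcal{M}^{(l)}\cong[0,1]$ and a parametric measurable quantile---it requires care because the parameter space $\mathcal{M}^{(l-1)}$ carries the weak topology of measures, so no explicit formula is available. Once these kernels are secured, the distributional identity, the Markov identification of conditional expectations, and the backward induction reduce to routine applications of the independence of Brownian increments together with Lemma~\ref{lem:rec3}.
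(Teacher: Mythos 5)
Your proof is correct, and it takes a genuinely different route from the paper's. The paper applies the Noise-Outsourcing Lemma (Lemma~\ref{lem:NOS}, via Lemma~\ref{lem:NOS02}) to the \emph{joint} sequence $W$, obtaining functions $g_l$ that may depend on all of $U^{(0)},\dots,U^{(l)}$; it then establishes the conditional-expectation matching in Lemma~\ref{lem:NOU1} through a separate Dynkin $\pi$--$\lambda$ argument, and transfers it to the Wiener space in Lemmas~\ref{lem:equivUuu} and~\ref{lem:equivWh}, before running a backward induction on the explicit finite-RSB iterates $\Xi_l$ of Proposition~\ref{prop:existk} to get distributional equality $\Xi_l\overset{d}{=}\widehat{\Xi}_l$. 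You instead exploit the Markov structure of $W$ directly: a single parametric sampling kernel $\Xi_{l-1}:\mathcal{M}^{(l-1)}\times[0,1]\to\mathcal{M}^{(l)}$ per level, fed by independent uniforms read off from Brownian increments, yields the one-step recursion $\bar{m}^{(l)}=\Xi_{l-1}(\bar{m}^{(l-1)},U_l)$; the conditional-expectation matching then falls out of the freezing lemma because $U_{l+1},\dots,U_{K+1}$ are independent of $\mathcal{F}_{q_{l-1}}$. Your backward induction proceeds through Lemma~\ref{lem:rec3} and proves the stronger \emph{almost sure} identity $\phi(q_{l-1},\bm{\omega})=V_l(\bar{m}(\bm{\omega}))$, not merely a distributional one.

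Two remarks. First, what you call the ``main obstacle''---the jointly Borel existence of the sampling kernels---is not a gap: it is the same technical content the paper imports via the Noise-Outsourcing Lemma (Kallenberg Theorem 5.10, cited by the paper), viewed level-by-level, and holds for any standard Borel target. Second, your indexing differs from the paper's by one: you route the increment over $[q_{l-2},q_{l-1}]$ into level $l$, whereas the paper routes $[q_{l-1},q_l]$ into level $l$; consequently your $\bar{m}^{(K+1)}$ (hence $\widetilde{\Psi}$) is $\mathcal{F}_{q_K}$-measurable and the base case of the induction collapses, while the paper's $m^{(K+1)}$ is only $\mathcal{F}_1$-measurable. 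Both allocations are legitimate, since the proposition only asserts $\mathcal{F}_1$-measurability and the matching of conditional expectations; yours slightly streamlines the final iteration at the cost of discarding the increment over $[q_K,1]$, which is harmless. The treatment of coincident $q$'s by ``merging levels'' is stated rather than argued, but this is a minor bookkeeping point that the paper's weak inequalities $q_{l}\leq q_{l+1}$ also leave implicit.
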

In order to prove the first statement, we will first establish the equivalence in distribution of $W$ with a suitable measurable functional on $(0,1)^{K+1}$, and then we obtain the result on the Wiener space. The proof of the second statement relies on the specific structure of functional $\bar{m}$; once the proposition is established, the proof of Theorem~\ref{th:The_Theorem} follows directly.

The following classical result from probability theory plays a crucial role in proving the first statement of the proposition. For a proof, we refer the reader to \cite[Lemma 3.1]{Austin} or \cite[Theorem 5.10]{Kallenberg}.
\begin{lemma}[Noise-Outsourcing Lemma]
\label{lem:NOS}
Let $T$ and $S$ be standard Borel spaces, and let $(Y,X)$ be a random variable taking value on $T \times S$. Then (possibly after enlarging the probability space) there exist a random variable $U \sim \mathrm{Uniform}(0,1)$, independent of $X$, and a Borel-measurable function $f: T \times (0,1) \to S$ such that
\been{
( Y,X) = \big(Y, f(X, U)\big) \quad  \text{a.s.}.
}
\end{lemma}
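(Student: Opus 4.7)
The plan is to construct $\bar{m}$ in two stages: first use the Noise-Outsourcing Lemma to write the hierarchical sequence as a Borel function of independent uniforms, then encode those uniforms as functionals of disjoint Brownian increments. Applying Lemma~\ref{lem:NOS} iteratively to the sequence~\eqref{eq:random-sequence2} (peeling off one level of the hierarchy at a time), I obtain iid Uniform$(0,1)$ variables $U_1,\ldots,U_{K+1}$, independent of $\zeta$, and Borel measurable maps $g_l\colon\mathcal{M}^{(0)}\times(0,1)^l\to\mathcal{M}^{(l)}$ such that $M^{(l)}=g_l(\zeta,U_1,\ldots,U_l)$ almost surely (with $g_0(\zeta):=\zeta$). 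To embed these uniforms into the Wiener space I set
\[
V_l(\omega):=\Phi^{\mathrm{N}}\!\bigl((\omega(q_l)-\omega(q_{l-1}))/\sqrt{q_l-q_{l-1}}\bigr),\qquad l=1,\ldots,K+1,
\]
where $\Phi^{\mathrm{N}}$ is the standard normal CDF; by the independent-increment property of Brownian motion, the $V_l$'s are iid Uniform$(0,1)$ under $\mathbb{W}$, with $V_l$ being $\mathcal{F}_{q_l}$-measurable and independent of $\mathcal{F}_{q_{l-1}}$. I then define
\[
\bar m(\omega):=\bigl(\zeta,\,g_1(\zeta,V_1(\omega)),\,\ldots,\,g_{K+1}(\zeta,V_1(\omega),\ldots,V_{K+1}(\omega))\bigr),
\]
so that $\bar m(\omega)\overset{d}{=}W$ follows immediately from the matching joint laws of $(U_l)$ and $(V_l)$, proving statement~(1). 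The degenerate case $q_l=q_{l-1}$ reduces to a smaller-$K$ hierarchy by collapsing the corresponding trivial level, or is handled by extracting extra independent uniforms from $\omega(0)$ via a Borel isomorphism $\mathbb{R}\cong(0,1)^{\mathbb{N}}$.

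For statement~(2), the crucial observation is a filtration alignment: by construction, the $l$-th component $\bar m_l(\omega)$ is $\mathcal{F}_{q_l}$-measurable, while the remaining increments $(V_{l+1},\ldots,V_{K+1})$ are independent of $\mathcal{F}_{q_l}$. Writing $\mathcal{G}_l:=\sigma(\bar m_0(\omega),\ldots,\bar m_l(\omega))\subseteq\mathcal{F}_{q_l}$, this independence yields, for every bounded Borel $H\colon(\mathcal{M}^{[K+1]_0})^n\to\mathbb{R}$ that is $\mathcal{A}_l^{\otimes n}$-measurable,
\[
\mathbb{E}\bigl[H(\bar m(\omega_1),\ldots,\bar m(\omega_n))\bigm|\mathcal{F}_{q_l}^{\otimes n}\bigr]=\mathbb{E}\bigl[H(\bar m(\omega_1),\ldots,\bar m(\omega_n))\bigm|\mathcal{G}_l^{\otimes n}\bigr]\overset{d}{=}\mathbb{E}\bigl[H(\bm W)\bigm|\mathcal{A}_l^{\otimes n}\bigr].
\]
Thus, on the image of $\bar m$, conditioning on $\mathcal{F}_{q_l}^{\otimes n}$ and then raising to the power $x_l/x_{l+1}$ mirrors exactly the corresponding step of the hierarchical recursion~\eqref{eq:recursive-T0}.

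With this alignment in hand I invoke the explicit finite-RSB solution from Proposition~\ref{prop:existk} with terminal condition $\widetilde{\Psi}(\bm\omega)=\Psi(\bar m(\omega_1),\ldots,\bar m(\omega_n))$ and Parisi parameter $\mu$ given by~\eqref{discrete2}. By downward induction on $l=K+1,K,\ldots,1$, the Wiener-space quantities $\Xi_l$ of~\eqref{eq:rec10}--\eqref{eq:rec1} coincide in distribution with the hierarchical quantities $\widehat{\Xi}_l(\Psi,x,\bm W)$ of~\eqref{eq:recursive-T0}: the base case $l=K+1$ reduces to $e^{\widetilde{\Psi}(\bm\omega)}\overset{d}{=}e^{\Psi(\bm W)}$, which follows from statement~(1); the inductive step matches directly thanks to the filtration alignment of the previous paragraph. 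Evaluating $\phi(0,\bm\omega)$ via the explicit form in Proposition~\ref{prop:existk} and comparing with~\eqref{eq:phihat} then yields $\Phi(\widetilde{\Psi},\mu,0)=\mathbb{E}[\phi(0,\bm\omega)]=\widehat{\Phi}(\Psi,x)$, establishing~\eqref{eq:equiv_to_prove}.

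The main obstacle is the filtration-alignment identity: one must verify that, after composing $\bar m$ with the hierarchical integrands, the $\sigma$-algebra $\mathcal{F}_{q_l}^{\otimes n}$ acts on the image of $\bar m$ exactly as $\mathcal{A}_l^{\otimes n}$ acts on $\bm W$, with no leakage of randomness from levels above $l$. This hinges on the iid structure of the $V_l$'s and on the strict ordering $0=q_0<q_1<\cdots<q_{K+1}=1$ that makes the filtration cleanly split across hierarchy levels. A secondary subtlety is the conventional value $x_0=0$, which makes the outermost level of the Wiener-side recursion trivial; this must be reconciled with the outermost level on the hierarchical side, where $\widehat{\Xi}_1$ is $\mathcal{A}_1$-measurable and the expectation in~\eqref{eq:phihat} integrates over both $M^{(0)}$ and $M^{(1)}$.
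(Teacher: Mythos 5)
Your proposal does not prove the statement it was assigned. The target is the Noise-Outsourcing Lemma itself (Lemma~\ref{lem:NOS}): given a pair $(Y,X)$ on standard Borel spaces, produce a uniform noise variable $U$ and a Borel map $f$ so that the second coordinate is reconstructed as $f(\,\cdot\,,U)$ almost surely. What you have written is instead a proof sketch of Proposition~\ref{prop:equivphiphi} — the construction of $\bar m$ on Wiener space via normal CDFs of Brownian increments, the filtration alignment $\mathcal{G}_l\subseteq\mathcal{F}_{q_l}$, and the inductive matching of $\Xi_l$ with $\widehat\Xi_l$. Worse, your very first step is ``Applying Lemma~\ref{lem:NOS} iteratively\ldots'', i.e.\ you invoke the lemma you were asked to prove. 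As an argument for Lemma~\ref{lem:NOS} this is circular and contains no content; as an argument for Proposition~\ref{prop:equivphiphi} it is reasonable but answers a different question.

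For the record, the paper does not prove this lemma either: it is quoted as a classical result with references to Austin and to Kallenberg. If you wanted to supply an actual proof, the standard route is: use Borel isomorphisms to reduce to $S=T=[0,1]$ (or $\mathbb{R}$); take a regular conditional distribution $\kappa(y,\cdot)$ of $X$ given $Y=y$ (which exists because $S$ is standard Borel); define $f(y,u):=F_y^{-1}(u)$, the generalized inverse of the conditional CDF $F_y(s)=\kappa(y,(-\infty,s])$, checking joint Borel measurability in $(y,u)$; then verify that for $U\sim\mathrm{Uniform}(0,1)$ independent of $Y$ the pair $(Y,f(Y,U))$ has the same law as $(Y,X)$, and finally upgrade equality in law to almost sure equality by a transfer/coupling argument on an enlarged probability space. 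None of these steps appears in your proposal. (Separately, note that the paper's displayed identity $(Y,X)=(Y,f(X,U))$ contains a typo — it should read $f(Y,U)$, as is clear from how the lemma is applied in Lemma~\ref{lem:NOS02}.)
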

The generalization of the above lemma to sequences of arbitrary length is straightforward. 
\begin{lemma}[Recursive Noise-Outsourcing for sequences]
\label{lem:NOS02}
Let $K\in \N$ and $S^{(0)}, \cdots, S^{(K+1)}$ be standard Borel spaces. For any random sequence $(X^{(l)})_{l\in [K+1]_0}$ taking value on the product space $S^{(0)}  \times \cdots \times S^{(K+1)}$, there exists (possibly after enlarging the probability space) there exists a sequence of independent random variables $(U^{(l)})_{l\in [K+1]_0}$ uniformly distributed on $(0,1)$ and a sequence of Borel-measurable functions
\been{
\(g_l: (0,1)^{l+1} \to S^{(l)}\)_{l\in [K+1]_0}
}
such that
\begin{equation}
\label{eq:recursive_representation}
\aled{
(X^{(0)}, \cdots, X^{(K+1)})= \left(g_0(U^{(0)}),\, \cdots,\, g_{K+1}(U^{(0)}, \cdots, U^{(K+1)})\right), \quad \text{a.s.}.
}
\end{equation}
\end{lemma}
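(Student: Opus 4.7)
The plan is to proceed by induction on $l \in [K+1]_0$, applying Lemma~\ref{lem:NOS} at each step to outsource successively the randomness of $X^{(l+1)}$ into a fresh independent uniform. Since a finite product of standard Borel spaces is again standard Borel (so at every stage $(0,1)^{l+1}$ and $S^{(l+1)}$ qualify as the $T$ and $S$ in Lemma~\ref{lem:NOS}), the hypotheses of that lemma remain available throughout the induction.

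For the base case $l=0$, take $T$ to be a one-point space and apply Lemma~\ref{lem:NOS} to the pair $(\ast, X^{(0)})$. This produces $U^{(0)} \sim \mathrm{Uniform}(0,1)$ and a Borel-measurable map $g_0 : (0,1) \to S^{(0)}$ with $X^{(0)} = g_0(U^{(0)})$ almost surely. For the inductive step, suppose that on a (possibly already enlarged) probability space we have constructed mutually independent uniforms $U^{(0)}, \dots, U^{(l)}$ and Borel maps $g_0,\dots,g_l$ satisfying
\begin{equation*}
(X^{(0)}, \dots, X^{(l)}) = \bigl(g_0(U^{(0)}),\, \dots,\, g_l(U^{(0)},\dots,U^{(l)})\bigr) \quad \text{a.s.}
\end{equation*}
Apply Lemma~\ref{lem:NOS} with $Y = (U^{(0)},\dots,U^{(l)}) \in T = (0,1)^{l+1}$ and $X = X^{(l+1)} \in S = S^{(l+1)}$. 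The lemma yields $U^{(l+1)} \sim \mathrm{Uniform}(0,1)$ independent of $Y$ (and hence independent of all previous $U^{(j)}$) together with a Borel map $g_{l+1} : (0,1)^{l+2} \to S^{(l+1)}$ satisfying $X^{(l+1)} = g_{l+1}(U^{(0)},\dots,U^{(l+1)})$ almost surely. Iterating this procedure $K+2$ times gives the representation~\eqref{eq:recursive_representation}.

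The main technical point to watch is the iterated enlargement of the underlying probability space: each invocation of Lemma~\ref{lem:NOS} may augment the space, and one needs to verify that the previously constructed variables and their almost-sure relations survive. This is standard: enlarging by an independent factor preserves the joint distribution of all random variables already defined on the base space, and any almost-sure identity remains almost sure on the product. The only subtlety, namely ensuring that the newly introduced $U^{(l+1)}$ is independent jointly of the \emph{tuple} $(U^{(0)},\dots,U^{(l)})$ rather than merely of each component separately, is built into the statement of Lemma~\ref{lem:NOS} by taking $Y$ to be the full tuple. This completes the inductive construction and hence the proof.
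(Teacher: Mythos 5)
Your proof is correct but organizes the induction differently from the paper. You proceed forward: having represented $(X^{(0)},\cdots,X^{(l)})$ as Borel functions of mutually independent uniforms $(U^{(0)},\cdots,U^{(l)})$, you apply Lemma~\ref{lem:NOS} with $Y$ equal to the full tuple $(U^{(0)},\cdots,U^{(l)})$ and $X = X^{(l+1)}$, which produces a fresh $U^{(l+1)}$ independent of that tuple together with a Borel map $g_{l+1}$. The paper instead runs a backward induction on the sequence length: it first establishes an intermediate representation in which $X^{(0)}$ is kept intact and each later $X^{(l)}$ is written as $f_l(X^{(0)},U^{(1)},\cdots,U^{(l)})$, proved by collapsing $(X^{(0)},X^{(1)})$ into a single variable $Y$, applying the inductive hypothesis to the shortened sequence $(Y,X^{(2)},\cdots,X^{(K+1)})$, and then unpacking the pair via a second invocation of Lemma~\ref{lem:NOS}; only at the very end is $X^{(0)}$ replaced by $g_0(U^{(0)})$ for a new uniform. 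Your route has the advantage that the mutual independence of all the uniforms is immediate at every stage, since each $U^{(l+1)}$ comes from Lemma~\ref{lem:NOS} with $Y$ set to the entire preceding tuple; the paper's organization has to argue separately that $U^{(1)}$ is independent of $(U^{(2)},\cdots,U^{(K+1)})$ and, implicitly, that the final $U^{(0)}$ can be chosen independent of the others. Both are valid iterations of the same underlying tool, and your handling of the iterated space-enlargement is exactly the right remark; yours is arguably the tidier bookkeeping of the two.
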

\begin{proof}
We will first extend the Noise Outsourcing Lemma~\ref{lem:NOS} for sequences of arbitrary length. We will prove that, for any $K\in \N$, there exists a sequence of Borel-measurable functions
\been{
\(f_l: S^{(0)} \times [0,1)^{l} \to S^{(l)}\)_{l\in[K+1]},
}
such that
\been{
\label{eq:recursive_representation0}
\aled{
&(X^{(0)}, \cdots, X^{(K+1)})\\
&= \left(X^{(0)},\, f_1(X^{(0)}, U^{(1)}), \cdots,\, f_{K+1}(X^{(0)}, U^{(1)}, \cdots, U^{(K+1)})\right) \quad \text{a.s.}.
}
}
We proceed by induction on $K$. The base case $K=0$ is the classical Noise-Outsourcing Lemma~\ref{lem:NOS}.

Assume the claim holds for sequences of length $K+1$. Given $(X^{(0)},\cdots,X^{(K+1)})$, define 
\begin{equation}
\label{eq:Y_def}
Y := (X^{(0)},X^{(1)}) \in S^{(0)} \times S^{(1)}.
\end{equation}
The sequence $(Y,X^{(2)},\cdots X^{(K+1)})$ contains $K+1$ elements. The induction hypothesis (I.H) there exists a sequence of $K$ Borel functions $\widetilde{f}_2,\cdots ,\widetilde{f}_{K+1}$ such that
\begin{equation}
\label{eq:IH_representation}
(Y, X^{(2)}, \cdots, X^{(K+1)}) = \bigl(Y, \widetilde{f}_2(Y,U^{(2)}), \cdots, \widetilde{f}_{K+1}(Y,U^{(2)},\cdots,U^{(K+1)})\bigr) \quad \text{a.s.}
\end{equation}
Now, we can apply the Noise-Outsourcing Lemma~\ref{lem:NOS} on the pair $(X^{(0)},X^{(1)})$:
\begin{equation}
\label{eq:NOSit}
(X^{(0)},X^{(1)}) \overset{\eqref{lem:NOS}}{=}\widetilde{f}_1(X^{(0)},U^{(1)}), \quad \text{a.s.}.
\end{equation}
for some Borel function $f_1$. The variable $U^{(1)}$ is independent of $U^{(2)},\cdots,U^{(K+1)}$ since the pair $(X^{(0)},X^{(1)})$ is independent of $U^{(2)},\cdots,U^{(K+1)}$. For $l \ge 2$, set
\begin{equation}
\label{eq:f_l_def}
f_l(X^{(0)}, U^{(1)}, \cdots, U^{(l)}) := \widetilde{f}_l\bigl(f_1(X^{(0)}, U^{(1)}), U^{(2)}, \cdots, U^{(l)}\bigr).
\end{equation}
Then we obtain the representation
\been{
\label{eq:sequence_representation}
\aled{
&(X^{(0)},\cdots, X^{(K+1)})\\
&=\left(X^{(0)},\, f_1(X^{(0)}, U^{(1)}), \cdots,\, f_{K+1}(X^{(0)}, U^{(1)}, \cdots, U^{(K+1)})\right), \quad \text{a.s.},
}
}
Thus, if the~\eqref{eq:recursive_representation0} holds for a sequence of $K+1$ random variables, then it holds for a sequence of $K+2$ random variables, proving the induction hypothesis and thus the equality~\eqref{eq:recursive_representation0}.

Finally, the random variable $X^{(0)}$ can be represented as a measurable function $g_0$ of a  random variable $U^{(0)}$ uniformly distributed on $(0,1)$ and independent of $U^{(1)},\cdots, U^{(K+1)}$. Thus we get
\been{
\label{eq:Xg}
\aled{
&(X^{(0)},\cdots, X^{(K+1)})=\left(g_0(U^{(0)}),\, \cdots,\, g_{K+1}(U^{(0)}, \cdots, U^{(K+1)})\right), \quad \text{a.s.},
}
}
where for $l> 0$
\been{
g_l(U^{(0)}, \cdots, U^{(l)}):=f_l(g_0(U^{(0)}), U^{(1)}, \cdots, U^{(l)}),
}
completing the proof.
\end{proof}
Note that $\sigma(U^{(0)}, \cdots, U^{(l)})$ is not, in general, equal to $\sigma(X^{(0)}, \cdots,X^{(l)})$. However, under suitable conditions, the corresponding conditional expectations can be shown to coincide.
\begin{lemma}
\label{lem:NOU1}
Let $F$ be a bounded real-valued random variable measurable with respect to $\sigma(X^{(0)}, \cdots, X^{(K+1)})$. Then, for every $l \in [K+1]$, we have almost surely
\begin{equation}
\begin{aligned}
\label{eq:NOU1}
&\E\[F\middle| \sigma(U^{(0)}, \cdots, U^{(l)})\] = \E\[F\middle|\sigma(X^{(0)},\cdots, X^{(l)}) \],\quad \textup{a.s.}.
\end{aligned}
\end{equation}
\end{lemma}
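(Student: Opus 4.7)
The plan is to combine the $\sigma$-algebra inclusion provided by Lemma~\ref{lem:NOS02} with a freezing argument exploiting the joint independence of the $U^{(l)}$. By that lemma $X^{(j)} = g_j(U^{(0)},\ldots,U^{(j)})$ a.s., so writing $\mathcal{G}_l := \sigma(X^{(0)},\ldots,X^{(l)})$ and $\mathcal{H}_l := \sigma(U^{(0)},\ldots,U^{(l)})$ one obtains the inclusion $\mathcal{G}_l \subseteq \mathcal{H}_l$. By the tower property, proving~\eqref{eq:NOU1} then reduces to showing that $\E[F\mid \mathcal{H}_l]$ is $\mathcal{G}_l$-measurable, since in that case $\E[F\mid \mathcal{G}_l] = \E[\E[F\mid \mathcal{H}_l]\mid \mathcal{G}_l] = \E[F\mid \mathcal{H}_l]$.

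The key step is to upgrade the representation of Lemma~\ref{lem:NOS02} to the equivalent recursive form
\begin{equation*}
X^{(j)} = F_j(X^{(0)},\ldots,X^{(j-1)},U^{(j)}) \quad \text{a.s.},\quad j \geq 1,
\end{equation*}
with $F_j$ Borel-measurable. This is exactly what the inductive construction inside the proof of Lemma~\ref{lem:NOS02} delivers at each step, the auxiliary function $\tilde f_j$ introduced there playing the role of $F_j$ and the asserted $g_j$ being the pull-back of $F_j$ to the $U$-coordinates. Iterating, for every $j > l$ one obtains a Borel function $G_j$ such that $X^{(j)} = G_j(X^{(0)},\ldots,X^{(l)},U^{(l+1)},\ldots,U^{(j)})$. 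Next I would write $F = \phi(X^{(0)},\ldots,X^{(K+1)})$ for some bounded Borel $\phi$, which is possible since $F$ is bounded and $\sigma(X^{(0)},\ldots,X^{(K+1)})$-measurable, and substitute the recursive form to obtain
\begin{equation*}
F = \Psi(X^{(0)},\ldots,X^{(l)},U^{(l+1)},\ldots,U^{(K+1)})
\end{equation*}
for some bounded Borel $\Psi$.

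At this point the conclusion follows from the standard freezing identity, since $(X^{(0)},\ldots,X^{(l)})$ is $\mathcal{H}_l$-measurable while $(U^{(l+1)},\ldots,U^{(K+1)})$ is independent of $\mathcal{H}_l$ by the joint independence of the $U^{(j)}$ asserted in Lemma~\ref{lem:NOS02}. Consequently
\begin{equation*}
\E[F\mid \mathcal{H}_l] = H(X^{(0)},\ldots,X^{(l)}), \qquad H(x_0,\ldots,x_l) := \int \Psi(x_0,\ldots,x_l,u_{l+1},\ldots,u_{K+1})\, du_{l+1}\cdots du_{K+1},
\end{equation*}
which is manifestly $\mathcal{G}_l$-measurable and closes the argument via the tower identity above. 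The main obstacle is the extraction of the iterative representation $X^{(j)} = F_j(X^{(0)},\ldots,X^{(j-1)},U^{(j)})$: this property is implicit in but not isolated within the proof of Lemma~\ref{lem:NOS02}, so either a careful reading of that proof or a short independent application of the classical Noise-Outsourcing Lemma~\ref{lem:NOS} to the pair $((X^{(0)},\ldots,X^{(j-1)}),X^{(j)})$ will be required to make it rigorous; once it is in hand, the rest of the proof reduces to routine manipulations of conditional expectations.
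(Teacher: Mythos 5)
Your proposal takes a genuinely different route from the paper's proof. The paper works through the Dynkin $\pi$-$\lambda$ theorem: it first shows the identity for indicator functions of cylinder sets $g^{-1}(A^{(0)}\times\cdots\times A^{(K+1)})$ by computing the conditional expectation explicitly and then observing that the factor obtained by integrating out $u_{l+1},\ldots,u_{K+1}$ is $\sigma(g^{[0,l]})$-measurable, before extending to all bounded $\sigma(g)$-measurable functions by the Monotone Class Theorem. You instead argue by a freezing/tower-property reduction: since $\mathcal{G}_l\subseteq\mathcal{H}_l$ it suffices to show that $\E[F\mid\mathcal{H}_l]$ is $\mathcal{G}_l$-measurable, which you get by substituting the recursive form $X^{(j)}=F_j(X^{(0)},\ldots,X^{(j-1)},U^{(j)})$ and integrating out the independent noise. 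Both arguments hinge on exactly the same structural fact — that the functions $g_j$ produced by the construction in Lemma~\ref{lem:NOS02} depend on $(U^{(0)},\ldots,U^{(l)})$ only through $(X^{(0)},\ldots,X^{(l)})$ for $l<j$ — the paper implicitly when it declares the integrated factor ``clearly $\sigma(\bar g^{[0,l]})$-measurable,'' and you explicitly via the recursive form. Your route is cleaner in that it makes the needed structure visible and dispenses with the Dynkin and Monotone Class steps, handling all bounded measurable $F$ at once.

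One caveat about the fix you propose for the gap you correctly flag. You suggest that the recursive representation could be obtained by ``a short independent application of the classical Noise-Outsourcing Lemma~\ref{lem:NOS} to the pair $((X^{(0)},\ldots,X^{(j-1)}),X^{(j)})$.'' That is not enough as stated: a fresh application of NOS would in general produce different uniform variables $U'^{(j)}$, and Lemma~\ref{lem:NOU1} is in fact \emph{false} for an arbitrary collection $(U^{(j)},g_j)$ satisfying only the conclusion of Lemma~\ref{lem:NOS02}. For example, with $U^{(0)},U^{(1)},U^{(2)}$ i.i.d.\ uniform, set $X^{(0)}\equiv 0$, $X^{(1)}=\bm{1}_{\{U^{(0)}>1/2\}}$, $X^{(2)}=\bm{1}_{\{U^{(0)}\in(1/4,3/4)\}}$; then $X^{(j)}=g_j(U^{(0)},\ldots,U^{(j)})$, but with $F=X^{(2)}$ one has $\E[F\mid\sigma(U^{(0)},U^{(1)})]=F$ while $\E[F\mid\sigma(X^{(0)},X^{(1)})]=1/2$, so the identity fails for $l=1$. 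The lemma is therefore a statement about the specific $(U^{(j)})$ produced by the iterative construction inside the proof of Lemma~\ref{lem:NOS02}, and the recursive form must be extracted from that construction (it is available there: each step applies NOS to $((X^{(0)},\ldots,X^{(j-1)}),X^{(j)})$ using a fresh independent auxiliary uniform, which does give $X^{(j)}=F_j(X^{(0)},\ldots,X^{(j-1)},U^{(j)})$ with \emph{those} $U^{(j)}$). Once that is stated as a property of the construction rather than re-derived ad hoc, your proof closes correctly.
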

\begin{proof}
We first introduce a bit of notation. Let $S:=S^{(0)}\times \cdots \times S^{(K+1)}$ and define
\been{
g:=(g_0,\cdots,g_{K+1}),\quad g^{[l_1,l_2]}:=(g_{l_1},\cdots,g_{l_2}),
}
and
\been{
U:=(U^{(0)},\cdots ,U^{(K+1)}),\quad \sigma(U^{[0,l]}):=\sigma(U^{(0)},\cdots,U^{(l)}).
}
We also denote by $\sigma({g}^{[0,l]})$ the $\sigma$--algebra generated by $g_0(U^{(0)}),\cdots,g_l(U^{(0)},\cdots, U^{(l)})$ and $\sigma(g):=\sigma({g}^{[1,K+1]})$. Note that $\sigma({g}^{[0,l]})\subseteq \sigma(U^{[0,l]})$ for any $l\in [K+1]_0$.

Since $F$ is bounded, real-valued, and measurable with respect to $\sigma(X^{(0)},\cdots, X^{(K+1)})$ and $S$ is a standard Borel space, then, by Lemma~\ref{lem:NOS02}, there exists a bounded, real-valued, and $\sigma(g)$ measurable function such that
\been{
\label{eq:Ff}
F=f\(U\), \quad \text{a.s.},
}
and
\been{
\label{eq:Ff2}
 \E\[F\middle|\sigma(X_1,\cdots, X_{l}) \]=\E\[f\(U\)\middle|\sigma({g}^{[0,l]}) \], \quad \text{a.s.}.
}
Hence, it suffice to prove the equivalence between $\E\[f\(U\)\middle|\sigma({g}^{[0,l]}) \]$ and $\E\[f\(U\)\middle|\sigma({U}^{[0,l]}) \]$.

We will first provide the proof for the indicator functions of any set $C\in \sigma(g)$
\been{
\label{eq:1c_toprove}
\E\[\bm{1}_{C}\middle|  \sigma(U^{[0,l]})\]=\E\[\bm{1}_{C}\middle| \sigma({g}^{[0,l]})\],\quad \forall l\in [K+1]_0.
}
The collection of measurable sets  $C\in  \sigma({g}^{[0,l]})$ for which the above equality holds forms a Dynkin system. By Dynkin $\pi-\lambda$ Theorem, t therefore suffices to verify the equality on a class of sets that is closed under finite intersections and generates $\sigma(\bar{g}^{[0,l]})$. To this end, we consider the following family of measurable subsets of $(0,1)^{K+2}$
\been{
\mathcal{C}:=\left\{g^{-1}(A^{(0)}\times \cdots \times A^{(K+1)})\middle| A^{(l)}\in (S^{(l)})',\quad \forall l\in [K+1]_0\right\},
}
where $(S^{(j)})'$ is the Borel $\sigma-$algebra of $S^{(j)}$. For any measurable cylindrical set $A^{(0)}\times \cdots\times A^{(K+1)}$ we have 
\been{
\aled{
&\E\[\bm{1}_{g^{-1}(A^{(0)}\times \cdots\times A^{(K+1)})}\middle| \sigma(U^{[0,l]})\]\\
&=\bm{1}_{({g}^{[0,l]})^{-1}(A^{(0)}\times \cdots\times A^{(l)})}\int_{(0,1)^{K+1-l}}du_{l+1}\cdots du_{K+1} \bm{1}_{({g}^{[l+1,K+1]})^{-1}(A^{(l+1)}\times \cdots \times A^{(K+1)})}.
}
}
The above function is clearly $\sigma(\bar{g}^{[0,l]})$--measurable. Thus
\been{
\E\[\bm{1}_{{g}^{-1}(A^{(0)}\times \cdots \times A^{(K+1)})}\middle| \sigma(U^{[0,l]})\]=\E\[\bm{1}_{{g}^{-1}(A^{(0)}\times\cdots \times A^{(K+1)})}\middle| \sigma(\bar{g}^{[0,l]})\].
}
So the equality~\eqref{eq:1c_toprove} holds for any set in $\mathcal{C}$. Hence, by Dynkin Theorem, it holds for any set in $\sigma(\mathcal{C})=\sigma(g)$. Thus, by the linearity of the expectation value, the Monotone Convergence Theorem and the Monotone Class Theorem for functions, we conclude that, for any bounded, real-valued, and $\sigma(g)$--measurable function $f$, the following identity holds almost surely
\been{
\E\[f\(U\)\middle|  \sigma(U^{[0,l]})\] = \E\[f\(U\)\middle|\sigma(g^{[0,l]}) \].
}
Finally, the almost sure equivalences~\eqref{eq:recursive_representation} and~\eqref{eq:Ff} complete the proof. 
\end{proof}
We will now use the representation~\eqref{eq:recursive_representation} to construct a sequence of Wiener functionals that is equivalent in distribution to the sequence $(X^{(l)})_{l\in [K+1]_0}$.

Define a $\(\mathcal{F}^{\otimes n}_q\)_{q\in[0,1]}$--adapted process $(q,\bm{\omega})\mapsto u(q,\omega)$ by
\been{
\label{eq:defu}
u(q,\omega):=\erf\(\omega(0)\)\bm{1}_{q= 0}+\sum^{K}_{l=0}\erf\(\frac{\omega\(q\)-\omega\(q_l\)}{\sqrt{q-q_l}}\)\bm{1}_{q\in \(q_l,q_{l+1}\]},
}
where the sequence $(q_0,\cdots,q_{K+1})$ is defined in~\eqref{eq:q_seq2} and $x\mapsto\erf(x)$ is the function
\been{
\label{eq:erf}
    \erf(x)=\int^{x}_{-\infty}\frac{dy}{\sqrt{2\pi}}e^{-\frac{y^2}{2}}.
}
So, given a sequence of $K+2$ independent random variable uniformly distributed on $(0,1)$, we have 
\been{
\label{eq:uu}
(u(q_0,\omega),\cdots,u(q_{K+1},\omega))\overset{d}{=}( {U}^{(0)}, \dots,U^{(K+1)}).
}
\begin{lemma}
\label{lem:equivUuu}
Let $\phi$ be a real-valued, bounded, and $\sigma(U^{(0)},\cdots,U^{(K+1)})$--measurable random variable and $\widetilde{\phi}$ be a real-valued, bounded, and $\sigma(u(q_0,\omega),\cdots,u(q_{K+1},\omega))$--measurable random variable. If
\been{
\label{eq:uuphi}
\phi\overset{d}{=}\widetilde{\phi}
}
then
\begin{equation}
\label{eq:lem-equivUuu-statement}
\aled{
\E\left[\phi \,\middle|\, \sigma(X^{(0)},\cdots,X^{(l)})\right]\overset{d}{=}\E\left[\widetilde{\phi} \,\middle|\, \mathcal{F}_{q_l}\right].
}
\end{equation}
\end{lemma}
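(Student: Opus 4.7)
The plan is to exploit two ingredients set up earlier in this section: the equidistribution $(u(q_0,\omega),\ldots,u(q_{K+1},\omega))\overset{d}{=}(U^{(0)},\ldots,U^{(K+1)})$ recorded in~\eqref{eq:uu}, and Lemma~\ref{lem:NOU1}, which lets one trade the conditioning $\sigma$-algebra $\sigma(X^{(0)},\ldots,X^{(l)})$ for the finer $\sigma(U^{(0)},\ldots,U^{(l)})$ when computing expectations of bounded $\sigma(X^{(0)},\ldots,X^{(K+1)})$-measurable integrands. The lemma will be invoked in the proof of Proposition~\ref{prop:equivphiphi} in the scenario where $\phi$ is a bounded Borel functional of $(X^{(0)},\ldots,X^{(K+1)})$ and $\widetilde\phi$ is the same functional with each $X^{(l)}$ replaced by $\widetilde X^{(l)}(\omega):=g_l(u(q_0,\omega),\ldots,u(q_l,\omega))$; the equality $\phi\overset{d}{=}\widetilde\phi$ is then automatic via the representation~\eqref{eq:recursive_representation} together with~\eqref{eq:uu}.

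Concretely, I would first use Doob-Dynkin to write $\phi=F(U^{(0)},\ldots,U^{(K+1)})$ for some bounded Borel $F\colon(0,1)^{K+2}\to\R$, and correspondingly $\widetilde\phi=F(u(q_0,\omega),\ldots,u(q_{K+1},\omega))$. The key structural observation is the conditional-independence profile of the Wiener-side tuple: by the definition~\eqref{eq:defu} of $u$, the variable $u(q_j,\omega)$ is $\mathcal{F}_{q_l}$-measurable for $j\le l$, whereas for $j>l$ one has $u(q_j,\omega)=\erf\bigl((\omega(q_j)-\omega(q_{j-1}))/\sqrt{q_j-q_{j-1}}\bigr)$, which depends only on a Brownian increment on $[q_{j-1},q_j]\subset[q_l,1]$. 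Hence $(u(q_{l+1},\omega),\ldots,u(q_{K+1},\omega))$ is independent of $\mathcal{F}_{q_l}$ with joint law $\mathrm{Unif}(0,1)^{\otimes(K-l+1)}$, exactly mirroring the role of $(U^{(l+1)},\ldots,U^{(K+1)})$ relative to $\sigma(U^{(0)},\ldots,U^{(l)})$.

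Introducing the bounded Borel kernel
\[
h_l(u_0,\ldots,u_l):=\int_{(0,1)^{K-l+1}}F(u_0,\ldots,u_l,v_{l+1},\ldots,v_{K+1})\,dv_{l+1}\cdots dv_{K+1},
\]
a direct Fubini argument, combined with the independence statements above, yields
\[
\E[\phi\mid\sigma(U^{(0)},\ldots,U^{(l)})]=h_l(U^{(0)},\ldots,U^{(l)}),\qquad \E[\widetilde\phi\mid\mathcal{F}_{q_l}]=h_l(u(q_0,\omega),\ldots,u(q_l,\omega)).
\]
Since the two argument tuples are equidistributed, the two conditional expectations share the same law. Finally, Lemma~\ref{lem:NOU1} substitutes $\sigma(U^{(0)},\ldots,U^{(l)})$ by $\sigma(X^{(0)},\ldots,X^{(l)})$ on the left-hand side, giving the claimed distributional identity.

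The main obstacle is conceptual rather than computational: the bare distributional equality $\phi\overset{d}{=}\widetilde\phi$ is not sufficient by itself, since distinct Borel kernels applied to equidistributed tuples can produce the same marginal law while yielding non-equidistributed conditional expectations. The proof must therefore anchor $\phi$ and $\widetilde\phi$ to a common Borel representative $F$ via the noise-outsourcing representation of Lemma~\ref{lem:NOS02}, which is precisely the canonical coupling under which this lemma is applied in the sequel.
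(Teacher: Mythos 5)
Your proof follows essentially the same structural route as the paper's: both rely on Lemma~\ref{lem:NOU1} to trade the conditioning $\sigma(X^{(0)},\ldots,X^{(l)})$ for $\sigma(U^{(0)},\ldots,U^{(l)})$, and both exploit that the Wiener variables $u(q_j,\omega)$ with $j>l$ are functions of Brownian increments over $[q_{j-1},q_j]\subset[q_l,1]$ and hence independent of $\mathcal{F}_{q_l}$. The paper reaches the conclusion by establishing $\E[\,\cdot\mid\sigma(u(q_0,\omega),\ldots,u(q_l,\omega))]=\E[\,\cdot\mid\mathcal{F}_{q_l}]$ on bounded $\sigma(u(q_0,\omega),\ldots,u(q_{K+1},\omega))$-measurable integrands through a Dynkin/monotone-class argument, whereas you write the conditional expectation out explicitly via the Fubini kernel $h_l$; these are interchangeable computations of the same object.

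The genuinely interesting part of your proposal is the final paragraph: you correctly observe that the marginal equality $\phi\overset{d}{=}\widetilde\phi$, taken alone, does not pin down a common Borel kernel and does not yield equidistributed conditional expectations. This is a real lacuna in the stated lemma and in the paper's own proof, where the middle $\overset{d}{=}$ in~\eqref{eq:uu2} is asserted as a consequence of~\eqref{eq:uu} and~\eqref{eq:uuphi}. A minimal counterexample: with $K=0$, $\phi=U^{(0)}$, $\widetilde\phi=u(q_1,\omega)$, one has $\phi\overset{d}{=}\widetilde\phi\sim\mathrm{Unif}(0,1)$, yet for $l=0$ the conditional expectation $\E[\phi\mid\sigma(X^{(0)})]=\E[\phi\mid\sigma(U^{(0)})]=U^{(0)}$ is uniform while $\E[\widetilde\phi\mid\mathcal{F}_{q_0}]=\tfrac12$ is deterministic. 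The hypothesis that is actually satisfied in the application (the backward induction in Proposition~\ref{prop:equivphiphi} passes down a \emph{common} Borel representative at every level, starting from the same $\Psi$) is the one you impose when writing $\phi=F(U^{(0)},\ldots,U^{(K+1)})$ and $\widetilde\phi=F(u(q_0,\omega),\ldots,u(q_{K+1},\omega))$ with the same $F$. Under that strengthened hypothesis your proof is complete and correct, and you are right that the lemma should be restated with it.
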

\begin{proof}
Lemma~\ref{lem:NOU1}, the equivalences in distribution~\eqref{eq:uu} and~\eqref{eq:uuphi} imply:
\been{
\label{eq:uu2}
\aled{
&\E\left[\phi \,\middle|\, \sigma(X^{(0)},\cdots,X^{(l)})\right]\underset{\ref{lem:NOU1}}{=}\E\left[\phi \,\middle|\, \sigma(U^{(0)},\cdots,U^{(l)})\right]\overset{d}{=}\E\left[\widetilde{\phi} \,\middle|\, \sigma(u(q_0,\omega),\cdots,u(q_l,\omega))\right].
}
}
Since $u$ is adapted, then $\sigma(u(q_0,\omega),\cdots,u(q_l,\omega))\subseteq \mathcal{F}_{q_l}$. Moreover, from the definition of $u$ in~\eqref{eq:defu}, for each $l\in [K+1]$, $u(q_l,\omega)$ depends only on the Brownian increment $\omega(q_l)-\omega(q_{l-1})$. Since the Brownian motion has independent increments, the collection $\(u(q_j,\omega)\)_{j>l}$ is independent of $\mathcal{F}_{q_l}$. Thus, for any measurable set $B\in (0,1)$ and $j>l$
\been{
\aled{
&\E\left[\bm{1}_{\{u(q_j,\omega) \in B\}}\]=\E\left[\bm{1}_{\{u(q_j,\omega) \in B\}} \,\middle|\, \sigma(u(q_0,\omega),\cdots,u(q_l,\omega))\right]=\E\left[ \bm{1}_{\{u(q_j,\omega) \in B\}} \,\middle|\, \mathcal{F}_{q_l}\right].
}
}
Hence, for any measurable rectangle $B_0\times \cdots\times B_{K+1} \subset (0,1)^{K+2}$, we have
\begin{equation}
\label{eq:lem-equivUuu-independence}
\aled{
&\E\left[\prod_{r=0}^{K+1} \bm{1}_{\{u(q_r,\omega) \in B_r\}} \,\middle|\, \sigma(u(q_0,\omega),\cdots,u(q_l,\omega))\right]\\
&= \prod_{r=0}^{l} \bm{1}_{\{u(q_r,\omega) \in B_r\}} \cdot \E\left[\prod_{r=l+1}^{K+1} \bm{1}_{\{u(q_r,\omega) \in B_r\}} \,\middle|\, \mathcal{F}_{q_l}\right]
=\E\left[\prod_{r=0}^{K+1} \bm{1}_{\{u(q_r,\omega) \in B_r\}} \,\middle|\, \mathcal{F}_{q_l}\right].
}
\end{equation}
Since the collection of the set of the form
\been{
\{\omega;\, \(u(q_0,\omega),\cdots u(q_{K+1},\omega)\)\in B_0\times \cdots\times B_{K+1}\} 
}
is closed under finite intersections and generate $\sigma(u(q_0,\omega),\cdots,u(q_l,\omega))$, then the Functional Monotone Class Theorem extends the equivalence~\eqref{eq:lem-equivUuu-independence} to all bounded measurable, real-valued, and $\sigma(u(q_0,\omega),\cdots,u(q_l,\omega))$--measurable random variables $\phi$. Finally, using~\eqref{eq:uu2}, we complete the proof.
\end{proof}
Combining the above three lemmas and the equivalence in distribution~\eqref{eq:uu}, we obtain the following representation for the sequence $W$. In the following lemma, we use the notation
\been{
\mathcal{M}^{[0,l]}:=\mathcal{M}^{(1)}\times \cdots \times \mathcal{M}^{(l)}.
}
\begin{lemma}
\label{lem:equivWh}
Given $\zeta\in \mathcal{M}^{(0)}$, let $W$ be a $\mathbb{M}_{\zeta}$--distributed random sequence defined as in~\eqref{eq:random-sequence}. Then
\been{
\label{eq:equivWh2}
W\overset{d}{=}\left(m^{(0)}\(0,\omega\), \cdots, m^{(K+1)}\(q_{K+1},\omega\)\right).
}
for some collection of random variables $m^{(l)}(q_l,\omega) \in \mathcal{M}^{(l)}$, where for each $ l \in [K+1]_0$, the map $\omega \mapsto m^{(l)}(q_l,\omega)$ is $\mathcal{F}_{q_l}$--measurable

Moreover, for any $l\in [K+1]_0$ and any pair of random variables $\phi$ and $\widetilde{\phi}$ such that:
\begin{enumerate}
\item  $\phi$ is bounded, real-valued, and $\mathcal{A}_{K+1}$--measurable,
\item $\widetilde{\phi}$ is bounded, real-valued, and $\sigma\left(m^{(0)}\(0,\omega\), \cdots, m^{(K+1)}\(q_K,\omega\)\right)$--measurable,
\item $\phi\overset{d}{=}\widetilde{\phi}$,
\end{enumerate}
it holds that
\been{
\label{eq:equivcond0000}
\aled{
&\E\left[\phi\middle|\mathcal{A}_l\right]\overset{d}{=}\E\left[\widetilde{\phi}\middle|\mathcal{F}_{q_l}\right],\quad \textup{a.s.}
}
}
\end{lemma}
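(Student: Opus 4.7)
The plan is to construct the random variables $m^{(l)}(q_l,\omega)$ by composing the abstract representation from Lemma~\ref{lem:NOS02} with the Wiener--space uniformizing functional $u$ from~\eqref{eq:defu}. First I would apply Lemma~\ref{lem:NOS02} to the sequence $W=(M^{(0)},\dots,M^{(K+1)})$ viewed as a random element of the standard Borel space $\mathcal{M}^{(0)}\times\dots\times\mathcal{M}^{(K+1)}$. This yields Borel measurable functions $g_l:(0,1)^{l+1}\to\mathcal{M}^{(l)}$ and i.i.d.~$\mathrm{Uniform}(0,1)$ random variables $(U^{(l)})_{l\in[K+1]_0}$ such that
\been{
W \;=\; \bigl(g_0(U^{(0)}),\,g_1(U^{(0)},U^{(1)}),\,\dots,\,g_{K+1}(U^{(0)},\dots,U^{(K+1)})\bigr)\quad\textup{a.s.}
}

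Next, on the Wiener space $(C([0,1],\R),\mathcal{F},\W)$, I would define
\been{
m^{(l)}(q_l,\omega)\;:=\;g_l\bigl(u(q_0,\omega),\dots,u(q_l,\omega)\bigr),\qquad l\in[K+1]_0,
}
where $u$ is the adapted process from~\eqref{eq:defu}. Since $u$ is $(\mathcal{F}_q)$--adapted, each $u(q_j,\omega)$ with $j\le l$ is $\mathcal{F}_{q_l}$--measurable, and Borel composition preserves measurability; hence $m^{(l)}(q_l,\omega)$ is $\mathcal{F}_{q_l}$--measurable, as required. The distributional identity~\eqref{eq:equivWh2} then follows immediately from the equivalence in distribution~\eqref{eq:uu}, namely $(u(q_0,\omega),\dots,u(q_{K+1},\omega))\overset{d}{=}(U^{(0)},\dots,U^{(K+1)})$, by pushing forward through the tuple of Borel functions $(g_0,\dots,g_{K+1})$.

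For the second statement, suppose $\phi$ is bounded, real--valued, and $\mathcal{A}_{K+1}$--measurable, and $\widetilde{\phi}\overset{d}{=}\phi$ is measurable with respect to the $\sigma$--algebra generated by the $m^{(l)}(q_l,\omega)$. By construction each $m^{(j)}(q_j,\omega)$ is a Borel function of $(u(q_0,\omega),\dots,u(q_j,\omega))$, so $\widetilde{\phi}$ is $\sigma(u(q_0,\omega),\dots,u(q_{K+1},\omega))$--measurable. Applying Lemma~\ref{lem:NOU1} to the sequence $(M^{(0)},\dots,M^{(K+1)})$ with its noise--outsourcing representation via $(U^{(l)})_{l\in[K+1]_0}$ gives
\been{
\E\bigl[\phi\,\bigl|\,\mathcal{A}_l\bigr]\;=\;\E\bigl[\phi\,\bigl|\,\sigma(U^{(0)},\dots,U^{(l)})\bigr]\quad\textup{a.s.},
}
and Lemma~\ref{lem:equivUuu} then yields
\been{
\E\bigl[\phi\,\bigl|\,\sigma(U^{(0)},\dots,U^{(l)})\bigr]\;\overset{d}{=}\;\E\bigl[\widetilde{\phi}\,\bigl|\,\mathcal{F}_{q_l}\bigr],
}
which chains to produce the claim~\eqref{eq:equivcond0000}.

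The only genuinely delicate point is the compatibility of $\sigma$--algebras required to invoke Lemmas~\ref{lem:NOU1} and~\ref{lem:equivUuu} in sequence: one must verify that $\widetilde{\phi}$ and $\phi$ can be compared not only marginally (by the equality in distribution that is assumed) but also after conditioning on the corresponding nested $\sigma$--algebras on their respective probability spaces. This is exactly what is secured by the inclusion $\sigma(m^{(0)}(0,\omega),\dots,m^{(K+1)}(q_K,\omega))\subseteq\sigma(u(q_0,\omega),\dots,u(q_{K+1},\omega))$, together with the fact that the functional form of $u$ in~\eqref{eq:defu} makes its joint law across the grid $(q_0,\dots,q_{K+1})$ literally a vector of independent uniforms. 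Once this bridge is made explicit, the remaining arguments are a straightforward concatenation of the noise--outsourcing and conditional--distribution lemmas already established.
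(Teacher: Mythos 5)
Your proposal is correct and follows essentially the same route as the paper: you define $m^{(l)}(q_l,\omega)=g_l\bigl(u(q_0,\omega),\dots,u(q_l,\omega)\bigr)$ by composing the functions from Lemma~\ref{lem:NOS02} with the uniformizing functional $u$ of~\eqref{eq:defu}, obtain measurability and~\eqref{eq:equivWh2} from~\eqref{eq:uu}, and derive~\eqref{eq:equivcond0000} from Lemmas~\ref{lem:NOU1} and~\ref{lem:equivUuu}. The only cosmetic difference is that you invoke Lemma~\ref{lem:NOU1} explicitly, whereas the paper cites only Lemma~\ref{lem:equivUuu} (whose proof already contains that step), so the arguments coincide.
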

\begin{remark}
    In this specific case $m^{(0)}\(0,\omega\)=\zeta$ $\mathbb{W}$--almost surely.
\end{remark}
\begin{proof}
By Lemma~\ref{lem:NOS02} and the equivalence in distribution~\eqref{eq:uu} there exists a sequence of measurable function $\(g_l:(0,1)^{l+1}\to \mathcal{M}^{(l)}\)_{l\in [K+1]_0}$ such that
\been{
\label{eq:equivW}
W\overset{\ref{lem:NOS02}}{=}\(g_0(U^{(0)}),\cdots,g_l(U^{(0)},\cdots,U^{(K+1)})\)\overset{d}{=}\(m^{(0)}\(q_{0},\omega\),\cdots,m^{(K+1)}\(q_{K+1},\omega\)\)
}
where
\been{
\label{eq:equivHG}
m^{(l)}\(q_{l},\omega\)=g_{l}\(u(0,\omega),u\(q_1,\omega\),\cdots,u\(q_{l},\omega\)\),\quad \forall l\in [K+1]_0.
}
The Wiener function $m^{(l)}\(q_{l},\omega\)$ is $\mathcal{F}_{q_l}$--measurable, proving the statement~\eqref{eq:equivWh2}. Using the definition~\eqref{eq:equivHG}, the statement~\eqref{eq:equivcond0000} is proved by Lemma~\ref{lem:equivUuu}.
\end{proof}
% We now use the above lemma 
% Given $n\in\N$ let $\bm{W} = (W_1, \cdots, W_{n})$ be $n$ i.i.d. samples from $\mathbb{M}_\zeta$. We denote by $\E\[\,\cdots\,|\mathcal{A}^{\otimes n}_l\]$ the conditional expectation with respect the $\sigma$--algebra generated by the $n$--folf product of the $\sigma$--algebra $\mathcal{A}_l$.

% We also consider the space $(C([0,1],\R^n),\mathcal{F}^{\otimes n},\mathbb{W}^{\otimes n},\(\mathcal{F}^{\otimes n}_q\)_{q\in[0,1]})$, described in  Section~\ref{sec:3}, and $\bm{\omega}=(\omega_1,\cdots,\omega_n)\in C([0,1],\R^n)$ denotes the $\R^n$--valued process obtained by $n$ independent realization of the coordinate process $\omega$.
We can finally provide the proof of Proposition~\ref{prop:equivphiphi}
\begin{proof}[Proof of Proposition~\ref{prop:equivphiphi}]
We define $ \bar{m}:C([0,1],\R)\to \mathcal{M}^{[K+1]_0}$ as:
  \been{
 \bar{m}(\omega):=\left(m^{(0)}\(0,\omega\), \cdots, m^{(K+1)}\(q_K,\omega\)\right),
 }
where the above sequence is the one defined in~\eqref{eq:equivWh2}. So, by Lemma~\ref{lem:equivWh}, $\bar{m}(\omega)\overset{d}{=}W$, proving the first statement. Hence we take
\been{
\label{eq:hoice}
\widetilde{\Psi}(\bm{\omega}):=\Psi(\bar{m}(\omega_1),\cdots ,\bar{m}(\omega_n)).
}
Since $\Psi$ is bounded and $\mathcal{A}^{\otimes n}_{K+1}$--measurable and $\bar{m}$-- is $\mathcal{F}_1$--measurable, $\widetilde{\Psi}$ is bounded and $\mathcal{F}_1$--measurable.

Combining Theorem~\ref{th:func_cont} with the explicit solution to the BSDE~\eqref{selfEq0Aux} for a discrete Parisi parameter given in Proposition~\ref{prop:existk}, we get that 
\been{
\label{eq:rec10bis}
\Phi(\widetilde{\Psi},\mu,0)=\E\[\frac{1}{x_{1}}\log(\Xi_{1}(\bm{\omega}))\],
}
where $\Xi_{1}(\bm{\omega})$ is defined iteratively as follows
\been{
\Xi_{l}(\bm{\omega}):=
\begin{dcases}
    e^{\widetilde{\Psi}(\bm{\omega})}, &\textup{if }l=K+1;\\
    \Econd{\(\Xi_{l+1}(\bm{\omega})\)^{\frac{x_{l}}{x_{l+1}}}}{q_{l}},\, &\textup{if }l\in [K].
\end{dcases}
}
The operator $\widehat{\Phi}$ is defined in a similar way in~\eqref{eq:recursive-T0} and~\eqref{eq:phihat}, with 
\been{
\widehat{\Phi}(\Psi, x)=\E\[\frac{1}{x_1}\log(\widehat{\Xi}_{1}(\Psi, x, \bm{W}))\].
}
and
\begin{equation}
\widehat{\Xi}_{l}(\Psi, x, \bm{W}) :=
\begin{dcases}
   e^{\Psi(\bm{W})}, & \text{if } l = K+1; \\
   \E\left[ \left. \widehat{\Xi}_{l+1}(\Psi, x,\bm{W})^{\frac{x_{l}}{x_{l+1}}} \right| \mathcal{A}^{\otimes n}_{l} \right] , & \text{if } l\in[K].
\end{dcases}
\end{equation}
Since, by~\eqref{eq:equivWh2}, $(\bar{m}(\omega_1),\cdots \bar{m}(\omega_n))\overset{d}{=}\bm{W}$ and $\Psi$ is $\mathcal{A}_{K+1}$--measurable and bounded, then, by the definition~\eqref{eq:hoice},
\been{
\Xi_{K+1}(\bm{\omega})=e^{\widetilde{\Psi}(\bm{\omega})}\overset{d}{=}e^{\Psi(\bm{W})}=\widehat{\Xi}_{K+1}(\Psi, x,\bm{W}).
}
We now proceed by backward induction on $ l \in [K+1]_0$. Assume that for some $ l+1 $, we have
\been{
\Xi_{l+1}(\bm{\omega})\overset{d}{=}\widehat{\Xi}_{l+1}(\Psi, x,\bm{W}),
}
then, since $\Xi_{l+1}$ and $\widehat{\Xi}_{l+1}$ are bounded, Lemma~\ref{lem:equivWh} yields
\been{
\Xi_{l}(\bm{\omega})=\Econd{\(\Xi_{l+1}(\bm{\omega})\)^{\frac{x_{l}}{x_{l+1}}}}{q_{l}}\overset{d}{\underset{\eqref{eq:equivcond0000}}{=}}\E\[\(\widehat{\Xi}_{l+1}(\Psi, x,\bm{W})\)^{\frac{x_{l}}{x_{l+1}}}\middle|\mathcal{A}^{\otimes n}_l\]=\widehat{\Xi}_{l}(\Psi, x,\bm{W})
}
So, if the equivalence in distribution holds up to $l+1$ it holds up to $l$, proving the induction hypothesis and then the proposition.
\end{proof}
We can finally prove the Theorem.
\begin{proof}[Proof of Theorem~\ref{th:The_Theorem}]
Fix a level of replica symmetry breaking $K\in \N$. Let $x:=(x_l)_{l\in [K+1]_0}$ and $q:=(q_l)_{l\in [K+1]_0}$ be the increasing sequences defined in~\eqref{eq:x_seq0} and~\eqref{eq:q_seq2}, respectively. Consider the space $\mathcal{M}^{[K+1]_0}$ from~\eqref{eq:product-space-M}, and define a Parisi parameter $\mu\in \Pr([0,1])^{\circ}$ as in~\eqref{discrete2}.

We divide the proof into two steps.

\medskip
\noindent\textbf{Step 1.} 
For any random $\zeta \in \mathcal{M}^{(0)}$, there exists $m \in B_1([-1,1])$ such that
\been{
\widehat{\mathcal{P}}_K(\zeta, x)\geq \inf_{m \in B_1([-1,1])}\mathcal{P}(m,\mu)
}

\noindent\textit{Proof of Step 1.} 
Let
  \been{
  \label{eq:mbar10}
 \bar{m}(\omega):=\left(m^{(0)}\(0,\omega\), \cdots, m^{(K+1)}\(q_K,\omega\)\right)\in \mathcal{M}^{[K+1]_0},
 }
 be the sequence defined as in~\eqref{eq:equivWh2}. We also define 
 \been{
 m(\omega):=m^{(K+1)}\(q_K,\omega\)\in [-1,1].
 }
 Note that $m\in B_1([-1,1])$. From the definition~\eqref{eq:psievmr}, we see that $\psi^{(*)} \circ \zeta^{\times 2c}(\bm{W})$ depends only on the last elements of the random sequences $\bm{W}=(W_1,\cdots W_n)$. So, comparing the definitions~\eqref{eq:psievmr} and~\eqref{eq:psic2}, we get
\been{
\widetilde{\Psi}(\bm{\omega})=\psi^{(*)} \circ \zeta^{\times 2c}[\bar{m}(\omega_1),\cdots,\bar{m}(\omega_n) ]=\psi^{(*)} \(m(\omega),\cdots ,m(\omega)\)=\psi^{(*)} \circ m^{\times 2c}(\bm{\omega}).
}
Thus, by Proposition~\ref{prop:equivphiphi}
\been{
\widehat{\Phi}(\psi^{(*)}_{\bm{J}} \circ \zeta^{\times 2x}, x)=\Phi(\psi^{(*)}_{\bm{J}} \circ m^{\times 2c}, \mu,0)
}
implying that $\widehat{\mathcal{P}}_K(\zeta, x)=\mathcal{P}(m,\mu)$. Ths implies
\been{
\widehat{\mathcal{P}}_K(\zeta, x)\geq \inf_{m \in B_1([-1,1])}\mathcal{P}(m,\mu)
}

\medskip
\noindent\textbf{Step 2.} 
For any random $m \in B_1([-1,1])$, there exists $\zeta \in \mathcal{M}^{(0)}$ such that
\been{
\mathcal{P}(m,\mu)\geq \inf_{\zeta \in\mathcal{M}^{(0)}}\mathcal{P}_K(\zeta, x).
}

\noindent\textit{Proof of Step 2.} 
Given $m\in B_1([0,1])$ we can construct a random sequence in $\mathcal{M}^{[K+1]_0}$ by taking
\been{
m^{(K+1)}(q_{K+1},\omega)=m(\omega),
}
and, for any $l\in [K]_0$, $m^{(l)}(q_l,\omega)$ is the unique element of $\mathcal{M}^{(l)}$ such that, for any open set $A\subseteq \mathcal{M}^{(l+1)}$
\been{
m^{(l)}(q_l,\omega)(A)=\E\[\bm{1}_{\{m^{(l+1)}(q_{l+1},\omega)\in A\}}\middle|\mathcal{F}_{q_l}\].
}
Let us denote the whole sequence by $\bar{m}(\omega)$ as in~\eqref{eq:mbar10}. Comparing the definition of  $\bar{m}(\omega)$ with the definition~\eqref{eq:random-sequence}, we see that, conditionally on $m^{(0)}(0,\omega)$, the law of $\bar{m}$ is $\mathbb{M}_{m^{(0)}(0,\omega)}$. So we can define a random sequence $W$ as in~\eqref{eq:random-sequence} such that
\been{
W\overset{d}{=}\bar{m}(\omega),\quad \textup{conditionally on }m^{(0)}(0,\omega).
}
Thus, following the proof of the previous step, we deduce that $\mathcal{P}(m,\mu)=\mathcal{P}_K\(m^{(0)}(0,\omega), x\).$ Thus
\been{
\mathcal{P}(m,\mu)\geq \inf_{\zeta \in\mathcal{M}^{(0)}}\mathcal{P}_K(\zeta, x).
}

\medskip
\noindent Combing the two steps, we deduce that, for any increasing sequence $x\in (x_l)_{l\in [K+1]_0}$ and any Parisi parameter $\mu\in \Pr([0,1])^{\circ}$ defined as in~\eqref{discrete2}
\been{
 \inf_{\zeta \in \mathcal{P}_{K+1}}\widehat{\mathcal{P}}_K(\zeta,x)= \inf_{m \in B_1([-1,1])}\mathcal{P}(m,\mu).
}
By Theorem~\ref{th:func_quant}, the operator $(\Psi,\mu)\mapsto \Phi(\Psi,\mu,0)$ is continuous in $\mu$. Thus $(m,\mu)\mapsto \mathcal{P}(m,\mu)$ is continuous in $\mu$, yielding to
\been{
\inf_{\substack{\zeta \in \mathcal{P}_{K+1}\\ \mu\in \Pr([0,1])}}\mathcal{P}(m,\mu)=\inf_{\substack{\zeta \in \mathcal{P}_{K+1}\\ \mu\in \Pr([0,1])^{\circ}}}\mathcal{P}(m,\mu)=\inf_{K\in \N}\inf_{\substack{\zeta \in \mathcal{P}_{K+1}\\0=x_0<x_1\leq \cdots x\leq x_{K+1}=1}}\widehat{\mathcal{P}}_K(\zeta,x).
}
The Franz-Leone upper bound~\eqref{eq:RSB_UB} completes the proof.
\end{proof}
\section*{Acknowledgments}
The author thanks Prof. Giuseppe Genovese and Prof. David Belius for helpful comments on earlier drafts.
Thanks are also due to Simone Franchini for valuable discussions and insights. 

%\bibliographystyle{amsplain}
%\bibliography{reference}

\providecommand{\bysame}{\leavevmode\hbox to3em{\hrulefill}\thinspace}
\providecommand{\MR}{\relax\ifhmode\unskip\space\fi MR }
% \MRhref is called by the amsart/book/proc definition of \MR.
\providecommand{\MRhref}[2]{%
  \href{http://www.ams.org/mathscinet-getitem?mr=#1}{#2}
}
\providecommand{\href}[2]{#2}

\end{document}